\numberwithin{equation}{section}
\begin{document}
%\renewcommand{\footnote}{}
%\pagenumbering{roman}
%\thispagestyle{empty}%\quad\newpage
%\thispagestyle{empty}
%\nonstopmode
%**************************************************************************

%%%%%%%%%%%%%%%%%%%%%%%%%%%%%%%%%%%%%%%%%%%%%%%%%%%%%%%%%%%%%%%%%%%%%%%%%%%%%%%%%
\renewcommand{\AA}{\mathcal{A}}
\newcommand{\BB}{\mathcal{B}}
\newcommand{\CC}{\mathcal{C}}
\newcommand{\DD}{\mathcal{D}}
\newcommand{\EE}{\mathcal{E}}
\newcommand{\FF}{\mathcal{F}}
\newcommand{\GG}{\mathcal{G}}
\newcommand{\HH}{\mathcal{H}}
\newcommand{\II}{\mathcal{I}}
\newcommand{\JJ}{\mathcal{J}}
\newcommand{\KK}{\mathcal{K}}
\newcommand{\LL}{\mathcal{L}}
\newcommand{\MM}{\mathcal{M}}
\newcommand{\NN}{\mathcal{N}}
\newcommand{\OO}{\mathcal{O}}
\newcommand{\PP}{\mathcal{P}}
\newcommand{\QQ}{\mathcal{Q}}
\newcommand{\RR}{\mathcal{R}}
\renewcommand{\SS}{\mathcal{S}}
\newcommand{\TT}{\mathcal{T}}
\newcommand{\UU}{\mathcal{U}}
\newcommand{\VV}{\mathcal{V}}
\newcommand{\WW}{\mathcal{W}}
\newcommand{\XX}{\mathcal{X}}
\newcommand{\YY}{\mathcal{Y}}
\newcommand{\ZZ}{\mathcal{Z}}

%Mengensystem

\newcommand{\A}{\mathbb{A}}
\newcommand{\B}{\mathbb{B}}
\newcommand{\C}{\mathbb{C}}
\newcommand{\D}{\mathbb{D}}
\newcommand{\E}{\mathbb{E}}
\newcommand{\F}{\mathbb{F}}
\newcommand{\G}{\mathbb{G}}
\renewcommand{\H}{\mathbb{H}}
\newcommand{\I}{\mathbb{I}}
\newcommand{\J}{\mathbb{J}}
\newcommand{\K}{\mathbb{K}}
\renewcommand{\L}{\mathbb{L}}
\newcommand{\M}{\mathbb{M}}
\newcommand{\N}{\mathbb{N}}
\renewcommand{\O}{\mathbb{O}}
\renewcommand{\P}{\mathbb{P}}
\newcommand{\Q}{\mathbb{Q}}
\newcommand{\R}{\mathbb{R}}
\renewcommand{\S}{\mathbb{S}}
\newcommand{\T}{\mathbb{T}}
\newcommand{\U}{\mathbb{U}}
\newcommand{\V}{\mathbb{V}}
\newcommand{\W}{\mathbb{W}}
\newcommand{\X}{\mathbb{X}}
\newcommand{\Y}{\mathbb{Y}}
\newcommand{\Z}{\mathbb{Z}}

%griechisch

\newcommand{\al}{\alpha}
\newcommand{\be}{\beta}
\newcommand{\ga}{\gamma}
\newcommand{\de}{\delta}
\newcommand{\ep}{\varepsilon}
\newcommand{\ze}{\zeta}
\newcommand{\et}{\eta}
\newcommand{\vth}{\vartheta}
\renewcommand{\th}{\theta}
\newcommand{\io}{\iota}
\newcommand{\ka}{\kappa}
\newcommand{\la}{\lambda}
\newcommand{\rh}{\rho}
\newcommand{\si}{\sigma}
\newcommand{\ta}{\tau}
\newcommand{\up}{\upsilon}
\newcommand{\ph}{\varphi}
\newcommand{\ch}{\chi}
\newcommand{\ps}{\psi}
\newcommand{\om}{\omega}

\newcommand{\Ga}{\Gamma}
\newcommand{\De}{\Delta}
\newcommand{\Th}{\Theta}
\newcommand{\La}{\Lambda}
\newcommand{\Si}{\Sigma}
\newcommand{\Up}{\Upsilon}
\newcommand{\Ph}{\xi}
\newcommand{\Om}{\Omega}

%mathematische Symbole
%%Pfeile
\newcommand{\inj}{\hookrightarrow}
\newcommand{\stetein}{\overset{s}{\hookrightarrow}}
\newcommand{\dichein}{\overset{d}{\hookrightarrow}}
\newcommand{\pa}{\partial}
\newcommand{\re}{\restriction}
\newcommand{\tief}{\downharpoonright}

%%Diverses
\newcommand{\bra}{\langle}
\newcommand{\ket}{\rangle}
\newcommand{\bs}{\backslash}
\newcommand{\divv}{\operatorname{div}}
\newcommand{\Dt}{\frac{\mathrm d}{\mathrm dt}}

%%Mengenoperationen
\newcommand{\sm}{\setminus}
\newcommand{\es}{\emptyset}

\newtheorem{theorem}{Theorem}[section]
\newtheorem{corollary}{Corollary}
\newtheorem*{main}{Main Theorem}
\newtheorem{lemma}[theorem]{Lemma}
\newtheorem{proposition}{Proposition}
\newtheorem{conjecture}{Conjecture}
\newtheorem*{problem}{Problem}
\theoremstyle{definition}
\newtheorem{definition}[theorem]{Definition}
\newtheorem{remark}{Remark}
\newtheorem*{notation}{Notation}

\numberwithin{equation}{section}

%**************************************************************************
\sloppy
%**************************************************************************
\title[Stability of electro-kinetic flows]
      {Global well-posedness and stability of electro-kinetic flows}
\author[D. Bothe, A. Fischer and J. Saal]{Dieter Bothe, Andr\'e Fischer and J{\tiny\"u}rgen Saal}
\address{Center of Smart Interfaces, Technische Universit\"at Darmstadt, Petersenstra\ss e 32,
64287 Darmstadt, Germany}
\email{bothe@csi.tu-darmstadt.de}
\address{Center of Smart Interfaces, Technische Universit\"at Darmstadt, Petersenstra\ss e 32,
64287 Darmstadt, Germany}
\email{fischer@csi.tu-darmstadt.de}
\address{Center of Smart Interfaces, Technische Universit\"at Darmstadt, Petersenstra\ss e 32,
64287 Darmstadt, Germany}
\email{saal@csi.tu-darmstadt.de}

\date{\today}
\subjclass[2010]{Primary: 76E25, 76D05, 35B25; Secondary: 35K51.}
 \keywords{Navier-Stokes-Nernst-Planck-Poisson equations, local and global
 well-posedness, exponential stability of steady states.}
%\thanks{class-stefan75.tex}
\thispagestyle{empty}
%\pagestyle{myheadings}
%\markboth{\today}{\today}
%\setlength{\parindent}{0mm}
%\setlength{\parskip}{10cm}
\parskip0.5ex plus 0.5ex minus 0.5ex
\bibliographystyle{plain}
%\bibliographystyle{alpha}
%\bibliographystyle{amsalpha}
%\setcounter{page}{3}
%\listoffigures

\begin{abstract}
We consider a coupled system of Navier-Stokes and Nernst-Planck
equations,
describing the evolution of the velocity and the concentration fields of dissolved constituents in an electrolyte solution.
Motivated by recent applications in the field of micro- and nanofluidics, we consider the model in such generality that electrokinetic flows
are included. This prohibits employing the assumption of electroneutrality of the total solution,
which is a common  approach in the mathematical literature in order to determine the electrical potential.
Therefore we complement the system of mass and momentum balances with a Poisson equation for the
electrostatic potential, with the charge density stemming from the concentrations of the ionic species.
For the resulting Navier-Stokes-Nernst-Planck-Poisson system we prove the existence of unique local strong solutions
in bounded domains in $\R^n$ for any $n\geq2$ as well as the existence of unique global strong solutions and
exponential convergence to uniquely determined steady states in two dimensions.
\end{abstract}
\maketitle
%%%%%%%%%%%%%%%%%%%%%%%%%%%%%%%%%%%%
%{\bf Keywords.}  \\[4mm]
%{\bf 2000 Mathematics Subject Classification.}
%Primary ; Secondary

\tableofcontents

%%%%%%%%%%%%%%%%%%%%%%%%%%%%%%%%%%%%%%%%%%%%%%%%%%%%%%%%%%%%%%%%%%%%%%%%%%%%%%%%%%%%%%%%%%%%%%%%%%%%%%%%%%%%%%%%%%%%%%%%%%%%%%%%%%%%%%%%%%
%%%%%%%%%%%%%%%%%%%%%%%%%%%%%%%%%%%%%%%%%%%%%%%%%%%%%%%%%%%%%%%%%%%%%%%%%%%%%%%%%%%%%%%%%%%%%%%%%%%%%%%%%%%%%%%%%%%%%%%%%%%%%%%%%%%%%%%%%%
\section{Introduction}
%%%%%%%%%%%%%%%%%%%%%%%%%%%%%%%%%%%%%%%%%%%%%%%%%%%%%%%%%%%%%%%%%%%%%%%%%%%%%%%%%%%%%%%%%%%%%%%%%%%%%%%%%%%%%%%%%%%%%%%%%%%%%%%%%%%%%%%%%%

Electrokinetic effects have received a lot of attention from various branches of research and applications,
recently in particular in the field of micro- and nanofluidics. Two well-known phenomena are \emph{electrophoresis}
and \emph{electro-osmotic flow (EOF)}.
While in electrophoresis, a dispersed (colloidal) particle is dragged through a fluid under an electrical field,
EOF describes the motion of an aqueous solution past a solid wall as a response to an external electrical field.
The latter offers a powerful method to manipulate liquids in mini- and micro-devices, since it does not depend on mechanical tools.
Indeed, employing electrical fields to pump a liquid through a (sub-)micro-channel can economize this task considerably,
if there are charged solutes in dispersion; see, e.g., \cite{MasBha,Probstein}.
EOF can be applied, for instance, for cooling systems in micro-electronics.
In analytical chemistry, electrical fields serve for the separation of chemicals
according to their electrophoretic characteristics (see \cite{kirby}), 
but it also has a great potential for
promoting mixing on micro-scales \cite{chang}. For further applications we refer to \cite{troj} and the references therein.

The present paper studies the mathematical model governing electro-osmotic flow.
From the significance of the EOF, a proper mathematical understanding is indispensable for applications.
We consider a rather general situation in which a dilute viscous solution with dissolved charged species
is placed in a container with solid walls, the container being situated in an electrical field.
A typical concrete case is an aqueous electrolyte solution, where of course uncharged species may also be dissolved in the solute.
The externally applied electrical field induces the EOF due to the presence of charged constituents in solution.
On the other hand, the ions themselves generate an intrinsic electrical field as well, which in turn also affects their fluxes.
Thus, even without external electrical fields, electrical effects have
to be accounted for whenever charged species are involved.
Concerning the solid-liquid interface, it is well-known from
electro-chemistry, see e.g.\ \cite{Newman}, that electro-chemical double
layers appear on the surface, thus the surface is typically charged.
In a simplified view the surface can be interpreted locally as a plate capacitor.
In contrast, far away from walls or other interfaces, the net charge density is essentially zero,
i.e.\ the solution is \emph{electro-neutral} inside the bulk; see, e.g., \cite{HHV, Newman}.
But note that in nanofluidic applications as well as in transport processes in nanopores within porous media,
there is no macroscopic bulk phase and, hence, electroneutrality does not even hold in a subdomain.
Even the more general approach to describe the electric potential via the Poisson-Boltzmann equation may not be accurate enough,
since the Boltzmann statistics does not apply in overlapping double layers; cf.\ \cite{Li2011}.

In the mathematical literature, the electroneutrality condition has been frequently used as a simplifying condition, 
see e.g.\ \cite{amann_renardy,BothePruss, wiedmann}.
As a consequence of electroneutrality,
there also is no electrical body-force acting on the mixture, such that the equations for the fluid motion decouple.
There still is a non-trivial electrical field, implicitly determined by
the algebraic condition of electro-neutrality which leads to a strong
coupling of the mass fluxes,
but still this simplification is too restrictive here, 
since it rules out the electrokinetic effects.

In papers of Y.S.~Choi and R.~Lui, see \cite{CL92,CL_multidim}, as well as in papers of
H.~Gajewski, A.~Glitzky, K.~Gr\"oger and R.~H\"unlich, e.g.~\cite{gaj85,GGH95,Glitzky,Glitzky_2},
the electro-neutrality condition is not employed. Instead, in the context of reaction-diffusion systems,
a more general mathematical model comprising species mass balances with fluxes according to
the Nernst-Planck equation combined with a Poisson equation for the electrical potential
is investigated. However, no fluid motion is taken into account and, hence, no momentum balance is accounted for.
Employing suitable Lyapunov functionals, the existence of global
solutions in two dimensions is proven and a careful analysis of the
long-time behavior is provided.
In this approach the mass flux $j_i$ of constituent $i$ is composed of a Fickian diffusion term and
an electro-migration term due to the electrical field.

In \cite{Schmuck}, the afore mentioned system of Nernst-Planck equation and the Poisson equation is
complemented by the Navier-Stokes system. Well-posedness issues are accounted for under the
assumption that all diffusivities of the individual species are constant and equal.
From the physical point of view, these assumptions are rather strong.
In fact, the diffusivity of one species will in general depend on the full composition of the system; cf.\ \cite{cussler}.
While this dependence is negligible in the dilute case considered here, the values of the diffusivities of 
individual constituents will still differ significantly, up to orders of magnitude.
Moreover, in \cite{Schmuck} the electrostatic potential is assumed to satisfy the homogeneous Neumann condition,
which rules out boundary charges in contrast to the possible occurrence of double layers at the boundary.

Further in some way related mathematical literature is given by
\cite{bedthom2011,jerome2002}. In \cite{bedthom2011}, for instance, motion of a
rigid charged macro-molecule immersed in an incompressible ionized fluid
is considered and local existence of a weak solution is proved.
The work \cite{jerome2002} deals with a two carriers (anions and cations)
electrophoretic model in the whole space $\R^n$. There local-in-time
existence of a unique strong solution in a Hilbert space setting is proved.
A major point in \cite{jerome2002} is uniformity of the existence
interval with respect to the viscosity, allowing for the inviscid limit to the 
Euler-Nernst-Planck-Poisson system.

Apparently, no further analytical works are available concerning the system
consisting of the Navier-Stokes equations for the fluid motion, the 
Nernst-Planck equation for the species concentrations
and the Poisson equation for the electrostatic potential. 
Therefore, our approach generalizes the ones given, for instance, in
\cite{CL92,CL_multidim,gaj85,GGH95,Glitzky,Glitzky_2}, 
where electro(-reaction)-diffusion systems are considered,
not taking into account convection induced by fluid motion or a 
coupling to the Navier-Stokes equations. 
In this context, we also want to stress the fact that our approach does not need
the electro-neutrality assumption, as is assumed e.g.\ in 
\cite{amann_renardy,BothePruss, wiedmann}, 
and also allows for different diffusivities  
for the individual species (in contrast to \cite{Schmuck})
as well as for non-trivial charge density on the interface.
Our result on local-in-time well-posedness for the full
Navier-Stokes-Nernst-Planck-Poisson system in a strong $L^p$-setting
hence is new.
The methods used for the proof of global strong solutions in
two space dimensions are close to the ones utilized in
\cite{CL_multidim}. The results obtained in the underlying note, however, 
differ from the ones derived in
\cite{CL_multidim} in several respects. Apart from the fact that
in \cite{CL_multidim} there is no convection, 
an approach in H\"older spaces is presented whereas we prefer to work
in an $L^p$-setting.
Our proof of existence and uniqueness of steady states is much simpler
and we are
able to remove a restrictive global electro-neutrality condition imposed
in \cite{CL_multidim} for uniqueness of steady states. 
Moreover, we prove exponential stability of steady states
which is not given in \cite{CL_multidim}. 
The methods applied here are related to
\cite{Glitzky}, where
exponential stability of steady states for concentrations is derived 
in the absence of fluid motion. In this regard, we note that the proof of
exponential
convergence to a steady state simultaneously for fluid velocity
and concentrations of species is new.

This article is organized as follows. First, we give a justification
of the mathematical model considered in this work, 
the Navier-Stokes-Nernst-Planck-Poisson
system, by illustrating its derivation from basic 
principles. We then
state our main results on local well-posedness in arbitrary dimension
and on global well-posedness and exponential stability in
two dimensions. 
In Section~\ref{local} we give a proof
of the local-in-time well-posedness result based on maximal regularity.
Utilizing an appropriate Lyapunov functional and conservation of mass for the
concentrations, which gives the starting point  for
a bootstrap argument, 
we prove in Section~\ref{global} that in two
dimensions the local solution extends to a global one.
The content of Section~\ref{stat} is the proof of existence of 
a unique steady state solution to the
Navier-Stokes-Nernst-Planck-Poisson system 
for each given distribution of initial masses. 
Finally, in Section~\ref{asymp} we prove exponential
stability of this stationary solution.

%However, the NP equations seem to already inherit the electro-neutrality
%condition, so a justification of the a posteriori drop of the electro-neutrality condition for the
%situation of a dilute solution is provided in Subsection~\ref{subsec_model}.

%%%%%%%%%%%%%%%%%%%%%%%%%%%%%%%%%%%%%%%%%%%%%%%%%%%%%%%%%%%%%%%%%%%%%%%%%%%%%%%%%%%%%%%%%%%%%%%%%%%%%%%%%%%%%%%%%%%%%%%%%%%%%%%%%%%%%%%%%%
%%%%%%%%%%%%%%%%%%%%%%%%%%%%%%%%%%%%%%%%%%%%%%%%%%%%%%%%%%%%%%%%%%%%%%%%%%%%%%%%%%%%%%%%%%%%%%%%%%%%%%%%%%%%%%%%%%%%%%%%%%%%%%%%%%%%%%%%%%
\subsection{The mathematical model}
\label{subsec_model}
%%%%%%%%%%%%%%%%%%%%%%%%%%%%%%%%%%%%%%%%%%%%%%%%%%%%%%%%%%%%%%%%%%%%%%%%%%%%%%%%%%%%%%%%%%%%%%%%%%%%%%%%%%%%%%%%%%%%%%%%%%%%%%%%%%%%%%%%%%

The basic continuum mechanical model for transport processes in (non-reactive) electrolyte solutions
consists of the Navier-Stokes equations for the mixture as a whole, coupled to a set of
species equations for which the mass fluxes are modeled via the Nernst-Planck equation with the electrical
potential given by a Poisson equation; see \cite{Newman}.
Then, depending on the concrete situation, the Poisson equations is often
replaced by either the above mentioned condition of electro-neutrality or by the Poisson-Boltzmann equation in which
the concentrations are related to the electrical potential by a Boltzmann statistics.
Here, in order to maintain the applicability to EOF, we keep the more general Poisson equation.

But there is one hidden difficulty which forces us to briefly reconsider the derivation of the mathematical model. 
Indeed, the Nernst-Planck equation in the usual form, stating that the diffusive (molar) mass fluxes $j_i$ are
given as
\begin{equation}
\label{eq_Nernst-Planck}
j_i=-D_i \nabla c_i - m_i \frac{F}{RT}z_i c_i\nabla\phi,
\end{equation}
already relies on additional assumptions. In fact, the standard derivation from the more fundamental generalized Maxwell-Stefan equations
(\cite{TaylorKrishna}) already considers the mixture to be electrically neutral. But the
resulting form is also applied to the general case which obviously 
requires clarification.
Fortunately, it turns out that the constitutive equation \eqref{eq_Nernst-Planck} is also valid without
electro-neutrality if the electrolyte solution is dilute - the case we are considering here.

We consider a fluid composed of $N+1$ constituents with molar concentrations $c_i$, i.e.\
$c_i=\rho_i/M_i$ with $\rho_i$ the mass density and $M_i$ the molar mass, where $i=0$ refers to the solvent
in which the other species ($i=1,\ldots ,N$) are dissolved.
A sufficiently fundamental starting point is the system of partial mass balances together with a common mixture momentum balance,
where the diffusive fluxes of species mass are determined from the system of Maxwell-Stefan equations.
The latter follow as a reduced set of balances of partial momenta, simplified by neglecting diffusive waves due to 
differential acceleration of the constituents.
The considered balances read
\begin{align}
\label{eq_part_bal}
\pa_tc_i+\divv(c_iu+j_i)&=0,\qquad i=0,\ldots,N,\\
\label{eq_mom_bal}
\pa_t(\rho u)+\divv(\rho u\otimes u-S)&=-\nabla \pi+\rho b,
\end{align}
where
\[
\rho:=\sum_{i=0}^N\rho_i,\qquad\rho u:=\sum_{i=0}^N\rho_iu_i,\qquad\rho
b:=\sum_{i=0}^N\rho_ib_i
\]
define the total density $\rho$, the barycentric velocity $u$ and the
total body force $b$, given
the individual velocities $u_i$ and forces $b_i$. The molar mass flux $j_i$ is defined as
$j_i:=c_i(u_i-u)$, but also has to be modeled, since individual velocities are not accounted for. In
(\ref{eq_mom_bal}), $S$ denotes the (viscous) extra stress and $\pi$ the pressure. Note that equations
(\ref{eq_part_bal}) are not independent, since total mass is conserved. Indeed,
\begin{equation*}
\sum_{i=0}^NM_ij_i=0,
\end{equation*}
and one of the species equations in (\ref{eq_part_bal}) can be replaced by the mixture continuity equation
\begin{equation}
\label{eq_cont_eq}
\pa_t\rho+\divv(\rho u)=0.
\end{equation}
The closure of (\ref{eq_part_bal}) and (\ref{eq_mom_bal}), i.e.\ the modeling of the $j_i$ is
achieved by the Maxwell-Stefan equations. Formulated for molar mass concentrations and under
isothermal conditions they read (cf.\ \cite{TaylorKrishna})
\begin{equation}
\label{eq_MS}
-\sum_{k=0}^N\frac{x_kj_i-x_ij_k}{\DJ_{ik}}=\frac{c_i}{RT}\nabla\mu_i-\frac{y_i}{RT}\nabla\pi-\frac{\rho_i}{RT}(b_i-b),\quad i=0,\ldots,N.
\end{equation}
Here $x_i=c_i/c$ with $c:=\Sigma_{k=0}^N c_k$ are the molar fractions, $\DJ_{ij}$ the Maxwell-Stefan
diffusivities, $R$ is the universal gas constant, $T$ the absolute temperature and
$y_i:=\rho_i/\rho$ the mass fractions. For extension to the non-isothermal case, see e.g.\ Chapter 24 in
\cite{BSL}. The molar-based chemical potentials $\mu_i$ are defined as
\begin{equation*}
\mu_i=M_i\frac{\pa(\rho\psi)}{\pa\rho_i}=\frac{\pa(\rho\psi)}{\pa c_i},
\end{equation*}
where $\rho\psi=\rho\psi(T,\rho_0,\ldots,\rho_N)$ denotes the free energy density of the mixture.
Hence $\mu_i$ are functions of $T$ and all $\rho_i$ as well, but in the literature on chemical
thermodynamics and multicomponent transport, it is common to use $(T,\pi,x_0,\ldots,x_N)$ as
independent variables instead. Of course not all $x_i$ are independent, hence
$(T,\pi,x_1,\ldots,x_{N})$, say, is more appropriate. If $\mu_i=\tilde\mu_i(T,\pi,x_j)$ denotes this
function, then (\ref{eq_MS}) can be written as
\begin{equation}
\label{eq_MS_1}
-\sum_{k=0}^N\frac{x_kj_i-x_ij_k}{\DJ_{ik}}=\frac{c_i}{RT}\nabla_{\pi,T}\tilde\mu_i+\frac{\phi_i-y_i}{RT}\nabla\pi-\frac{\rho_i}{RT}(b_i-b),
\end{equation}
where $\phi_i$ is the volume fraction of species $i$ and
\begin{equation*}
%\label{eq_nabla_pT}
\nabla_{\pi,T}\tilde\mu_i:=\sum_j\frac{\pa\tilde\mu_i}{\pa x_j}\nabla x_j
\end{equation*}
is the ''gradient taken at constant $\pi$ and $T$''. The three terms on the right-hand side of
(\ref{eq_MS_1}) correspond to diffusional transport due to composition gradients, pressure gradients
and external forces, respectively.

In case of an electrolyte solution, the individual body force densities are
\begin{equation}
\label{eq_body_force}
b_i=-\frac{F}{M_i}z_i\nabla\phi,
\end{equation}
where $F$ is the Faraday constant (charge of one mole of electrons), $z_i$ are the charge numbers
and $\phi$ is the electrical potential. The latter is assumed to be determined by the electrostatic
relation
\begin{equation}
\label{eq_poisson_eq}
-\divv(\ep\nabla\phi)=F\sum_{k=0}^Nz_kc_k,
\end{equation}
i.e.\ dynamic effects and the influence of magnetic fields are ignored which is usually sound
for the applications which have been mentioned above. In (\ref{eq_poisson_eq}), the material parameter
$\ep$ denotes the electrical permittivity of the medium.

The system of equations (\ref{eq_part_bal}), (\ref{eq_mom_bal}), (\ref{eq_cont_eq}),
(\ref{eq_MS_1}), (\ref{eq_body_force}) and (\ref{eq_poisson_eq}) has to be complemented by an
equation of state, relating $\pi$ with $(T,\rho_0,\ldots,\rho_N)$, and by a material function modeling
the free energy of the mixture; alternatively, and more common, the chemical potentials need to be
modeled. These are highly complicated tasks, in general.

From here on, we only consider the case of a dilute solution, for which species $0$, say, is the
solvent with $x_0\approx1$ and all other species are present in trace quantities, i.e.\ $x_i\ll1$ for
$i=1,\ldots,N$. This leads to several simplifications. First, for $i=1,\ldots,N$, the Maxwell-Stefan equations reduce
to
\begin{equation}
\label{eq_MS_2}
-\frac1{\DJ_{i0}}j_i=\frac{c_i}{RT}\nabla_{\pi,T}\tilde\mu_i+\frac{\phi_i-y_i}{RT}\nabla
\pi-\frac{\rho_i}{RT}(b_i-b).
\end{equation}
Next, the difficult request for an equation of state is resolved by assuming the mixture to maintain
constant density, which is reasonable since $\rho\approx \rho_0$ and the latter will be almost
constant at isothermal conditions. Therefore the species equation for $i=0$ is omitted, but
\begin{equation}
\label{eq_rho_div}
\rho\equiv const, \qquad\divv u=0
\end{equation}
is used instead. The effect of pressure diffusion is usually negligible, since $\phi_i/y_i$ is not
far from $1$ and here both $\phi_i$ and $y_i$ are even close to zero. It remains to model the
chemical potentials. For a dilute solution one has, \cite{atkins},
\begin{equation}
\label{eq_mod_chem_pot}
\tilde\mu_i=\tilde\mu_i^0(T,\pi)+RT\log x_i,\qquad i=1,\ldots N.
\end{equation}
With these simplifications and with $D_i:=\DJ_{i0}$, the diffusive fluxes $j_i$ from (\ref{eq_MS_2}) become
\begin{equation}
\label{eq_flux}
j_i=-D_i\left(c\nabla x_i+\frac{F}{RT}(z_ic_i-\sum_{k=0}^Nz_kc_k)\nabla\phi\right),\qquad
i=1,\ldots,N.
\end{equation}
This is close to the Nernst-Planck form, up to the term $\sum_{k=0}^Nz_kc_k$. The latter vanishes in
the electro-neutral case, but this would restrict the applicability of the model too far. Instead, we
note that in the dilute case necessarily $z_0=0$, hence $b_0=0$, and then
\begin{equation}
b_i-b=b_i-\sum_{k=0}^Ny_kb_k=(1-y_i)b_i+\!\!\!
\sum_{i\neq k=1}^N y_k b_k\approx  b_i,\quad i=1,\ldots,N.
\end{equation}
Consequently, $b$ is small as compared to $b_i$ (for $i$ such that $z_i\ne0$), but this
does not imply $b$ to be small as compared to the other quantities in the momentum balance. In fact
the electrical forces acting on dilute components can drag the whole mixture as exploited in EOF.
To sum up, using the fact that $c\approx c_0$ is nearly constant, we see that
in a dilute electrolyte solution the diffusive molar mass fluxes of the solutes can be modeled via
the Nernst-Planck constitutive equation
\begin{equation}
\label{eq_flux_fin}
j_i=-D_i\left(\nabla c_i+\frac{F}{RT}z_ic_i\nabla\phi\right),\qquad i=1,\ldots,N.
\end{equation}
Note that this derivation from the Maxwell-Stefan equations
automatically also yields the Nernst-Einstein relation,
saying that the mobility coefficients in the electromigration term, i.e.\ the $m_i$ in \eqref{eq_Nernst-Planck}
equal $D_i /RT$.

Concerning (\ref{eq_mom_bal}), the viscous stress tensor $S$ is modeled by
\[S=\mu(\nabla u+(\nabla u)^{\sf T})\]
with constant viscosity $\mu>0$, hence $\divv S=\mu\Delta u$ since $u$ is divergence free by (\ref{eq_rho_div}).
The permittivity $\ep$ in equation (\ref{eq_poisson_eq})
for the electrostatical potential is assumed to be positive and constant. Thus the full model reads
\begin{align}
\label{eq_NS_}
\rho(\pa_tu+(u\cdot\nabla)u)-\mu\Delta u+\nabla
\pi&=-F\sum_{j=1}^{N}z_jc_j\nabla\phi,\\
\label{eq_NS_div_}
\divv u&=0,\\
\label{eq_NP_}
\partial_tc_i+\divv \left(c_i u -D_i\left(\nabla
c_i+\frac{F}{RT}z_ic_i\nabla\phi\right)\right)&=0,\quad i=1,\ldots ,N,\\
\label{eq_P_}
-\ep\Delta\phi&=F\sum_{j=1}^{N}z_jc_j,
\end{align}
for $(t,x)\in(0,T)\times\Omega$, where $\Omega\subset\R^n$, $n\geq 2$, is the domain under
consideration. We note that the force term on the right hand side 
of (\ref{eq_NS_}) is usually called Coulomb force. 

At $\partial \Omega$, we impose the boundary conditions
\begin{align}
\label{eq_NS_bdy_}
u&=0,\\
\label{eq_NP_bdy_}
-D_i(\pa_\nu c_i-\frac{F}{R T} z_ic_i\pa_\nu\phi)&=0,\qquad i=1,\ldots ,N,\\
\label{eq_P_bdy_}
\pa_\nu\phi+\tau\phi&=\xi,
\end{align}
for $(t,x)\in(0,T)\times\pa\Omega$, where $\nu$ denotes the outer normal to $\Omega$ and $\pa_\nu$ denotes the
derivative in direction $\nu$.

Condition (\ref{eq_NS_bdy_}) is the commonly assumed no-slip condition for the velocity field
$u$ and the no-flux condition (\ref{eq_NP_bdy_}) models impermeable walls and assures the conservation of mass in $\Omega$.
Relation (\ref{eq_P_bdy_}) requires some physical explanation. By the Maxwell equations we have
\begin{equation}
\label{eq_surface_charge}
\ep\pa_\nu\phi=\sigma_s \quad \mbox{ on } \partial \Omega,
\end{equation}
where $\sigma_s$ denotes the surface charge density. A typical choice for boundary
conditions for the electrical potential (see e.g. \cite{CL_multidim,Schmuck}) are given by
homogeneous Neumann conditions, implicitly claiming that the surface is charge free. As already
mentioned, this condition is not sufficient for our situation. However, finding appropriate models for
the charge density on the surface is a delicate matter. 
The boundary condition
(\ref{eq_P_bdy_})
is the same as employed in \cite{Glitzky}. For a physical motivation we 
imagine to append a layer of (dimensionless) thickness $0<\de\ll1$ 
around the volume $\Omega$ which gives us locally a plate capacitor 
for which the surface charge density is known to be
\[\sigma_s=\frac{\ep}\de(\phi_\de-\phi),\]
where $\phi_\de$ is the electrical potential at the outer boundary of the layer. Plugging this
equality into (\ref{eq_surface_charge}) we obtain
\[\pa_\nu\phi+\frac1\de\phi=\frac1\de\phi_\de.\]
The parameter $\tau:=1/\de>0$ in (\ref{eq_P_bdy_}) hence refers to the local capacity of the boundary.
The right-hand side $\xi$ in (\ref{eq_P_bdy_}) depends also on an external electrical potential.

The system of equations (\ref{eq_NS_})-(\ref{eq_P_}),
(\ref{eq_NS_bdy_})-(\ref{eq_P_bdy_}) complemented by the initial
conditions
\begin{align}
u(0,x)&=u^0(x),\quad x\in\Omega,
\label{eq_NS_ic_}\\
c(0,x)&=c^0(x),\quad x\in\Omega,
\label{eq_NP_ic_}
\end{align}
represents the full model, where here and in the sequel 
$c=(c_1,\ldots,c_N)$ always denotes the full vector of
concentrations. 

%%%%%%%%%%%%%%%%%%%%%%%%%%%%%%%%%%%%%%%%%%%%%%%%%%%%%%%%%%%%%%%%%%%%%%%%%%%%%%%%%%%%%%%%%%%%%%%%%%%%%%%%%%%%%%%%%%%%%%%%%%%%%%%%%%%%%%%%%%
%%%%%%%%%%%%%%%%%%%%%%%%%%%%%%%%%%%%%%%%%%%%%%%%%%%%%%%%%%%%%%%%%%%%%%%%%%%%%%%%%%%%%%%%%%%%%%%%%%%%%%%%%%%%%%%%%%%%%%%%%%%%%%%%%%%%%%%%%%
\subsection{Main results}
\label{subsec_mainresult}
%%%%%%%%%%%%%%%%%%%%%%%%%%%%%%%%%%%%%%%%%%%%%%%%%%%%%%%%%%%%%%%%%%%%%%%%%%%%%%%%%%%%%%%%%%%%%%%%%%%%%%%%%%%%%%%%%%%%%%%%%%%%%%%%%%%%%%%%%%

Before introducing our main results let us fix some notation which will be frequently used below.

Let $n \in\N$ and $\Omega\subset\R^n$ be an open domain with boundary $\pa\Omega$ and $1\leq
p\leq\infty$. By $L^p(\Omega),L^p(\pa\Omega),W^{m,p}(\Omega)$ we denote the usual 
Lebesgue and Sobolev spaces. We do not distinguish between spaces of functions
and spaces of vector fields, i.e.\ we write also $L^p(\Omega)$ for $L^p(\Omega)^n$ for example. The notion $W^{m,p}_0(\Omega)$ describes the 
closure of smooth and compactly supported functions $C^\infty_0(\Omega)$
in $W^{m,p}(\Omega)$. For
$s>0$, $p\in(1,\infty)$ we write $W^s_p(\Omega)=(L_p(\Omega),W^{m,p}(\Omega))_{\theta,p}$ for the
Sobolev-Slobodeckii space, where $(\cdot,\cdot)_{\theta,p}$ is the real interpolation
functor with exponents $\theta,p$ and $s=\theta m$. It is worth mentioning that $W^m_2(\Omega)=W^{m,2}(\Omega)$ for $m\in\N$, see \cite{triebel}. Let
$C_{0,\sigma}^\infty(\Omega)$ denote the solenoidal functions with compact support in $\Omega$ and
\[L^p_\sigma(\Omega)=\overline{C_{0,\sigma}^\infty(\Omega)}^{\|\cdot\|_p}\]
denotes the space of solenoidal $L^p$-functions. Occasionally we write $W^{m,p}_{0,\sigma}(\Omega)$
instead of $W^{m,p}_0(\Omega)\cap L^p_\sigma(\Omega)$. The corresponding norms are denoted by
$\|\cdot\|_p,\|\cdot\|_{p,\pa\Omega},\|\cdot\|_{W^{m,p}}$, etc. For $J=(0,T)$, $T\in(0,\infty]$
and $X$ a Banach space we write $L^p(J;X), W^{m,p}(J;X),W^s_p(J;X)$ for the corresponding spaces of $X$-valued
functions. For $s\ne1/p$ let
\[\hspace{.01ex}_0W^s_p(J,X)=\overline{C^\infty_0((0,T],X)}^{\|\cdot\|_{W^s_p}}.\]
Note that $\hspace{.01ex}_0W^s_p(J;X)=W^s_p(J;X)$ for $s<1/p$ and that 
for $1/p<s<1+1/p$ this space coincides with all functions in $W^s_p(J,X)$
having vanishing time trace at zero.

Now let $\Omega\subset\R^n$, $n\geq2$ be a bounded smooth domain. Concerning the model derived in
Subsection~\ref{subsec_model} we set the parameters $\rho,\mu,F,RT$ all to 1 for technical simplicity;
all the results obtained in this work remain valid in the general case where $\rho,\mu,F,RT>0$, but constant.

%%%%%%%%%%%%%%%%%%%%%%%%%%%%%%%%%%%%%%%%%%%%%%%%%%%%%%%%%%%%%%%%%%%%%%%%%%%%%%%%%%%%%%%%%%%%%%%%%%%%%%%%%%%%%%%%%%%%%%%%%%%%%%%%%%%%%%%%%%
\begin{remark}
\label{rem_sohr}
From the functional analytic approach to the Navier-Stokes equations it is sufficient to solve
for the velocity field rather than both for the velocity and the pressure field, because the pressure can be
recovered once the velocity is known, see e.g. \cite{Sohr}. In this spirit, by
formal application of
the Helmholtz projection $P$ (see \cite{Sohr}) to (\ref{eq_NS_}), solving system
(\ref{eq_NS_}), (\ref{eq_NS_div_}), (\ref{eq_NS_bdy_}), (\ref{eq_NS_ic_}) is equivalent to solving the problem
\begin{align}
\label{eq_NS_evo_1}
\pa_tu+A_Su&=-P(u\cdot\nabla)u-P\left(\Sigma_jz_jc_j\nabla\phi\right),\quad t>0,\\
\label{eq_NS_evo_2}
u|_{t=0}&=u^0,
\end{align}
where $A_S=-P\Delta$ is the Stokes operator subject to homogeneous Dirichlet boundary conditions
with domain
\[\DD(A_S)=W^{2,p}(\Omega)\cap W^{1,p}_{0}(\Omega)\cap L^p_\sigma(\Omega),\quad p\in(1,\infty).\]
Once (\ref{eq_NS_evo_1}), (\ref{eq_NS_evo_2}) is solved, the pressure can be
recovered by
\begin{equation}\label{pressrec}
	\nabla \pi=(I-P)\left(\Delta u-(u\cdot\nabla)u
	         -\left(\Sigma_jz_jc_j\nabla\phi\right)\right).
\end{equation}
\end{remark}
%%%%%%%%%%%%%%%%%%%%%%%%%%%%%%%%%%%%%%%%%%%%%%%%%%%%%%%%%%%%%%%%%%%%%%%%%%%%%%%%%%%%%%%%%%%%%%%%%%%%%%%%%%%%%%%%%%%%%%%%%%%%%%%%%%%%%%%%%%

%%%%%%%%%%%%%%%%%%%%%%%%%%%%%%%%%%%%%%%%%%%%%%%%%%%%%%%%%%%%%%%%%%%%%%%%%%%%%%%%%%%%%%%%%%%%%%%%%%%%%%%%%%%%%%%%%%%%%%%%%%%%%%%%%%%%%%%%%%
\begin{remark}
\label{rem_triebel}
Consider problem (\ref{eq_P_}), (\ref{eq_P_bdy_}).

(a) For the Robin-Laplacian $-\Delta_R$ in $L^p(\Omega)$ with $\DD(-\Delta_R)=\{v\in
W^{2,p}(\Omega):\pa_\nu v+\tau v=0\}$, $\tau>0$ a constant, it is well-known that
$0\in\rho(-\Delta_R)$, therefore the operator $(-\Delta_R,\BB)v:=(-\Delta v,\pa_\nu v+\tau v)$, is a
bijection from $W^{m+2,p}(\Omega)$ to
$W^{m,p}(\Omega)\times W^{m+1-1/p}_p(\pa\Omega)$, $m\in\N$ and $1<p<\infty$, see e.g.
\cite[Chapter 5]{triebel}.

(b) We only consider time-independent functions
$\xi=\xi(x)$. This will prove important in various places throughout this work.

(c) Note that once $c$ is known, the potential $\phi$ is known as well.
\end{remark}
%%%%%%%%%%%%%%%%%%%%%%%%%%%%%%%%%%%%%%%%%%%%%%%%%%%%%%%%%%%%%%%%%%%%%%%%%%%%%%%%%%%%%%%%%%%%%%%%%%%%%%%%%%%%%%%%%%%%%%%%%%%%%%%%%%%%%%%%%%

From Remarks~\ref{rem_sohr} and \ref{rem_triebel} it suffices
to consider solutions $(u,c)$ rather than solutions $(u,c,\pi,\phi)$ 
of the following problem:
\begin{align}
\label{eq_NS}
\pa_tu+A_Su+P(u\cdot\nabla)u+P\left(\Sigma_jz_jc_j\nabla\phi\right)&=0,\quad t>0,\\
\label{eq_NP}
\partial_tc_i+\divv \left(c_iu-D_i\nabla c_i-D_iz_ic_i\nabla\phi\right)&=0,\quad t>0, \
x\in\Omega\\
\label{eq_P}
-\ep\Delta\phi&=\Sigma_jz_jc_j,\quad t>0, \ x\in\Omega\\
\label{eq_NP_bdy}
\pa_\nu c_i+z_ic_i\pa_\nu\phi&=0,\quad t>0, \ x\in\pa\Omega,\\
\label{eq_P_bdy}
\pa_\nu\phi+\tau\phi&=\xi,\quad t>0, \ x\in\pa\Omega\\
\label{eq_NS_ic}
u(0)&=u^0,\quad x\in\Omega\\
\label{eq_NP_ic}
c_i(0)&=c_i^0,\quad x\in\Omega,
\end{align}
for $i=1,\ldots ,N$. System (\ref{eq_NS})-(\ref{eq_NP_ic}) will be called problem ($P$).

We will frequently use the abbreviation
\begin{equation}
\label{eq_def_J}
J_i:=j_i+c_iu=-D_i\nabla c_i-D_iz_ic_i\nabla\phi+c_iu.
\end{equation}
With this notation and the no-slip condition for $u$, equations (\ref{eq_NP}) and (\ref{eq_NP_bdy}) can be written as
\begin{alignat*}{2}
\pa_tc_i+\divv J_i&=0,\quad\text{ in }(0,T)\times\Omega,&i=1,\ldots,N,\\
J_i\cdot\nu&=0,\quad\text{ on }(0,T)\times\pa\Omega,\quad&i=1,\ldots,N.
\end{alignat*}

For the local well-posedness in any dimension $n\geq2$ we apply maximal regularity. Let us define
the usual spaces of maximal $L^p$-regularity for $1<p<\infty$ by
\begin{align*}
\E_{T,p}^u&:=W^{1,p}(0,T;L^p_\sigma(\Omega))\cap L^p(0,T;\DD(A_S)),\\
\E_{T,p}^c&:=W^{1,p}(0,T;L^p(\Omega))\cap L^p(0,T;W^{2,p}(\Omega)),\\
\E_{T,p}&:=\E_{T,p}^u\times\E_{T,p}^c,\\
\X_p&:=\left(W^{2-2/p}_p(\Omega)\cap W^{1,p}_{0,\sigma}(\Omega)\right)\times W^{2-2/p}_p(\Omega).
\end{align*}
We also set
\begin{align*}
	\E_{T,p}^\pi&:= L^p(0,T;\widehat W^{1,p}(\Omega))\\
	\E_{T,p}^\phi&:=W^{1,p}(0,T;W^{2,p}(\Omega))\cap L^p(0,T;W^{4,p}(\Omega))
\end{align*}
for the spaces of the corresponding pressure and potential. Our main result
on local-in-time strong solvability now reads as
%%%%%%%%%%%%%%%%%%%%%%%%%%%%%%%%%%%%%%%%%%%%%%%%%%%%%%%%%%%%%%%%%%%%%%%%%%%%%%%%%%%%%%%%%%%%%%%%%%%%%%%%%%%%%%%%%%%%%%%%%%%%%%%%%%%%%%%%%%
\begin{theorem}
\label{theorem_strong_loc}
Let $\Omega\subset\R^n$, $n\geq2$, be a bounded and smooth 
domain and let $(n+2)/3<p<\infty$, $p\ne3$. Let
$\xi\in W^{3-1/p}_p(\pa\Omega)$ and $(u^0,c^0)\in \X_p$. In the case $p>3$ let $c^0$ and $\xi$
satisfy the compatibility condition
\begin{equation}
\label{eq_comp}
\pa_\nu c_i^0+z_ic_i^0\pa_\nu\phi^0|_{\pa\Omega}=0,
\end{equation}
where
\[-\ep\Delta\phi^0=\Sigma_jz_jc_j^0,\quad\pa_\nu\phi^0+\tau\phi^0=\xi.\]
Then there is $T>0$ such that there exists a unique solution
$(u,c)\in\E_{T,p}$ to problem ($P$). The latter is equivalent
to saying that
\[
	(u,c,\pi,\phi)\in \E_{T,p}\times\E_{T,p}^\pi\times\E_{T,p}^\phi 
\]
with $\pi$ given by (\ref{pressrec}) 
is the unique solution of system (\ref{eq_NS_})-(\ref{eq_P_bdy_})
and (\ref{eq_NS_ic_}), (\ref{eq_NP_ic_}).
If $c^0\geq0$ (componentwise) we have $c\geq0$ as long as the solution exists.
\end{theorem}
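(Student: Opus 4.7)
The plan is to close the problem by a contraction-mapping argument built on $L^p$-maximal regularity of the linear parts of ($P$), combined with the elliptic regularity of the Poisson--Robin problem from Remark~\ref{rem_triebel}. The Stokes operator $A_S$ on $L^p_\sigma(\Omega)$ with homogeneous Dirichlet condition enjoys maximal $L^p$-regularity on the bounded smooth domain $\Omega$, and so does the Laplacian with inhomogeneous Neumann boundary data; the Poisson problem \eqref{eq_P}, \eqref{eq_P_bdy} then returns $\phi\in\E_{T,p}^\phi$ as an affine map of $c\in\E_{T,p}^c$, with $\xi$ contributing only a time-independent part.

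First I would recast ($P$) as a fixed-point equation. Define $\Phi:(u,c)\mapsto(\tilde u,\tilde c)$, where $(\tilde u,\tilde c)\in\E_{T,p}$ solves the linear reference system
\begin{align*}
\pa_t\tilde u+A_S\tilde u &= -P\bigl[(u\cdot\nabla)u\bigr]-P\bigl[\textstyle\sum_jz_jc_j\nabla\phi\bigr],\quad \tilde u(0)=u^0,\\
\pa_t\tilde c_i-D_i\Delta\tilde c_i &= -u\cdot\nabla c_i+D_iz_i\bigl(\nabla c_i\cdot\nabla\phi+c_i\Delta\phi\bigr),\\
\pa_\nu\tilde c_i &= -z_ic_i\pa_\nu\phi,\qquad\tilde c_i(0)=c_i^0,
\end{align*}
with $\phi$ recovered from $c$ via the elliptic solution operator. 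The aim is to show that, for $T$ sufficiently small, $\Phi$ maps a closed ball in $\E_{T,p}$ around the pure linear solution $(u_*,c_*)$ (obtained by solving the system with zero right-hand side and the given initial data $u^0,c^0$) into itself and is a strict contraction on it; the resulting fixed point is the desired strong solution, and the equivalence statement and pressure formula \eqref{pressrec} follow from Remark~\ref{rem_sohr}. The required bilinear estimates rest on the mixed-norm embeddings $\E_{T,p}^u,\E_{T,p}^c\inj\mathrm{BUC}([0,T];W^{2-2/p}_p(\Omega))$, on the two-derivative spatial gain $\phi\in\mathrm{BUC}([0,T];W^{4-2/p}_p(\Omega))$ coming from elliptic regularity, and on H\"older in time to extract a small factor $T^\alpha$; the hypothesis $(n+2)/3<p$ is exactly what is needed to place all the quadratic interior nonlinearities into $L^p((0,T)\times\Omega)$.

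The hard part is the nonlinear Neumann contribution $-z_ic_i\pa_\nu\phi$, which plays the role of the inhomogeneous boundary datum for $\tilde c_i$. It must be controlled in the trace class
\[
W^{1/2-1/(2p)}_p(0,T;L^p(\pa\Omega))\cap L^p(0,T;W^{1-1/p}_p(\pa\Omega))
\]
of Neumann data admissible for $\E_{T,p}^c$. The excluded value $p=3$ and the compatibility assumption \eqref{eq_comp} arise precisely here: for $p>3$ the time trace at $t=0$ of this space is well-defined, and the fixed-point iteration forces $\pa_\nu c_i^0+z_ic_i^0\pa_\nu\phi^0=0$, whereas for $p<3$ no such trace exists and no compatibility has to be imposed. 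The boundary estimate itself is performed with pointwise multiplier estimates in $W^s_p(\pa\Omega)$ applied to the boundary traces of $c$ and $\pa_\nu\phi$ given by the elliptic theory, again yielding a small $T^\alpha$ gain that completes the contraction argument and gives the uniqueness assertion.

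Finally, for the nonnegativity of $c$, given $c_i^0\geq0$ I would test $\pa_tc_i+\divv J_i=0$ against $-c_i^-$ with $c_i^-:=\max(-c_i,0)$. Integration by parts using $J_i\cdot\nu=0$ on $\pa\Omega$ and $\divv u=0$, together with $\Delta\phi=-\ep^{-1}\sum_jz_jc_j$ from \eqref{eq_P}, yields after Young's inequality
\[
\tfrac12\Dt\|c_i^-\|_2^2+\tfrac{D_i}{2}\|\nabla c_i^-\|_2^2\le C\bigl(\|u\|_\infty^2+\|\nabla\phi\|_\infty^2\bigr)\|c_i^-\|_2^2,
\]
whose prefactor lies in $L^1(0,T)$ by the already-established space-time regularity of $(u,\phi)$. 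Since $c_i^-(0)\equiv0$, Gronwall forces $c_i^-\equiv0$ throughout the existence interval, i.e.\ $c\geq0$ componentwise.
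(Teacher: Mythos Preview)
Your proposal is correct and follows essentially the same contraction-mapping-via-maximal-regularity strategy as the paper, including the identification of the trace space $W^{1/2-1/(2p)}_p(0,T;L^p(\pa\Omega))\cap L^p(0,T;W^{1-1/p}_p(\pa\Omega))$ for the nonlinear Neumann datum as the delicate point and the correct explanation of why $p=3$ is excluded and why \eqref{eq_comp} is needed for $p>3$.

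Two technical points where the paper proceeds slightly differently are worth knowing. First, the paper splits $\phi=\phi_1+\phi_2$ with $\phi_1$ time-independent carrying the boundary datum $\xi$ and $\phi_2$ carrying the source; this cleanly separates the linear part $c\mapsto c\nabla\phi_1$ (absorbed into the linear problem) from the genuinely quadratic part $c\mapsto c\nabla\phi_2(c)$, and it makes the $T^\alpha$-smallness bookkeeping in the trace-space estimate (the paper's Lemma~\ref{lem_DN}) more transparent. Second, for nonnegativity the paper does \emph{not} Gronwall the cross term as you do; instead it solves an auxiliary problem $(P')$ in which $c_i$ is replaced by $c_i^+$ in the drift and in the boundary flux. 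Testing against $c_i^-$ then kills the electromigration term \emph{exactly} via $c_i^+\nabla c_i^-=0$, and uniqueness for $(P)$ identifies the two solutions. Your Gronwall route also works once you observe $\nabla\phi\in BUC([0,T];L^\infty)$ for $p>(n+2)/3$ (and note that the convective term $\int c_i^-\,u\!\cdot\!\nabla c_i^-$ vanishes by $\divv u=0$, so the factor $\|u\|_\infty^2$ in your displayed inequality is superfluous and should be dropped---this matters since $u\in L^\infty$ is not guaranteed in the full range of $p$).
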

%%%%%%%%%%%%%%%%%%%%%%%%%%%%%%%%%%%%%%%%%%%%%%%%%%%%%%%%%%%%%%%%%%%%%%%%%%%%%%%%%%%%%%%%%%%%%%%%%%%%%%%%%%%%%%%%%%%%%%%%%%%%%%%%%%%%%%%%%%
\begin{remark}
(a) The local-in-time solution $(u,c)$ remains unique, as long as the solution exists, see Remark~\ref{rem_unique}.

(b) Remark~\ref{rem_triebel} and the fact that $\pa_t$ and $\Delta$
commute cause the potential $\phi$
to be contained in the following regularity class:
\[\phi\in W^{1,p}(0,T;W^{2,p}(\Omega))\cap L^p(0,T;W^{4,p}(\Omega)),\]
if $\xi$ is smooth enough. This is accomplished by $\xi\in W^{3-1/p}_p(\pa\Omega)$ which motivates the 
assumption on $\xi$ in Theorem~\ref{theorem_strong_loc}. Note, however,
that this is done for simplicity;
this constraint can be weakened, while still obtaining the same results presented in this paper, see Remark~\ref{rem_phi_1}.

(c) We also remark that at this stage minimal boundary regularity
is none of our major objectives. By the higher regularity for $\phi$ we
work with, the approach presented in this note should at least 
work for domains $\Omega$ of class $C^4$. This is also more than enough
to guarantee maximal regularity for the Stokes equations, see
e.g.\ \cite{noll2003}. A closer inspection of what is really needed
for the potential $\phi$ in the proofs of our main Theorems 
(which is not done here) and again \cite{noll2003} will yield
that even $C^3$ shall be enough.

(d) The compatibility condition (\ref{eq_comp}) is required for $p>3$ only, because the trace in time
for $t=0$ of the boundary spaces is only well-defined in this case.

(e) The main difficulty in proving Theorem~\ref{theorem_strong_loc} lies in the fact that (\ref{eq_NP_bdy}) represents a nonlinear boundary condition which has to be treated in the according trace space
\[\F_{T,p}^{c,\pa\Omega}=W^{1/2-1/2p}_p(0,T;L^p(\pa\Omega))\cap L^p(0,T;W^{1-1/p}_p(\pa\Omega)).\]

(f) From the physical point of view it is important to note that the total mass of each species is
conserved due to the no-flux condition at the boundary; by nonnegativity this implies
$\|c_i(t)\|_1\equiv\|c_i^0\|_1$ as long as the solution exists.
\end{remark}
%%%%%%%%%%%%%%%%%%%%%%%%%%%%%%%%%%%%%%%%%%%%%%%%%%%%%%%%%%%%%%%%%%%%%%%%%%
Now, let $(u,c)$ denote the local strong solution for the case 
$p=2$ and let the total free energy function $E$ be defined by
\[E(u,c):=\frac12\|u\|_2^2+\sum_{i=1}^N\int_\Omega c_i\log
c_idx+\frac\ep2\|\nabla\phi\|_2^2+\frac{\ep\tau}2\|\phi\|_{2,\pa\Omega}^2,\]
for $(u,c)\in L^2_\sigma(\Omega)\times L^2(\Omega)$ with $c_i\geq0$. Here $\frac12\|u\|_2^2$ represents the kinetic
energy, $\int c_i\log
c_i$ refers to the chemical potential for species $i$, see e.g. \cite{Newman}, and the last two
terms correspond to electrical energy. It turns out that the function $V$, given by
\[V(t):=E(u(t),c(t)),\quad t\geq0,\]
is a Lyapunov functional for system ($P$) and its derivative $\Dt V$, the dissipation rate, is
given explicitly by
\begin{equation}\label{dissiprate}
\Dt V(t)=-\|\nabla u(t)\|_2^2-\sum_{i=1}^N\int_\Omega\frac{1}{D_ic_i(t)}|j_i(t)|^2dx\leq0,
\end{equation}
where $j_i=-D_i\nabla c_i-D_iz_ic_i\nabla\phi$ is the flux of species
$i$. For a suitable definition of the possibly degenerate integral 
we refer to Lemma~\ref{lem_lya_der}.

In the two-dimensional case this is the cornerstone for 
the following result on global well-posedness.

%%%%%%%%%%%%%%%%%%%%%%%%%%%%%%%%%%%%%%%%%%%%%%%%%%%%%%%%%%%%%%%%%%%%%%%%%%%%%%%%%%%%%%%%%%%%%%%%%%%%%%%%%
\begin{theorem}
\label{theorem_glob_ex}
Let $\Omega\subset\R^2$ be bounded and smooth and let $(u^0,c^0)\in W^{1,2}_{0,\sigma}(\Omega)\times(W^{1,2}(\Omega)\cap
L^\infty(\Omega))$ with $c_i^0\geq0$, $i=1,\ldots,N$. Then the local strong solution
$(u,c)\in\E_{T,2}$ to ($P$) (equivalently, 
$(u,c,\pi,\phi)\in \E_{T,2}\times\E_{T,2}^\pi\times\E_{T,2}^\phi$
to (\ref{eq_NS_})-(\ref{eq_P_bdy_}), (\ref{eq_NS_ic_}),
(\ref{eq_NP_ic_})) from Theorem~\ref{theorem_strong_loc}
extends to a unique global strong solution.
\end{theorem}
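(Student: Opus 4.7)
The plan is to argue by contradiction: let $(u,c)\in\E_{T,2}$ be the maximal local solution supplied by Theorem~\ref{theorem_strong_loc} with $p=2$, defined on some maximal interval $[0,T_{\max})$, and suppose $T_{\max}<\infty$. Because the local theory runs through $L^2$-maximal regularity, the standard continuation criterion says that the solution extends past any $t_0<T_{\max}$ provided $(u(t_0),c(t_0))$ stays bounded in the trace space $\X_2=(W^{1,2}(\Omega)\cap W^{1,2}_{0,\sigma}(\Omega))\times W^{1,2}(\Omega)$. The entire task is therefore to produce such an \emph{a priori} bound on $[0,T_{\max})$, whereupon the maximality of $T_{\max}$ is contradicted. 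Uniqueness of the global solution then follows from uniqueness of the local one on every compact subinterval.

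The first layer of estimates is the dissipation identity~(\ref{dissiprate}) for the Lyapunov functional $V(t)=E(u(t),c(t))$. Integrating in time yields
\[
u\in L^\infty(0,T_{\max};L^2_\sigma(\Omega))\cap L^2(0,T_{\max};W^{1,2}_{0,\sigma}(\Omega)),\qquad \nabla\phi\in L^\infty(0,T_{\max};L^2(\Omega)),
\]
together with a uniform bound on $\int_\Omega c_i\log c_i\,dx$. Testing (\ref{eq_NP}) against the constant~$1$, invoking the no-flux condition (\ref{eq_NP_bdy}), and using the nonnegativity clause from Theorem~\ref{theorem_strong_loc}, I obtain $c_i\geq 0$ and conservation of mass $\|c_i(t)\|_1=\|c_i^0\|_1$. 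These $L\log L$-plus-$L^1$ bounds, in tandem with the two-dimensional Trudinger--Moser/Gagliardo--Nirenberg inequalities, form the launching pad for a bootstrap.

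The core of the argument is a Moser-type iteration on $\|c_i\|_q$. Testing (\ref{eq_NP}) with $c_i^{q-1}$, $q\geq 2$, the convective contribution vanishes by $\divv u=0$ and $u|_{\pa\Omega}=0$; after a further integration by parts the electro-migration term is reshaped, via the Poisson equation (\ref{eq_P}) and the Robin boundary condition (\ref{eq_P_bdy}), into an expression proportional to $\int_\Omega c_i^q\sum_j z_j c_j\,dx$ plus harmless boundary contributions involving $\xi$. In two dimensions, Gagliardo--Nirenberg interpolation between $\|c_i\|_1$ (mass conservation) and $\|\nabla(c_i^{q/2})\|_2$ (the parabolic dissipation) absorbs this super-linear quantity into the left-hand side. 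Iterating over $q$ I expect to reach $c_i\in L^\infty(0,T_{\max};L^q(\Omega))$ for every finite $q$, whereupon elliptic regularity for (\ref{eq_P})--(\ref{eq_P_bdy}) upgrades $\phi$ to $L^\infty(0,T_{\max};W^{2,q}(\Omega))$, and in particular $\nabla\phi\in L^\infty((0,T_{\max})\times\Omega)$.

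With the Coulomb drift now bounded, an $L^2$-energy estimate for (\ref{eq_NS}) (testing against $-P\Delta u$) closes for $u\in L^\infty(0,T_{\max};W^{1,2})$, after which testing (\ref{eq_NP}) against $-\Delta c_i$, together with the now-controlled electro-migration term and the nonlinear boundary contribution in (\ref{eq_NP_bdy}), delivers $c_i\in L^\infty(0,T_{\max};W^{1,2})$. Thus $(u(t),c(t))$ remains bounded in $\X_2$ as $t\uparrow T_{\max}$, contradicting maximality. The step I anticipate to be the main obstacle is the self-consistent closure of the $L^q$-iteration: the drift $c_i\nabla\phi$ is tied back to $\sum_j z_j c_j$ via (\ref{eq_P}), so the iteration must simultaneously control the concentrations and the potential they themselves generate. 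The resulting super-linear expression is borderline critical and is only absorbable through the two-dimensional embedding $W^{1,2}\hookrightarrow L^q$ for every finite $q$, which is precisely the analytic reason why the global result is restricted to $n=2$.
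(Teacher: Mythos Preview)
Your overall strategy---Lyapunov functional, then Moser-type $L^q$ bounds on $c_i$, then $\nabla\phi\in L^\infty$, then higher regularity for $u$ and $c$---matches the paper's. The initial bootstrap to $c_i\in L^\infty(0,T_{\max};L^q)$ via testing with $c_i^{q-1}$ and rewriting the drift through the Poisson equation is a reasonable variant of what the paper does (the paper instead estimates $\int c_i\nabla\phi\cdot\nabla c_i$ directly via log-type interpolation lemmas to reach $L^2$, then climbs to $L^4$ and $L^\infty$); both routes close in two dimensions.

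The genuine gap is your final step, ``testing (\ref{eq_NP}) against $-\Delta c_i$''. Because the Neumann data in (\ref{eq_NP_bdy}) is \emph{inhomogeneous}, the identity
\[
\int_\Omega \partial_t c_i\,(-\Delta c_i)\,dx=\tfrac12\Dt\|\nabla c_i\|_2^2-\int_{\partial\Omega}(\partial_t c_i)(\partial_\nu c_i)\,dS
\]
produces a boundary integral $\int_{\partial\Omega} z_i c_i(\partial_\nu\phi)\,\partial_t c_i\,dS$ that you cannot simply call ``the nonlinear boundary contribution'' and move on: $\partial_t c_i$ lives only in $L^2(0,T;L^2(\Omega))$ and has no boundary trace. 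The paper identifies this explicitly as the obstruction and resolves it by passing to the transformed variable $\zeta_i:=c_i\exp(z_i\phi)$, which satisfies \emph{homogeneous} Neumann conditions (\ref{eq_trans_prob})--(\ref{eq_trans_prob_bdy}); testing the $\zeta_i$-equation with $-\Delta\zeta_i$ then works cleanly. This transformation, however, injects a new term $z_i\zeta_i\phi_t$ into the right-hand side, so one must first establish $\|\phi_t(t)\|_2\le C$ (done in the paper by a duality argument, Lemma~\ref{lem_phi_t_p}), which in turn relies on the $L^\infty$ bound on $c_i$ already obtained by the Moser iteration. Your sketch is missing this entire detour; without it the closure of $c_i\in L^\infty(0,T_{\max};W^{1,2})$ does not go through as written.
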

%%%%%%%%%%%%%%%%%%%%%%%%%%%%%%%%%%%%%%%%%%%%%%%%%%%%%%%%%%%%%%%%%%%%%%%%%%%%%%%%%%%%%%%%%%%%%%%%%%%%%%%%%
\begin{remark}
(a) Comparing the requirements on the initial data of Theorem~\ref{theorem_strong_loc} for $p=2$ and of
Theorem~\ref{theorem_glob_ex}, taking into account $W^1_2(\Omega)=W^{1,2}(\Omega)$ we note that there is an additional $L^\infty$-condition on $c^0$.
This is inserted for technical reasons. However, it is possible to ease
this constraint, as explained in Remark~\ref{rem_weak_sol_2}.

(b) We emphasize that for the global existence of solutions due to
Theorem~\ref{theorem_glob_ex} no smallness or compatibility conditions
on the data are required. 
We also would like to stress at this stage that concerning 3D not only
the Navier-Stokes but also the Nernst-Planck equation seems to be 
difficult to handle. So, even under the assumption of small data, 
to the authors at the present stage it is unclear how to extend
Theorem~\ref{theorem_glob_ex} to three space dimensions, 
see also Remark~\ref{rem_3d}.

(c) From the fact that $V$ is a Lyapunov functional we readily confirm that the quantities $\|u\|_2$ and
$\|c_i\log c_i\|_1$ are uniformly
bounded in time. Having this
information at hand it is possible to prove that the $L^2(\Omega)$-norm of the concentrations $c_i$
is
uniformly bounded in time, as it is done in \cite{CL_multidim}. Applying further energy estimates,
the norm of the strong solutions $(u,c)$ is shown to remain
finite for finite time, whence the solution can be extended up to any time $T$.

\end{remark}
%%%%%%%%%%%%%%%%%%%%%%%%%%%%%%%%%%%%%%%%%%%%%%%%%%%%%%%%%%%%%%%%%%%%%%%%%%%%%%
In view of the long-time behavior of system ($P$) we are interested in 
steady state solutions in two dimensions, i.e., in existence and 
uniqueness of solutions
$(u^\infty,c^\infty)$ to the stationary system
\begin{equation}
\label{eq_stat}
\tag{$P_s$}
\left.
\begin{array}{rlll}
A_Su+P((u\cdot\nabla) u)+P(\Sigma_jz_jc_j\nabla\phi)&=0,&\\
\divv (c_iu-D_i\nabla c_i-D_iz_ic_i\nabla\phi)&=0,& x\in\Omega,&i=1,\ldots ,N,\\
-\ep\Delta\phi&=\Sigma_jz_jc_j,&x\in\Omega,\\
\pa_\nu c_i+z_ic_i\pa_\nu\phi&=0,&x\in\pa\Omega,& i=1,\ldots ,N,\\
\pa_\nu\phi+\tau\phi&=\xi,&x\in\pa\Omega,
\end{array}\right\}
\end{equation}
subject to $\int_\Omega c_i^\infty(x)dx=m_i$ for given total masses $m_i>0$.

%%%%%%%%%%%%%%%%%%%%%%%%%%%%%%%%%%%%%%%%%%%%%%%%%%%%%%%%%%%%%%%%%%%%%%%%%%%%%%%%%%%%%%%%%%%%%%%%%%%%%%%%%
\begin{theorem}
\label{theorem_stat}
Let $\Omega\subset\R^2$ be bounded and smooth and let numbers $m_i>0$, $i=1,\ldots ,N,$ be given.

(a) Then there is a unique solution
\[(u^\infty,c^\infty)\in (W^{1,2}_{0,\sigma}(\Omega)\cap W^{2,2}(\Omega))\times W^{2,2}(\Omega)\]
to problem ($P_s$) subject to $c_i^\infty\geq0$, $\int_\Omega c_i^\infty dx=m_i$, $i=1,\ldots ,N$.
More precisely, we have $u^\infty=0$ and 
$c^\infty\in W^{2,2}(\Omega)$ is the unique nonnegative solution to
\begin{equation}
\label{eq_stat_u0}
\tag{$P'_s$}
\left.
\begin{array}{rll}
\divv(-D_i\nabla c_i-D_iz_ic_i\nabla\phi)&=0,& i=1,\ldots ,N,\\
(\pa_\nu c_i+z_ic_i\pa_\nu\phi)|_{\pa\Omega}&=0,& i=1,\ldots ,N,\\
-\ep\Delta\phi&=\Sigma_jz_jc_j,&\\
(\pa_\nu\phi+\tau\phi)|_{\pa\Omega}&=\xi,&
\end{array}\right\}
\end{equation}
subject to $\int_\Omega c_i^\infty dx=m_i$, $i=1,\ldots,N$.

The pressure $\pi^\infty$ in the equilibrium state is given by $\pi^\infty=\Sigma_jc_j^\infty$ up to a constant.

(b) Let $(u^0,c^0)\in W^{1,2}_{0,\sigma}(\Omega)\times (W^{1,2}(\Omega)\cap L^\infty(\Omega))$ with
$c_i^0\geq0$ and
$\int_\Omega c_i^0(x)dx=m_i$, $i=1,\ldots,N$. Let $c^\infty\in
W^{2,2}(\Omega)$ be the unique
nonnegative solution
to ($P'_s$) subject to $\int_\Omega c_i^\infty dx=m_i$, $i=1,\ldots,N$, and let $(u,c)$ be the unique global strong solution to ($P$) with initial data
$(u^0,c^0)$ from Theorem~\ref{theorem_glob_ex}, then
\begin{equation}
\label{eq_conv_steady}
\lim_{t\to\infty}(u(t),c(t))=(0,c^\infty)
\end{equation}
in $W^{1,2}_{0,\sigma}(\Omega)\times W^{1,2}(\Omega)$.
\end{theorem}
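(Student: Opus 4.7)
For Part (a), I would start by pinning down the form of any stationary solution. Since a steady state is trivially a time-independent strong solution, the Lyapunov identity (\ref{dissiprate}) yields
\begin{equation*}
0=\tfrac{d}{dt}V=-\|\nabla u^\infty\|_2^2-\sum_{i=1}^N\int_\Omega\frac{|j_i^\infty|^2}{D_ic_i^\infty}\,dx,
\end{equation*}
whence $u^\infty\equiv 0$ by no-slip and $j_i^\infty\equiv 0$ (using Lemma~\ref{lem_lya_der} for the possibly degenerate integral). From $j_i^\infty=-D_i(\nabla c_i^\infty+z_ic_i^\infty\nabla\phi^\infty)=0$ one obtains $\nabla(c_i^\infty e^{z_i\phi^\infty})=0$, so $c_i^\infty=K_ie^{-z_i\phi^\infty}$ with $K_i=m_i/\int_\Omega e^{-z_i\phi^\infty}dx>0$ (from the mass constraint and nonnegativity). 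In particular $c_i^\infty>0$, and the Poisson equation reduces to the nonlinear Poisson-Boltzmann problem
\begin{equation*}
-\ep\Delta\phi=\sum_{j=1}^N z_j\frac{m_je^{-z_j\phi}}{\int_\Omega e^{-z_j\phi}dx}\ \text{ in }\Omega,\qquad\pa_\nu\phi+\tau\phi=\xi\ \text{ on }\pa\Omega.
\end{equation*}

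This BVP is the Euler-Lagrange equation of the functional
\begin{equation*}
F(\phi):=\tfrac{\ep}{2}\|\nabla\phi\|_2^2+\tfrac{\ep\tau}{2}\|\phi\|_{2,\pa\Omega}^2-\ep\int_{\pa\Omega}\xi\phi\,dS+\sum_{i=1}^N m_i\log\int_\Omega e^{-z_i\phi}dx
\end{equation*}
on $W^{1,2}(\Omega)$. Since the log-partition $\phi\mapsto\log\int_\Omega e^{-z_i\phi}$ is convex (H\"older), $F$ is strictly convex and weakly lower semicontinuous; coercivity follows because the quadratic part is equivalent to $\|\cdot\|_{W^{1,2}}^2$ (Robin term plus gradient, with $\tau>0$), while Jensen yields $\log\int e^{-z_i\phi}\geq\log|\Omega|-z_i\overline\phi$, a linear term dominated by the quadratic one. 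The direct method delivers a unique minimizer $\phi^\infty\in W^{1,2}(\Omega)$, and by Moser-Trudinger in 2D the right-hand side lies in $L^p(\Omega)$ for every $p<\infty$, so a standard elliptic bootstrap combined with Remark~\ref{rem_triebel} promotes $\phi^\infty,c_i^\infty\in W^{2,2}(\Omega)$. Finally, inserting $u^\infty=0$ and $\nabla c_i^\infty=-z_ic_i^\infty\nabla\phi^\infty$ in the stationary momentum balance gives $\nabla\pi^\infty=-\sum_jz_jc_j^\infty\nabla\phi^\infty=\sum_j\nabla c_j^\infty$, so $\pi^\infty=\sum_jc_j^\infty$ up to an additive constant.

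For Part (b), I would use (\ref{dissiprate}) together with the pointwise estimate $c\log c\geq -1/e$ to see that $V$ is non-increasing and bounded below, so $V(t)\searrow V_\infty$ and
\begin{equation*}
\int_0^\infty\Bigl(\|\nabla u(t)\|_2^2+\sum_{i=1}^N\int_\Omega\tfrac{|j_i(t)|^2}{D_ic_i(t)}dx\Bigr)dt<\infty.
\end{equation*}
Global strong solvability (Theorem~\ref{theorem_glob_ex}) together with parabolic bootstrapping provide uniform-in-time bounds for $(u(t),c(t))$ in higher-regularity spaces (beyond $W^{1,2}$), so by Rellich the trajectory is relatively compact in $W^{1,2}_{0,\sigma}(\Omega)\times W^{1,2}(\Omega)$. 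For any $t_n\to\infty$ I would extract a subsequence along which the dissipation tends to zero (possible by the above integrability) and $(u(t_n),c(t_n))\to(\bar u,\bar c)$ strongly in $W^{1,2}$; the limit then satisfies $\nabla\bar u=0$, $\bar j_i=0$, the Poisson-Robin problem, and $\int_\Omega\bar c_i\,dx=m_i$ by mass conservation, hence solves ($P_s$) with the prescribed masses. Part (a) forces $(\bar u,\bar c)=(0,c^\infty)$, so every accumulation point coincides with $(0,c^\infty)$ and the whole trajectory converges, giving (\ref{eq_conv_steady}).

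The main obstacle I anticipate is the uniform-in-time higher-regularity required for the precompactness step: maximal regularity produces good control on finite intervals, but extending it globally in time calls for a careful combination of mass conservation, the a priori bound on $V$, the two-dimensional Gagliardo-Nirenberg inequality, and a repetition of the global existence estimates from Section~\ref{global} with constants independent of $t$. A secondary technical point is giving rigorous meaning to the dissipation integral near the zero set of $c_i$, but this is exactly what Lemma~\ref{lem_lya_der} is designed to handle.
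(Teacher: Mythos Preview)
Your approach to Part~(a) is correct but takes a genuinely different route from the paper. You prove existence and uniqueness of the steady state via the direct method of the calculus of variations applied to the Poisson--Boltzmann functional $F$; the paper instead obtains existence \emph{dynamically}, by taking any global strong solution from Theorem~\ref{theorem_glob_ex}, showing (via the uniform-in-time $W^{1,2}$ bounds of Section~\ref{global}) that the trajectory is relatively compact in $L^2$, and verifying that any accumulation point is a steady state. Uniqueness in the paper comes from a short algebraic identity (take two solutions, use that $\log\zeta_i^\infty$ is constant and that the masses coincide), not from strict convexity. Your variational route is cleaner and decouples existence of steady states from global well-posedness; the paper's route has the advantage that existence and the convergence in Part~(b) come out of a single argument.

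For Part~(b) there is a gap in your compactness step. The estimates of Section~\ref{global} give uniform-in-time bounds on $\|u(t)\|_{W^{1,2}}$ and $\|c(t)\|_{W^{1,2}}$ (Lemmas~\ref{lem_delta_u_l2} and~\ref{lem_delta_c_l2}), but the $W^{2,2}$ norms are only controlled in the integrated sense $\int_0^t\|\cdot\|_{W^{2,2}}^2\leq C(1+t)$, which does not yield pointwise-in-time $W^{2,2}$ bounds and hence does not give the precompactness in $W^{1,2}$ you claim. The paper circumvents this: it first obtains compactness only in $L^2$ (from the $W^{1,2}$ bounds), then invokes Lemma~\ref{lem_AS_nabla_yi}, which uses the uniform Gronwall inequality together with the differential inequalities (\ref{pr_lem_glob}) and (\ref{eq_nabla_zeta_4}) --- not merely the finiteness of the dissipation integral --- to show $\|\nabla u(t)\|_2\to 0$ and $\|\nabla\zeta_i(t)\|_2\to 0$ as $t\to\infty$ along \emph{all} $t$, not just a subsequence. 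From $\nabla\zeta_i\to 0$ and the already-established $L^2$ convergence of $c$ and $\phi$ one then reads off convergence of $\nabla c_i$ in $L^2$ via the identity $\exp(-z_i\phi)\nabla\zeta_i=\nabla c_i+z_ic_i\nabla\phi$. The obstacle you flagged is real, and this is precisely how the paper handles it.
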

%%%%%%%%%%%%%%%%%%%%%%%%%%%%%%%%%%%%%%%%%%%%%%%%%%%%%%%%%%%%%%%%%%%%%%%%%%%%%%%%%%%%%%%%%%%%%%%%%%%%%%%%%

It is worth to mention that there are no equilibrium 
states with non-trivial velocity, i.e.\ with Theorem
\ref{theorem_stat} the long-time dynamics of the mixture is determined by the
Nernst-Planck equation and the Poisson equation, problem (\ref{eq_stat_u0}), and the initial masses
$m_i=\int c_i^0(x)dx>0$.

The proof of Theorem~\ref{theorem_stat} can be outlined as follows.
For existence we employ the global strong solution from Theorem \ref{theorem_glob_ex}. Indeed,
let $0\leq c_i^0\in W^{1,2}(\Omega)\cap L^\infty(\Omega)$ with $\int_\Omega c_i^0(x)dx=m_i$,
$i=1,\ldots,N$, and $u^0\in W^{1,2}_{0,\sigma}(\Omega)$ be arbitrary and let $(u,c)$ be the
corresponding global strong solution from Theorem~\ref{theorem_glob_ex}.
The estimates derived in the proof of Theorem \ref{theorem_glob_ex} below imply
compactness of the semi-orbits $(u,c)$ in $L^2_\sigma(\Omega)\times L^2(\Omega)$, where any
accumulation point turns out to be a steady
state. From mass conservation we know that each constituent's mass is conserved, hence a
steady state for masses $m_i$ is found.

With the help of an alternative representation of the dissipation rate
(\ref{dissiprate}) it is possible to
obtain a characterization of steady states, which allows us to conclude the uniqueness of steady
states subject to prescribed masses $m_i>0$.

With this reasoning, assertion $(b)$ of Theorem \ref{theorem_stat}
essentially follows as a by-product of $(a)$: by compactness
of semi-orbits and the uniqueness of the accumulation point, the steady state, the system converges
to an equilibrium as time tends to infinity. This is first obtained 
with respect to the $L^2$-topology, but with
additional effort convergence with respect to $W^{1,2}$ can be proven.

Concerning the rate of convergence, we have the following result.

%%%%%%%%%%%%%%%%%%%%%%%%%%%%%%%%%%%%%%%%%%%%%%%%%%%%%%%%%%%%%%%%%%%%%%%%%%%%%%%%%%%%%%%%%%%%%%%%%%%%%%%%%
\begin{theorem}
\label{theorem_conv}
Let $\Omega\subset\R^2$ be bounded and smooth, let $(u^0,c^0)\in W^{1,2}_{0,\sigma}(\Omega)\times
(W^{1,2}(\Omega)\cap L^\infty(\Omega))$ with $c_i^0\geq0$ and
$\int_\Omega c_i^0dx=m_i$, $i=1,\ldots,N$, let $c^\infty\in
W^{2,2}(\Omega)$ be the unique nonnegative solution to (\ref{eq_stat_u0}) with $\int_\Omega c_i^\infty=m_i$,
$i=1,\ldots,N$, and let $(u,c)$ be the global strong solution to ($P$) from
Theorem~\ref{theorem_glob_ex} with initial data $(u^0,c^0)$.
Then there are constants $C,\omega>0$ such that
\[\|u(t)\|_2+\|c(t)-c^\infty\|_1\leq Ce^{-\omega t}.\]
\end{theorem}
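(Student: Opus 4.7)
The idea is to promote the Lyapunov functional $V$ to a \emph{relative} free energy that measures the distance to the equilibrium, show the same dissipation identity holds for it, and then close the loop with an entropy--dissipation (Poincar\'e/log--Sobolev--type) inequality that yields exponential decay by Gronwall. From Theorem~\ref{theorem_stat} we already have $(u(t),c(t))\to(0,c^\infty)$ in $W^{1,2}$, and the steady-state relations $\divv j_i^\infty=0$, $j_i^\infty\cdot\nu=0$ combined with $c_i^\infty\geq 0$, $\int c_i^\infty=m_i>0$ force $j_i^\infty=0$ and the Boltzmann-type representation $c_i^\infty=K_ie^{-z_i\phi^\infty}$, with $K_i>0$ and $\phi^\infty\in L^\infty(\Omega)$. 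Consequently $c_i^\infty$ is bounded away from $0$ and $\infty$, and by the $W^{1,2}$-convergence plus Sobolev embedding in $2$D we obtain a time $t_0\geq 0$ and constants $0<c_*\leq c^*<\infty$ with $c_*\leq c_i(t,x)\leq c^*$ for all $t\geq t_0$, $x\in\Omega$, $i=1,\ldots,N$.

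\textbf{Relative free energy and dissipation.} Define, for $t\geq 0$,
\[
W(t):=\tfrac12\|u(t)\|_2^2+\sum_{i=1}^N\int_\Omega\Bigl(c_i\log\frac{c_i}{c_i^\infty}-c_i+c_i^\infty\Bigr)\,dx+\tfrac\ep2\|\nabla(\phi-\phi^\infty)\|_2^2+\tfrac{\ep\tau}2\|\phi-\phi^\infty\|_{2,\pa\Omega}^2.
\]
By mass conservation $\int c_i(t)=\int c_i^\infty=m_i$, so the linear correction in the entropy integrand vanishes and $W(t)=V(t)-V(\infty)+{\rm const}$ plus a Poisson-linked identity; in particular $W\geq 0$ and $W=0$ iff $(u,c)=(0,c^\infty)$. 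Using $c_i^\infty=K_ie^{-z_i\phi^\infty}$, the species-flux rewrites as $j_i=-D_ic_i\nabla\mu_i^\ast$, where
\[
\mu_i^\ast:=\log\frac{c_i}{c_i^\infty}+z_i(\phi-\phi^\infty).
\]
A direct computation, using the Robin boundary condition for $\phi-\phi^\infty$ (note that $\xi$ cancels by time-independence, cf.\ Remark~\ref{rem_triebel}(b)), the no-flux condition $J_i\cdot\nu=0$, and $\divv u=0$, $u|_{\pa\Omega}=0$, yields the dissipation identity
\[
\frac{d}{dt}W(t)=-\|\nabla u(t)\|_2^2-\sum_{i=1}^N D_i\int_\Omega c_i(t)|\nabla\mu_i^\ast(t)|^2\,dx=:-\DD(t).
\]

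\textbf{Entropy--dissipation inequality (the main obstacle).} The crucial step is to prove that there is $C>0$ such that, for all $t\geq t_0$,
\begin{equation}\label{eed}
W(t)\leq C\,\DD(t).
\end{equation}
For the kinetic term, the Poincar\'e inequality on $W^{1,2}_{0,\sigma}(\Omega)$ gives $\|u\|_2^2\leq C_P\|\nabla u\|_2^2$. For the remaining contribution, using the uniform positivity $c_i\geq c_*$ for $t\geq t_0$, one has
$\int c_i|\nabla\mu_i^\ast|^2\geq c_*\int|\nabla\mu_i^\ast|^2$, so it suffices to control
$\sum_i\int(c_i\log\frac{c_i}{c_i^\infty}-c_i+c_i^\infty)\,dx+\ep\|\nabla(\phi-\phi^\infty)\|_2^2+\ep\tau\|\phi-\phi^\infty\|_{2,\pa\Omega}^2$
by $\sum_i\|\nabla\mu_i^\ast\|_2^2$. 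Writing $\mu_i^\ast=\log(c_i/c_i^\infty)+z_i(\phi-\phi^\infty)$, a Poincar\'e--Wirtinger estimate (using that the $c_i$'s satisfy $\int(c_i-c_i^\infty)=0$) combined with the linearity in the concentration perturbation --- as in the proof of the entropy--entropy production inequality in \cite{Glitzky} --- gives
\[
\sum_i\|\log(c_i/c_i^\infty)-\bar\alpha_i\|_{W^{1,2}}^2+\|\phi-\phi^\infty\|_{W^{1,2}}^2\leq C\sum_i\|\nabla\mu_i^\ast\|_2^2
\]
for suitable constants $\bar\alpha_i$; the boundary term $\|\phi-\phi^\infty\|_{2,\pa\Omega}^2$ is then absorbed by the trace inequality and the robust Poincar\'e estimate coming from the Robin term $\tau\phi$. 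Finally, the entropy is controlled by the squared $L^2$-norm of $c_i-c_i^\infty$ via the bounds $c_i,c_i^\infty\in[c_*,c^*]$ (so that $c_i\log(c_i/c_i^\infty)-c_i+c_i^\infty\leq C|c_i-c_i^\infty|^2$), while $\|c_i-c_i^\infty\|_2$ is controlled through the Poisson equation $-\ep\Delta(\phi-\phi^\infty)=\sum_j z_j(c_j-c_j^\infty)$ and the previous bound. This chain of inequalities yields \eqref{eed}, and this coercivity estimate, requiring the uniform pointwise bounds on $c_i$ and the coupling through Poisson, is the technical heart of the proof.

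\textbf{Conclusion.} From \eqref{eed} and the dissipation identity we get $\frac{d}{dt}W(t)\leq -C^{-1}W(t)$ for $t\geq t_0$, hence $W(t)\leq W(t_0)e^{-(t-t_0)/C}$. By the Csisz\'ar--Kullback--Pinsker inequality applied to each species,
\[
\|c_i(t)-c_i^\infty\|_1^2\leq 2\|c_i^\infty\|_1\int\Bigl(c_i\log\tfrac{c_i}{c_i^\infty}-c_i+c_i^\infty\Bigr)dx\leq CW(t),
\]
and $\|u(t)\|_2^2\leq 2W(t)$ directly. Combining,
\[
\|u(t)\|_2+\|c(t)-c^\infty\|_1\leq Ce^{-\omega t},\qquad t\geq t_0,
\]
with $\omega=(2C)^{-1}$. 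Absorbing the bounded interval $[0,t_0]$ into the constant (using the uniform bounds from Theorem~\ref{theorem_glob_ex}) gives the claim for all $t\geq 0$.
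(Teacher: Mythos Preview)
Your overall strategy---relative free energy, dissipation identity, entropy--dissipation inequality, Gronwall, then CKP---is the same as the paper's. The relative functional $W$ coincides with the paper's $\Psi$ (see \eqref{eq_def_PSI}), and your dissipation identity is \eqref{eq_lya_der_zeta} rewritten in terms of $\mu_i^\ast$. The final conversion to $\|u\|_2+\|c-c^\infty\|_1$ also matches the paper's short closing argument.

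However, there is a genuine gap at the step where you claim $c_i(t,x)\geq c_\ast>0$ for all $t\geq t_0$. You justify this by ``$W^{1,2}$-convergence plus Sobolev embedding in $2$D'', but in two dimensions $W^{1,2}(\Omega)$ does \emph{not} embed into $L^\infty(\Omega)$, so convergence $c(t)\to c^\infty$ in $W^{1,2}$ does not give uniform pointwise convergence and therefore cannot produce a uniform positive lower bound. This lower bound is load-bearing in your argument: without it, $\mu_i^\ast=\log(c_i/c_i^\infty)+z_i(\phi-\phi^\infty)$ need not be in $L^2$, the estimate $\int c_i|\nabla\mu_i^\ast|^2\geq c_\ast\int|\nabla\mu_i^\ast|^2$ fails, and your direct Poincar\'e--Wirtinger approach to the entropy--dissipation inequality breaks down. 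The paper never establishes (or uses) such a lower bound on $c_i$.

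The paper circumvents precisely this difficulty. Instead of $\mu_i^\ast$ it works with the bounded auxiliary variable $\psi_i=\sqrt{c_i/c_i^\infty}\,e^{z_i(\phi-\phi^\infty)/2}-1$, whose gradient satisfies $|\nabla\psi_i|^2\leq \tfrac{C D_i}{c_i}e^{-z_i\phi}|\nabla\zeta_i|^2$, so that the singular factor $1/c_i$ matches exactly the weight already present in the dissipation; no lower bound on $c_i$ is needed. The entropy--dissipation inequality $\Psi\leq C D$ is then proved not constructively but by an \emph{indirect} two-scale compactness/contradiction argument (Theorem~\ref{thm_asymp}), using only the uniform $L^\infty$ upper bound on $c$ and $\phi$ from Lemmas~\ref{lem_nabla_phi_infty} and~\ref{lem_ci_infty}. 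Your sketch of a direct proof also leaves unclear how you separate control of $\nabla\log(c_i/c_i^\infty)$ and $\nabla(\phi-\phi^\infty)$ from their sum $\nabla\mu_i^\ast$, and the step ``$\|c_i-c_i^\infty\|_2$ is controlled through the Poisson equation'' goes in the wrong direction (Poisson bounds $\phi-\phi^\infty$ by $\sum_j z_j(c_j-c_j^\infty)$, not conversely for individual species).
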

%%%%%%%%%%%%%%%%%%%%%%%%%%%%%%%%%%%%%%%%%%%%%%%%%%%%%%%%%%%%%%%%%%%%%%%%%%%%%%%%%%%%%%%%%%%%%%%%%%%%%%%%%

In order to derive exponential convergence results, 
the so-called entropy method has been
widely used, c.f.\ \cite{DF06}, \cite{Glitzky} and references therein. The idea is the following:
Consider a Lyapunov functional $\Psi(\psi(t))$ with
\[\Dt \Psi(\psi(t))=-D(\psi(t)),\]
where $\psi$ represents the state of the (physical) system, the dissipation rate $D$ is nonnegative and
$\Psi(\psi)=0$ if and only if $\psi$ is an equilibrium. Suppose that $\Psi(\psi(t))\leq CD(\psi(t))$
holds for some constant $C>0$. Then Gronwall's inequality implies
$\Psi(\psi(t))\leq \Psi(0)\exp(-t/C)$, i.e.\ the value of the Lyapunov functional decreases with
exponential speed. In mathematical works it seems quite common to refer
to $\Psi$ as the entropy. Note,
however, that this notion can be misleading, since the entropy is a physical
quantity in thermodynamics which is a priori not connected with a
generic Lyapunov functional.

In (\ref{eq_def_PSI}) we define
\[\Psi(t):=E(u(t),c(t))-E(0,c^\infty).\]
From the physical point of view $\Psi(t)$ can be interpreted as the difference of the total free energy at
time $t$ and the total free energy in the equilibrium state. The main task for 
proving Theorem~\ref{theorem_conv} lies in the proof of the exponential
decay of $\Psi$. To this end, note
that $\Dt\Psi=\Dt V$, so there is an explicit representation for the derivative. It is therefore
sufficient to show $\Psi(t)\leq C\Dt\Psi(t)$ for some constant $C>0$. Performing a similar
approach as in \cite{Glitzky} this can be shown by an indirect (somewhat
technical) reasoning, see Theorem~\ref{thm_asymp}. 
The assertion of Theorem~\ref{theorem_conv} then follows by
a straight forward calculation.

%%%%%%%%%%%%%%%%%%%%%%%%%%%%%%%%%%%%%%%%%%%%%%%%%%%%%%%%%%%%%%%%%%%%%%%%%%%%%%%%%%%%%%%%%%%%%%%%%%%%%%%%%%%%%%%%%%%%%%%%%%%%%%%%%%%%%%%%%%
%%%%%%%%%%%%%%%%%%%%%%%%%%%%%%%%%%%%%%%%%%%%%%%%%%%%%%%%%%%%%%%%%%%%%%%%%%%%%%%%%%%%%%%%%%%%%%%%%%%%%%%%%%%%%%%%%%%%%%%%%%%%%%%%%%%%%%%%%%
\section{Local in time well-posedness}
\label{local}
%%%%%%%%%%%%%%%%%%%%%%%%%%%%%%%%%%%%%%%%%%%%%%%%%%%%%%%%%%%%%%%%%%%%%%%%%%%%%%%%%%%%%%%%%%%%%%%%%%%%%%%%%%%%%%%%%%%%%%%%%%%%%%%%%%%%%%%%%%
Let $\Omega\subset\R^n$ be bounded with smooth boundary, $n\geq2$. By means of maximal
$L^p$-regularity we will show local well-posedness simultaneously for system
(\ref{eq_NS})-(\ref{eq_NP_ic}) and
for system (\ref{eq_NS})-(\ref{eq_NP_ic}) with (\ref{eq_NP}) and (\ref{eq_NP_bdy}) replaced by
\begin{align}
\label{eq_NP+}
\tag{\ref{eq_NP}'}
\partial_tc_i+\divv \left(-D_i\nabla c_i-D_iz_ic_i^+\nabla\phi+c_iu\right)&=0,\quad t>0, \
x\in\Omega\\
\label{eq_NP_bdy+}
\tag{\ref{eq_NP_bdy}'}
\pa_\nu c_i+z_ic_i^+\pa_\nu\phi&=0,\quad t>0, \ x\in\pa\Omega,
\end{align}
where $f^+(x)=\max\{0,f(x)\}$ for $f:\R^n\to\R$.

This auxiliary problem will be denoted by ($P'$). Its well-posedness will give us the possibility to
derive nonnegativity of concentrations $c_i$ in an easy way without applying the strong maximum
principle. In the case of classical solutions $(u,c)$ the maximum principle implies that
concentrations $c_i$ are nonnegative, see e.g. \cite{CL_multidim}. The reason why we hesitate to
follow this line lies in the fact that, in general, strong solutions $(u,c)$, see
Definiton~\ref{def_strong_sol}, are not classical.
Nevertheless it is possible to show that $(u,c)$ is classical via maximal regularity and
bootstrapping. Since due to the strong coupling of the differential equations of ($P$) this is rather technical, we
believe that our approach to nonnegativity, inspired by \cite{gaj85}, is appropriate.

%%%%%%%%%%%%%%%%%%%%%%%%%%%%%%%%%%%%%%%%%%%%%%%%%%%%%%%%%%%%%%%%%%%%%%%%%%%%%%%%%%%%%%%%%%%%%%%%%%%%%%%%%%%%%%%%%%%%%%%%%%%%%%%%%%%%%%%%%%
\begin{definition}[Strong solution]
\label{def_strong_sol}
Let $\Omega\subset\R^n$, $n\geq2$, be bounded and smooth, $T>0$ and $1<p<\infty$. The function
\[(u,c)\in W^{1,p}(0,T;L^p_\sigma(\Omega)\times L^p(\Omega))\cap L^p(0,T;\DD(A_S)\times
W^{2,p}(\Omega))\]
is called \textit{strong solution} to ($P$) resp.\ ($P'$) if $(u,c)$ satisfies ($P$) resp.\ ($P'$) almost
everywhere.
\end{definition}
%%%%%%%%%%%%%%%%%%%%%%%%%%%%%%%%%%%%%%%%%%%%%%%%%%%%%%%%%%%%%%%%%%%%%%%%%%%%%%%%%%%%%%%%%%%%%%%%%%%%%%%%%%%%%%%%%%%%%%%%%%%%%%%%%%%%%%%%%%

\begin{remark}
\label{rem_loc_nonneg}
In order to show local well-posedness for ($P$) it is sufficient to show that ($P'$) is well-posed.
The reason lies in the fact that the (nonlinear) terms in ($P'$) have less regularity as compared to
those in ($P$). To be precise, for $c\in W^{1,p}(0,T; L^p(\Omega))\cap L^p(0,T;W^{2,p}(\Omega))$
we only know that $c^+\in W^{1,p}(0,T; L^p(\Omega))\cap
L^p(0,T;W^{1,p}(\Omega))$, cf.\ \cite{GilTrud}. Hence, once we have proved local well-posedness for ($P'$), the same line of arguments yields local well-posedness for the original problem ($P$).
\end{remark}

Following this approach we first have to examine the associated linear problem and prove maximal regularity for
the corresponding solution operator. In the next step we write the full problem as a fixed-point
equation and apply the contraction mapping theorem to deduce a unique fixed point, the
local-in-time strong solution.

In order to sort the terms in ($P'$) into linear and nonlinear terms, we split the potential $\phi$
into $\phi_1$ and $\phi_2$ satisfying
\begin{alignat}{4}
\label{eq_phi_1}
-\ep\Delta\phi_1=&0&&\quad\text{ in }\Omega,&\qquad\pa_\nu\phi_1+\tau\phi_1=&\xi&&\quad\text{on
}\pa\Omega,\\
\label{eq_phi_2}
-\ep\Delta\phi_2=&\Sigma_jz_jc_j&&\quad\text{ in
}\Omega,&\qquad\pa_\nu\phi_2+\tau\phi_2=&0&&\quad\text{on }\pa\Omega.
\end{alignat}

In view of Remark~\ref{rem_triebel} the functions $\phi_1$ and $\phi_2$
are well-defined, if the corresponding right-hand sides 
in (\ref{eq_phi_1}) and (\ref{eq_phi_2}) belong to suitable regularity
classes. This yields
the following nonlinear problem,
\begin{align}
\label{eq_nlin_1}
\partial_tu+A_Su+P\left[ \Sigma_jz_jc_j\nabla\phi_1\right]&=F(u,c),&\\
\label{eq_nlin_2}
\partial_tc_i-D_i\Delta c_i&=G_i(u,c),&\quad i=1,\ldots ,N,\\
\label{eq_nlin_3}
\pa_\nu c_i&=H_i(c),&\quad i=1,\ldots ,N.\\
\label{eq_nlin_4}
u|_{t=0}&=u^0,&\\
\label{eq_nlin_5}
c_i|_{t=0}&=c_i^0,&\quad i=1,\ldots ,N,
\end{align}
where the nonlinear terms are defined by
\begin{align}
\label{eq_F}
F(u,c)&=-P\left((u\cdot\nabla)u\right)-P\left( \Sigma_jz_jc_j\nabla\phi_2(c)\right),\\
\label{eq_G}
G_i(u,c)&=D_iz_i\left[\nabla c_i^+\cdot(\nabla\phi_1+\nabla\phi_2(c))-c_i^+\Delta\phi_2(c)\right]-\nabla
c_i\cdot u,\\
	&\hspace{5.35cm} i=1,\ldots ,N,\\
\label{eq_H}
H_i(c)&=-z_i\left[c_i^+\pa_\nu(\phi_1+\phi_2(c))\right],\qquad i=1,\ldots ,N.
\end{align}

The linear problem associated to (\ref{eq_nlin_1})-(\ref{eq_nlin_5}) reads as 
\begin{alignat}{2}
\label{eq_lin_1}
\partial_tu+A_Su+ P\left[ \Sigma_jz_jc_j\nabla\phi_1\right]&=f,&\\
\label{eq_lin_2}
\partial_tc_i-D_i\Delta c_i&=g_i,&\quad i=1,\ldots ,N,\\
\label{eq_lin_3}
\pa_\nu c_i&=h_i,&\quad i=1,\ldots ,N,\\
\label{eq_lin_4}
u|_{t=0}&=u^0,&\\
\label{eq_lin_5}
c_i|_{t=0}&=c_i^0,&\quad i=1,\ldots ,N,
\end{alignat}
with right hand sides in suitable function spaces.

Strong $L^p$-solvability of this linear problem is characterized 
in the following lemma.

%%%%%%%%%%%%%%%%%%%%%%%%%%%%%%%%%%%%%%%%%%%%%%%%%%%%%%%%%%%%%%%%%%%%%%%%%%%%%%%%%%%%%%%%%%%%%%%%%%%%%%%%%%%%%%%%%%%%%%%%%%%%%%%%%%%%%%%%%%
\begin{lemma}
\label{lemma_linear}
Let $\Omega\subset\R^n$, $n\geq2$, be bounded and smooth, $(n+2)/3<p<\infty$, $0<T<\infty$ and
$\xi\in W^{3-1/p}_p(\pa\Omega)$. Then there is a unique solution
\begin{align*}
u&\in W^{1,p}(0,T;L^p_\sigma(\Omega))\cap L^p(0,T;\DD(A_S))=:\E_{T,p}^u,\\
c&\in W^{1,p}(0,T;L^p(\Omega))\cap L^p(0,T;W^{2,p}(\Omega))=:\E_{T,p}^c,
\end{align*}
to problem (\ref{eq_lin_1})-(\ref{eq_lin_5}) iff the data is contained in the following regularity
classes
\begin{align}
\label{eq_data_1}
f&\in L^p(0,T;L^p_\sigma(\Omega))=:\F_{T,p}^u,\\
\label{eq_data_2}
g&\in L^p(0,T;L^p(\Omega))=:\F_{T,p}^c,\\
\label{eq_data_3}
h&\in W^{1/2-1/2p}_p(0,T;L^p(\partial\Omega))\cap
L^p(0,T;W^{1-1/p}_p(\pa\Omega))=:\F_{T,p}^{c,\pa\Omega},\\
\label{eq_data_4}
u^0&\in W^{2-2/p}_p(\Omega)\cap W^{1,p}_{0,\sigma}(\Omega)=:\X_p^u,\\
\label{eq_data_5}
c^0&\in W^{2-2/p}_p(\Omega)=:\X_p^c
\end{align}
and the initial data satisfies the following compatibility condition
\begin{equation}
\label{eq_data_6}
\pa_\nu c_i^0=\ga_0h_i,\quad i=1,\ldots ,N,\ \text{in the case }p>3,
\end{equation}
 where $\ga_0$ is the trace operator in time for $t=0$.
\end{lemma}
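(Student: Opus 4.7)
The plan is to exploit the one-way coupling of the linear system: the $c$-equations do not involve $u$, so one may solve them first and then treat the Stokes equation for $u$ with a prescribed forcing. A preliminary observation simplifies matters. Since $\xi\in W^{3-1/p}_p(\pa\Omega)$ is independent of $t$, the function $\phi_1$ defined by (\ref{eq_phi_1}) is time-independent, and by Remark~\ref{rem_triebel}(a) the Robin Laplacian $(-\De_R,\BB)$ realizes an isomorphism that yields $\phi_1\in W^{4,p}(\Omega)$; the Sobolev embedding $W^{3,p}(\Omega)\hookrightarrow L^\infty(\Omega)$, valid since $p>(n+2)/3>n/3$, then gives $\nabla\phi_1\in L^\infty(\Omega)$.

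First I would solve the $N$ decoupled inhomogeneous Neumann heat problems (\ref{eq_lin_2}), (\ref{eq_lin_3}), (\ref{eq_lin_5}) for $c_i$, each of the form $\pa_tc_i-D_i\De c_i=g_i$, $\pa_\nu c_i=h_i$, $c_i|_{t=0}=c_i^0$. This is a classical instance of maximal $L^p$-regularity for the Neumann Laplacian on a bounded smooth domain; the trace space of $\E_{T,p}^c$ under the spatial Neumann trace is precisely $\F_{T,p}^{c,\pa\Omega}$, and the time-trace at $t=0$ of $\E_{T,p}^c$ is $W^{2-2/p}_p(\Omega)$. For $p>3$ the boundary datum $h_i$ has a temporal trace at $t=0$ in $W^{1-3/p}_p(\pa\Omega)$, which must coincide with $\pa_\nu c_i^0$; this is exactly the compatibility (\ref{eq_data_6}). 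For $p<3$ no such trace exists and no compatibility is required. Invoking e.g.\ the Denk--Hieber--Pr\"uss/Pr\"uss--Simonett framework (or Amann's theory) then yields a unique $c\in\E_{T,p}^c$ with the corresponding estimate.

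Next I would insert this $c$ into the coupling term in (\ref{eq_lin_1}). Since $c\in L^p(0,T;W^{2,p}(\Omega))\hookrightarrow L^p(0,T;L^p(\Omega))$ and $\nabla\phi_1\in L^\infty(\Omega)$, we obtain $\Sigma_j z_jc_j\nabla\phi_1\in L^p(0,T;L^p(\Omega))$; boundedness of the Helmholtz projection $P$ on $L^p(\Omega)$ places $P[\Sigma_jz_jc_j\nabla\phi_1]$ in $\F_{T,p}^u$. The remaining problem $\pa_tu+A_Su=f-P[\Sigma_jz_jc_j\nabla\phi_1]$, $u|_{t=0}=u^0$, with right-hand side in $\F_{T,p}^u$ and initial datum in $\X_p^u$ (whose Dirichlet-$L^p_\sigma$ condition is the time-trace space of $\E_{T,p}^u$), is then solved uniquely by maximal $L^p$-regularity of the Dirichlet Stokes operator on bounded smooth domains, as discussed in \cite{noll2003}. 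Combining the two estimates gives the sufficiency.

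The necessity direction is a consequence of the standard trace/embedding results: the time-trace at $t=0$ maps $\E_{T,p}^c$ onto $W^{2-2/p}_p(\Omega)$ and $\E_{T,p}^u$ onto $\X_p^u$, while the Neumann trace maps $\E_{T,p}^c$ continuously onto $\F_{T,p}^{c,\pa\Omega}$; similarly $A_Su,\pa_tu\in\F_{T,p}^u$ and the coupling term is in $\F_{T,p}^u$ by the above, so $f\in\F_{T,p}^u$, and analogously $g_i\in\F_{T,p}^c$. The main obstacle I foresee is not a single deep step but the careful bookkeeping of trace spaces, in particular the sharp identification of $\F_{T,p}^{c,\pa\Omega}$ as the Neumann trace space of $\E_{T,p}^c$ and the delicate compatibility (\ref{eq_data_6}) at $p=3$; once these trace characterizations are in place, the lemma reduces to assembling the two classical maximal regularity results for the Neumann Laplacian and the Stokes operator via the triangular structure of the system.
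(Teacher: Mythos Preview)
Your proposal is correct and follows essentially the same approach as the paper: you exploit the triangular structure to first solve the Neumann heat problems for $c$ via maximal $L^p$-regularity (the paper cites \cite{DHP}), then use $\phi_1\in W^{4,p}(\Omega)$ from Remark~\ref{rem_triebel} together with the Sobolev embedding $W^{3,p}(\Omega)\hookrightarrow BUC(\overline\Omega)$ to show the coupling term lies in $L^p(0,T;L^p_\sigma(\Omega))$, and finally invoke maximal regularity for the Stokes operator. Your write-up is in fact more detailed than the paper's own proof, which dispatches the necessity direction and the compatibility discussion in a single sentence.
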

%%%%%%%%%%%%%%%%%%%%%%%%%%%%%%%%%%%%%%%%%%%%%%%%%%%%%%%%%%%%%%%%%%%%%%%%%%%%%%%%%%%%%%%%%%%%%%%%%%%%%%%%%%%%%%%%%%%%%%%%%%%%%%%%%%%%%%%%%%

\begin{proof}
The existence of a unique solution $c\in\E_{T,p}^c$ to the parabolic 
problem (\ref{eq_lin_2}), (\ref{eq_lin_3})
and (\ref{eq_lin_5}) iff the data fulfills (\ref{eq_data_2}), (\ref{eq_data_3}), (\ref{eq_data_5})
and (\ref{eq_data_6}) is well-known and contained e.g.\ in \cite{DHP}. From $\xi\in
W^{3-1/p}_p(\pa\Omega)$ we obtain, cf.\ Remark~\ref{rem_triebel},
$\phi_1\in W^{4,p}(\Omega)$, hence $\nabla\phi_1\in W^{3,p}(\Omega)$. 
Note that $\phi_1$ is time-independent. For equation
(\ref{eq_lin_1}) we compute
\begin{equation}
\label{eq_lin_strong}
\|c_i\nabla\phi_1\|_{L^p(L^p)}\leq\|c_i\|_{L^p(L^p)}\|\nabla\phi_1\|_{BUC(\overline\Omega)},
\end{equation}
due to $W^{3,p}(\Omega)\inj BUC(\overline\Omega)$ from Sobolev's embedding theorem, \cite{Adams}. So $\tilde
f:=f-P( \Sigma_iz_ic_i\nabla\phi_1)\in L^p(0,T;L^p_\sigma(\Omega))$. By maximal regularity for the
Stokes operator, see e.g. \cite{sol77, giga85, noll2003} 
the remaining claims follow.
\end{proof}

Let $\E_{T,p}:=\E_{T,p}^u\times\E_{T,p}^c$,
$\F_{T,p}:=\F_{T,p}^u\times\F_{T,p}^c\times\F_{T,p}^{c,\pa\Omega}$ and $\X_p:=\X_p^u\times \X_p^c$.
From Lemma~\ref{lemma_linear}
for every given $T>0$ there is a continuous solution operator
\begin{equation}
\label{eq_sol_op}
L\colon\{(f,g,h,u^0,c^0)\in\F_{T,p}\times \X_p:\text{(\ref{eq_data_6}) is valid}\}\to\E_{T,p},
\end{equation}
for problem (\ref{eq_lin_1})-(\ref{eq_lin_5}). Observe that for $h\in\F_{T,p}^{c,\pa\Omega}$ the
trace at $t=0$ is well-defined if $p>3$. For $p\ne3$ we set
\[
	\hspace{.1ex}_0\F_{T,p}^{c,\pa\Omega}
	:=\left({\hspace{.1ex}_0W}^{1/2-1/2p}_p(0,T;L^p(\pa\Omega))
	\cap L^p(0,T;W^{1-1/p}_p(\pa\Omega))\right)
\]
and denote by $\hspace{.1ex}_0\F_{T,p}$ the space
\[
	\hspace{.1ex}_0\F_{T,p}
	=\F_{T,p}^u\times\F_{T,p}^c
        \times \hspace{.1ex}_0\F_{T,p}^{c,\pa\Omega}.
\]

%%%%%%%%%%%%%%%%%%%%%%%%%%%%%%%%%%%%%%%%%%%%%%%%%%%%%%%%%%%%%%%%%%%%%%%%%%%%%%%%%%%%%%%%%%%%%%%%%%%%%%%%%%%%%%%%%%%%%%%%%%%%%%%%%%%%%%%%%%
\begin{lemma}
Let $n\geq2$, $(n+2)/3<p<\infty$, $p\ne3$ and $0< T'\leq T<\infty$. Then there is $C>0$ independent of $T'\leq
T$such that
\[\|L_{|\hspace{.1ex}_0\F_{T',p}\times\{0\}^2}\|_{\LL(\F_{T',p}\times\{0\}^2,\E_{T',p})}\leq C,\]
where $L$ is the solution operator from (\ref{eq_sol_op}).
\end{lemma}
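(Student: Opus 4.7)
The plan is to use an extension-by-zero argument on the time interval, which is the standard device for obtaining a solution operator bound independent of the length of the interval, provided the data vanishes at $t=0$ in the appropriate sense.

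First I would define the extension operator $E_{T',T}\colon\hspace{.1ex}_0\F_{T',p}\to\hspace{.1ex}_0\F_{T,p}$ by extending each component by zero on $(T',T)$. For $f\in L^p(0,T';L^p_\sigma(\Omega))$ and $g\in L^p(0,T';L^p(\Omega))$ this is immediate with norm $\le 1$. The delicate component is $h\in\hspace{.1ex}_0\F_{T',p}^{c,\pa\Omega}$, where the time regularity is $W^{1/2-1/2p}_p$. Here the key point is that, since $p\ne 3$, the exponent $s=1/2-1/(2p)$ satisfies $s\ne 1/p$. In the range $s<1/p$ we have ${\hspace{.1ex}_0W}^{s}_p=W^{s}_p$ and extension by zero is trivially contractive; in the range $1/p<s<1+1/p$, elements of $\hspace{.01ex}_0W^s_p$ vanish at the time trace $t=0$ and (via reflection/Hardy-type arguments) zero extension past $T'$ stays in $\hspace{.01ex}_0W^s_p(0,T)$ with an estimate uniform in $T'$. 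In either case there is a constant $c_0>0$ (independent of $T'\le T$) such that
\[
\|E_{T',T}h\|_{\hspace{.01ex}_0\F_{T,p}^{c,\pa\Omega}}\le c_0\|h\|_{\hspace{.01ex}_0\F_{T',p}^{c,\pa\Omega}}.
\]

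Next I would combine this with the global solution operator. Given $(f,g,h,0,0)\in\hspace{.1ex}_0\F_{T',p}\times\{0\}^2$, the extended data $(E_{T',T}f,E_{T',T}g,E_{T',T}h,0,0)$ lies in $\hspace{.1ex}_0\F_{T,p}\times\{0\}^2$ and satisfies the compatibility condition (\ref{eq_data_6}) trivially (both sides are $0$ when $p>3$, since the zero time trace of $h$ is preserved under the zero extension). Applying Lemma~\ref{lemma_linear} on the fixed interval $(0,T)$ yields a solution with
\[
\|L(E_{T',T}f,E_{T',T}g,E_{T',T}h,0,0)\|_{\E_{T,p}}\le C_T\bigl(\|f\|+\|g\|+c_0\|h\|\bigr),
\]
where $C_T$ is the operator norm on $[0,T]$.

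Finally, by the uniqueness assertion of Lemma~\ref{lemma_linear}, the restriction of this solution to $(0,T')$ coincides with $L(f,g,h,0,0)$ on $(0,T')$. Since the restriction map $\E_{T,p}\to\E_{T',p}$ has norm $\le 1$, setting $C:=C_T\max\{1,c_0\}$ gives the desired bound uniformly in $T'\in(0,T]$. The main obstacle is the justification that extension by zero is bounded on ${\hspace{.01ex}_0W}^{1/2-1/2p}_p$ with a constant independent of $T'$; this is precisely where the hypothesis $p\ne 3$ enters, ruling out the critical value $s=1/p$ where the zero-extension loses continuity on these Besov/Sobolev-Slobodeckii scales.
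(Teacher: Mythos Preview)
Your overall strategy---extend the data to the fixed interval $(0,T)$, solve there, and restrict back---is exactly the paper's approach. The difference is in the extension: the paper invokes a bounded extension operator $\mathcal E_{T'}^0\colon\hspace{.01ex}_0W^s_p(0,T';X)\to\hspace{.01ex}_0W^s_p(0,T;X)$ with norm independent of $T'$ (from \cite[Lemma~2.5]{Meyries}, valid for all $s\in[0,2]$), whereas you propose extension by zero on $(T',T)$.

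There is a genuine gap in your version. For $p>3$ the time exponent $s=\tfrac12-\tfrac1{2p}$ satisfies $s>1/p$, so functions in $\hspace{.01ex}_0W^s_p(0,T';L^p(\partial\Omega))$ have well-defined traces at \emph{both} endpoints. The subscript ``$0$'' only forces $h(0)=0$; there is no reason for $h(T')$ to vanish. Extending such an $h$ by zero on $(T',T)$ therefore introduces a jump at $t=T'$, and the extended function is not in $W^s_p(0,T;L^p(\partial\Omega))$ at all. Your appeal to ``reflection/Hardy-type arguments'' and to the vanishing trace at $t=0$ addresses the wrong endpoint: those tools justify zero extension to the \emph{left} of $0$, not to the right of $T'$.

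The fix is to use an extension that does not rely on vanishing at $t=T'$. One standard construction is: first exploit $h(0)=0$ to extend by zero to $(-\infty,T')$, then apply a reflection-type extension across $t=T'$ to reach $(-\infty,\infty)$, and restrict to $(0,T)$. The first step is where the zero trace (and the hypothesis $p\neq3$) is actually used, and the second step has operator norm independent of $T'$ because after the first step the function lives on a half-line. This is essentially what the cited lemma packages; once you have such an $\mathcal E_{T'}^0$, the rest of your argument (uniqueness gives agreement on $(0,T')$, restriction has norm $\le1$) goes through verbatim.
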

%%%%%%%%%%%%%%%%%%%%%%%%%%%%%%%%%%%%%%%%%%%%%%%%%%%%%%%%%%%%%%%%%%%%%%%%%%%%%%%%%%%%%%%%%%%%%%%%%%%%%%%%%%%%%%%%%%%%%%%%%%%%%%%%%%%%%%%%%%

\begin{proof}
We abbreviate $L_T:=L_{|\hspace{.1ex}_0\F_{T,p}}$. From \cite[Lemma 2.5]{Meyries} for any Banach
space $X$ there is a bounded extension operator $\mathcal
E_T^0\colon\hspace{.01ex}_0W^s_p(0,T';X)\to\hspace{.01ex}_0W^s_p(0,T;X)$, whose norm is
independent of $T'\leq T$ for all $s\in[0,2]$. So we may extend $(f,g,h)\in\hspace{.1ex}_0\F_{T',p}$ to some $(\tilde
f,\tilde g,\tilde h)\in\hspace{.1ex}_0\F_{T,p}$ with $(\tilde f,\tilde g,\tilde
h)_{|(0,T')}=(f,g,h)$. Thanks to the fact that
$L_T(\tilde f,\tilde g,\tilde h,0,0)_{|(0,T')}=L(f,g,h,0,0)$ we deduce
\begin{align*}
\|L(f,g,h,0,0)\|_{\E_{T',p}}&\leq\|L_T(\tilde f,\tilde g,\tilde
h,0,0)\|_{\E_{T,p}}\leq\|L_T\|\|(\tilde f,\tilde g,\tilde h)\|_{\hspace{.1ex}_0\F_{T,p}}\\
	&\leq C\|L_T\|\|(f,g,h)\|_{\hspace{.1ex}_0\F_{T',p}}
\end{align*}
for $(f,g,h)\in\hspace{.1ex}_0\F_{T',p}$, where $C>0$ does not depend on $T'\leq T$ by virtue of
\cite[Lemma 2.5]{Meyries}.
\end{proof}

Knowing the mapping properties of the linear solution operator we can
now treat the nonlinear problem (\ref{eq_nlin_1})-(\ref{eq_nlin_5}) 
involving the nonlinear terms $F,G,H$,
given in (\ref{eq_F})-(\ref{eq_H}). We let $N(u,c)=(F(u,c),G(u,c),H(c))$
in the following and write $\phi_2=\phi_2(c)$ for the solution to
(\ref{eq_phi_2}) to indicate the (linear)
$c$-dependence of $\phi_2$. The fact that $F,G,H$ map into the ''right''
spaces when $p$ is chosen
appropriately is the content of

%%%%%%%%%%%%%%%%%%%%%%%%%%%%%%%%%%%%%%%%%%%%%%%%%%%%%%%%%%%%%%%%%%%%%%%%%%%%%%%%%%%%%%%%%%%%%%%%%%%%%%%%%%%%%%%%%%%%%%%%%%%%%%%%%%%%%%%%%%
\begin{lemma}
\label{lem_NL}
Let $n\geq2$, $(n+2)/3<p<\infty$ and $T\in(0,\infty)$. Then $N(u,c)\in\F_{T,p}$ for any $(u,c)\in\E_{T,p}^u\times\E_{T,p}^c$.
\end{lemma}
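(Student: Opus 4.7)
The plan is to verify the three components of $N(u,c)=(F(u,c),G(u,c),H(c))$ separately, using elliptic regularity for $\phi_1,\phi_2$ from Remark~\ref{rem_triebel}(a), the standard embedding $\E_{T,p}^u,\E_{T,p}^c\hookrightarrow BUC([0,T];W^{2-2/p}_p(\Omega))$ for the time trace, appropriate Sobolev embeddings on $\Omega$, and the parabolic trace theorem. The hypothesis $p>(n+2)/3$ will enter precisely when controlling the quadratic convective terms.

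\emph{Step 1 (Potentials).} From $\xi\in W^{3-1/p}_p(\pa\Omega)$ and Remark~\ref{rem_triebel}(a), $\phi_1\in W^{4,p}(\Omega)$, so $\nabla\phi_1,\Delta\phi_1\in BUC(\overline\Omega)$, and $\pa_\nu\phi_1\in W^{3-1/p}_p(\pa\Omega)\hookrightarrow BUC(\pa\Omega)$, independent of~$t$. Applying the same result pointwise in $t$ and commuting $\pa_t$ with the (bounded) inverse of the Robin-Laplacian yields a linear continuous map $c\mapsto\phi_2(c)$ from $\E_{T,p}^c$ into $W^{1,p}(0,T;W^{2,p}(\Omega))\cap L^p(0,T;W^{4,p}(\Omega))$, hence in particular $\phi_2\in L^\infty(0,T;W^{3,p}(\Omega))\hookrightarrow L^\infty(0,T;BUC^1(\overline\Omega))$.

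\emph{Step 2 (Term $F$).} The Navier-Stokes convective term $P((u\cdot\nabla)u)$ is in $L^p(0,T;L^p_\sigma(\Omega))$ by the classical embedding $\E_{T,p}^u\hookrightarrow L^{2p}(0,T;L^{2p}(\Omega))$ combined with $\nabla u\in L^{2p}(0,T;L^{2p}(\Omega))$ and Hölder's inequality; this is exactly where $p>(n+2)/3$ is needed. The Coulomb-type term $P(\Sigma_j z_j c_j\nabla\phi_2(c))$ is controlled by
$$\|c_j\nabla\phi_2\|_{L^p(L^p)}\leq \|c_j\|_{L^p(L^p)}\|\nabla\phi_2\|_{L^\infty(L^\infty)}$$
via Step~1.

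\emph{Step 3 (Term $G$).} The transport term $\nabla c_i\cdot u$ is estimated in the same way as the convective term, using $u,\nabla c_i\in L^{2p}(0,T;L^{2p}(\Omega))$. Since $(\cdot)^+$ is Lipschitz on $\R$, $c_i^+\in L^\infty(0,T;W^{2-2/p}_p(\Omega))\cap L^p(0,T;W^{1,p}(\Omega))$ with $\nabla c_i^+=\mathbf{1}_{\{c_i>0\}}\nabla c_i$ a.e.\ (see \cite{GilTrud}). The products $\nabla c_i^+\cdot\nabla\phi_j$ and $c_i^+\Delta\phi_2$ are then absorbed by the $L^\infty(L^\infty)\cap L^p(L^\infty)$ bounds on $\nabla\phi_j$ and $\Delta\phi_2$ from Step~1, giving $G_i(u,c)\in L^p(0,T;L^p(\Omega))$.

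\emph{Step 4 (Boundary term $H$).} This is the main obstacle. By the classical trace theorem for anisotropic parabolic Sobolev-Slobodeckii spaces,
$$c_i|_{\pa\Omega}\in W^{1/2-1/(2p)}_p(0,T;L^p(\pa\Omega))\cap L^p(0,T;W^{2-1/p}_p(\pa\Omega)),$$
and the same holds (with one less spatial derivative) for $c_i^+$. Combining Step~1 with the trace theorem yields $\pa_\nu(\phi_1+\phi_2(c))\in L^\infty(0,T;W^{2-1/p}_p(\pa\Omega))$ with analogous time-regularity for $\phi_2$. Pointwise multiplier estimates in $W^{1-1/p}_p(\pa\Omega)$ handle the $L^p(0,T;W^{1-1/p}_p(\pa\Omega))$ part. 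For the Slobodeckii seminorm in time one splits the double difference,
$$(c_i^+\pa_\nu\phi)(t)-(c_i^+\pa_\nu\phi)(s)=(c_i^+(t)-c_i^+(s))\pa_\nu\phi(t)+c_i^+(s)(\pa_\nu\phi(t)-\pa_\nu\phi(s)),$$
and estimates each factor using the better time regularity of $\pa_\nu\phi_1+\pa_\nu\phi_2$ and the trace-space regularity of $c_i^+$; summing over $i$ then gives $H(c)\in\F_{T,p}^{c,\pa\Omega}$, which completes the proof. The delicate point, and the one I expect to require the most care, is precisely ensuring that the pointwise positive-part operation together with multiplication respects the fractional anisotropic regularity required by the trace space.
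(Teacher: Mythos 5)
Your overall strategy is the right one, and in fact the paper gives no separate argument for this lemma: it simply refers to the product estimates carried out in the proof of Lemma~\ref{lem_DN}. However, Step~2 contains a genuine error. You claim $\nabla u\in L^{2p}(0,T;L^{2p}(\Omega))$ for $u\in\E_{T,p}^u$ and that this is ``exactly where $p>(n+2)/3$ is needed''. It is not: by the mixed derivative theorem $\nabla u\in W^{s}_p(0,T;W^{1-2s}_p(\Omega))$, and embedding this into $L^{2p}(0,T;L^{2p}(\Omega))$ requires simultaneously $s\geq 1/(2p)$ (time) and $1-2s\geq n/(2p)$ (space), which forces $p\geq(n+2)/2$. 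For $(n+2)/3<p<(n+2)/2$ (e.g.\ $n=3$, $p=2$) the embedding fails, so your symmetric H\"older splitting of $(u\cdot\nabla)u$ --- and likewise of $u\cdot\nabla c_i$ and $\nabla c_i^+\cdot\nabla\phi$ in Step~3 --- breaks down on part of the admissible range of $p$. The correct splitting, which is the one used in \eqref{eq_DN_emb_1}--\eqref{eq_DN_emb_2}, is asymmetric: $u,c\in L^{3p}(0,T;L^{3p})$ (via $s_1=2/(3p)$) and $\nabla u,\nabla c\in L^{3p/2}(0,T;L^{3p/2})$ (via $s_2=1/(3p)$), paired by H\"older with $1/(3p)+2/(3p)=1/p$; both embeddings hold exactly when $p\geq(n+2)/3$.

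Step~4 is also under-specified at the one point that actually matters. In the term $c_i^+(s)\bigl(\pa_\nu\phi(t)-\pa_\nu\phi(s)\bigr)$ you implicitly need a uniform-in-time bound on $c_i^+$ in a Lebesgue space strong enough to pair with the regularity of $\pa_\nu\phi$; the natural candidate $c\in BUC([0,T];W^{2-2/p}_p(\Omega))\hookrightarrow BUC([0,T];L^\infty(\Omega))$ again requires $p>(n+2)/2$. The paper sidesteps both this and the product estimates in the fractional boundary space by estimating $c_i^+\nabla\phi$ in the bulk space $L^p(0,T;W^{1,p}(\Omega))\cap W^{1/2}_p(0,T;L^p(\Omega))$, whose spatial trace space is $\F_{T,p}^{c,\pa\Omega}$, controlling the half-order time norm by the interpolation inequality \eqref{eq_est_H_int} and handling the critical regime $p<(n+2)/2$ with the H\"older pair $q=np/(n-2p+2)$, $q'=np/(2p-2)$ as in \eqref{eq_est_H_8}--\eqref{eq_est_H_9}. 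You should either adopt that route or supply the analogous exponents directly on the boundary.
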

%%%%%%%%%%%%%%%%%%%%%%%%%%%%%%%%%%%%%%%%%%%%%%%%%%%%%%%%%%%%%%%%%%%%%%%%%%%%%%%%%%%%%%%%%%%%%%%%%%%%%%%%%%%%%%%%%%%%%%%%%%%%%%%%%%%%%%%%%%

For the proof we have to estimate the individual terms of $N(u,c)$. However, these
computations are contained in the proof of Lemma~\ref{lem_DN} and are therefore omitted here.

Let $(u^0,c^0)\in \X_p$ be fixed from now on. We will prove the existence of a unique local-in-time
strong solution to (\ref{eq_nlin_1})-(\ref{eq_nlin_5}) with $(u(0),c(0))=(u^0,c^0)$. This is
equivalent to the existence of a unique fixed point of the equation
\begin{equation}
\label{eq_fix_a}
(u,c)=L(N(u,c),u^0,c^0).
\end{equation}
It will be convenient to split the solution in a 
part with zero time trace at $t=0$ plus
a remaining part taking care of the non-zero traces. 
If $c\in\E_{T,p}^c$ is a solution of the Nernst-Planck system, 
in the case $p>3$ due to Lemma~\ref{lem_NL} by taking trace of $H(c)$ we obtain
\[
	H(c){|_{t=0}}=H(c^0)\in W^{1-3/p}(\pa\Omega).
\]
We set $h^*:=e^{t\Delta_{\pa\Omega}}H(c^0)$, where $\Delta_{\pa\Omega}$
denotes the Laplace-Beltrami operator on the smooth manifold
$\pa\Omega$. By maximal regularity for $\Delta_{\pa\Omega}$, see e.g.\ 
\cite{PruessSaalSimonett, koehnepruesswilke}, then we have
$
	h^*\in \F_{T,p}^{c,\pa\Omega}
$
and $h^*(0)=H(c^0)$.

If $p<3$ we just set $h^*=0$. According to Lemma~\ref{lemma_linear} we may define a function
$(u^*,c^*)$ by
\begin{equation}
\label{eq_ref_sol}
(u^{*},c^{*}):=L(0,0,h^*,u^0,c^0).
\end{equation}
Subtracting (\ref{eq_ref_sol}) from (\ref{eq_fix_a}) yields the new fixed point problem
\begin{equation}
\label{eq_fix}
(\bar u,\bar c)=L(\bar N(\bar u,\bar c),0,0),
\end{equation}
where
\begin{equation}
\label{eq_def_fix}
\begin{split}
\bar N(\bar u,\bar c)&:=(\bar F(\bar u,\bar c),\bar G(\bar u,\bar c),\bar H(\bar c))\\
	&:=(F(\bar u+u^{*},\bar c+c^{*}),G(\bar u+u^{*},\bar c+c^{*}),H(\bar c+c^{*})-h^*),
\end{split}
\end{equation}
for $(\bar u,\bar c)\in \hspace{1pt}_0\E_{T,p}:=\{(u,c)\in\E_{T,p}:(u(0),c(0))=0\}$.

The advantage of applying the fixed point argument to the zero trace space $\hspace*{0.05em}_0\E_T$
lies in the fact that the embedding constants for embeddings in time do not depend on $T$. This
enables us to choose $T$ as small as we please without having 
the embedding constants blow up.

%%%%%%%%%%%%%%%%%%%%%%%%%%%%%%%%%%%%%%%%%%%%%%%%%%%%%%%%%%%%%%%%%%%%%%%%%%%%%%%%%%%%%%%%%%%%%%%%%%%%%%%%%%%%%%%%%%%%%%%%%%%%%%%%%%%%%%%%%%
\begin{lemma}
\label{lem_const_indep}
Let $p\in(1,\infty)$, $I\in\N$, $s_i\in[0,2]$, and $X_i$ be Banach spaces for $i=1,\ldots,I$. Suppose there is $q\in(1,\infty)$ and a Banach space $Y$ such that there is a continuous embedding 
\begin{equation}
\label{eq_emb_mdr_1}
\bigcap_{i=1}^IW^{s_i}_p(0,T_0;X_i)\inj L^q(0,T_0;Y).
\end{equation}
Then for each
$T'\leq T_0$ there is a continuous embedding 
\begin{equation}
\label{eq_emb_mdr_2}
\bigcap_{i=1}^I\hspace{.01ex}_0W^{s_i}_p(0,T';X_i)\inj L^q(0,T';Y),
\end{equation}
whose embedding constant does not depend on $T'\leq T_0$.
\end{lemma}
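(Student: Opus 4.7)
The plan is to reduce the claim on $(0,T')$ to the known embedding on $(0,T_0)$ by means of a bounded extension operator whose norm is uniform in $T' \leq T_0$, namely the operator $\mathcal E_{T_0}^0$ from \cite[Lemma 2.5]{Meyries} that has already been exploited in the excerpt. The essential point is that the same operator extends all of the Sobolev spaces $\hspace{.01ex}_0W^s_p$ for $s \in [0,2]$ with a bound independent of $T' \le T_0$; this is why it can be applied simultaneously to the finitely many exponents $s_1,\dots,s_I$.

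Concretely, given $u \in \bigcap_{i=1}^I \hspace{.01ex}_0W^{s_i}_p(0,T';X_i)$, I would set $\tilde u := \mathcal E_{T_0}^0 u$. Then $\tilde u \in \hspace{.01ex}_0W^{s_i}_p(0,T_0;X_i)$ with
\[
\|\tilde u\|_{W^{s_i}_p(0,T_0;X_i)} \le C\,\|u\|_{W^{s_i}_p(0,T';X_i)}, \qquad i=1,\dots,I,
\]
and the constant $C$ independent of $T'\le T_0$ by the cited lemma. The hypothesis (\ref{eq_emb_mdr_1}) applied to $\tilde u$ then gives
\[
\|\tilde u\|_{L^q(0,T_0;Y)} \le C' \sum_{i=1}^I \|\tilde u\|_{W^{s_i}_p(0,T_0;X_i)} \le C'' \sum_{i=1}^I \|u\|_{W^{s_i}_p(0,T';X_i)},
\]
with $C''$ independent of $T'$. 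Since $\tilde u$ is an extension of $u$ (i.e.\ $\tilde u|_{(0,T')} = u$), we conclude
\[
\|u\|_{L^q(0,T';Y)} \le \|\tilde u\|_{L^q(0,T_0;Y)} \le C'' \sum_{i=1}^I \|u\|_{W^{s_i}_p(0,T';X_i)},
\]
which is exactly the embedding (\ref{eq_emb_mdr_2}) with a constant independent of $T'\le T_0$.

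The one subtle point — and the only place where anything more than a formal manipulation is used — is the uniformity of the extension constants in both $T'$ and the exponent $s$. This is precisely the content of \cite[Lemma 2.5]{Meyries}, which constructs a single operator (of reflection type acting on the zero-trace spaces) that is simultaneously bounded on $\hspace{.01ex}_0W^s_p(0,T';X) \to \hspace{.01ex}_0W^s_p(0,T_0;X)$ for every $s \in [0,2]$ and every Banach space $X$, with operator norm controlled uniformly in $T'\le T_0$. Once this is invoked, the rest of the argument is purely formal, so I expect no further technical obstacle.
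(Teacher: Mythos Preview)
Your proof is correct and is essentially the same argument as the paper's: extend by the uniform extension operator from \cite[Lemma 2.5]{Meyries}, apply the assumed embedding on $(0,T_0)$, and restrict back to $(0,T')$. The paper merely phrases this as a commutative diagram involving $\mathcal E_{T'}^0$ and the restriction $\mathcal R_{T'}$, but the substance---in particular the key point that the extension operator is independent of $s\in[0,2]$ and hence acts on the intersection---is identical to what you wrote.
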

%%%%%%%%%%%%%%%%%%%%%%%%%%%%%%%%%%%%%%%%%%%%%%%%%%%%%%%%%%%%%%%%%%%%%%%%%%%%%%%%%%%%%%%%%%%%%%%%%%%%%%%%%%%%%%%%%%%%%%%%%%%%%%%%%%%%%%%%%%

\begin{proof}
For $T'\in(0,T_0]$ let $\mathcal E_{T'}^0$ be the extension operator 
\[\mathcal E_{T'}^0\colon\hspace{.01ex}_0W^s_p(0,T';X)\to\hspace{.01ex}_0W^s_p(0,T_0;X),\]
$s\in[0,2]$, from \cite[Lemma 2.5]{Meyries}, whose norm does not depend
on $T'$; note that $\mathcal E_T^0$ in \cite[Lemma 2.5]{Meyries} does
not depend on $s\in[0,2]$, that is, for each $s\in[0,2]$ we have the
same extension operator. Let $\mathcal
R_{T'}$ be the corresponding restriction operator
\[\mathcal R_{T'}\colon W^s_p(0,T_0;Y)\to W^s_p(0,T';Y),\]
Then the claim follows, since the following diagram commutes:
\begin{equation*}
\begin{array}{ccc}
\bigcap_{i=1}^I\hspace{.01ex}_0W^{s_i}_p(0,T';X_i)&\xrightarrow{\mathcal E_{T'}^0}&\bigcap_{i=1}^I\hspace{.01ex}_0W^{s_i}_p(0,T_0;X)\\
\downarrow&&\downarrow\\
L^q(0,T';Y)&\xleftarrow{\mathcal R_{T'}}&L^q(0,T_0;Y),
\end{array}
\end{equation*}
$T'\leq T_0$.
\end{proof}
%%%%%%%%%%%%%%%%%%%%%%%%%%%%%%%%%%%%%%%%%%%%%%%%%%%%%%%%%%%%%%%%%%%%%%%%%%%%%%%%%%%%%%%%%%%%%%%%%%%%%%%%%%%%%%%%%%%%%%%%%%%%%%%%%%%%%%%%%%
\begin{remark}
Note that, due to the embedding (\ref{eq_emb_mdr_1}), the embedding
constant for (\ref{eq_emb_mdr_2}) can depend on $T_0$.
\end{remark}
%%%%%%%%%%%%%%%%%%%%%%%%%%%%%%%%%%%%%%%%%%%%%%%%%%%%%%%%%%%%%%%%%%%%%%%%%%%%%%%%%%%%%%%%%%%%%%%%%%%%%%%%%%%%%%%%%%%%%%%%%%%%%%%%%%%%%%%%%%
For the proof of Theorem~\ref{theorem_strong_loc} it will be necessary to show a contraction estimate for the map $L\circ
\bar N$, cf.\ (\ref{eq_fix}). This is somewhat technical due to the
intricate structure of the trace space
$\F_{T,p}^{c,\pa\Omega}$. Therefore we prove it as a separate lemma.

%%%%%%%%%%%%%%%%%%%%%%%%%%%%%%%%%%%%%%%%%%%%%%%%%%%%%%%%%%%%%%%%%%%%%%%%%%%%%%%%%%%%%%%%%%%%%%%%%%%%%%%%%%%%%%%%%%%%%%%%%%%%%%%%%%%%%%%%%%
\begin{lemma}
\label{lem_DN}
Let $n\geq2$, $(n+2)/3<p<\infty$, $0<T'\leq T<\infty$ and $R>0$. Then there are $\al,C>0$, independent of $T'$
and $R$ such that
\begin{equation}
\label{eq_Fre_est}
\begin{split}
\|&\bar N(\bar u^1,\bar c^1)-\bar N(\bar u^2,\bar c^2)\|_{\F_{T',p}}\leq C\|(\bar u^1,\bar
c^1)-(\bar u^2,\bar c^2)\|_{\E_{T',p}}\times\\
&\bigg[\bigg\{R+\|u^*\|_{L^{3p}(0,T';L^{3p})}+\|\nabla
u^*\|_{L^{3p/2}(0,T';L^{3p/2})}\\
	&\quad\quad+\|c^*\|_{L^{3p}(0,T';L^{3p})}+\|c^*\|_{L^{3p/2}(0,T';W^{1,3p/2})}\\
&\quad\quad+\|\nabla\phi_2(c^*)\|_{L^{3p}(0,T';L^{3p})}+\|\nabla\phi_2(c^*)\|_{L^{3p/2}(0,T';W^{1,3p/2})}\\
	&\quad\quad+\|\pa_tc^*\|_{L^p(0,T';L^p)}
	+\|\pa_t(\nabla\phi_2(c^*))\|_{L^p(0,T';W^{1,p})}\bigg\}\\
&\quad\quad+T'^\al\bigg\{\|c^*\|_{BUC(0,T';W^{2-2/p}_p)}
+\|\nabla\phi_1\|_{W^{1,3p}}\\
&\quad\quad+\|\nabla\phi_2(c^*)\|_{BUC(0,T';BUC)}
\bigg\}\bigg]
\end{split}
\end{equation}
for $(\bar u^1,\bar c^1),(\bar u^2,\bar c^2)\in B_{\hspace*{0.05em}_0\E_{T',p}}(0,R)$, the closed ball centered at $0$
in $\hspace*{0.05em}_0\E_{T',p}$ with radius $R$.
\end{lemma}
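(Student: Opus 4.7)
The estimate (\ref{eq_Fre_est}) is obtained by splitting
$\bar N(\bar u^1,\bar c^1)-\bar N(\bar u^2,\bar c^2)$ into its three components $\bar F,\bar G,\bar H$ and estimating each in the corresponding factor of $\F_{T',p}$. Every nonlinear term in (\ref{eq_F})-(\ref{eq_H}) is bilinear or at worst quadratic in $(u,c)$ and in $\nabla\phi_2(c)$, which is linear in $c$ via (\ref{eq_phi_2}) and Remark~\ref{rem_triebel}. Writing each difference as a telescoped sum, one factor is the difference $(\bar u^1-\bar u^2,\bar c^1-\bar c^2)$, while the other factor is either $\bar u^i+u^*$, $\bar c^i+c^*$, $\nabla\phi_1$, or $\nabla\phi_2(\bar c^i+c^*)$. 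The radius bound $R$ absorbs $\|\bar u^i\|_{\E^u_{T',p}}$ and $\|\bar c^i\|_{\E^c_{T',p}}$, while the $u^*$- and $c^*$-contributions appear explicitly in the right-hand side. Throughout, Lemma~\ref{lem_const_indep} guarantees that the various mixed-norm embeddings from $\hspace{.1ex}_0\E_{T',p}$ have constants independent of $T'\le T$.

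\textbf{Interior estimates.} For $\bar F$ the convective term is handled by the Hölder pairing induced by $\tfrac1p=\tfrac{1}{3p}+\tfrac{2}{3p}$, namely
\[
\|(v\cdot\nabla)w\|_{L^p(L^p)}\le\|v\|_{L^{3p}(L^{3p})}\|\nabla w\|_{L^{3p/2}(L^{3p/2})},
\]
which is why the exponents $3p$ and $3p/2$ appear in (\ref{eq_Fre_est}); the embeddings $\E^u_{T',p},\E^c_{T',p}\hookrightarrow L^{3p}(L^{3p})\cap L^{3p/2}(W^{1,3p/2})$ are valid for $p>(n+2)/3$ by Sobolev--Gagliardo--Nirenberg interpolation. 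The electric-force term $P(\Sigma_j z_jc_j\nabla\phi_2(c))$ is estimated identically after invoking the elliptic regularity $\|\nabla\phi_2(c)\|_{W^{1,3p/2}}\lesssim\|c\|_{W^{1,3p/2}}$. For $\bar G$ the Lipschitz property $|c_i^{1,+}-c_i^{2,+}|\le|c_i^1-c_i^2|$ reduces the $(\cdot)^+$-terms to the same pattern; the contribution involving $\Delta\phi_2(c)=-\varepsilon^{-1}\Sigma_j z_jc_j$ is linear and immediate, and the $\nabla c\cdot u$ term is again a $L^{3p/2}\!\cdot\! L^{3p}$ Hölder product.

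\textbf{Boundary term and main obstacle.} The key difficulty is estimating $\bar H(\bar c^1)-\bar H(\bar c^2)$ in the anisotropic trace space $\F_{T',p}^{c,\partial\Omega}$, where the Slobodeckii seminorm in time is intertwined with a fractional Sobolev norm in space. Writing $H_i(c)=-z_ic_i^+\partial_\nu\phi_1-z_ic_i^+\partial_\nu\phi_2(c)$, we bound each factor via: (i) the trace theorem $\E^\phi_{T,p}\to \F^{c,\partial\Omega}_{T,p}$ applied to $\partial_\nu\phi_2(c)$, giving $\|\partial_\nu\phi_2(c)\|_{\F^{c,\partial\Omega}_{T',p}}\lesssim\|c\|_{\E^c_{T',p}}$; (ii) the time-trace embedding $\E^c_{T,p}\hookrightarrow BUC(0,T;W^{2-2/p}_p(\Omega))$ for the pointwise-in-time spatial traces; and (iii) the multiplier/Banach-algebra property of $W^{1-1/p}_p(\partial\Omega)$ coupled with an estimate of the form
\[
\|fg\|_{W^{1/2-1/2p}_p(L^p(\partial\Omega))}\lesssim\|f\|_{BUC(L^\infty(\partial\Omega))}\|g\|_{W^{1/2-1/2p}_p(L^p(\partial\Omega))}+\|g\|_{BUC(L^\infty(\partial\Omega))}\|f\|_{W^{1/2-1/2p}_p(L^p(\partial\Omega))},
\]
which handles the fractional time part via interpolation (here $p>(n+2)/3$ provides the Sobolev room needed on $\partial\Omega$). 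Since $\nabla\phi_1$ is time independent, it contributes only through its spatial norm, entering the $T'^\alpha$-bracket in (\ref{eq_Fre_est}).

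\textbf{Extraction of $T'^\alpha$ and assembly.} Because $\bar u^1-\bar u^2\in\hspace{.1ex}_0\E^u_{T',p}$ and $\bar c^1-\bar c^2\in\hspace{.1ex}_0\E^c_{T',p}$ have vanishing time trace at zero, Lemma~\ref{lem_const_indep} lets us replace any $L^p(0,T';\cdot)$ norm by $L^q(0,T';\cdot)$ with $q<p$ at the cost of a factor $T'^{1/q-1/p}=:T'^\alpha$, whenever the accompanying factor (namely $c^*$, $\nabla\phi_2(c^*)$ in their $BUC$-norms, or $\nabla\phi_1\in W^{1,3p}$) can be put in $L^\infty$ in time. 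This accounts exactly for the second bracket in (\ref{eq_Fre_est}), while the $L^{3p}$-, $L^{3p/2}$- and $L^p$-type norms of $u^*,c^*,\nabla\phi_2(c^*)$ in the first bracket arise from the factors for which no such trade is available. Collecting all contributions yields (\ref{eq_Fre_est}) with the stated structure and with constants independent of $T'$ and $R$; the principal obstacle throughout is the product estimate in the trace space $\F^{c,\partial\Omega}_{T',p}$, since the remaining bounds are standard Hölder-and-Sobolev bookkeeping.
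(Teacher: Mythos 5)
Your treatment of the interior terms $\bar F,\bar G$ coincides with the paper's: the same H\"older pairing $\tfrac1p=\tfrac1{3p}+\tfrac2{3p}$, the same embeddings $\E^c_{T',p}\hookrightarrow L^{3p}(L^{3p})\cap L^{3p/2}(W^{1,3p/2})$ obtained from the mixed derivative theorem plus Sobolev embedding (which is exactly where $p>(n+2)/3$ enters), and Lemma~\ref{lem_const_indep} to keep constants $T'$-independent. That part is fine.

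The boundary term is where your route diverges, and it has a genuine gap. You propose to estimate $\bar H_i(\bar c^1)-\bar H_i(\bar c^2)$ \emph{directly} in $\F^{c,\pa\Omega}_{T',p}$ via a symmetric bilinear estimate whose time part reads
$\|fg\|_{W^{1/2-1/2p}_p(L^p(\pa\Omega))}\lesssim\|f\|_{BUC(L^\infty(\pa\Omega))}\|g\|_{W^{1/2-1/2p}_p(L^p(\pa\Omega))}+\|g\|_{BUC(L^\infty(\pa\Omega))}\|f\|_{W^{1/2-1/2p}_p(L^p(\pa\Omega))}$,
together with the Banach-algebra property of $W^{1-1/p}_p(\pa\Omega)$ for the spatial part. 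Both ingredients fail in part of the admissible range of $p$. To place the concentration factor (in particular the difference $\hat c_i^+$, or $c_i^*+\bar c_i^{2,+}$) in $BUC([0,T'];L^\infty(\pa\Omega))$ you would need $W^{2-2/p}_p(\Omega)\hookrightarrow BUC(\overline\Omega)$, i.e.\ $p>(n+2)/2$; and $W^{1-1/p}_p(\pa\Omega)$ is an algebra only for $p>n$. Both conditions exclude the range $(n+2)/3<p\le(n+2)/2$, and in particular the case $n=2$, $p=2$ on which the entire global theory of Sections~\ref{global}--\ref{asymp} rests. Only $\nabla\phi_2$ (which gains two derivatives by elliptic regularity, cf.\ (\ref{eq_BUC_phi})) is uniformly bounded; the concentrations are not.

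The paper circumvents this by never estimating the product in the trace space itself: since $\F^{c,\pa\Omega}_{T',p}$ is the spatial trace of $L^p(0,T';W^{1,p}(\Omega))\cap W^{1/2}_p(0,T';L^p(\Omega))$, it bounds the products in this bulk space, and controls the $W^{1/2}_p(0,T';L^p)$ norm by interpolation between $L^p(0,T';L^p)$ and $W^{1,p}(0,T';L^p)$ (see (\ref{eq_est_H_int})). The full time derivative then obeys the ordinary Leibniz rule, and the resulting term $f\,\pa_t(\nabla\phi_2(\hat c))$ is handled by a H\"older pairing $L^q\cdot L^{q'}$ with $q=np/(n-2p+2)$, $q'=np/(2p-2)$, which is Sobolev-admissible precisely for $p>(n+2)/3$ — no $L^\infty$ control of $c$ is needed. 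The factor $T'^\alpha$ then drops out of the $L^p(L^p)$ half of the interpolation by H\"older in time, which is close to what you describe. If you want to keep your direct trace-space approach you would have to replace the $L^\infty(\pa\Omega)$ norms by suitable $L^{q}(\pa\Omega)$ norms throughout and verify the corresponding anisotropic product estimates; as written, the argument does not cover the stated range of $p$.
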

%%%%%%%%%%%%%%%%%%%%%%%%%%%%%%%%%%%%%%%%%%%%%%%%%%%%%%%%%%%%%%%%%%%%%%%%%%%%%%%%%%%%%%%%%%%%%%%%%%%%%%%%%%%%%%%%%%%%%%%%%%%%%%%%%%%%%%%%%%

\begin{proof}
Before estimating the individual terms in $\bar N(\bar u^1,\bar c^1)-\bar N(\bar u^2,\bar c^2)$
recall the following estimate which is obtained as a consequence of the 
mixed derivative theorem; 
for $\al,\be\in\N$, $p\in(1,\infty)$ it holds for $s\in(0,1)$ that
\begin{equation}
\label{eq_MDT}
W^{\al,p}(0,T;L^p(\Omega))\cap L^p(0,T;W^{\be,p}(\Omega))\inj W^{\al
s}_p(0,T;W^{\be(1-s)}_p(\Omega)).
\end{equation}
A proof for the case $T=\infty$ and the whole space can be found in \cite{Sob75}. Applying
suitable extensions relation (\ref{eq_MDT}) follows for our situation, see also
\cite{PruessSaalSimonett}.
With (\ref{eq_MDT}) and Sobolev's embedding theorem it is straight forward to see
that
\begin{equation}
\label{eq_DN_emb_1}
\begin{split}
W^{1,p}(0,T;L^p)\cap L^p(0,T;W^{2,p})&\inj W^{s_1}_p(0,T;W^{2-2s_1}_p)\inj L^{3p}(0,T;L^{3p}),
\end{split}
\end{equation}
\begin{equation}
\label{eq_DN_emb_2}
\begin{split}
\nabla(W^{1,p}(0,T;L^p)\cap L^p(0,T;W^{2,p}))&\inj W^{s_2}_p(0,T;W^{1-2s_2}_p)\\
	&\inj L^{3p/2}(0,T;L^{3p/2})
\end{split}
\end{equation}
for $s_1=2/(3p)$ and $s_2=1/(3p)$. Of course (\ref{eq_MDT})-(\ref{eq_DN_emb_2}) are also valid for $T$ substituted by $T'$.

These facts represent crucial ingredients for the proof. Let us, for the sake of readability,
introduce the following abbreviations:
\[\hat u:=\bar u^1-\bar u^2,\quad \hat c:=\bar c^1-\bar c^2,\quad \hat c_i:=\bar c_i^1-\bar
c_i^2\]
and accordingly $\hat c^+$ and $\hat c_i^+$.
The individual terms in $\bar N(\bar u^1,\bar c^1)- \bar N(\bar u^2,\bar c^2)$ can therefore be
written in the following form:
\begin{align}
\label{eq_repr_F}
\begin{split}
&\bar F(\bar u^1,\bar c^1)-\bar F(\bar u^2,\bar c^2)=P\big((u^*+\bar u^2)\cdot\nabla\hat u+\hat
u\cdot\nabla(u^*+\bar u^1)\\
&\quad+\sum_jz_j((c_j^*-\bar c_j^2)\nabla\phi_2(\hat c)+\hat c_j\nabla\phi_2(c^*+\hat c^1))\big),
\end{split}\\
\label{eq_repr_G}
\begin{split}
&\bar G_i(\bar u^1,\bar c^1)-\bar G_i(\bar u^2,\bar c^2)=D_iz_i\big(\nabla\hat
c_i^+(\nabla\phi_1+\nabla\phi_2(c^*+\bar c^1))\\
	&\quad+\nabla(c_i^*+\bar c_i^{2,+})\nabla\phi_2(\hat c)+(c_i^*+\bar c_i^{2,+})\Delta\phi_2(\hat c)\\
&\quad+\hat c_i^+\Delta\phi_2(c^*+\bar c^1)\big)+(u^*+\bar u^2)\nabla\hat c_i+\hat
u\cdot\nabla(c_i^*+\bar c_i^1)
\end{split}\\
\label{eq_repr_H}
\begin{split}
&\bar H_i(\bar c^1)-\bar H_i(\bar c^2)=-z_i\big((c_i^*+\bar c_i^{2,+})\pa_\nu\phi_2(\hat c)+\hat
c_i^+\pa_\nu(\phi_1+\phi_2(c^*+\bar c^1))\big).
\end{split}
\end{align}

Concerning the individual summands in $F$ and $G$ we observe that the term resulting from the
convection term $(u\cdot\nabla)u$ has the weakest regularity properties as compared to the remaining
terms in $F$ and $G$. Since $F,G$ have both to be estimated in $L^p(0,T';L^p)$ it is sufficient to
consider only the first two summands in (\ref{eq_repr_F}); the other terms left can be treated in
exactly the same way. By the continuity of $P$ and H\"older's inequality, there is a constant $C>0$ not depending
on $T'$ such that
\begin{equation}
\label{eq_est_barF1}
\begin{split}
\|&P\big((u^*+\bar u^2)\nabla\hat u+\hat u\nabla(u^*+\bar u^1)\big)\|_{L^p(0,T';L^p)}\\
&\leq C((\|u^*\|_{L^{3p}(0,T';L^{3p})}+\|\bar u^2\|_{L^{3p}(0,T';L^{3p})})\|\nabla\hat
u\|_{L^{3p/2}(0,T';L^{3p/2})}\\
&\quad+(\|\nabla u^*\|_{L^{3p/2}(0,T';L^{3p/2})}+\|\nabla\bar u^1\|_{L^{3p/2}(0,T';L^{3p/2})})\|\hat
u\|_{L^{3p}(0,T';L^{3p})}).\\
\end{split}
\end{equation}
From the fact that $\hat u\in\hspace{.01ex}_0\E_{T',p}^u$ and relations (\ref{eq_DN_emb_1}),
(\ref{eq_DN_emb_2}) Lemma~\ref{lem_const_indep} assures the existence of a constant $C'>0$, not
depending on $T'$, such that
\begin{equation}
\label{eq_est_barF2}
\|\hat u\|_{L^{3p}(0,T';L^{3p})}+\|\nabla\hat u\|_{L^{3p/2}(0,T';L^{3p/2})}\leq C'\|\hat
u\|_{\E_{T',p}^u}.
\end{equation}
So combining (\ref{eq_est_barF1}) and (\ref{eq_est_barF2}) we deduce
\begin{equation*}
\begin{split}
\|P\big(&(u^*+\bar u^2)\nabla\hat u+\hat u\nabla(u^*+\bar u^1)\big)\|_{L^p(0,T';L^p)}\\
&\leq C\|\hat u\|_{\E_{T',p}}(R+\|u^*\|_{L^{3p}(0,T';L^{3p})}+\|\nabla
u^*\|_{L^{3p/2}(0,T';L^{3p/2})})
\end{split}
\end{equation*}
For further estimation note that, Remark~\ref{rem_triebel},
\begin{equation}
\label{eq_rem_triebel}
\|\phi_2(\bar c)\|_{W^{1,p}(0,T';W^{2,p})\cap L^p(0,T';W^{4,p})}\leq C\|\bar c\|_{\E_{T',p}^c},
\end{equation}
for $\bar c\in\hspace{.01ex}_0\E_{T',p}^c$, with some constant $C>0$ independent of $T'$. So in
performing analogous computations for the other summands of $\bar F$ and $\bar G$, taking into
account $\hat c_i^+\leq|\hat c_i|$, we establish
\begin{equation}
\label{eq_est_F_G}
\begin{split}
\|(\bar F,\bar G)&(\bar u^1,\bar c^1)-(\bar F,\bar G)(\bar u^2,\bar c^2)\|_{L^p(0,T';L^p)}\leq
C\|(\bar u^1,\bar c^1)-(\bar u^2,\bar c^2)\|_{\E_{T',p}}\times\\
&\quad(R+\|u^*\|_{L^{3p}(0,T';L^{3p})}+\|
u^*\|_{L^{3p/2}(0,T';W^{1,3p/2})}+\|c^*\|_{L^{3p}(0,T';L^{3p})}\\
	&\qquad+\|c^*\|_{L^{3p/2}(0,T';W^{1,3p/2})}
	+\|\nabla\phi_1\|_{L^{3p/2}(0,T';L^{3p/2})}\\
&\qquad+\|\nabla\phi_2(c^*)\|_{L^{3p}(0,T';L^{3p})}
+\|\nabla\phi_2(c^*)\|_{L^{3p/2}(0,T';W^{1,3p/2})},
\end{split}
\end{equation}
for $(\bar u^1,\bar c^1),(\bar u^2,\bar c^2)\in B_{\hspace*{0.05em}_0\E_{T',p}}(0,R)$.

We are left with the estimation of (\ref{eq_repr_H}) in $\F_{T',p}^{c,\pa\Omega}$. For the treatment
of this expression let us note that since $p>(n+2)/3$ from 
(\ref{eq_MDT}), Sobolev's embedding theorem, and by 
choosing $\de<\frac1{2p}(3p-n-2)$ such that
$s=1/p+\de\in(0,1)$ we have
\begin{equation}
\label{eq_BUC_phi}
\nabla\phi\in W^s_p(0,T';W^{3-2s}_p(\Omega))\inj BUC([0,T'];BUC(\overline\Omega)).
\end{equation}

Because of the smoothness of $\pa\Omega$ the normal vector field $\nu$ is $BUC^1(\pa\Omega)$. From
$BUC^1(\pa\Omega)\cdot L^p(\pa\Omega)\inj L^p(\pa\Omega)$ and $BUC^1(\pa\Omega)\cdot
W^{1,p}(\pa\Omega)\inj W^{1,p}(\pa\Omega)$ we see, by interpolation, \cite{triebel},
$BUC^1(\pa\Omega)\cdot W^{1-1/p}_p(\pa\Omega)\inj W^{1-1/p}_p(\pa\Omega)$; thus we may estimate
\begin{equation}
\label{eq_est_H_first}
\begin{split}
\|\bar H_i&(\bar c^1)-\bar H_i(\bar c^2)\|_{\F_{T',p}^{c,\pa\Omega}}\leq C\|(c_i^*+\bar c_i^{2,+})\nabla\phi_2(\hat c)\|_{ L^p(0,T';W^{1,p})\cap
W^{1/2}_p(0,T';L^p)}\\
&\qquad+C\|\hat c_i^+(\nabla\phi_1+\nabla\phi_2(c^*+\bar c^1))\|_{ L^p(0,T';W^{1,p})\cap
W^{1/2}_p(0,T';L^p)},
\end{split}
\end{equation}
using the fact that $\F_{T',p}^{c,\pa\Omega}$ is the spacial 
trace space of $ L^p(0,T';W^{1,p})\cap
W^{1/2}_p(0,T';L^p)$, see e.g. \cite{DHP}. Note that, due to $\bar
H_i(\bar c^j)\in\hspace{.01ex}_0\F_{T',p}^{c,\pa\Omega}$, the constant $C$ in (\ref{eq_est_H_first}) does not depend on $T'$. This can be seen by a similar argument as in the proof of Lemma~\ref{lem_const_indep}.

Let us consider the first term on the right hand side of (\ref{eq_est_H_first}). 
First we estimate the norm in $L^p(0,T';W^{1,p})$. 
Applying the H\"older inequality yields
\begin{equation}
\label{eq_est_H_1}
\begin{split}
& \|(c_i^*+\bar c_i^{2,+})\nabla\phi_2(\hat c)\|_{L^p(0,T';W^{1,p})}\\
&\leq(\|c_i^{*}\|_{L^{3p/2}(0,T';W^{1,3p/2})}
+\|\bar c_i^2\|_{L^{3p/2}(0,T';W^{1,3p/2})})\|\nabla\phi_2(\hat
c)\|_{L^{3p}(0,T';W^{1,3p})}.
\end{split}
\end{equation}
Next we use the fact that $\bar c_i^1(0)=\bar c_i^1(0)=0$ and hence also  
$\phi_2(\hat c)(0)=0$, which gives us by applying 
(\ref{eq_DN_emb_1}), Lemma~\ref{lem_const_indep}, and afterwards
relation (\ref{eq_rem_triebel}) that
\begin{equation}
\label{eq_est_H_12}
\begin{split}
& \|(c_i^*+\bar c_i^{2,+})\nabla\phi_2(\hat c)\|_{L^p(0,T';W^{1,p})}\\
	&\leq C (\|
	c_i^*\|_{L^{3p/2}(0,T';W^{1,3p/2})}+R)\|\phi_2(\hat
	c)\|_{\E_{T',p}^c}\\
	&\leq C (\|
	c_i^*\|_{L^{3p/2}(0,T';W^{1,3p/2})}+R)\|\hat c\|_{\E_{T',p}^c}.
\end{split}
\end{equation}

Let us now consider the norm in $W^{1/2}_p(0,T';L^p(\Omega))$.
We will not calculate directly with Sobolev-Slobodeckij-norms. Instead,
since $\phi_2$ enjoys
rather convenient regularity properties, we can afford to use the interpolation inequality,
\cite{triebel},
\begin{equation}
\label{eq_est_H_int}
\begin{split}
\|(c_i^*+&\bar c_i^{2,+})\nabla\phi_2(\hat c)\|_{W^{1/2}_p(0,T';L^p)}\\
&\leq C(T)\|(c_i^*+\bar c_i^{2,+})\nabla\phi_2(\hat c)\|_{ L^p(0,T';L^p)}^{1/2}\|(c_i^*+\bar
c_i^{2,+})\nabla\phi_2(\hat c)\|_{W^{1,p}(0,T';L^p)}^{1/2}.
\end{split}
\end{equation}
The fact that the constant $C(T)$ in (\ref{eq_est_H_int}) does not depend on $T'$ 
again can be seen by applying the extension operator in Lemma~\ref{lem_const_indep} 
to $(0,T)$ and the fact that $(c_i^*+\bar c_i^{2,+})\nabla\phi_2(\hat c)\in\hspace{.01ex}_0W^{1/2}_p(0,T';L^p)$.  Expanding the norm in $W^{1,p}(0,T';L^p)$ gives
\begin{equation}
\label{eq_est_H_6}
\begin{split}
\|(&c_i^*+\bar c_i^{2,+})\nabla\phi_2(\hat c)\|_{W^{1,p}(0,T';L^p)}=\|(c_i^*+\bar
c_i^{2,+})\nabla\phi_2(\hat c)\|_{L^p(0,T';L^p)}\\
&+\|\pa_t(c_i^*+\bar c_i^{2,+})\nabla\phi_2(\hat c)\|_{L^p(0,T';L^p)}+\|(c_i^*+\bar
c_i^{2,+})\pa_t(\nabla\phi_2(\hat c))\|_{L^p(0,T';L^p)}.
\end{split}
\end{equation}
Note that, since $\phi_2(\hat c)(0)=0$, we have by relations
(\ref{eq_rem_triebel}), (\ref{eq_BUC_phi}), and
Lemma~\ref{lem_const_indep} that
\begin{equation}
\label{eq_est_H_2}
\begin{split}
\|\nabla&\phi_2(\hat c)\|_{BUC(0,T';BUC)}\leq C(T)\|\nabla\phi_2(\hat
c)\|_{W^s_p(0,T';W^{3-2s}_p(\Omega))}\\
&\leq C'(T) \|\phi_2(\hat c)\|_{W^{1,p}(0,T';W^{2,p})\cap L^p(0,T';W^{4,p})}
\leq C''(T)\|\hat
c\|_{\E_{T',p}^c}.
\end{split}
\end{equation}
This implies for the second term in (\ref{eq_est_H_6}) that
\begin{equation}
\label{eq_est_H_7}
\begin{split}
\|\pa_t&(c_i^*+\bar c_i^{2,+})\nabla\phi_2(\hat c)\|_{L^p(0,T';L^p)}\\
&\leq (\|\pa_tc_i^*\|_{L^p(0,T';L^p)}+\|\pa_t\bar c_i^2\|_{L^p(0,T';L^p)})\|\nabla\phi_2(\hat
c)\|_{BUC(0,T';BUC)}\\
	&\leq C(T)(\|\pa_tc_i^*\|_{L^p(0,T';L^p)}+R)\|\hat
	c\|_{\E_{T',p}^c}.
\end{split}
\end{equation}
For the third term in (\ref{eq_est_H_6}) we compute
\begin{equation}
\label{eq_est_H_8}
\begin{split}
\|(c_i^*&+\bar c_i^{2,+})\pa_t(\nabla\phi_2(\hat c))\|_{L^p(0,T';L^p)}\\
&\leq(\|c_i^*\|_{BUC(0,T';L^q)}+\|\bar c_i^2\|_{BUC(0,T';L^q)})\|\pa_t(\nabla\phi_2(\hat
c))\|_{L^p(0,T';L^{q'})}
\end{split}
\end{equation}
with $1/q+1/q'=1/p$. From \cite{amann} it holds
\begin{equation}
\label{eq_amann}
W^{1,p}(0,T';L^p(\Omega))\cap L^p(0,T';W^{2,p}(\Omega))\inj BUC(0,T';W^{2-2/p}_p(\Omega)).
\end{equation}
So the term $\|c_i^*\|_{BUC(0,T';L^q)}$ in (\ref{eq_est_H_8}) is well-defined if it holds true that
$W^{2-2/p}_p(\Omega)\inj L^q$. At the same time we have to assure that $W^{1,p}(\Omega)\inj
L^{q'}(\Omega)$, since $\pa_t\nabla\phi_2\in L^p(W^{1,p})$. The critical cases occur when
$p<(n+2)/2$. In this situation choose $q=np/(n-2p+2)$ and $q'=np/(2p-2)$. So, recalling $p>(n+2)/3$,
it holds that $W^{2-2/p}_p(\Omega)\inj L^q$ and $W^{1,p}(\Omega)\inj L^{q'}$ thanks to Sobolev's
embedding theorem, hence
\begin{equation}
\label{eq_est_H_9}
\begin{split}
&\|(c_i^*+\bar c_i^{2,+})\pa_t(\nabla\phi_2(\hat c))\|_{L^p(0,T';L^p)}\\
&\leq C(\|c_i^*\|_{BUC(0,T';W^{2-2/p}_p)}+\|\bar
c_i^2\|_{BUC(0,T';W^{2-2/p}_p)})\|\pa_t(\nabla\phi_2(\hat c))\|_{L^p(0,T';W^{1,p})}\\
	&\leq C(T)(\|c_i^*\|_{BUC(0,T';W^{2-2/p}_p)}+R)\|\hat c\|_{\E_{T',p}^c}.
\end{split}
\end{equation}
In contrast to the $L^r$- norms in time the appearing $BUC$-norms of 
$c_i^*$ will in general not become small as $T'\to 0$. Here we need an 
additional factor $T'^\alpha$ which will be provided by estimating
the first factor in (\ref{eq_est_H_int}) a little more carefully as in 
(\ref{eq_est_H_1}). Indeed, we can estimate
\begin{equation}
\begin{split}
\label{eq_est_H_3}
&\|(c_i^*+\bar c_i^{2,+})\nabla\phi_2(\hat c)\|_{L^p(0,T';L^p)}\\
&\leq (\|
c_i^*\|_{L^{3p/2}(0,T';L^{3p/2})}+\|
\bar c_i^{2,+}\|_{L^{3p/2}(0,T';L^{3p/2})})\|\nabla\phi_2
(\hat c)\|_{L^{3p}(0,T';L^{3p})}\\
&\leq C(T)T'^\al (\|
c_i^*\|_{L^{3p}(0,T';L^{3p})}+R)\|\hat c\|_{\E_{T',p}^c}
\end{split}
\end{equation}
for a certain $\alpha>0$. Note that here and in the sequel the appearing
$\alpha$'s at first might be different. At the end, however, we
just need one $\alpha>0$ and we can always choose the smallest of
finitely many ones. Thus, as for the generic constant $C$, in the sequel
$\alpha$ can change from line to line.

From relations (\ref{eq_est_H_int}), (\ref{eq_est_H_6}), (\ref{eq_est_H_7}),
(\ref{eq_est_H_8}), (\ref{eq_est_H_9}), (\ref{eq_est_H_3}), 
and Young's inequality we now obtain
\begin{equation}
\label{eq_est_H_tot_2}
\begin{split}
&\|(c_i^*+\bar c_i^{2,+})\nabla\phi_2(\hat c)\|_{W^{1/2}_p(0,T';L^p)}\\
	&\leq
	C(T)T'^\al\big(\|c_i^*\|_{L^{3p}(0,T';L^{3p})}+R\big)^{1/2}\\
&\qquad\cdot\big(\|\pa_tc_i^*\|_{L^p(0,T';L^p)}
+\|c_i^*\|_{BUC(0,T';W^{2-2/p}_p)}+R\big)^{1/2}\|
\hat c\|_{\E_{T',p}^c}\\
&\leq
	C(T)T'^\al\big(\|c_i^*\|_{L^{3p}(0,T';L^{3p})}\\
&\qquad+\|\pa_tc_i^*\|_{L^p(0,T';L^p)}
+\|c_i^*\|_{BUC(0,T';W^{2-2/p}_p)}+R\big)\|\hat c\|_{\E_{T',p}^c}.
\end{split}
\end{equation}
Collecting (\ref{eq_est_H_12}) and (\ref{eq_est_H_tot_2}) 
then gives
\begin{equation}
\label{eq_est_H_first_}
\begin{split}
&\|(c_i^*+\bar c_i^{2,+})\nabla\phi_2(\hat
c)\|_{\F_{T',p}^{c,\pa\Omega}}\\
&\leq
C(T)\bigg(\|c^*\|_{L^{3p/2}(0,T';W^{1,3p/2})}
+\|c^*\|_{L^{3p}(0,T';L^{3p})}\\
&\qquad+\|\pa_tc^*\|_{L^p(0,T';L^p)}
+T'^\al\|c^*\|_{BUC(0,T';W^{2-2/p}_p)}+R
\bigg)\|\hat c\|_{\E_{T',p}^c}.
\end{split}
\end{equation}

Now we turn to the second term in (\ref{eq_est_H_first}). 
This term is basically of the same structure as the first term.
For its estimation essentially analogous
calculations and arguments can be carried out. Thus
we will be brief in detail here.
Also note that $\nabla\phi_1$ has the same
regularity in space as $\nabla\phi_2(c^*)$ has, but due to its 
time-independence it can even be treated 
in an easier way. This is why we leave this
term out of the following computation.
In fact, by merely interchanging the roles of the factors
we obtain similarly to (\ref{eq_est_H_1}) and (\ref{eq_est_H_12})
that
\begin{equation}
\label{eq_est_H_1_4}
\begin{split}
&\|\hat c_i^+(\nabla\phi_2(c^*+\bar c^1))\|_{L^p(0,T';W^{1,p})}\\
&\leq C(T)
\big(\|\nabla\phi_2(c^*)\|_{L^{3p}(0,T';W^{1,3p})}+R\big)
\|\hat
c_i\|_{\E_{T',p}^c}.
\end{split}
\end{equation}
Accordingly for the norm in $W^{1/2}_p(0,T';L^p)$,
by performing similar arguments as in 
(\ref{eq_est_H_int})-(\ref{eq_est_H_tot_2}) we deduce
\begin{equation}
\label{eq_est_H_tot_3}
\begin{split}
&\|\hat c_i^+(\nabla\phi_2(c^*+\bar c^1))\|_{W^{1/2}_p(0,T';L^p)}\\
&\leq
C(T)T'^\al\bigg(\|\nabla\phi_2(c^*)\|_{BUC(0,T';BUC)}
+\|\nabla\phi_2(c^*)\|_{L^{3p}(0,T';L^{3p})}\\
&\qquad+\|\pa_t(\nabla\phi_2(c^*))\|_{L^p(0,T';W^{1,p})}+R\bigg)\|\hat
c\|_{\E_{T',p}^c}.
\end{split}
\end{equation}
By the fact that 
$\nabla\phi_1$ and $\nabla\phi_2(c^*)$ have the same regularity in
space, the same terms will appear for $\nabla\phi_1$. Since 
$\nabla\phi_1$ is independent of time then by the Sobolev embedding
merely a term as $T'^\alpha\|\nabla \phi_1\|_{W^{1,3p}}$ remains.
Hence, collecting (\ref{eq_est_H_1_4}) and (\ref{eq_est_H_tot_3}) gives us
\begin{equation}
\label{eq_est_H_second_}
\begin{split}
&\|\hat c_i(\nabla\phi_1+\nabla\phi_2(c^*+\bar
c^1))\|_{\F_{T',p}^{c,\pa\Omega}}\\
&\leq
C(T)\bigg(R+T'^\al\big[\|\nabla\phi_2(c^*)\|_{BUC(0,T';BUC)}
+\|\nabla\phi_1\|_{W^{1,3p}}\big]\\
&\qquad+\|\nabla\phi_2(c^*)\|_{L^{3p}(0,T';W^{1,3p})}
+\|\pa_t(\nabla\phi_2(c^*))\|_{L^p(0,T';W^{1,p})}\bigg)\|\hat
c\|_{\E_{T',p}}.
\end{split}
\end{equation}
Finally, we conclude (\ref{eq_Fre_est}) from relations (\ref{eq_est_F_G}), (\ref{eq_est_H_first}),
(\ref{eq_est_H_first_}) and (\ref{eq_est_H_second_}).
\end{proof}

%%%%%%%%%%%%%%%%%%%%%%%%%%%%%%%%%%%%%%%%%%%%%%%%oben neu

Having Lemma~\ref{lem_DN} at hand, we are in position to prove Theorem~\ref{theorem_strong_loc}.

\begin{proof}[Proof. (of Theorem~\ref{theorem_strong_loc})]
Recall the solution operator $L$ from (\ref{eq_sol_op}) and its restriction to
$\hspace{.01ex}_0\F_{T',p}$. Its operator norm is denoted by $\|L\|$. 
Thanks to Lemma~\ref{lem_DN} we
can choose first $R>0$ and then $T'>0$ small enough such that
\begin{align*}
\|N(\bar u^1,\bar c^1)-N(\bar u^2,\bar c^2)\|_{\F_{T',p}}\leq\frac 1{2\|L\|}\|(\bar u^1,\bar
c^1)-(\bar u^2,\bar c^2)\|_{\E_{T',p}},
\end{align*}
for $(\bar u^1,\bar c^1),(\bar u^2,\bar c^2)\in B_{\hspace*{0.05em}_0\E_{T',p}}(0,R)$.
We may assume that $T'>0$ was also chosen such that
\begin{align*}
\|\bar N(0,0)\|_{\F_{T',p}}=\|N(u^*,c^*)\|_{\F_{T',p}}\leq\frac{R}{2\|L\|},
\end{align*}
since all terms in $N(u^*,c^*)$ consist of time integrals over given functions. Now we can apply the
contraction mapping theorem. The estimates
\begin{align*}
\|L\bar N(\bar u^1,\bar c^1)-L\bar N(\bar u^2,\bar c^2)\|_{\E_{T',p}}&\leq\|L\|\|N(\bar u^1,\bar
c^1)-N(\bar u^2,\bar c^2)\|_{\F_{T',p}}\\
	&\leq\frac 12\|(\bar u^1,\bar c^1)-(\bar u^2,\bar c^2)\|_{\E_{T',p}}
\end{align*}
for $(\bar u^1,\bar c^1),(\bar u^2,\bar c^2)\in B_{\hspace*{0.05em}_0\E_{T',p}}(0,R)$ and
\begin{align*}
\|L\bar N(\bar u,\bar c)\|&\leq\|L\|\big(\|\bar N(\bar u,\bar c)-\bar N(0,0)\|+\|\bar
N(0,0)\|\big)\\
	&\leq\|L\|\left(\frac{1}{2\|L\|}\|(\bar u,\bar c)\|+\frac{R}{2\|L\|}\right)\\
	&\leq R
\end{align*}
for $(\bar u,\bar c)\in B_{\hspace*{0.05em}_0\E_{T',p}}(0,R)$ assure the existence of a fixed point and hence the existence and uniqueness of a local-in-time
solution to ($P'$). Note that our computation implies also the unique local-in-time existence of
strong solutions to the original problem ($P$), cf.\ Remark~\ref{rem_loc_nonneg}.
The claimed regularity for the pressure then is obtained as an
easy consequence of formula (\ref{pressrec}).

Let us denote the solutions to ($P'$) and ($P$) by $(u^a,c^a)$ and $(u,c)$ respectively and let
$c_i^0\geq0$, for $i=1,\ldots,N$. If we can show that for $(u^a,c^a)$ we have $c^a\geq0$ almost
everywhere, then it is also a solution to ($P$) and by uniqueness we see that $(u,c)=(u^a,c^a)$. Let $c_i^-=\min\{c_i,0\}$.
Note that
\[\frac12\Dt\|c_i^{a,-}\|_2^2=\int_\Omega\pa_tc_i^{a,-}c_i^{a,-}dx=\int_\Omega\pa_tc_i^{a}c_i^{a,-}dx=\int_\Omega\pa_tc_i^{a,-}c_i^adx.\]
From the fact that $\pa_tc_i^{a,-}\in L^p(0,T';L^p)$ and $c_i^a\in\E_{T',p}^c$ it is easy to see
using H\"older's inequality and Sobolev's embedding theorem, that the last integral is well-defined.
Hence formal multiplication of (\ref{eq_NP+}) by $c_i^{a,-}$ and integration over $\Omega$ gives
\[\frac12\Dt\|c_i^{a,-}\|_2^2+\int_\Omega(-D_i\nabla c_i^a-D_iz_ic_i^{a,+}\nabla\phi+c_i^au^a)\nabla
c_i^{a,-}dx=0.\]
Integration by parts, taking into account (\ref{eq_NP_bdy+}), yields
\begin{equation}
\label{eq_nonneg_1}
\frac12\Dt\|c_i^{a,-}\|_2^2+D_i\|\nabla c_i^{a,-}\|_2^2=D_iz_i\int_\Omega c_i^{a,+}\nabla\phi\cdot\nabla
c_i^{a,-}dx-\int_\Omega c_i^au\cdot\nabla c_i^{a,-}dx.
\end{equation}
The first integral on the right-hand side is zero since $c_i^{a,+}\cdot\nabla c_i^{a,-}=0$. The
second one also vanishes due to solenoidality of $u$, using
\[\int_\Omega c_i^au\cdot\nabla c_i^{a,-}dx=\frac12\int_\Omega u\cdot\nabla( c_i^{a,-})^2=0.\]
Here we used the fact that $c_i^a\nabla c_i^a\in L^1(0,T';L^2(\Omega))$, to make sure that the
integral exists. Finally, integrating (\ref{eq_nonneg_1}) over $(0,t)$ gives
\begin{align*}
\|c_i^{a,-}(t)\|_2^2+2D_i\int_0^t\|\nabla c_i^{a,-}\|_2^2=\|c_i^{0,-}\|_2^2\leq0,
\end{align*}
for almost all $t\in(0,T)$. So $c_i^a\geq0$ almost everywhere, whence $(u^a,c^a)$ and $(u,c)$
coincide. This proves Theorem~\ref{theorem_strong_loc}.
\end{proof}

\begin{remark}
\label{rem_unique}
Note that the solution $(u,c)$ is unique up to the maximal time of existence $T_{max}$. This can be seen as follows: Let $(u,c),(\hat u,\hat c)\in\E_{T,p}$ be two solutions with initial value $(u^0,c^0)\in\X_p$, and let $(0,T')$ be the maximal time interval such that $(u,c),(\hat u,\hat c)$ coincide, $T'\leq T<T_{max}$. Suppose $T$ could be chosen in such a way that $T'<T$. From \cite{amann} we have $(u,c),(\hat u,\hat c)\in BUC(0,T';\X_p)$, so therefore $(u(T'),c(T'))=(\hat u(T'),\hat c(T'))$ and applying Theorem~\ref{theorem_strong_loc} with initial value $(u(T'),c(T'))\in\X_p$ implies that $(u,c)$ and $(\hat u,\hat c)$ necessarily coincide on $(T',T'+\ep)$, say; 
this contradicts the maximality of $T'$.
\end{remark}

\begin{remark}
\label{rem_phi_1}
The condition $\xi\in W^{3-1/p}_p(\pa\Omega)$ in
Theorem~\ref{theorem_strong_loc} can be relaxed to, e.g.,
\begin{equation}
\label{eq_xi}
\xi\in W^{1+2n/(3p)-1/p}_p(\pa\Omega).
\end{equation}
It is a straight forward calculation that with this condition the facts
that $\nabla\phi_1\in BUC(\overline\Omega)$ and $\nabla^2\phi_1\in
L^{3p}(\Omega)$ remain valid. Having this information at hand, we see
that estimate (\ref{eq_lin_strong}) and the reasoning to obtain 
(\ref{eq_est_H_second_}) is still true. There are no further points where the regularity of $\phi_1$ is used. However, because of the unphysical dependence on the space dimension, we chose to state Theorem~\ref{theorem_strong_loc} in the given form.
\end{remark}

%%%%%%%%%%%%%%%%%%%%%%%%%%%%%%%%%%%%%%%%%%%%%%%%%%%%%%%%%%%%%%%%%%%%%%%%%%%%%%%%%%%%%%%%%%%%%%%%%%%%%%%%%%%%%%%%%%%%%%%%%%%%%%%%%%%%%%%%%%
\begin{corollary}
\label{cor_mass_cons}
Let $(u,c)$ be the local strong solution to ($P$) from Theorem \ref{theorem_strong_loc} with initial
data $(u^0,c^0)$ where $c_i^0\geq0$, $i=1,\ldots,N$. Then the initial masses $m_i:=\int_\Omega
c_i^0(x)dx$ are conserved, i.e.
\[\|c_i(t)\|_1=\|c_i^0\|_1,\quad t\geq0,\quad i=1,\ldots N.\]
\end{corollary}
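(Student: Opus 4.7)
The plan is a direct integration argument, mirroring the derivation of the no-flux condition from Remark~(f) after Theorem~\ref{theorem_strong_loc}.

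First, I would recall from (\ref{eq_def_J}) that in divergence form the species equation reads $\partial_t c_i + \divv J_i = 0$ in $\Omega$, together with the boundary condition $J_i\cdot\nu=0$ on $\partial\Omega$, which holds because $u$ satisfies the no-slip condition and $c_i$ satisfies (\ref{eq_NP_bdy}). Since $\Omega$ is bounded and $(u,c)\in\E_{T,p}$, the function $c_i$ lies in $W^{1,p}(0,T;L^p(\Omega))$, so $\partial_t c_i(t,\cdot)\in L^p(\Omega)\subset L^1(\Omega)$ for a.e.\ $t$, and $t\mapsto\int_\Omega c_i(t,x)\,dx$ is absolutely continuous with derivative $\int_\Omega\partial_t c_i\,dx$.

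Next, I would integrate the equation $\partial_t c_i=-\divv J_i$ over $\Omega$. Since $J_i\in L^p(0,T;W^{1,p}(\Omega))$ (this is clear from the definition $J_i=-D_i\nabla c_i-D_iz_ic_i\nabla\phi+c_iu$ together with $c\in\E_{T,p}^c$, $\phi\in\E_{T,p}^\phi$, and $u\in\E_{T,p}^u$), the divergence theorem applies and yields
\begin{equation*}
\frac{d}{dt}\int_\Omega c_i(t,x)\,dx
=-\int_\Omega\divv J_i\,dx
=-\int_{\partial\Omega}J_i\cdot\nu\,d\sigma=0
\end{equation*}
for a.e.\ $t\in(0,T_{\max})$. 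Integrating in time gives $\int_\Omega c_i(t)\,dx=\int_\Omega c_i^0\,dx=m_i$.

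Finally, since Theorem~\ref{theorem_strong_loc} guarantees $c_i(t,\cdot)\geq 0$ a.e.\ whenever $c_i^0\geq 0$, we have $\|c_i(t)\|_1=\int_\Omega c_i(t,x)\,dx=\int_\Omega c_i^0\,dx=\|c_i^0\|_1$, as asserted. There is no real obstacle here; the entire content is the combination of the no-flux boundary condition with the solenoidality of $u$ (both packaged in $J_i\cdot\nu=0$) and the sufficient regularity of the strong solution to justify the interchange of $\partial_t$ and integration together with the divergence theorem.
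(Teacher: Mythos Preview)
Your proof is correct and follows essentially the same approach as the paper: integrate $\partial_t c_i=-\divv J_i$ over $\Omega$, apply the divergence theorem together with the no-flux condition $J_i\cdot\nu=0$, and conclude using nonnegativity that $\|c_i(t)\|_1=\int_\Omega c_i(t)\,dx$ is conserved. The paper's version is terser, while you additionally spell out the regularity justifications for interchanging $\partial_t$ with the spatial integral and for applying the divergence theorem.
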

%%%%%%%%%%%%%%%%%%%%%%%%%%%%%%%%%%%%%%%%%%%%%%%%%%%%%%%%%%%%%%%%%%%%%%%%%%%%%%%%%%%%%%%%%%%%%%%%%%%%%%%%%%%%%%%%%%%%%%%%%%%%%%%%%%%%%%%%%%

\begin{proof}
The claim follows from nonnegativity of $c_i$ and
\begin{equation}
\label{eq_mass_cons}
\begin{split}
\Dt\int_\Omega c_idx&=\int_\Omega \pa_tc_idx=-\int_\Omega\divv J_idx=-\int_{\pa\Omega}J_i\cdot\nu dx=0,\quad i=1,\ldots N,
\end{split}
\end{equation}
due to (\ref{eq_NP}) and (\ref{eq_NP_bdy}).
\end{proof}

%%%%%%%%%%%%%%%%%%%%%%%%%%%%%%%%%%%%%%%%%%%%%%%%%%%%%%%%%%%%%%%%%%%%%%%%%%%%%%%%%%%%%%%%%%%%%%%%%%%%%%%%%%%%%%%%%%%%%%%%%%%%%%%%%%%%%%%%%%
%%%%%%%%%%%%%%%%%%%%%%%%%%%%%%%%%%%%%%%%%%%%%%%%%%%%%%%%%%%%%%%%%%%%%%%%%%%%%%%%%%%%%%%%%%%%%%%%%%%%%%%%%%%%%%%%%%%%%%%%%%%%%%%%%%%%%%%%%%
\section{Global well-posedness in two dimensions}
\label{global}
%%%%%%%%%%%%%%%%%%%%%%%%%%%%%%%%%%%%%%%%%%%%%%%%%%%%%%%%%%%%%%%%%%%%%%%%%%%%%%%%%%%%%%%%%%%%%%%%%%%%%%%%%%%%%%%%%%%%%%%%%%%%%%%%%%%%%%%%%%
%%%%%%%%%%%%%%%%%%%%%%%%%%%%%%%%%%%%%%%%%%%%%%%%%%%%%%%%%%%%%%
%%%%%%%%%%%%%%%%%%%%%%%%%%%%%%%%%%%%%%%%%%%%%%%%%%%%%%%%%%%%%%
%\subsection{Preliminaries}
%\label{preliminary}
%%%%%%%%%%%%%%%%%%%%%%%%%%%%%%%%%%%%%%%%%%%%%%%%%%%%%%%%%%%%%%

Before we turn our attention to the estimation of the local solution $(u,c)$ from Theorem~\ref{theorem_strong_loc} we collect some known technical results
which will be employed in the proofs of global well-posedness and
stability. Below we write
\[\log^+x=\max\{0,\log x\},\text{ for }x>0.\]

%%%%%%%%%%%%%%%%%%%%%%%%%%%%%%%%%%%%%%%%%%%%%%%%%%%%%%%%%%%%%%
\begin{lemma}
\label{lemma32}\cite[Lemma 3.2]{CL_multidim}
Let $v\geq0$ be defined on $\Omega$. Then for any $p\geq1$, $\ep>0$ and $0\leq\al<1$, there is a
constant $C=C(p,\al,\ep,\Omega)>0$ such that
\[\|v\|_p\leq\ep\|v(\log^+v)^{1-\al}\|_p+C.\]
\end{lemma}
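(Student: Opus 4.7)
The plan is to split the domain according to whether $v$ is small or large, with the threshold tuned to $\ep$.

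First I would fix $M>0$ (to be chosen later) and write
\[
\|v\|_p^p = \int_{\{v\le e^M\}} v^p\,dx + \int_{\{v> e^M\}} v^p\,dx.
\]
On the first set, the integrand is bounded by $e^{Mp}$, so that contribution is at most $e^{Mp}|\Omega|$. On the second set, we exploit $\log^+v=\log v>M$ to write
\[
v = v\cdot\frac{(\log^+v)^{1-\al}}{(\log^+v)^{1-\al}} \le \frac{1}{M^{1-\al}}\, v(\log^+v)^{1-\al},
\]
which is legitimate since $\al<1$ guarantees $1-\al>0$. Taking the $L^p$-norm restricted to $\{v>e^M\}$ then gives
\[
\Bigl(\int_{\{v>e^M\}} v^p\,dx\Bigr)^{1/p} \le M^{-(1-\al)}\,\|v(\log^+v)^{1-\al}\|_p.
\]

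Combining these bounds via the triangle inequality in $L^p$ yields
\[
\|v\|_p \le e^{M}|\Omega|^{1/p} + M^{-(1-\al)}\|v(\log^+v)^{1-\al}\|_p.
\]
Finally, I would choose $M=M(\ep,\al)$ so large that $M^{-(1-\al)}\le\ep$, for instance $M=\ep^{-1/(1-\al)}$, and set $C:=e^M|\Omega|^{1/p}$. This produces the desired inequality with $C=C(p,\al,\ep,\Omega)$.

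There is no real obstacle here; the only point to watch is that the splitting threshold $e^M$ must be chosen \emph{after} fixing $\ep$ and $\al$ (so that $M^{-(1-\al)}\le\ep$), which is why the resulting constant $C$ depends on $\ep,\al,p$ and $|\Omega|$ but not on $v$.
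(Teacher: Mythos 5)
Your argument is correct. The paper does not prove this lemma at all --- it is quoted verbatim from \cite[Lemma 3.2]{CL_multidim} --- so there is nothing to compare against, but your splitting of $\Omega$ into $\{v\le e^M\}$ and $\{v>e^M\}$ with $M=\ep^{-1/(1-\al)}$ is the standard elementary proof and every step checks out: on the large set $(\log^+v)^{1-\al}\ge M^{1-\al}$ since $1-\al>0$, and the final combination is legitimate whether you read it as the triangle inequality applied to $v=v\chi_{\{v\le e^M\}}+v\chi_{\{v>e^M\}}$ or as subadditivity of $t\mapsto t^{1/p}$. The only implicit hypothesis is measurability of $v$ (and the inequality is vacuous if the right-hand norm is infinite), which is harmless.
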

%%%%%%%%%%%%%%%%%%%%%%%%%%%%%%%%%%%%%%%%%%%%%%%%%%%%%%%%%%%%%%
\begin{lemma}
\label{lemma33}\cite[Lemma 3.3]{CL_multidim}
Let $m\geq1$, $q>r\geq1$ and define
\[\gamma=\frac{\frac1r-\frac1q}{\frac1n-\frac1m+\frac1r}.\]
Suppose that $0<\ga<1$. Then there exists $\al$ with $\ga<\al<1$ and a constant
$C=C(m,n,q,r,\Omega)$ such that for all $v\in W^{1,m}(\Omega)$ with $v\geq0$
\[\|v(\log^+v)^{1-\al}\|_q\leq C\|v\log^+v\|_r^{1-\al}(\|\nabla
v\|_m^\ga\|v\|_r^{\al-\ga}+\|v\|_1^\al).\]
\end{lemma}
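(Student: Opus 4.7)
The plan is to combine Hölder's inequality with the Gagliardo--Nirenberg interpolation inequality in a bounded smooth domain, exploiting the algebraic identity
\[
v(\log^+ v)^{1-\alpha} = v^{\alpha}\cdot (v\log^+ v)^{1-\alpha}.
\]
This factorization isolates the logarithmic factor in one Hölder piece and leaves a pure $L^s$-norm of $v$ in the other, which is exactly what interpolation can handle.

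First, I would apply Hölder with conjugate exponents $s/\alpha$ and $r/(1-\alpha)$, where $s$ is determined by
\[
\frac{\alpha}{s}+\frac{1-\alpha}{r}=\frac{1}{q},
\qquad \text{equivalently}\qquad
\frac{1}{r}-\frac{1}{s}=\frac{1}{\alpha}\left(\frac{1}{r}-\frac{1}{q}\right),
\]
which yields
\[
\|v(\log^+ v)^{1-\alpha}\|_q \leq \|v\|_s^{\alpha}\,\|v\log^+ v\|_r^{1-\alpha}.
\]
Next I would estimate $\|v\|_s$ by the Gagliardo--Nirenberg inequality on $\Omega$ in the form
\[
\|v\|_s \leq C\bigl(\|\nabla v\|_m^{\tilde\gamma}\|v\|_r^{1-\tilde\gamma}+\|v\|_1\bigr),
\]
with $\tilde\gamma$ fixed by the scaling relation $\tfrac{1}{s}=\tilde\gamma(\tfrac{1}{m}-\tfrac{1}{n})+(1-\tilde\gamma)\tfrac{1}{r}$. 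Substituting the expression for $1/s$ from the Hölder step, a direct computation gives exactly $\tilde\gamma=\gamma/\alpha$, so that $\tilde\gamma\alpha=\gamma$ and $(1-\tilde\gamma)\alpha=\alpha-\gamma$. Raising the Gagliardo--Nirenberg inequality to the $\alpha$-th power and using the sub-additivity $(a+b)^{\alpha}\leq a^{\alpha}+b^{\alpha}$ valid for $0<\alpha<1$ and $a,b\geq0$, one obtains
\[
\|v\|_s^{\alpha}\leq C\bigl(\|\nabla v\|_m^{\gamma}\|v\|_r^{\alpha-\gamma}+\|v\|_1^{\alpha}\bigr),
\]
and multiplying by $\|v\log^+ v\|_r^{1-\alpha}$ produces the claimed inequality.

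The main obstacle, and really the only delicate point, is verifying that a single $\alpha\in(\gamma,1)$ can meet every constraint simultaneously. Admissibility of Gagliardo--Nirenberg forces $\tilde\gamma=\gamma/\alpha\in[0,1]$, i.e.\ $\alpha\geq\gamma$, which is guaranteed by $\alpha>\gamma$. The Hölder step requires $s>0$ and $s\geq\alpha$, which rearranges to $\alpha>1-r/q$. One therefore has to pick $\alpha$ in the interval $(\max\{\gamma,1-r/q\},1)$; this interval is nonempty precisely because $\gamma<1$ by hypothesis and $r<q$. With such a choice, the constants produced by Gagliardo--Nirenberg and Hölder depend only on $\alpha,r,q,m,n$ and on $\Omega$, but not on $v$, which completes the proposed proof.
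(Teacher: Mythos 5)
Your proof is correct. The paper itself does not prove this lemma --- it simply cites \cite[Lemma 3.3]{CL_multidim} and records that the hypothesis $r>1$ assumed there can be weakened to $r\geq1$ --- and your argument (the factorization $v(\log^+v)^{1-\alpha}=v^{\alpha}(v\log^+v)^{1-\alpha}$, H\"older with exponents $s/\alpha$ and $r/(1-\alpha)$, then Gagliardo--Nirenberg in the additive bounded-domain form, i.e.\ the same variant as Lemma~\ref{lemma34}) is exactly the standard route taken in the cited source. The exponent bookkeeping checks out: $\tilde\gamma=\gamma/\alpha$, $s>r\geq1$ so the H\"older and Gagliardo--Nirenberg exponents are admissible, and the interval $(\max\{\gamma,1-r/q\},1)$ for $\alpha$ is nonempty; note also that your argument only uses $r\geq1$, which substantiates the paper's remark about relaxing $r>1$.
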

%%%%%%%%%%%%%%%%%%%%%%%%%%%%%%%%%%%%%%%%%%%%%%%%%%%%%%%%%%%%%%
This result is contained in Lemma~3.3 of \cite{CL_multidim}. Note that 
the restriction $r>1$ assumed there is not needed in the proof.
%%%%%%%%%%%%%%%%%%%%%%%%%%%%%%%%%%%%%%%%%%%%%%%%%%%%%%%%%%%%%%
\begin{lemma}
\label{lemma34}
Let $\Omega\subset\R^n$ be bounded with smooth boundary. Let $1\leq q<\infty$
if $n=2$ and $1\leq q<2n/(n-2)$ if $n\ge 3$. Then there is a
constant $C=C(n,q,\Omega)$ such that
\[\|v\|_q\leq C(\|\nabla v\|_2^\al\|v\|_1^{1-\al}+\|v\|_1),\]
for any $v\in W^{1,2}(\Omega)$, where
\[\al=\frac{1-\frac1q}{\frac1n+\frac12}\]
and $0\leq\al<1$.
\end{lemma}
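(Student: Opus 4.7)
The plan is to reduce the bounded-domain statement to the classical Gagliardo--Nirenberg inequality on $\R^n$ by means of a smooth extension. Let $E\colon W^{1,2}(\Om)\to W^{1,2}(\R^n)$ denote a Stein-type extension operator for the smooth domain $\Om$, chosen so that $\mathrm{supp}(Ev)$ is contained in a fixed compact set and so that $E$ is simultaneously bounded from $L^p(\Om)$ to $L^p(\R^n)$ for $p=1$ and from $W^{1,2}(\Om)$ to $W^{1,2}(\R^n)$. In particular one has the bounds $\|Ev\|_{L^1(\R^n)}\le C\|v\|_1$ and $\|\nabla Ev\|_{L^2(\R^n)}\le C(\|\nabla v\|_2+\|v\|_2)$.

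Next I would apply the standard Gagliardo--Nirenberg inequality on $\R^n$,
\[
\|Ev\|_{L^q(\R^n)}\le C\,\|\nabla Ev\|_{L^2(\R^n)}^\al\,\|Ev\|_{L^1(\R^n)}^{1-\al},
\]
whose exponent is dictated by the scaling identity $-n/q=\al(1-n/2)+(1-\al)(-n)$, which yields precisely $\al=(1-1/q)/(1/n+1/2)$. The restriction $\al<1$ translates into $q<2n/(n-2)$ for $n\ge 3$ and $1\le q<\infty$ for $n=2$, matching the hypotheses of the lemma, while $\al=0$ corresponds to the trivial case $q=1$.

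To eliminate the parasitic $\|v\|_2$ produced by the extension, I invoke the Poincar\'e--Wirtinger inequality on $\Om$. Writing $\bar v:=|\Om|^{-1}\int_\Om v$, one has $\|v-\bar v\|_2\le C\|\nabla v\|_2$ together with $|\bar v|\le|\Om|^{-1}\|v\|_1$, so that $\|v\|_2\le C\|\nabla v\|_2+C\|v\|_1$. Combining this with $\|v\|_{L^q(\Om)}\le\|Ev\|_{L^q(\R^n)}$ and with the elementary inequality $(a+b)^\al\le a^\al+b^\al$ valid for $\al\in[0,1]$ and $a,b\ge 0$, one obtains
\[
\|v\|_q\le C\bigl(\|\nabla v\|_2+\|v\|_1\bigr)^\al\|v\|_1^{1-\al}\le C\|\nabla v\|_2^\al\|v\|_1^{1-\al}+C\|v\|_1,
\]
which is the desired bound.

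The only genuinely non-trivial ingredient is the existence of an extension operator that is simultaneously bounded on $L^1$ and on $W^{1,2}$ with the compact-support property; this is classical for $C^\infty$-smooth bounded domains, for instance via Stein's reflection-by-straightening construction. Everything else is dimensional scaling and the mean-value Poincar\'e inequality, so I do not anticipate any substantive obstacle.
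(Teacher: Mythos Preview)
Your proof is correct. The paper does not actually prove this lemma but simply records it as a direct consequence of the Gagliardo--Nirenberg inequality without boundary conditions, citing \cite{LSU}; your extension-plus-Poincar\'e argument is precisely one standard way to derive that bounded-domain version from the whole-space inequality, so you have supplied the details the paper omits.
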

This is a direct consequence of the Gagliardo-Nirenberg inequality without boundary conditions, for a proof see e.g.\ \cite{LSU}.

%%%%%%%%%%%%%%%%%%%%%%%%%%%%%%%%%%%%%%%%%%%%%%%%%%%%%%%%%%%%%%

For convenience we also state the Poincar\'e inequality in a form which will be used frequently.
%%%%%%%%%%%%%%%%%%%%%%%%%%%%%%%%%%%%%%%%%%%%%%%%%%%%%%%%%%%%%%
\begin{lemma}
\label{lem_poinc}\cite[Chapter 5.8, Theorem 1]{Evans} Let $1\leq p\leq\infty$ and $\Omega\subset\R^n$ be a
bounded Lipschitz domain. Then there is a constant $C=C(p,n,\Omega)>0$ such that for any $v\in
W^{1,p}(\Omega)$
\[\|v\|_p\leq C\|\nabla v\|_p+\|v\|_1.\]
\end{lemma}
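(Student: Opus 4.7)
Since this is the standard Poincar\'e-type inequality (indeed, the statement is explicitly attributed to Evans, Chapter~5.8, Theorem~1), the natural approach is the classical compactness-contradiction argument via the Rellich-Kondrachov theorem. Assuming the inequality fails, for each $k\in\N$ one finds $v_k\in W^{1,p}(\Omega)$ with $\|v_k\|_p>k(\|\nabla v_k\|_p+\|v_k\|_1)$. Normalizing $w_k:=v_k/\|v_k\|_p$ yields $\|w_k\|_p=1$ while $\|\nabla w_k\|_p+\|w_k\|_1\le 1/k$, so $(w_k)$ is bounded in $W^{1,p}(\Omega)$.

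For $1\le p<\infty$, Rellich-Kondrachov (which is available on bounded Lipschitz domains via a Sobolev extension operator to $\R^n$) delivers a subsequence $w_{k_j}\to w$ strongly in $L^p(\Omega)$ with $\|w\|_p=1$. Since $\nabla w_{k_j}\to 0$ in $L^p$, the distributional gradient of the limit vanishes, so on a connected $\Omega$ (otherwise one argues componentwise) $w$ is constant. But $\|w_{k_j}\|_1\to 0$ together with strong $L^p$-convergence on a bounded domain forces $\|w\|_1=0$, hence $w\equiv 0$, contradicting $\|w\|_p=1$.

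The endpoint $p=\infty$ cannot be handled by compactness, since $W^{1,\infty}\hookrightarrow L^\infty$ is not compact, and so requires a direct argument. On a bounded Lipschitz domain any two points are joined by a rectifiable path of length bounded by a geometric constant $M=M(\Omega)$, so $|v(x)-v(y)|\le M\|\nabla v\|_\infty$ for every $x,y\in\Omega$. Averaging in $y$ over $\Omega$ and then taking the supremum in $x$ gives $\|v\|_\infty\le |\Omega|^{-1}\|v\|_1+M\|\nabla v\|_\infty$, as required. The only genuine obstacle in this plan is the compactness step for $p<\infty$, which tacitly relies on the Sobolev extension theorem for Lipschitz domains; with that in hand everything else is routine and matches the presentation in Evans.
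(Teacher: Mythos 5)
The paper gives no proof of this lemma (it is quoted from Evans, whose own argument for the Poincar\'e inequality is exactly the compactness--contradiction scheme you use), so your proposal is correct and in line with the cited source; the normalization, Rellich--Kondrachov via an extension operator, and the observation that $\|w\|_1=0$ already forces $w\equiv 0$ are all sound, and your separate quasiconvexity argument for $p=\infty$ is a legitimate way to cover the endpoint. One small remark: what your argument actually delivers is $\|v\|_p\leq C\bigl(\|\nabla v\|_p+\|v\|_1\bigr)$, with the constant also multiplying $\|v\|_1$ (respectively $|\Omega|^{-1}\|v\|_1$ for $p=\infty$); the literal statement with coefficient $1$ on $\|v\|_1$ fails for constant functions whenever $|\Omega|<1$ and $p>1$, so this is a defect of the statement rather than of your proof, and the form you prove is the one the paper actually uses (e.g.\ in Lemmas~\ref{lem_ci_l2} and \ref{lem_ci_infty}).
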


We finally recall a ''uniform version'' of Gronwall's inequality.
\begin{lemma}
\label{lem_gron}\cite[Chapter III, Lemma 1.1]{TemamDyn} Let $f,g,h$ be three nonnegative locally integrable
functions on $(t_0,\infty)$ such that $f'(t)$ is locally integrable on $(t_0,\infty)$,
and which satisfy
\[f'(t)\leq g(t)f(t)+h(t),\quad t\geq t_0,\]
\[\int_t^{t+r}f(s)ds\leq a_1,\quad\int_t^{t+r}g(s)ds\leq a_2,\quad \int_t^{t+r}h(s)ds\leq a_3,\quad
t\geq t_0,\]
where $r,a_i$ are positive constants. Then
\[f(t+r)\leq\left(\frac{a_1}r+a_3\right)\exp(a_2),\quad t\geq t_0.\]
\end{lemma}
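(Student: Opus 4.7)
The plan is to combine the classical pointwise Gronwall inequality on a floating subinterval with an averaging argument. The bound $\int_t^{t+r}f(s)\,ds\le a_1$ is an integral-in-time estimate, so at some stage one must integrate both sides of a Gronwall-type inequality over the length-$r$ window; the trick is to first freeze the starting point of the inequality, apply pointwise Gronwall, and then average the starting point afterwards.

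Concretely, first I would fix $t\ge t_0$ and introduce the integrating factor $\mu(\sigma):=\exp\!\left(-\int_t^{\sigma}g(\tau)\,d\tau\right)$. Since $f$ is absolutely continuous on compacts (as $f'$ is locally integrable) and $\mu$ is as well, the product $\mu f$ is absolutely continuous with $(\mu f)'\le \mu h$ almost everywhere. Integrating from any $s\in[t,t+r]$ to $t+r$ and dividing by $\mu(t+r)$ produces
\begin{equation*}
f(t+r)\le f(s)\exp\!\left(\int_s^{t+r}g(\sigma)\,d\sigma\right)+\int_s^{t+r}h(\sigma)\exp\!\left(\int_\sigma^{t+r}g(\tau)\,d\tau\right)d\sigma.
\end{equation*}
Using $\int_s^{t+r}g\le a_2$ and $\int_s^{t+r}h\le a_3$ (both obvious from monotonicity of the integration domain in $s$), each exponential is bounded by $e^{a_2}$, whence
\begin{equation*}
f(t+r)\le\bigl(f(s)+a_3\bigr)\,e^{a_2},\qquad s\in[t,t+r].
\end{equation*}
Now I would integrate this inequality over $s\in[t,t+r]$. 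Since the left-hand side is independent of $s$,
\begin{equation*}
r\,f(t+r)\le\left(\int_t^{t+r}f(s)\,ds+r\,a_3\right)e^{a_2}\le\bigl(a_1+r\,a_3\bigr)\,e^{a_2},
\end{equation*}
and dividing by $r$ yields the claimed bound.

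The only point that requires a little care is the justification of the integrating-factor step in the merely absolutely continuous / locally integrable regularity class, but this is standard: the product rule for absolutely continuous functions is classical and the differential inequality holds almost everywhere, which is exactly what is needed to integrate $(\mu f)'\le\mu h$. Beyond this there is no real analytic obstacle; the whole statement is essentially a short corollary of the ordinary Gronwall lemma, with the hypothesis $r>0$ serving only to give the averaging step its meaning and thus to produce the characteristic $a_1/r$ term on the right-hand side.
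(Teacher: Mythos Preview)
Your proof is correct and is exactly the standard argument for the uniform Gronwall lemma; the paper does not give its own proof here but simply cites Temam, and what you wrote is essentially Temam's proof (integrating factor on $[s,t+r]$, then average over $s\in[t,t+r]$). There is nothing to add.
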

%%%%%%%%%%%%%%%%%%%%%%%%%%%%%%%%%%%%%%%%%%%%%%%%%%%%%%%%%%%%%%
%%%%%%%%%%%%%%%%%%%%%%%%%%%%%%%%%%%%%%%%%%%%%%%%%%%%%%%%%%%%%%%%%%%%%%%%%%%%%%%%%%%%%%%%%%%%%%%%%%%%%%%%%%%%%%%%%%%%%%%%%%%%%%%%%%%%%%%%%%
%%%%%%%%%%%%%%%%%%%%%%%%%%%%%%%%%%%%%%%%%%%%%%%%%%%%%%%%%%%%%%%%%%%%%%%%%%%%%%%%%%%%%%%%%%%%%%%%%%%%%%%%%%%%%%%%%%%%%%%%%%%%%%%%%%%%%%%%%%
%\subsection{Global estimates}
%%%%%%%%%%%%%%%%%%%%%%%%%%%%%%%%%%%%%%%%%%%%%%%%%%%%%%%%%%%%%%%%%%%%%%%%%%%%%%%%%%%%%%%%%%%%%%%%%%%%%%%%%%%%%%%%%%%%%%%%%%%%%%%%%%%%%%%%%%

For the case $p=2$ the initial data $(u^0,c^0)$ has to satisfy $(u^0,c^0)\in
W^{1,2}_{0,\sigma}(\Omega)\times W^{1,2}(\Omega)$ for the local strong solution $(u,c)$ to be well-defined by virtue of Theorem~\ref{theorem_strong_loc}.
For simplicity we impose the following
additional initial regularity condition on the concentrations for the rest of this article:
\[c_i^0\in W^{1,2}(\Omega)\cap L^\infty(\Omega).\]
Note that the $L^\infty$-constraint is not a restriction from the physical point of view, since concentrations of dissolved species are naturally finite. From the mathematical point of view, it is still possible to show that all results concerning global well-posedness and asymptotics in two dimensions presented in this work essentially remain valid, if we allow for initial data $(u^0,c^0)\in L^2_\sigma(\Omega)\times L^2(\Omega)$ with $c_i^0\geq0$. This will be explained in Remark~\ref{rem_weak_sol_2}.

%%%%%%%%%%%%%%%%%%%%%%%%%%%%%%%%%%%%%%%%%%%%%%%%%%%%%%%%%%%%%%%%%%%%%%%%%%%%%%%%%%%%%%%%%%%%%%%%%%%%%%%%%%%%%%%%%%%%%%%%%%%%%%%%%%%%%%%%%%
\begin{definition}[Global solution]
\label{def_glob_well}
Let $\Omega\subset\R^n$, $n\geq2$. The function $(u,c)\colon[0,\infty)\to L^2_\sigma(\Omega)\times
L^2(\Omega)$ with $c_i\geq0$ is called \textit{global solution} to ($P$), if it satisfies ($P$) almost everywhere and if
for all $T<\infty$ it holds true that
\[(u,c)\in W^{1,2}(0,T;L^2_\sigma(\Omega)\times L^2(\Omega))\cap L^2(0,T;\DD(A_S)\times
W^{2,2}(\Omega)).\]
\end{definition}
%%%%%%%%%%%%%%%%%%%%%%%%%%%%%%%%%%%%%%%%%%%%%%%%%%%%%%%%%%%%%%%%%%%%%%%%%%%%%%%%%%%%%%%%%%%%%%%%%%%%%%%%%%%%%%%%%%%%%%%%%%%%%%%%%%%%%%%%%%

Let from now on $\Omega\subset\R^2$ be bounded and smooth. For the proof of Theorem~\ref{theorem_glob_ex} we aim for an estimate of type
\[\|(u,c)\|_{\E_{T,2}}\leq C(1+T)\]
with $\E_{T,2}$ the space of solutions defined right before (\ref{eq_sol_op}).
This requires appropriate estimates of the local 
solution $(u,c)$. Applying energy methods
we will derive several uniform-in-time bounds for quantities like $\|u\|_2$,
$\|c\|_2$, etc., e.g. $\|u(t)\|_2\leq C$ for $t\geq0$ and $C>0$ not depending on $t$. A
justification for the use of energy methods, i.e.\ the formal multiplication of certain equations of
($P$) with quantities like $u,c$ and integration over $\Omega$, can be given by the fact that
$(u,c)$ satisfies ($P$) almost everywhere and the integrals in the calculations being finite. Note
at this stage that since we deal with strong solutions these boundedness relations are merely valid
in an almost everywhere sense. However, as we aim at controlling the norm in $\E_{T,2}$, this is
sufficient for our purpose.

Let from now on $(u,c)\in\E_{T,2}$ denote the unique local strong solution from Theorem
\ref{theorem_strong_loc} for initial data
\[(u^0,c^0)\in W^{1,2}_{0,\sigma}(\Omega)\times (W^{1,2}(\Omega)\cap L^\infty(\Omega)).\]

For $(u,c)\in L^2_\sigma(\Omega)\times L^2(\Omega)$ with $c_i\geq0$ we define the energy functional
\begin{equation}
\label{eq_energy_fct}
E(u,c):=\frac12\|u\|_2^2+\sum_{i=1}^N\int_\Omega c_i\log
c_idx+\frac\ep2\|\nabla\phi\|_2^2+\frac{\ep\tau}2\|\phi\|_{2,\pa\Omega}^2,
\end{equation}
where $\phi$ is the solution to
\begin{equation}
\label{eq_energy_phi}
-\ep\Delta\phi=\sum_{j=1}^Nz_jc_j,\quad\pa_\nu\phi+\tau\phi=\xi.
\end{equation}
Recall that $\xi$ is assumed to be a time-independent function on the surface $\pa\Omega$. Note also that
the term $\int_\Omega c_i\log c_idx$ is finite for $c_i\in L^2(\Omega)$ with $c_i\geq0$, since
\begin{equation}
\label{eq_xlogx}
x\log x\leq\begin{cases}
	0,&x\in(0,1),\\
	x^2,&x>1.
\end{cases}
\end{equation}
In the following we set $(x\log x)_{|x=0}:=0$.

Let $T_{max}$ be the maximal time of existence of solutions $(u,c)$. For $t\in[0,T_{max})$ we
define
\begin{equation}
\label{eq_lyapunov}
\begin{split}
V(t):=&E(u(t),c(t))\\
=&\frac1{2}\|u(t)\|_2^2+\sum_{i=1}^N\int_\Omega c_i(t)\log
c_i(t)dx+\frac\ep2\|\nabla\phi(t)\|_2^2+\frac{\ep\tau}2\|\phi(t)\|_{2,\pa\Omega}^2.
\end{split}
\end{equation}
The term $\int_\Omega c_i\log c_idx$ is bounded from below, since $x\log x\geq-1$, say, for $x\geq0$. As
the remaining terms in $V$ are nonnegative, $V$ is bounded from below.

%%%%%%%%%%%%%%%%%%%%%%%%%%%%%%%%%%%%%%%%%%%%%%%%%%%%%%%%%%%%%%
\begin{lemma}
\label{lem_lya_der}
Suppose $(u,c)$ is the local strong solution to ($P$) from Theorem~\ref{theorem_strong_loc}.

(a) We have 
\[t\mapsto \int_\Omega\frac{1}{D_ic_i(t)}|j_i(t)|^2dx\in L^1_{loc}([0,T_{max})),\]
where $j_i=-D_i\nabla c_i-D_iz_ic_i\nabla\phi$ denotes the flux of species $i$.

(b) The function $V$ from (\ref{eq_lyapunov}) is a Lyapunov functional for system ($P$). More
precisely, it holds true that
\begin{equation}
\label{eq_lya_der}
\Dt V(t)=-\|\nabla u(t)\|_2^2-\sum_{i=1}^N\int_\Omega\frac{1}{D_ic_i(t)}|j_i(t)|^2dx\leq0
\end{equation}
for almost all $t\in(0,T_{max})$.
\end{lemma}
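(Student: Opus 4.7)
The strategy is to compute $\Dt V$ formally first, then justify everything by regularizing the entropy $c\log c$ with $(c+\de)\log(c+\de)$, $\de>0$, which simultaneously delivers part (a). The central algebraic identity is $j_i=-D_ic_i\nabla(\log c_i+z_i\phi)$ on $\{c_i>0\}$, so pointwise $|j_i|^2/(D_ic_i)=D_ic_i|\nabla(\log c_i+z_i\phi)|^2$. Since $c_i\geq0$ and $\nabla c_i=0$ almost everywhere on $\{c_i=0\}$ (by Stampacchia's lemma), $j_i=0$ there as well; hence the integrand in (a) is well-defined as an element of $[0,\infty]$ when extended by $0$ on $\{c_i=0\}$.

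For $\Dt V$ I differentiate the three contributions. Testing (\ref{eq_NS}) with $u$, using $\divv u=0$ and $u|_{\pa\Omega}=0$, gives $\int_\Omega u\cdot\pa_tu\,dx=-\|\nabla u\|_2^2-\sum_jz_j\int_\Omega c_ju\cdot\nabla\phi\,dx$. Writing $\pa_tc_i=-\divv J_i$ with $J_i\cdot\nu=0$, integrating by parts, and noting $\int_\Omega\nabla c_i\cdot u\,dx=0$, yields $\Dt\int_\Omega c_i\log c_i\,dx=\int_\Omega\nabla(\log c_i)\cdot j_i\,dx$. For the electric part I crucially use that $\xi$ is time-independent, so $\pa_t\phi$ satisfies the homogeneous Robin condition $\pa_\nu\pa_t\phi+\tau\pa_t\phi=0$; Green's identity together with $-\ep\Delta\pa_t\phi=\sum_jz_j\pa_tc_j$ then gives
\[
    \ep\int_\Omega\nabla\phi\cdot\nabla\pa_t\phi\,dx+\ep\tau\int_{\pa\Omega}\phi\,\pa_t\phi\,dS=\sum_jz_j\int_\Omega\nabla\phi\cdot(c_ju+j_j)\,dx.
\]
Adding the three contributions, the convective coupling $\sum_jz_j\int_\Omega c_ju\cdot\nabla\phi\,dx$ cancels between the kinetic and electric parts, and the pointwise identity above produces $\Dt V=-\|\nabla u\|_2^2-\sum_i\int_\Omega|j_i|^2/(D_ic_i)\,dx$.

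To make this rigorous and establish (a), replace $c_i\log c_i$ by $\varphi_\de(c_i):=(c_i+\de)\log(c_i+\de)$ and define $V_\de$ accordingly. Since $\varphi_\de''(x)=1/(x+\de)$ is bounded, the preceding computation repeats for the local strong solution with $\nabla c_i/(c_i+\de)$ in place of $\nabla\log c_i$; the convective term still vanishes because the scalar primitive $\int_0^{c_i}s/(s+\de)\,ds$ is admissible against $\divv u=0$, $u|_{\pa\Omega}=0$. A short algebraic rearrangement, using $j_i=-D_i(\nabla c_i+z_ic_i\nabla\phi)$, yields
\[
    \Dt V_\de=-\|\nabla u\|_2^2-\sum_i\int_\Omega\frac{|j_i|^2}{D_i(c_i+\de)}\,dx+R_\de(t),
\]
where $R_\de(t)=\sum_iz_i\int_\Omega\frac{\de\,\nabla\phi\cdot j_i}{c_i+\de}\,dx$ satisfies $|R_\de(t)|\leq\sum_i|z_i|\|\nabla\phi\|_2\|j_i\|_2$ and $R_\de(t)\to0$ as $\de\to0$ by dominated convergence ($\de/(c_i+\de)\to 0$ on $\{c_i>0\}$, and $j_i=0$ on $\{c_i=0\}$). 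Integrating on $(0,t)$, and using that $V_\de$ is uniformly bounded below ($\varphi_\de\geq -1/e$, other terms nonnegative) while $V_\de(0)$ is uniformly bounded above by virtue of $c_i^0\in L^\infty$, we obtain $\sum_i\int_0^t\int_\Omega|j_i|^2/(D_i(c_i+\de))\,dx\,ds\leq C_t$ uniformly in $\de$. Monotone convergence as $\de\downarrow 0$ — valid because $|j_i|^2$ is independent of $\de$ and $1/(c_i+\de)$ increases — proves (a); passing to the limit in the integrated form of $\Dt V_\de$ and differentiating in $t$ then yields (\ref{eq_lya_der}), i.e.\ part (b).

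The principal obstacle is the degeneracy at $\{c_i=0\}$, which makes the formal chain rule $\frac{d}{dt}(c_i\log c_i)=(\log c_i+1)\pa_tc_i$ meaningless where $c_i$ vanishes. The regularization circumvents this cleanly, provided one verifies that the correction $R_\de$ decays unconditionally and that the limit is captured sharply by monotone convergence. A smaller but indispensable point is the homogeneous Robin condition satisfied by $\pa_t\phi$, on which the cancellation in the energy identity depends and which itself hinges on the time-independence of $\xi$.
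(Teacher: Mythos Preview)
Your proof is correct and follows essentially the same route as the paper: regularize the degenerate entropy term by $(c_i+\de)\log(c_i+\de)$, compute the time derivative, integrate in time, and pass to the limit using monotone convergence for the dissipation term and dominated convergence for the remainder. The only difference is organizational---you package everything into $V_\de$ with a single remainder $R_\de(t)=\sum_iz_i\int_\Omega\frac{\de}{c_i+\de}\nabla\phi\cdot j_i\,dx$, whereas the paper regularizes only the entropy contribution $g_{i,\de}(t)=\int_\Omega(c_i+\de)\log(c_i+\de)\,dx$ separately, passes to the limit in its derivative to obtain $\Dt\int_\Omega c_i\log c_i\,dx$, and only then assembles $\Dt V$ from the kinetic, entropic and electric pieces; both arrive at the same identity by the same mechanism.
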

%%%%%%%%%%%%%%%%%%%%%%%%%%%%%%%%%%%%%%%%%%%%%%%%%%%%%%%%%%%%%%

In the following proof we suppress all arguments of functions under integrals for better readability, whenever this does not lead to misunderstandings.

\begin{proof}[Proof. (of Lemma~\ref{lem_lya_der})]
In view of the derivation of (\ref{eq_lya_der}) and the mere nonnegativity of $c_i$ we have to first
give a meaning to the term $\int_\Omega\frac{1}{D_ic_i}|j_i|^2dx$, since it might not be well-defined. As will
become clear in the proof below, this integral is connected to the term $\Dt\int_\Omega c_i\log
c_idx$, if it exists. We cannot just interchange differentiation and integration and then
differentiate under the integral, since the derivative of the function $x\mapsto x\log x$ tends to
$\infty$ for $x\to0$. So it is not obvious that the term $\int_\Omega c_i\log c_idx$ is
differentiable with respect to time at all, since we only know $c_i\geq0$. This is the reason why we
first consider an approximation of this term. For $1\leq i\leq N$ and $\de>0$ let
\[g_{i,\de}(t):=\int_\Omega(c_i+\de)\log(c_i+\de)dx,\quad t\in[0,T_{max}).\]
Clearly $g_{i,\de}$ is differentiable with respect to time and we have
\begin{equation}
\label{eq_gide}
\Dt g_{i,\de}(t)=\int_\Omega \pa_tc_i\log(c_i+\de)dx+\int_\Omega \pa_tc_idx.
\end{equation}
The last term is zero by (\ref{eq_mass_cons}); with (\ref{eq_NP}), (\ref{eq_NP_bdy}), integration
by parts, the notation in (\ref{eq_def_J}) and straight forward manipulations we obtain
\begin{equation}
\label{eq_diff_gide}
\begin{split}
\Dt g_{i,\de}(t)&=-\int_\Omega (\divv J_i)\log(c_i+\de)dx=\int_\Omega J_i\cdot\nabla\log(c_i+\de)dx\\
	&=\int_\Omega\frac1{c_i+\de}\nabla c_i\cdot J_idx\\
&=\int_\Omega\frac{1}{D_i(c_i +\de)}(D_i\nabla c_i +D_iz_ic_i \nabla\phi )\cdot(-D_i\nabla c_i
-D_iz_ic_i \nabla\phi )dx\\
&\qquad+ z_i\int_\Omega\frac{c_i}{c_i+\de}\nabla\phi \cdot(D_i\nabla c_i + D_iz_ic_i \nabla\phi
-c_iu )dx\\
	&\qquad+\int_\Omega\frac{c_i}{c_i+\de} u \cdot( z_ic_i\nabla\phi +\nabla c_i )dx\\
&=-\int_\Omega\frac{1}{D_i(c_i+\de)}|j_i|^2dx-z_i\int_\Omega\frac{c_i}{c_i+\de}\nabla\phi\cdot
J_idx\\
&\quad+z_i\int_\Omega\frac{c_i}{c_i+\de}c_iu\cdot\nabla\phi dx+\int_\Omega\frac{c_i}{c_i+\de}\nabla
c_i\cdot udx,\quad t\in(0,T_{max}).
\end{split}
\end{equation}
Integrating equation (\ref{eq_diff_gide}) form $0$ to $t\in(0,T_{max})$ yields, after rearrangement,
\begin{equation}
\label{ez_int_diff_gide}
\begin{split}
\int_0^t\int_\Omega\frac{1}{D_i(c_i+\de)}&|j_i|^2dx=g_{i,\de}(0)-g_{i,\de}(t)-z_i\int_0^t\int_\Omega\frac{c_i}{c_i+\de}\nabla\phi\cdot J_idxds\\
	&+z_i\int_0^t\int_\Omega\frac{c_i}{c_i+\de}c_iu\cdot\nabla\phi dxds+\int_0^t\int_\Omega\frac{c_i}{c_i+\de}\nabla c_i\cdot udxds.
\end{split}
\end{equation}
From this equation we are going to conclude assertion (a). The monotone convergence theorem implies that
\[\int_0^t\int_\Omega\frac{1}{D_ic_i}|j_i|^2dx=\lim_{\de\to0}\int_0^t\int_\Omega\frac{1}{D_i(c_i+\de)}|j_i|^2dx,\]
however this limit might be infinite. To exclude this case we consider $\lim_{\de\to\infty}$ of the
right-hand side of (\ref{ez_int_diff_gide}). Relation (\ref{eq_xlogx}) and the fact that $x\log x\geq-1$, say, implies
\[|(c_i+\de)\log(c_i+\de)|\leq1+(c_i+\de)^2,\]
so there is an integrable majorant due to $c_i(t)\in L^2(\Omega)$, thus by continuity of $x\mapsto x\log x$, $x\geq0$,
\[\lim_{\de\to0}g_{i,\de}(t)=\int_\Omega c_i\log c_idx, \quad t\in[0,T_{max}).\]
For the remaining terms, note that $c_i/(c_i+\de)\leq1$, so by
\begin{align*}
\int_0^t\int_\Omega |\nabla\phi\cdot\nabla c_i|dxds&\leq\|\nabla\phi\|_{L^2(0,t;L^2)}\|\nabla
c_i\|_{L^2(0,t;L^2)}<\infty,\\
\int_0^t\int_\Omega
\bigg|c_i|\nabla\phi|^2\bigg|dxds&\leq\|c_i\|_{L^2(0,t;L^2)}\|\nabla\phi\|_{L^4(0,t;L^4)}<\infty,\\
\int_0^t\int_\Omega
|c_iu\cdot\nabla\phi|dxds&\leq\|c_i\|_{L^2(0,t;L^2)}\|u\|_{L^4(0,t;L^4)}\|\nabla\phi\|_{L^4(0,t;L^4)}<\infty,\\
\int_0^t\int_\Omega |\nabla c_i\cdot u|dxds&\leq\|\nabla c_i\|_{L^2(0,t;L^2)}\|u\|_{L^2(0,t;L^2)}<\infty,
\end{align*}
the premises of the dominated convergence theorem are satisfied. Because of $c_i/(c_i+\de)\to1$ as $\de\to0$ taking the limit on both sides of equation
(\ref{ez_int_diff_gide}) gives
\begin{equation}
\label{eq_diff_L1}
\begin{split}
\int_0^t\int_\Omega\frac{1}{D_ic_i}|j_i|^2dx&=\int_\Omega c_i^0\log c_i^0dx-\int_\Omega c_i(t)\log
c_i(t)dx\\
&-z_i\int_0^t\int_\Omega\nabla\phi\cdot J_idxds+z_i\int_0^t\int_\Omega c_iu\cdot\nabla\phi
dxds<\infty,
\end{split}
\end{equation}
where we used the fact that due to $u\in L^2_\sigma(\Omega)$ the term $\int_\Omega \nabla c_i\cdot udx=0$. Claim (a) is proven.

Relation (\ref{eq_diff_L1}) implies that $t\mapsto\int_\Omega c_i\log c_idx$ is absolutely
continuous and its a.e.\ derivative is given by
\begin{equation}
\label{eq_diff_gi}
\Dt\int_\Omega c_i\log c_idx=-\int_\Omega\frac{1}{D_ic_i}|j_i|^2dx-z_i\int_\Omega\nabla\phi\cdot
J_idx+z_i\int_\Omega c_iu\cdot\nabla\phi dx
\end{equation}
for almost all $t\in(0,T_{max})$; note that the right-hand side of (\ref{eq_diff_gi}) is in
$L^1_{loc}(0,T_{max})$ due to (\ref{eq_diff_L1}).

In two dimensions the energy equality for weak solutions of the Navier-Stokes equations is valid, cf.\ \cite[Theorem V.1.4.2]{Sohr}. Since strong solutions are also weak solutions, we have
\begin{equation}
\label{eq_abl_u}
\frac1{2}\Dt\|u\|_2^2=-\|\nabla u\|_2^2- \sum_{i=1}^Nz_i\int_\Omega c_i\nabla\phi\cdot udx, \quad
t\in(0,T_{max}).
\end{equation}
Recalling the definition of $V$ in (\ref{eq_lyapunov}) with the help of summation of
(\ref{eq_diff_gi}) over $i$, (\ref{eq_abl_u}) and interchanging differentiation and integration, we
obtain
\begin{equation}
\label{eq_V_der_1}
\begin{split}
\Dt V(t)&=-\|\nabla
u\|_2^2-\sum_{i=1}^N\int_\Omega\frac{1}{D_ic_i}|j_i|^2dx-\sum_{i=1}^Nz_i\int_\Omega\nabla\phi\cdot
J_idx\\
	&\quad+\ep\int_\Omega\nabla\phi\cdot\nabla\phi_tdx+\ep\tau\int_{\pa\Omega}\phi\phi_tdx
\end{split}
\end{equation}
for almost all $t\in(0,T_{max})$, where we write $\phi_t:=\pa_t\phi$. Note that the respective last terms of (\ref{eq_diff_gi}) and
(\ref{eq_abl_u}) cancel. Observe also that due to the time independence of $\xi$ differentiating the
boundary condition of (\ref{eq_energy_phi}) in time yields $\pa_\nu\phi_t+\tau\phi_t=0$. Hence using
integration by parts, taking into account (\ref{eq_NP}), (\ref{eq_NP_bdy}) and the time-independence of $\xi$ gives
\begin{equation}
\label{eq_lya_1}
\begin{split}
\sum_{i=1}^Nz_i\int_\Omega\nabla\phi\cdot J_idx&=\int_\Omega\phi \pa_t\left(\sum_{i=1}^Nz_ic_i
\right)dx=-\ep\int_\Omega\phi \Delta\phi_t dx\\
&=\ep\int_\Omega\nabla\phi\cdot \nabla\phi_t dx-\ep\int_{\pa\Omega}\phi \pa_\nu\phi_t  dx\\
&=\ep\int_\Omega\nabla\phi\cdot \nabla\phi_t dx+\ep\tau\int_{\pa\Omega}\phi \phi_t dx.
\end{split}
\end{equation}
Thus, collecting (\ref{eq_V_der_1}) and (\ref{eq_lya_1}), we see
\[\Dt V(t)=-\|\nabla u\|_2^2-\sum_{i=1}^N\int_\Omega\frac{1}{D_ic_i}|j_i|^2dx\leq0\]
for almost all $t\in(0,T_{max})$.
\end{proof}

\begin{remark}
\label{rem_der_lya}
Observe that due to $V\in W_{loc}^{1,1}(0,T_{max})$ the function $V$ is absolutely
continuous, so for almost all $t\in(0,T_{max})$ it holds true that
\begin{equation}
\label{eq_V_abs_cont}
V(t)=V(0)-\int_0^t\left(\|\nabla
u(s)\|_2^2+\sum_{i=1}^N\int_\Omega\frac1{D_ic_i(s)}|j_i(s)|^2dx\right)ds.
\end{equation}
Hence $V$ is non-increasing.
\end{remark}

%%%%%%%%%%%%%%%%%%%%%%%%%%%%%%%%%%%%%%%%%%%%%%%%%%%%%%%%%%%%%%
\begin{lemma}
\label{lem_u_2}
There is a constant $C>0$, depending only on the initial data, such that for almost all
$t\in(0,T_{max})$
\[\|u(t)\|_2+\|c_i(t)\log c_i(t)\|_1\leq C.\]
\end{lemma}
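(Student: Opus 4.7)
\medskip

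\textbf{Proof plan.} The plan is to exploit the monotonicity of the Lyapunov functional $V$ established in Lemma~\ref{lem_lya_der}. Since $V$ is absolutely continuous on $[0,T_{\max})$ and its derivative is nonpositive by (\ref{eq_lya_der}), we have $V(t)\le V(0)$ for almost every $t\in(0,T_{\max})$, and $V(0)$ depends only on $u^0$, $c^0$ and $\xi$ (the latter through $\phi(0)$), hence only on the initial data.

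Now observe that three of the four summands in the definition (\ref{eq_lyapunov}) of $V$ are manifestly nonnegative, namely $\tfrac12\|u\|_2^2$, $\tfrac{\ep}{2}\|\nabla\phi\|_2^2$ and $\tfrac{\ep\tau}{2}\|\phi\|_{2,\pa\Omega}^2$. The only possibly negative contribution is $\sum_i\int_\Omega c_i\log c_i\,dx$, but since the function $x\mapsto x\log x$ satisfies $x\log x\ge -1/e$ for all $x\ge 0$, we have the pointwise bound $\sum_i\int_\Omega c_i\log c_i\,dx\ge -N|\Omega|/e$. Combining this with $V(t)\le V(0)$ yields
\[
\tfrac12\|u(t)\|_2^2\;\le\;V(t)-\sum_{i=1}^N\int_\Omega c_i(t)\log c_i(t)\,dx\;\le\;V(0)+\tfrac{N|\Omega|}{e},
\]
which gives the desired uniform bound on $\|u(t)\|_2$.

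For the second term, the key step is to pass from the signed integral $\int c_i\log c_i$ to the $L^1$-norm $\|c_i\log c_i\|_1=\int |c_i\log c_i|$. Splitting the domain according to whether $c_i\ge 1$ or $c_i<1$ and writing $\log^+$ as usual, one has
\[
\int_\Omega |c_i\log c_i|\,dx=\int_\Omega c_i\log^+c_i\,dx+\int_{\{c_i<1\}} c_i|\log c_i|\,dx,
\]
while $\int_\Omega c_i\log c_i\,dx=\int_\Omega c_i\log^+c_i\,dx-\int_{\{c_i<1\}} c_i|\log c_i|\,dx$. Using again $x\log x\ge -1/e$ to bound the second integral by $|\Omega|/e$, these two identities combine to
\[
\|c_i(t)\log c_i(t)\|_1\;\le\;\int_\Omega c_i(t)\log c_i(t)\,dx+\tfrac{2|\Omega|}{e}.
\]
The right-hand side is controlled by $V(t)$ from above, since all other summands of $V$ are nonnegative: $\int_\Omega c_i\log c_i\,dx\le V(t)+\sum_{j\ne i}(|\Omega|/e)\le V(0)+(N-1)|\Omega|/e$. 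Altogether $\|c_i(t)\log c_i(t)\|_1\le V(0)+(N+1)|\Omega|/e$, proving the claim.

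The proof is essentially bookkeeping; there is no genuine obstacle, since all the analytic work has already been done in Lemma~\ref{lem_lya_der}. The only subtle point worth flagging is that the quantity to be bounded is an $L^1$-norm (absolute value inside) rather than the signed entropy, and that this discrepancy is harmless because $x\mapsto x\log x$ is uniformly bounded below on $[0,\infty)$.
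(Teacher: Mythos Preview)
Your proof is correct and follows essentially the same approach as the paper: both use $V(t)\le V(0)$ from Lemma~\ref{lem_lya_der}, the pointwise lower bound on $x\log x$ to control the negative part of the entropy, and the resulting inequality $\|c_i\log c_i\|_1\le \int c_i\log c_i + C|\Omega|$ to pass from the signed integral to the $L^1$-norm. The only cosmetic difference is that the paper uses the cruder bound $x\log x\ge -1$ whereas you use the sharp $-1/e$.
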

%%%%%%%%%%%%%%%%%%%%%%%%%%%%%%%%%%%%%%%%%%%%%%%%%%%%%%%%%%%%%%

\begin{proof}
From (\ref{eq_V_abs_cont}) we have $V(t)\leq V(0)$, hence
\[\|u(t)\|_2^2+\int_\Omega c_i(t)\log c_i(t)dx\leq C_0,\]
where $C_0>0$ only depends on the initial data. From $c_i\log c_i\geq-1$ we deduce
\[\int_\Omega|c_i(t)\log c_i(t)|dx\leq\int_\Omega c_i\log c_idx+2 |\Omega|\leq C'_0,\]
which shows the claim.
\end{proof}

With Lemma~\ref{lemma33} the boundedness of the term $\|c_i\log c_i\|_1$ enables us to show
that the $L^2(\Omega)$-norm of concentrations $c_i$ remains uniformly bounded as long as the solution
exists.

%%%%%%%%%%%%%%%%%%%%%%%%%%%%%%%%%%%%%%%%%%%%%%%%%%%%%%%%%%%%%%
\begin{lemma}
\label{lem_ci_l2}
There is a constant $C>0$, depending only on the initial data, such that for almost all
$t\in(0,T_{max})$
\[\|c_i(t)\|_2\leq C\quad \ \text{and }\quad\int_0^t\|\nabla c_i(s)\|_2^2ds\leq C(1+t),\quad
i=1,\ldots ,N.\]
\end{lemma}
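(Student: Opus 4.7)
The plan is to derive a differential inequality for $y(t):=\sum_{i=1}^N\|c_i(t)\|_2^2$ and close it by Gronwall. Testing~(\ref{eq_NP}) with $c_i$ and using~(\ref{eq_NP_bdy}), the convection term $\int_\Omega c_iu\cdot\nabla c_i\,dx=\tfrac12\int_\Omega u\cdot\nabla(c_i^2)\,dx$ vanishes by $\divv u=0$ and the no-slip condition, so integration by parts gives
\[
\tfrac12\frac{d}{dt}\|c_i\|_2^2+D_i\|\nabla c_i\|_2^2=-D_iz_i\int_\Omega c_i\nabla c_i\cdot\nabla\phi\,dx.
\]
Rewriting the right-hand side as $-\tfrac{D_iz_i}{2}\int_\Omega\nabla(c_i^2)\cdot\nabla\phi\,dx$, a second integration by parts combined with~(\ref{eq_P}) and~(\ref{eq_P_bdy}) yields
\[
\frac{d}{dt}\|c_i\|_2^2+2D_i\|\nabla c_i\|_2^2=-\frac{D_iz_i}{\ep}\int_\Omega c_i^2\sum_{j=1}^Nz_jc_j\,dx-D_iz_i\int_{\pa\Omega}c_i^2(\xi-\tau\phi)\,dS.
\]

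The core of the argument is to absorb the right-hand side into the dissipative term $\|\nabla c_i\|_2^2$. The key input is that combining Lemmas~\ref{lemma32} and~\ref{lemma33} (applied with $n=m=2$, $r=1$) with the uniform bound $\|c_i\log c_i\|_1\le C$ from Lemma~\ref{lem_u_2} and mass conservation (Corollary~\ref{cor_mass_cons}) yields, for every $q>1$ and every $\ep'>0$,
\[
\|c_i\|_q\le\ep'\|\nabla c_i\|_2^{1-1/q}+C_{q,\ep'};
\]
this is the classical Gagliardo-Nirenberg exponent \emph{but with an arbitrarily small prefactor}. Taking $q=3$ and using $|\int c_i^2c_j|\le\|c_i\|_3^2\|c_j\|_3$ together with Young's inequality absorbs the interior term as $\ep\sum_k\|\nabla c_k\|_2^2+C$. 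The boundary term is handled via the trace interpolation $\|c_i\|_{L^2(\pa\Omega)}^2\le C\|c_i\|_2\|c_i\|_{W^{1,2}}$, elliptic regularity for the Robin-Poisson problem (Remark~\ref{rem_triebel}) giving $\|\phi\|_{W^{2,2}(\Omega)}\le C(\|c\|_2+1)$, and the one-dimensional Sobolev embedding $W^{3/2,2}(\pa\Omega)\hookrightarrow L^\infty(\pa\Omega)$. The resulting contributions, at most cubic in $\|c\|_2$ and $\|\nabla c\|_2$, are then successively absorbed by Young's inequality and the $q=2$ specialisation $\|c_i\|_2^2\le\eta\|\nabla c_i\|_2^2+C_\eta$ of the interpolation above.

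Summing over $i$ delivers $\frac{d}{dt}y+\tilde D\sum_i\|\nabla c_i\|_2^2\le K$ with $\tilde D,K>0$ depending only on the initial data. Using the same absorptive inequality in the form $\sum_i\|\nabla c_i\|_2^2\ge(y-C)/\eta$, this becomes the linear Gronwall inequality $\frac{d}{dt}y+(\tilde D/\eta)y\le K'$, from which $y(t)\le C$ uniformly in $t\in[0,T_{\max})$, which is the first assertion. Integrating the same differential inequality over $(0,t)$ then yields the second assertion $\int_0^t\|\nabla c_i(s)\|_2^2\,ds\le C(1+t)$. The main technical obstacle is the boundary integral---absent in the Neumann setting of~\cite{CL_multidim}---which involves the a priori unknown potential $\phi$ and forces the iterated absorption scheme outlined above; crucially, this works only in two space dimensions, both through the boundary Sobolev embedding into $L^\infty$ and through the logarithmic interpolation inequalities of Lemmas~\ref{lemma32}--\ref{lemma33}.
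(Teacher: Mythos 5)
Your proof is correct, and it treats the critical coupling term by a genuinely different manipulation than the paper. The paper keeps the term $\int_\Omega c_i\nabla\phi\cdot\nabla c_i\,dx$ as it stands, estimates it by H\"older as $\|\nabla c_i\|_2\|c_i\|_r\|\nabla\phi\|_q$ with $1/2=1/q+1/r$, and controls $\|\nabla\phi\|_q$ via the splitting (\ref{eq_phi_1})--(\ref{eq_phi_2}) and the near-endpoint elliptic regularity $W^{2,\de}(\Omega)\inj W^{1,q}(\Omega)$, $\de$ slightly above $1$; no boundary integral ever appears. You instead integrate by parts once more, trading $\nabla\phi$ for $\Delta\phi=-\frac1\ep\Sigma_jz_jc_j$ plus the Robin boundary contribution $\int_{\pa\Omega}c_i^2(\xi-\tau\phi)\,dS$; the interior term becomes the cleaner cubic $\int_\Omega c_i^2c_j\,dx$, at the cost of a boundary term absent from the paper's argument, which you correctly dispose of with trace interpolation, $\|\phi\|_{W^{2,2}}\le C(1+\|c\|_2)$, and the one-dimensional embedding $W^{3/2}_2(\pa\Omega)\inj L^\infty(\pa\Omega)$. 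Both routes arrive at exactly critical scaling (total degree $2$ in the gradients) and both are rescued by the same essential input: the small-prefactor interpolation $\|c_i\|_q\le\ep'\|\nabla c_i\|_2^{1-1/q}+C$ from Lemmas~\ref{lemma32} and \ref{lemma33} combined with the entropy bound of Lemma~\ref{lem_u_2} and Corollary~\ref{cor_mass_cons}; your worst boundary contribution $\ep'^2\|\nabla c_i\|_2^{3/2}\|\nabla c_j\|_2^{1/2}$ also has total degree $2$ and absorbs. The closing step is equivalent in substance (the paper uses Poincar\'e's inequality with $\|c_i\|_1=m_i$ where you use the $q=2$ specialisation of the interpolation). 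What your route buys is avoidance of the $W^{2,\de}$ elliptic estimate for $\de\downarrow1$; what it costs is the boundary bookkeeping, which is why the paper's direct H\"older estimate is somewhat shorter.
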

%%%%%%%%%%%%%%%%%%%%%%%%%%%%%%%%%%%%%%%%%%%%%%%%%%%%%%%%%%%%%%

\begin{proof}
We multiply equation (\ref{eq_NP}) by $c_i$ and integrate over 
$\Omega$ by parts, making use of
(\ref{eq_NP_bdy}), to get
\begin{equation*}
\begin{split}
\frac12\Dt\|c_i\|_2^2+D_i&\|\nabla c_i\|_2^2=- D_iz_i\int_\Omega c_i\nabla\phi\cdot\nabla c_i
dx+\int_\Omega c_iu\cdot\nabla c_idx.
\end{split}
\end{equation*}
Since
\[\int_\Omega c_iu\cdot\nabla c_idx=\frac12\int_\Omega u\cdot\nabla(c_i^2)dx=0\]
due to $u\in L^2_\sigma(\Omega)$ and $\nabla(c_i^2)\in L^2(\Omega)$, we are left with
\begin{equation}
\label{eq_l2_est}
\frac12\Dt\|c_i\|_2^2+D_i\|\nabla c_i\|_2^2\leq C\int_\Omega|c_i\nabla\phi\cdot\nabla c_i|dx.
\end{equation}
In order to use a Gronwall argument we have to estimate $I_i:=\int_\Omega|c_i\nabla\phi\cdot\nabla
c_i|dx$ appropriately. For convenience we reproduce the computation from \cite{CL_multidim} correcting also a slight mistake. We use the same splitting into $\phi_1$ and $\phi_2$ for the potential $\phi$ as in
(\ref{eq_phi_1}) and (\ref{eq_phi_2}). Let $1\leq q,r<\infty$ be such that $1/2=1/q+1/r$. Then
\begin{equation}
\label{ez_ii_1}
I_i\leq\|\nabla c_i\|_2\|c_i\|_r\|\nabla\phi\|_q\leq\|\nabla
c_i\|_2\|c_i\|_r(\|\nabla\phi_1\|_q+\|\nabla\phi_2\|_q).
\end{equation}

With $W^{2,2}(\Omega)\inj W^{1,q}(\Omega)$ for any $q\in[1,\infty)$, since $\Omega\subset\R^2$, due to
Sobolev's embedding theorem and the help of Remark~\ref{rem_triebel} we compute
\begin{equation}
\label{eq_l2_phi_1}
\|\nabla\phi_1\|_q\leq\|\phi_1\|_{W^{1,q}}\leq C\|\phi_1\|_{W^{2,2}}\leq
C'\|\xi\|_{W^{1/2}_2(\pa\Omega)},
\end{equation}
hence $\|\nabla\phi_1\|_q\leq C$ independent of time. Now let $\de\in(1,2)$ and $q:=\frac
{2\de}{2-\de}$, then by Sobolev's embedding theorem we have $W^{2,\de}(\Omega)\inj W^{1,q}(\Omega)$.
So again taking advantage of Remark~\ref{rem_triebel} we have
\begin{equation}
\label{eq_l2_phi_2}
\|\nabla\phi_2\|_q\leq\|\phi_2\|_{W^{1,q}}\leq C\|\phi_2\|_{W^{2,\de}}\leq
C'\sum_{j=1}^N\|c_j\|_\de.
\end{equation}
Combining (\ref{ez_ii_1})-(\ref{eq_l2_phi_2}) yields
\begin{equation}
I_i\leq C\|\nabla c_i\|_2\|c_i\|_r\left(1+\sum_{j=1}^N\|c_j\|_\de\right),
\end{equation}
where the constant $C$ does not depend on $t$ or $c_i$.

Applying Lemma~\ref{lemma32} and Lemma~\ref{lemma33} and the fact that $\|c_i\|_1\equiv m_i$ and
$\|c_i\log c_i\|_1\leq C$ due to Corollary~\ref{cor_mass_cons} and Lemma~\ref{lem_u_2}, we see that
for any $p\geq1$ and $\ep>0$ there is a constant $C>0$ such that
\[\|c_i\|_p\leq\ep\|\nabla c_i\|_2^{1-1/p}+C,\]
where $C$ depends on $m_i$ and the parameters indicated in Lemmas~\ref{lemma32} and \ref{lemma33}.
Thus we may estimate $I_i$ by
\begin{equation}
\label{eq_l2_Ii_2}
\begin{split}
I_i&\leq C\|\nabla c_i\|_2(\ep\|\nabla c_i\|_2^{1-1/r}+C)\left(\ep\|\sum_{j=1}^N\|\nabla
c_j\|_2^{1-1/\de}+C\right)\\
&\leq C\left(\ep \|\nabla c_i\|_2^{2-1/r}\bigg(1+\sum_{j=1}^N\|\nabla c_j\|_2^{1-1/\de}\bigg)+\ep \sum_{j=1}^N\|\nabla c_j\|_2^{1-1/\de}+1\right)
\end{split}
\end{equation}
for $\ep<1$. With the help of
\begin{equation*}
\begin{split}
\sum_{i=1}^N\|\nabla c_i\|_2^{2-1/r}\sum_{j=1}^N\|\nabla
c_j\|_2^{1-1/\de}&\leq C\max_{1\leq i\leq N}\|\nabla c_i\|_2^{3-1/r-1/\de}\\
	&\leq C\sum_{i=1}^N\|\nabla c_i\|_2^{3-1/r-1/\de}
\end{split}
\end{equation*}
and Young's inequality we obtain
\begin{equation}
\label{eq_l2_Ii_sum}
\begin{split}
\sum_{i=1}^NI_i\leq C\bigg(\ep\sum_{i=1}^N(\|\nabla c_i\|_2^{3-1/r-1/\de}+\|\nabla c_i\|_2^2)+1\bigg).
\end{split}
\end{equation}
From $n=2$ and the definition of $r$ and $\de$ we see $1/r+1/\de=1$. So, writing $d':=\min\{D_i\}$
and choosing $\ep>0$ small enough, we infer from (\ref{eq_l2_Ii_sum})
\begin{equation}
\label{eq_l2_Ii_sum_2}
\sum_{i=1}^NI_i\leq\frac{d'}2\sum_{i=1}^N\|\nabla c_i\|_2^2+C
\end{equation}
Summing (\ref{eq_l2_est}) over $i$ and using estimate (\ref{eq_l2_Ii_sum_2}) yields
\begin{align}
\label{eq_nabla_ci_int}
\Dt \sum_{i=1}^N\|c_i \|_2^2&\leq-d'\sum_{i=1}^N\|\nabla c_i \|_2^2+C.
\end{align}
Poincar\'e's inequality (cf.\ Lemma~\ref{lem_poinc}) implies that there is a constant $C>0$ such
that
\[-\|\nabla c_i\|_2\leq -C\|c_i\|_2+C\|c_i\|_1\leq-C\|c_i\|_2+C'.\]
Plugging this into (\ref{eq_nabla_ci_int}) results in
\begin{align*}
%\label{eq_nabla_ci_int_a}
\Dt \sum_{i=1}^N\|c_i \|_2^2&\leq-d''\sum_{i=1}^N\|c_i \|_2^2+C'.
\end{align*}
With Gronwall's inequality we deduce that there is a $C>0$ such that
\[\|c_i(t)\|_2\leq C,\quad t\in[0,T_{max}).\]
Integrating (\ref{eq_nabla_ci_int}) from $0$ to $t$ gives
\[\int_0^t\|\nabla c_i(s)\|_2^2ds\leq C'(1+t).\]
\end{proof}

\begin{remark}
\label{rem_gron_1}
If we integrate (\ref{eq_nabla_ci_int}) only over $(t,t+r)$ for $t<T_{max}$ and $r>0$ small enough
such that the integration makes sense, we obtain
\begin{equation}
\label{eq_nabla_ci_growth}
\int_t^{t+r}\|\nabla c_i(s)\|_2^2ds\leq C(r)
\end{equation}
independently of $t$. This will prove important for the application of the uniform Gronwall
inequality in Lemma~\ref{lem_gron}.
\end{remark}

\begin{remark}
\label{rem_3d}
In three dimensions it is also possible to estimate the term $I_i$ along the lines of the proof of
Lemma~\ref{lem_ci_l2} using Corollary \ref{cor_mass_cons} and Lemmas~\ref{lemma32}, \ref{lemma33}
and \ref{lem_u_2}. However performing the same calculations one observes that in this case
\[\frac1r+\frac1\de<1,\]
so $I_i$ cannot be absorbed into $\|\nabla c_i\|_2^2$, cf.\ \cite{CL_multidim}.
\end{remark}

%%%%%%%%%%%%%%%%%%%%%%%%%%%%%%%%%%%%%%%%%%%%%%%%%%%%%%%%%%%%%%
\begin{lemma}
\label{lem_nabla_phi_infty}
There is a constant $C>0$, depending only on the initial data, such that for almost all
$t\in(0,T_{max})$
\[\|c_i(t)\|_4+\|\phi(t)\|_{W^{1,\infty}}\leq C.\]
\end{lemma}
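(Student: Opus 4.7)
The plan is to run an $L^4$-energy estimate on the Nernst-Planck equation, using the already-available uniform $L^2$-bound on $c_i$ (Lemma~\ref{lem_ci_l2}) together with elliptic regularity for the potential, and then bootstrap back to $W^{1,\infty}$-control of $\phi$ via a second application of elliptic regularity.

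First I would obtain an intermediate uniform bound on $\nabla\phi$. From Lemma~\ref{lem_ci_l2}, $\|c(t)\|_2\le C$; the Robin-Laplacian isomorphism of Remark~\ref{rem_triebel} applied to (\ref{eq_P}), (\ref{eq_P_bdy}) with $m=0$, $p=2$ yields
\[
\|\phi(t)\|_{W^{2,2}}\le C\bigl(\|c(t)\|_2+\|\xi\|_{W^{1/2}_2(\pa\Omega)}\bigr)\le C,
\]
and the 2D Sobolev embedding $W^{2,2}(\Omega)\inj W^{1,4}(\Omega)$ delivers $\|\nabla\phi(t)\|_4\le C$ uniformly in $t$.

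Next I would test (\ref{eq_NP}) with $c_i^3$ and integrate by parts. The convection term $\int_\Omega c_i^3 u\cdot\nabla c_i\,dx=\tfrac14\int_\Omega u\cdot\nabla c_i^4\,dx$ vanishes by $\divv u=0$ and $u|_{\pa\Omega}=0$, and the no-flux condition (\ref{eq_NP_bdy}) kills the boundary contribution. Using $c_i^2|\nabla c_i|^2=\tfrac14|\nabla c_i^2|^2$ and $c_i^3\nabla c_i=\tfrac12 c_i^2\nabla c_i^2$, one arrives at
\[
\frac{1}{4}\Dt\|c_i\|_4^4+\frac{3D_i}{4}\|\nabla c_i^2\|_2^2
=-\frac{3D_iz_i}{2}\int_\Omega c_i^2\,\nabla c_i^2\cdot\nabla\phi\,dx.
\]
By Hölder ($1/4+1/2+1/4=1$) the right-hand side is bounded by $C\|c_i^2\|_4\|\nabla c_i^2\|_2\|\nabla\phi\|_4$. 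Applying Lemma~\ref{lemma34} with $n=2$, $q=4$ (so $\al=3/4$) to $v=c_i^2$, together with $\|c_i\|_2\le C$, gives
\[
\|c_i^2\|_4\le C\bigl(\|\nabla c_i^2\|_2^{3/4}\|c_i\|_2^{1/2}+\|c_i\|_2^2\bigr)
\le C\bigl(\|\nabla c_i^2\|_2^{3/4}+1\bigr),
\]
so the right-hand side is at most $C(\|\nabla c_i^2\|_2^{7/4}+\|\nabla c_i^2\|_2)$; since $7/4<2$, Young's inequality absorbs it as $\ep\|\nabla c_i^2\|_2^2+C_\ep$, yielding
\[
\Dt\|c_i\|_4^4+c_0\|\nabla c_i^2\|_2^2\le C_1.
\]

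Finally, Poincaré's inequality (Lemma~\ref{lem_poinc}) applied to $v=c_i^2$, combined with $\|c_i^2\|_1=\|c_i\|_2^2\le C$, gives $\|\nabla c_i^2\|_2^2\ge\tilde c\,\|c_i\|_4^4-C_2$. Writing $y(t)=\|c_i(t)\|_4^4$ one obtains $y'+\om y\le C_3$ with $\om>0$, and a scalar Gronwall argument yields the uniform bound $\|c_i(t)\|_4\le C$. Feeding this back into Remark~\ref{rem_triebel} with $p=4$ gives $\|\phi(t)\|_{W^{2,4}}\le C(\|c(t)\|_4+\|\xi\|_{W^{3/4}_4(\pa\Omega)})\le C$, and since $4>n=2$ the embedding $W^{2,4}(\Omega)\inj W^{1,\infty}(\Omega)$ concludes the proof. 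The delicate point is the exponent balance in the Gagliardo-Nirenberg step: in 2D the resulting power $7/4$ is strictly subcritical with respect to the dissipation exponent $2$, which is exactly what allows the Coulombic cross term to be absorbed — this is the analogue of the dimension-two obstruction flagged in Remark~\ref{rem_3d}.
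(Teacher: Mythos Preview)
Your proof is correct and follows essentially the same approach as the paper: test the Nernst-Planck equation with $c_i^3$, kill the convection term by solenoidality, bound $\nabla\phi$ in some $L^q$ via elliptic regularity from the $L^2$-bound on $c$, close by Gagliardo--Nirenberg (Lemma~\ref{lemma34}) applied to $v=c_i^2$, absorb with Young, then Poincar\'e plus Gronwall. The only difference is the H\"older split: the paper uses $\|v_2\|_3\|\nabla v_2\|_2\|\nabla\phi\|_6$ (with $\al=2/3$ in Lemma~\ref{lemma34}, yielding a power $5/3$ on $\|\nabla v_2\|_2$), whereas you use $\|c_i^2\|_4\|\nabla c_i^2\|_2\|\nabla\phi\|_4$ (with $\al=3/4$, yielding a power $7/4$); both are subcritical in 2D and the argument closes identically.
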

%%%%%%%%%%%%%%%%%%%%%%%%%%%%%%%%%%%%%%%%%%%%%%%%%%%%%%%%%%%%%%

\begin{proof}
Note that once the uniform boundedness of $\|c_i\|_4$ for $t\geq0$ is obtained, by Sobolev's embedding
theorem and Remark~\ref{rem_triebel} it follows that
\[\|\phi\|_{W^{1,\infty}}\leq C\|\phi\|_{W^{2,4}}\leq C'\left(\sum_j\|c_j\|_4+1\right)\leq C''\quad
(t\in(0,T_{max})).\]
Let $k\in\N$ and $t\geq0$. We multiply (\ref{eq_NP}) by $c_i^{2k-1}$, integrate over $\Omega$ and by
parts and use again the no-flux condition (\ref{eq_NP_bdy}) to obtain
\begin{equation}
\label{eq_l4_1}
\begin{split}
&\frac1{2k}\Dt\|c_i^k\|_2^2=\int_\Omega J_i\nabla(c_i^{2k-1})dx\\
&=(2k-1)\int_\Omega \left(-D_ic_i^{2k-2}|\nabla c_i|^2-D_iz_i c_i^{2k-1}\nabla
c_i\nabla\phi+c_i^{2k-1}u\nabla c_i\right)dx\\
&=-D_i\frac{(2k-1)}{k^2}\int_\Omega|\nabla(c_i^k)|^2dx- D_iz_i\frac{2k-1}k\int_\Omega
c_i^k\nabla(c_i^k)\nabla\phi dx\\
	&\quad+\frac{2k-1}{2k}\int_\Omega u\nabla (c_i^{2k})dx.
\end{split}
\end{equation}
To make this formal calculation rigorous, it is important to note that $(u,c)$ satisfies problem ($P$) almost everywhere and that $c_i\in
W^{1/2}_2(0,T;W^{1,2}(\Omega)\inj L^q(0,T;L^q(\Omega))$ for any $q<\infty$, thus
$\|c_i^k\|_2=\|c_i\|_{2k}^{k}$ is well-defined for almost all $t\in(0,T_{max})$.

Due to the divergence free condition on $u$ the term $\int_\Omega u\cdot\nabla (c_i^{2k})dx$ is zero. If
we denote
\begin{equation}
\label{eq_def_vk}
v_k:=c_i^k,
\end{equation}
we can rewrite (\ref{eq_l4_1}) as
\begin{align}
\label{eq_hoe_v}
\frac1{2k}\Dt\|v_k\|_2^2=-D_i\frac{2k-1}{k^2}\|\nabla v_k\|_2^2- D_iz_i\frac{2k-1}{k}\int_\Omega
v_k\nabla v_k\cdot\nabla\phi dx.
\end{align}
The H\"older inequality yields
\begin{equation}
\label{eq_hoe}
\frac1{2k}\Dt \|v_k\|_2^2\leq-D_i\frac{2k-1}{k^2}\|\nabla v_k\|_2^2+
C\frac{2k-1}{k}\|v_k\|_3\|\nabla v_k\|_2\|\nabla\phi\|_6.
\end{equation}
The uniform boundedness of $\|c_i\|_2$ due to Lemma~\ref{lem_ci_l2} and the Sobolev embedding
$W^{2,2}(\Omega)\inj W^{1,6}(\Omega)$ imply
\[ \|\nabla\phi\|_6\leq\|\phi\|_{W^{2,2}}\leq C\left(\sum_j\|c_j\|_2+1\right)\leq C'.\]
Applying Lemma~\ref{lemma34} to $\|v_k\|_3$ shows that
\[\|v_k\|_3\leq C(\|\nabla v_k\|_2^{2/3}\|v_k\|_1^{1/3}+\|v_k\|_1),\]
where $C>0$ is independent of $v_k$ and $t$. Hence we estimate (\ref{eq_hoe}) further
by\begin{equation*}
\begin{split}
\frac1{2k}\Dt& \|v_k\|_2^2\leq-\frac{D_i}k\|\nabla v_k\|_2^2+C\|\nabla v_k\|_2(\|\nabla
v_k\|_2^{2/3}\|v_k\|_1^{1/3}+\|v_k\|_1).
\end{split}
\end{equation*}
To obtain the lemma we set $k=2$; note that $\|v_2(t)\|_1=\|c_i(t)\|_2^2\leq C$, so
\begin{equation*}
\begin{split}
\frac1{4}\Dt \|v_2 \|_2^2&\leq-\frac{D_i}2\|\nabla v_2 \|_2^2+C'\|\nabla v_2 \|_2(\|\nabla v_2
\|_2^{2/3}+1)\\
	&\leq-\frac{D_i}4\|\nabla v_2 \|_2^2+C'',
\end{split}
\end{equation*}
where we made use of Young's inequality in the last step. Applying
Poincar\'e's inequality (Lemma~\ref{lem_poinc}) and taking into account
the boundedness of $\|v_2\|_1$ yields constants $d',C>0$, such that
\[ \frac14\Dt \|v_2 \|_2^2\leq-d'\|v_2 \|_2^2+C.\]
Finally, from Gronwall's inequality $\|v_2 \|_2=\|c_i \|_4^2$ is uniformly bounded for $t\geq0$,
since $\|c_i(0)\|_4=\|c_i^0\|_4<\infty$.
\end{proof}

We are now in position to obtain suitable estimates on the velocity field~$u$.

%%%%%%%%%%%%%%%%%%%%%%%%%%%%%%%%%%%%%%%%%%%%%%%%%%%%%%%%%%%%%%%%%%%%%%%%%%
\begin{lemma}
\label{lem_nabla_u_l2}
There is a constant $C>0$, depending only on the initial data, such that for almost all
$t\in(0,T_{max})$
\[\int_0^t\|\nabla u(s)\|_2^2ds\leq C(1+t).\]
\end{lemma}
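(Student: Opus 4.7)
The plan is to test the Navier–Stokes equation (\ref{eq_NS}) against $u$ and absorb the resulting Coulomb coupling using the a priori bounds established in the preceding lemmas.

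First, I would take the $L^2(\Omega)$-inner product of (\ref{eq_NS}) with $u$. Since $u$ is divergence free and satisfies the no-slip condition, the Stokes term contributes $\|\nabla u\|_2^2$, the convection term vanishes by the standard cancellation $\int_\Omega (u\cdot\nabla)u\cdot u\,dx = 0$, and the Helmholtz projection $P$ disappears upon pairing with $u\in L^2_\sigma(\Omega)$. This recovers the energy equality (\ref{eq_abl_u}) already recorded in the proof of Lemma~\ref{lem_lya_der},
\begin{equation*}
\frac12\Dt\|u\|_2^2+\|\nabla u\|_2^2=-\sum_{i=1}^N z_i\int_\Omega c_i\nabla\phi\cdot u\,dx,
\end{equation*}
valid for almost all $t\in(0,T_{max})$ by the strong regularity of $(u,c)$.

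Next, I would estimate the right–hand side using the a priori bounds already available. By H\"older's inequality,
\begin{equation*}
\left|\sum_{i=1}^N z_i\int_\Omega c_i\nabla\phi\cdot u\,dx\right|\leq C\sum_{i=1}^N\|c_i\|_2\|\nabla\phi\|_\infty\|u\|_2.
\end{equation*}
Lemma~\ref{lem_ci_l2} bounds $\|c_i\|_2$ uniformly in $t$, Lemma~\ref{lem_nabla_phi_infty} bounds $\|\nabla\phi\|_\infty$ uniformly in $t$, and Lemma~\ref{lem_u_2} bounds $\|u\|_2$ uniformly in $t$, all with constants depending only on the initial data. Hence the right–hand side is bounded by a constant $C>0$ depending only on the data, yielding
\begin{equation*}
\frac12\Dt\|u\|_2^2+\|\nabla u\|_2^2\leq C.
\end{equation*}

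Finally, since $V\in W^{1,1}_{loc}$ arguments analogous to those in Remark~\ref{rem_der_lya} make $t\mapsto\|u(t)\|_2^2$ absolutely continuous, so I may integrate the above inequality from $0$ to $t\in(0,T_{max})$ to obtain
\begin{equation*}
\tfrac12\|u(t)\|_2^2+\int_0^t\|\nabla u(s)\|_2^2\,ds\leq \tfrac12\|u^0\|_2^2+Ct,
\end{equation*}
which gives the claim with a possibly larger constant depending only on $\|u^0\|_2$ and the data. There is no real obstacle here: the estimate is a direct consequence of the uniform bounds on $\|u\|_2$, $\|c_i\|_2$, and $\|\nabla\phi\|_\infty$ already obtained. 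The only point to verify is that the formal testing against $u$ is rigorous, which is justified by the energy equality for strong (hence weak) solutions of the two–dimensional Navier–Stokes equations as cited earlier in (\ref{eq_abl_u}).
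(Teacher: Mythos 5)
Your proposal is correct and follows essentially the same route as the paper: invoke the energy equality (\ref{eq_abl_u}), bound the Coulomb term by $\|c_i\|_2\|\nabla\phi\|_\infty\|u\|_2\leq C$ using Lemmas~\ref{lem_u_2}, \ref{lem_ci_l2} and \ref{lem_nabla_phi_infty}, and integrate in time. No gaps.
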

%%%%%%%%%%%%%%%%%%%%%%%%%%%%%%%%%%%%%%%%%%%%%%%%%%%%%%%%%%%%%%%%%%%%%%%%%%

\begin{proof}
Recall the energy equality from (\ref{eq_abl_u}), i.e.\
\begin{equation}
\label{eq_energy_est}
\begin{split}
\frac12\Dt \|u \|_2^2&+\|\nabla u \|_2^2=-\int_\Omega\sum_{i=1}^Nz_ic_i \nabla\phi \cdot u dx.
\end{split}
\end{equation}
The right-hand side can be estimated with the uniform boundedness of $\|u\|_2$, $\|c\|_2$ and
$\|\nabla\phi\|_\infty$, cf.\ Lemmas \ref{lem_u_2}, \ref{lem_ci_l2} and \ref{lem_nabla_phi_infty},
as follows:
\[\left|\int_\Omega c_i \nabla\phi\cdot u dx\right|\leq \|c_i \|_2\|\nabla\phi \|_\infty\|u \|_2\leq
C\quad (t\in(0,T_{max})).\]
So integrating (\ref{eq_energy_est}) from $0$ to $t$ in time gives
\[\frac12\|u(t)\|_2^2+\int_0^t\|\nabla u(s) \|_2^2ds\leq\frac12\|u^0\|_2^2+Ct\leq C'(1+t),\]
which proves the claim.
\end{proof}

\begin{remark}
\label {rem_gron_2}
Note that if we choose $t$ and $r$ in the same way as in Remark~\ref{rem_gron_1} and integrate
(\ref{eq_energy_est}) over $(t,t+r)$, this results in
\begin{equation}
\label{ez_int_A_S_12}
\int_t^{t+r}\|\nabla u(s)\|_2^2ds\leq C'(r),
\end{equation}
where $C'$ only depends on $r$, but not on $t$.
\end{remark}

We also need estimates on higher derivatives of $u$.

%%%%%%%%%%%%%%%%%%%%%%%%%%%%%%%%%%%%%%%%%%%%%%%%%%%%%%%%%%%%%%%%%%%%%%%%%%
\begin{lemma}
\label{lem_delta_u_l2}
There is a constant $C>0$, depending only on the initial data, such that for almost all
$t\in(0,T_{max})$
\begin{equation}
\label{eq_uni_gron}
\|\nabla u(t)\|_2^2\leq C\quad\text{and }\quad\int_0^t\|A_Su(s)\|_2^2ds\leq C(1+t)
\end{equation}
where $A_S$ denotes the Stokes operator defined in Remark~\ref{rem_sohr}.
\end{lemma}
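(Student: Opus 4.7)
The plan is to test the Navier-Stokes equation \eqref{eq_NS} with $A_Su$ in $L^2_\sigma(\Omega)$. Since $u$ satisfies the no-slip condition and $A_S = -P\Delta$, an integration by parts yields
$\int_\Omega \pa_t u \cdot A_Su\,dx = \tfrac12\Dt\|\nabla u\|_2^2$, so the identity becomes
\begin{equation*}
\tfrac12\Dt\|\nabla u\|_2^2 + \|A_Su\|_2^2 = -\int_\Omega (u\cdot\nabla)u\cdot A_Su\,dx - \int_\Omega\bigl(\Sigma_j z_j c_j\nabla\phi\bigr)\cdot A_Su\,dx,
\end{equation*}
where the Helmholtz projection may be dropped on the right because $A_Su\in L^2_\sigma(\Omega)$.

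For the convection term I would use Ladyzhenskaya's 2D inequality $\|v\|_4\leq C\|v\|_2^{1/2}\|\nabla v\|_2^{1/2}$ together with the standard Stokes regularity $\|\nabla u\|_{W^{1,2}}\leq C\|A_Su\|_2$, giving $\|\nabla u\|_4\leq C\|\nabla u\|_2^{1/2}\|A_Su\|_2^{1/2}$. Hence by H\"older's and Young's inequalities
\begin{equation*}
\left|\int_\Omega (u\cdot\nabla)u\cdot A_Su\,dx\right|\leq \|u\|_4\|\nabla u\|_4\|A_Su\|_2 \leq C\|u\|_2^{1/2}\|\nabla u\|_2\|A_Su\|_2^{3/2} \leq \tfrac14\|A_Su\|_2^2 + C\|u\|_2^2\|\nabla u\|_2^4.
\end{equation*}
For the Coulomb force I invoke the uniform bounds of Lemmas \ref{lem_ci_l2} and \ref{lem_nabla_phi_infty} on $\|c\|_2$ and $\|\nabla\phi\|_\infty$, together with Young's inequality, to get the trivial bound $\leq \tfrac14\|A_Su\|_2^2 + C$. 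Combining these with the uniform bound on $\|u\|_2$ from Lemma \ref{lem_u_2}, setting $y(t):=\|\nabla u(t)\|_2^2$ I obtain
\begin{equation*}
y'(t) + \tfrac12\|A_Su(t)\|_2^2 \leq C\,y(t)^2 + C.
\end{equation*}

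To close the argument I would apply the uniform Gronwall inequality (Lemma~\ref{lem_gron}) with $g(t)=Cy(t)$ and $h\equiv C$. Its integral hypotheses are satisfied thanks to Remark~\ref{rem_gron_2}, which provides $\int_t^{t+r}y(s)\,ds\leq C'(r)$ uniformly in $t$. This yields a uniform bound $\|\nabla u(t)\|_2^2\leq C$ on $[r,T_{max})$; combined with a standard Gronwall argument on $[0,r]$ using the finite initial datum $\|\nabla u^0\|_2^2$, the first assertion of \eqref{eq_uni_gron} follows. Integrating the differential inequality from $0$ to $t$ and inserting the uniform bound on $\|\nabla u\|_2$ then gives $\int_0^t\|A_Su(s)\|_2^2\,ds\leq \|\nabla u^0\|_2^2 + C\int_0^t\|\nabla u\|_2^4\,ds + Ct\leq C(1+t)$, which is the second assertion.

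The principal obstacle is the convection term: the natural estimate produces a quartic right-hand side $\|\nabla u\|_2^4$, and a naive scalar Gronwall argument would fail to prevent $y$ from blowing up. The crucial observation is that Remark~\ref{rem_gron_2} delivers precisely the uniform-in-$t$ local $L^1$ control of $\|\nabla u\|_2^2$ that the uniform Gronwall lemma requires, which makes the quartic term harmless.
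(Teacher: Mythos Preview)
Your proof is correct and follows essentially the same approach as the paper: test \eqref{eq_NS} with $A_Su$, estimate the convection term via the 2D Gagliardo--Nirenberg inequality and Stokes regularity to obtain a quartic right-hand side $C\|\nabla u\|_2^4$, bound the Coulomb term uniformly using Lemmas~\ref{lem_ci_l2} and~\ref{lem_nabla_phi_infty}, and close with the uniform Gronwall lemma via Remark~\ref{rem_gron_2}. Your explicit remark that a standard Gronwall argument is needed on the initial interval $[0,r]$ is a careful touch the paper leaves implicit.
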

%%%%%%%%%%%%%%%%%%%%%%%%%%%%%%%%%%%%%%%%%%%%%%%%%%%%%%%%%%%%%%%%%%%%%%%%%%

\begin{proof}
Multiplying (\ref{eq_NS}) by $A_Su$ and integrating over $\Omega$ yields
\begin{equation}
\label{eq_nabla_u_est}
\begin{split}
\frac12\Dt&\|\nabla u \|_2^2+\|A_Su \|_2^2=-\int_\Omega(u \cdot\nabla)u \cdot A_Su
dx-\int_\Omega\sum_{i=1}^Nz_ic_i \nabla\phi \cdot (A_Su) dx
\end{split}
\end{equation}
where we used the well-known fact that
\begin{equation}
\label{eq_energy_est_nabla}
\int_\Omega u_t\cdot A_Sudx=-\int_\Omega u_t\Delta udx=\int_\Omega\nabla u_t\colon\nabla
u-\int_{\pa\Omega} u_t\pa_\nu udx=\frac12\Dt\|\nabla u\|_2^2.
\end{equation}

Working in two dimensions, the Gagliardo-Nirenberg inequality implies
\begin{equation}
\label{eq_gag_ge}
\|v\|_4\leq C\|v\|_2^{1/2}\|\nabla v\|_2^{1/2},\quad\text{for }v\in W^{1,2}(\Omega).
\end{equation}
The Stokes operator in $L^2_\sigma(\Omega)$, $\Omega\subset\R^n$ a bounded smooth domain,
is closed with $0\in\rho(A_S)$, cf.\ \cite[Theorem III.2.1.1]{Sohr}.
Hence, we infer $\|\nabla^2 u\|_2\leq \|u\|_{W^{2,2}}\leq C\|A_Su\|_2$ for
$u\in\DD(A_S)$.
This in combination with (\ref{eq_gag_ge}), the uniform boundedness of $\|u\|_2$ by
Lemma~\ref{lem_u_2}, and Young's inequality yield the following estimate on the non-linearity.
\begin{equation}
\label{eq_nabla_u_nlin_1}
\begin{split}
\left|\int_\Omega(u \cdot\nabla)u \cdot A_Su dx\right|&\leq\|u \|_4\|\nabla u \|_4\|A_Su \|_2\\
	&\leq C\|u \|_2^{1/2}\|\nabla u \|_2\|\nabla^2u \|_2^{1/2}\|A_Su \|_2\\
	&\leq \frac14\|A_Su \|_2^2+C'\|\nabla u \|_2^4.
\end{split}
\end{equation}
Using the a priori estimates on $c_i$ and $\nabla\phi$ from Lemma~\ref{lem_ci_l2} and
\ref{lem_nabla_phi_infty} and Young's inequality the nonlinear electro-kinetic part can be treated by
\begin{equation}
\label{eq_nabla_u_nlin_2}
\begin{split}
\left|\int_\Omega\sum_{i=1}^Nz_ic_i \nabla\phi \cdot A_Su dx\right|&\leq C\|\nabla\phi \|_\infty\|c
\|_2\|A_Su \|_2\\
	&\leq \frac14\|A_Su \|_2^2+C'.
\end{split}
\end{equation}
So estimating (\ref{eq_nabla_u_est}) with (\ref{eq_nabla_u_nlin_1}) and (\ref{eq_nabla_u_nlin_2})
results in
\begin{equation}
\label{pr_lem_glob}
\Dt\|\nabla u \|_2^2+\|A_Su \|_2^2\leq C\|\nabla u \|_2^4+C.
\end{equation}
Setting $g(t) :=C\|\nabla u \|_2^2$ this implies
\[\Dt\|\nabla u \|_2^2\leq g(t) \|\nabla u \|_2^2+C\quad (t\in(0,T_{max})).\]
If we let $f(t)=\|\nabla u \|_2^2$, and $h(t):=C$, with Remark~\ref {rem_gron_2} all premises for
the uniform Gronwall inequality Lemma~\ref{lem_gron} are fullfilled, and hence
\begin{equation*}
\|\nabla u \|_2^2\leq C'\quad (t\in (0,T_{max})).
\end{equation*}
Integrating (\ref{pr_lem_glob}) over $(0,t)$ therefore yields
\[\int_0^t\|A_Su (s)\|_2^2ds\leq C'(1+t).\]
\end{proof}

The estimation of higher derivatives of $c_i$ turns out to pose certain difficulties since a
calculation analogous to (\ref{eq_energy_est_nabla}), creates, in
general, non-vanishing and unpleasant boundary integrals. Indeed it gives
\begin{align*}
\int_\Omega \pa_tc_i\cdot\Delta c_idx&=-\int_\Omega\nabla (\pa_tc_i)\cdot\nabla c_idx+\int_{\pa\Omega}
(\pa_tc_i)(\pa_\nu c_i)dx\\
	&=-\frac12\Dt\|\nabla c_i\|_2^2+\int_{\pa\Omega} (\pa_tc_i)(\pa_\nu c_i)dx.
 \end{align*}
By this fact, we prefer to introduce new variables instead of 
working with $c_i$ directly.
In \cite{CL_multidim} a problem similar to (\ref{eq_NP})-(\ref{eq_NP_bdy}) is transformed to a
problem for the new variable
\begin{equation}
\label{eq_trans_conc}
\zeta_i:=c_i\exp( z_i\phi).
\end{equation}
We will show that $\zeta$ possesses exactly the regularity as 
desired for $c$ in Theorem~\ref{theorem_strong_loc}, i.e.
\[\zeta\in W^{1,2}(\Omega)(0,T;L^2(\Omega))\cap L^2(0,T;W^{2,2}(\Omega)),\]
cf.\ Lemma~\ref{lem_reg_zeta}. For, we plug $c_i=\zeta_i\exp(-z_i\phi)$ into
(\ref{eq_NP})-(\ref{eq_NP_bdy}) and obtain the following nonlinear heat equation subject to
homogeneous Neumann boundary conditions for $\zeta_i$:
\begin{align}
\label{eq_trans_prob}
\pa_t\zeta_i-D_i\Delta \zeta_i&=- D_iz_i\nabla\phi\cdot\nabla \zeta_i+ z_i\zeta_i\phi_t+
z_i\zeta_iu\cdot\nabla\phi-u\cdot\nabla \zeta_i\quad\text{in }\Omega,\\
\label{eq_trans_prob_bdy}
\pa_\nu \zeta_i&=0\quad\text{on }\pa\Omega,
\end{align}
for $i=1,\ldots,N$. Those homogeneous Neumann boundary conditions enable us to apply the
corresponding relation to (\ref{eq_energy_est_nabla}) for $\zeta$, namely
\begin{equation}
\label{eq_energy_zeta}
\int_\Omega \pa_t\zeta_i\cdot\Delta \zeta_idx=-\int_\Omega\nabla
(\pa_t\zeta_i)\cdot\nabla\zeta_idx+\int_\Omega(\pa_t\zeta_i)(\pa_\nu \zeta_i)dx=-\frac12\Dt\|\nabla \zeta_i\|_2^2.
\end{equation}
Once we have ''good'' estimates for higher derivatives of $\zeta$ we will be able to infer the same
behaviour for higher derivatives of $c$, since $c$ and $\zeta$ are
connected via $\phi$, which already has nice regularity properties.

However, in order to obtain suitable estimates for $\nabla\zeta$ and $\Delta\zeta$ via the energy
method, we need to gather more information on the terms on the right-hand side of
(\ref{eq_trans_prob}). This will be done in two steps.

First we shall prove the existence of a uniform $L^\infty$-bound in space and time for
concentrations $c_i$ (Lemma~\ref{lem_ci_infty}). With this information at hand we gain sufficient
information on the new term $\phi_t$ (Lemma \ref{lem_phi_t_p}).

%%%%%%%%%%%%%%%%%%%%%%%%%%%%%%%%%%%%%%%%%%%%%%%%%%%%%%%%%%%%%%
\begin{lemma}
\label{lem_ci_infty}
There is a constant $C>0$, depending only on the initial data, such that for almost all
$t\in(0,T_{max})$
\[\|c_i(t)\|_\infty\leq C.\]
\end{lemma}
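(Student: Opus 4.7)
The approach I would take is a Moser-type (Alikakos) iteration applied directly to the Nernst--Planck equation. The decisive new input, in comparison with Lemma~\ref{lem_nabla_phi_infty}, is the uniform-in-time estimate $\|\nabla\phi(t)\|_\infty\le M$ already supplied by that lemma, which allows one to treat the electro-migration contribution as a drift with bounded coefficient.

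As a first step, I would take the identity (\ref{eq_hoe_v}) (with $v_k=c_i^k$) that was derived inside the proof of Lemma~\ref{lem_nabla_phi_infty}, now for \emph{arbitrary} $k\in\N$, apply Cauchy--Schwarz to the integral $\int_\Omega v_k\nabla v_k\cdot\nabla\phi\,dx$ using $\|\nabla\phi\|_\infty\le M$, and use Young's inequality to absorb half of the dissipation into the left-hand side. Multiplying by $2k$ this produces
\[
\Dt\|c_i\|_{2k}^{2k}+\frac{D_i(2k-1)}{k}\|\nabla c_i^k\|_2^2 \;\le\; C k^{2}\|c_i\|_{2k}^{2k},
\]
with a constant $C$ depending only on $M$, $D_i$ and $z_i$, hence independent of $k$ and $t$.

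Next, the two-dimensional Gagliardo--Nirenberg inequality from Lemma~\ref{lemma34} applied to $v_k=c_i^k$ yields $\|c_i\|_{2k}^{2k}\le C\|\nabla c_i^k\|_2\|c_i\|_k^{k}+C\|c_i\|_k^{2k}$. Two uses of Young's inequality then let me absorb both the $Ck^{2}\|c_i\|_{2k}^{2k}$ term on the right and another copy of $\|c_i\|_{2k}^{2k}$ (used to close the Gronwall loop) into the dissipation, at the price of a polynomial-in-$k$ forcing term. I arrive at
\[
\Dt\|c_i\|_{2k}^{2k}+\kappa\|c_i\|_{2k}^{2k}\;\le\; Ck^{4}\|c_i\|_k^{2k}
\]
with $\kappa>0$ independent of $k$, and Gronwall's inequality then delivers the iteration step
\[
\sup_{t\ge 0}\|c_i(t)\|_{2k}\;\le\;\|c_i^0\|_{2k}+\bigl(Ck^{4}/\kappa\bigr)^{1/(2k)}\sup_{t\ge 0}\|c_i(t)\|_k.
\]

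Iterating this along $k=2^{j}$, starting from the bound $\|c_i(t)\|_4\le C$ provided by Lemma~\ref{lem_nabla_phi_infty} and using the assumed $c_i^0\in L^\infty(\Omega)$, the prefactors $(Ck^{4})^{1/(2k)}$ form a convergent infinite product, so the bound on $\|c_i(t)\|_{2^j}$ remains finite as $j\to\infty$; this gives the desired uniform $L^\infty$-estimate. The main obstacle is the familiar Moser bookkeeping: one has to track the $k$-dependence of all constants (coming from Young's inequality, the Gagliardo--Nirenberg constant, and the polynomial factors $2k-1$, $k^{2}$, $k^{4}$) carefully enough to guarantee that their $2^{j}$-th roots are summable as $j\to\infty$. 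It is precisely at this point that the two-dimensionality of $\Omega$ is essential---in three dimensions the relevant Gagliardo--Nirenberg exponent would break the iteration, in agreement with Remark~\ref{rem_3d}.
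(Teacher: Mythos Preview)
Your approach is the same Moser/Alikakos iteration the paper uses, and the differential inequality you derive is correct. Two points, however, need attention.

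First, the recursion
\[
\sup_{t}\|c_i(t)\|_{2k}\;\le\;\|c_i^0\|_{2k}+\bigl(Ck^{4}/\kappa\bigr)^{1/(2k)}\sup_{t}\|c_i(t)\|_{k}
\]
does not close as written. Iterating $A_{j+1}\le a+\mu_j A_j$ along $k=2^{j}$ with the additive constant $a=\sup_j\|c_i^0\|_{2^{j+1}}$ gives only $A_J\le aJP+PA_2$ (where $P=\prod_j\mu_j<\infty$), which grows without bound and says nothing about $\|c_i\|_\infty$. The paper fixes this by setting $S_k:=\max\{\|c_i^0\|_\infty,\sup_t\|c_i(t)\|_k\}$: since $\|c_i^0\|_{2k}^{2k}\le|\Omega|\,\|c_i^0\|_\infty^{2k}\le|\Omega|\,S_k^{2k}$, the recursion becomes purely multiplicative, $S_{2k}\le(Ck^m)^{1/(2k)}S_k$, and then the infinite product argument goes through. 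This is a small bookkeeping repair, not a change of strategy.

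Second, the claim that two-dimensionality is essential \emph{at this step} is not accurate. Once $\|\nabla\phi\|_\infty\le M$ is available, the Gagliardo--Nirenberg inequality of Lemma~\ref{lemma34} still yields $\|v_k\|_2\le C\|\nabla v_k\|_2^{\alpha}\|v_k\|_1^{1-\alpha}+C\|v_k\|_1$ with $\alpha<1$ in three dimensions, and Young's inequality absorbs the drift term with merely a larger power of $k$ in the forcing. Remark~\ref{rem_3d} refers to Lemma~\ref{lem_ci_l2}, where the restriction $n=2$ was actually used; that lemma is the input needed to obtain the bound on $\|\nabla\phi\|_\infty$ in Lemma~\ref{lem_nabla_phi_infty}, which you are already taking for granted here.
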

%%%%%%%%%%%%%%%%%%%%%%%%%%%%%%%%%%%%%%%%%%%%%%%%%%%%%%%%%%%%%%

The computation for the proof of Lemma~\ref{lem_ci_infty} is essentially contained in \cite{CL_multidim}, for convenience we provide a proof here correcting a minor mistake. The idea is as follows: If we can show that there is a sequence $p_k\to\infty$ with
the property that $\operatorname{ess\,sup}_{t\in[0,\infty)}\|c_i(t)\|_{p_k}\leq C$ independently of $k$, then not only
is $c_i(t)\in L^\infty(\Omega)$ for almost all $t\in[0,\infty)$, but it also holds that $c_i\in
L^\infty([0,\infty)\times\Omega)$. This method is sometimes referred to as ''Moser iteration'', cf.
\cite{Wu}.

\begin{proof}[Proof. (of Lemma \ref{lem_ci_infty})]
Recall the notation $v_k=c_i^k$ for $k\in\N$. Taking into account the
$L^\infty$-bound for $\nabla\phi$, we improve the estimation of (\ref{eq_hoe_v}) given in (\ref{eq_hoe}) and get
\[\frac1{2k}\Dt \|v_k\|_2^2\leq-\frac{D_i}{k}\|\nabla v_k\|_2^2+C\|v_k\|_2\|\nabla v_k\|_2.\]
Thanks to Lemma~\ref{lemma34} and Young's inequality, for every $\ep>0$ there is a $C(\ep)>0$ such
that
\begin{align*}
\frac1{2k}\Dt \|v_k\|_2^2&-\frac{D_i}{k}\|\nabla v_k\|_2^2+C\|\nabla v_k\|_2(\|\nabla
v_k\|_2^{1/2}\|v_k\|_1^{1/2}+\|v_k\|_1)\\
&\leq-\frac{D_i}{k}\|\nabla v_k\|_2^2+\ep\|\nabla v_k\|_2^2+C(\ep)\|v_k\|_1^2.
\end{align*}
If we choose $\ep=\frac{D_i}{2k}$, we get
\begin{equation}
\label{eq_ci_infty_1}
\begin{split}
\frac1{2k}\Dt \|v_k\|_2^2\leq -\frac{D_i}{2k}\|\nabla v_k\|_2^2+C'(k)\|v_k\|_1^2,
\end{split}
\end{equation}
where $C'=C'(k)$ can be controlled by a polynomial in $k$.
Poincar\'e's inequality (Lemma~\ref{lem_poinc}) implies
\[-\|\nabla v_k\|_2^2\leq-\frac{1}{C_P}\|v_k\|_2^2+\|v_k\|_1^2\]
with a constant $C_P>\max\{1,|\Omega|\}$, without loss of generality. Plugging this into
(\ref{eq_ci_infty_1}) and multiplying with $2k$ gives
\begin{equation}
\label{eq_ci_infty_2}
\Dt \|v_k\|_2^2\leq -\frac{D_i}{C_P}\|v_k\|_2^2+(2kC'(k)+D_i)\|v_k\|_1^2.
\end{equation}
Considering a fixed $t\in(0,T_{max})$, we define
\[S_k:=\max\left\{\|c_i(0)\|_\infty,\operatorname*{ess\,sup}_{0\leq s\leq t}\left(\int_\Omega
c_i^k(s,x)dx\right)^{1/k}\right\}.\]
Note that we already proved that $S_1,S_2,S_4$ can be bounded independently of~$t$.

With the abbreviations $C_1=D_i/C_P$ and $C_2(k)=2kC'(k)+D_i$ we deduce from equation
(\ref{eq_ci_infty_2}) that
\[\Dt \|v_k(t)\|_2^2\leq -C_1\|v_k(t)\|_2^2+C_2(k)S_k^{2k}, \quad 0\leq t\leq T.\]
Multiplying with $e^{C_1t}$ yields
\[
	\Dt e^{C_1t}\|v_k(t)\|_2^2\leq C_2(k)S_k^{2k}e^{C_1t}.
\]
By a standard Gronwall argument we therefore can achieve that
\begin{equation}
\label{eq_ci_infty_3}
\begin{split}
\|v_k(t)\|_2^2&\leq\|v_k(0)\|_2^2+\frac{C_2(k)}{C_1}S_k^{2k}\leq|\Omega|\|c_i^0\|_\infty^{2k}+\frac{C_2(k)S_k^{2k}}{C_1}\\
&\leq\left(|\Omega|+\frac{C_2(k)}{C_1}\right)S_k^{2k}\leq\frac{2C_2(k)}{C_1}S_k^{2k}, \quad 0\leq
t\leq T_{max},
\end{split}
\end{equation}
since we assumed that $C_P>|\Omega|$. Therefore
\begin{equation}
\label{eq_ci_infty_4}
\left(\int_\Omega
c_i(t,x)^{2k}dx\right)^{1/2k}=\left(\|v_k(t)\|_2^2\right)^{1/2k}\leq\left(\frac{2C_2(k)}{C_1}\right)^{1/2k}S_k.
\end{equation}
We can find constants $C,m>0$ such that
\begin{equation}
\label{eq_ci_infty_5}
\frac{2C_2(k)}{C_1}=2C_P\left(1+2k\frac{C'(k)}{D_i}\right)\leq Ck^m,\quad k\geq1.
\end{equation}
Collecting (\ref{eq_ci_infty_3})-(\ref{eq_ci_infty_5}), we obtain the following recursion relation
for $S_k$:
\begin{equation}
\label{eq_ci_infty_rec}
\begin{split}
S_{2k}&\leq\max\bigg\{\|c_i(0)\|_\infty,\bigg(\frac{2C_2(k)}{C_1}\bigg)^{1/2k}S_k\bigg\}=\bigg(\frac{2C_2(k)}{C_1}\bigg)^{1/2k}S_k\\
	&\leq C^{1/2k}k^{m/2k}S_k.
\end{split}
\end{equation}
Thus from the fact that $S_4<\infty$, we deduce that $S_8<\infty$ and so on. Note that this
recursion is valid for all $0<t<T_{max}$, and the constant $C$ does
neither depend on $t$ nor on $k$.

If we set $k=2^\mu$, $\mu=0,1,2\ldots$, relation (\ref{eq_ci_infty_rec}) implies
\[S_{2^{\mu+1}}\leq C^{1/2^{\mu+1}}2^{m\mu/2^{\mu+1}}S_{2^\mu}, \quad\mu=0,1,2,\ldots.\]
Since $S_1<\infty$ due to Corollary~\ref{cor_mass_cons} and the series $\sum2^{-(\mu+1)}$ and
$\sum\mu/2^{\mu+1}$ converge, the sequence $(S_{2^\mu})_\mu$ is bounded. Note that this means that
\[\operatorname*{ess\,sup}_{0\leq t<T_{max}}\|c(t)\|_{2^\mu}\leq S_{2^\mu}\leq C\]
independently of $\mu$. We therefore proved $\operatorname{ess\,sup}_{0\leq
t<T_{max}}\|c_i(t)\|_\infty\leq C$.
\end{proof}

Let us consider $\phi_t=\pa_t\phi$; for almost all $t\in(0,T_{max})$ it holds true that
\begin{alignat}{2}
\label{eq_P_phit}
-\ep\Delta\phi_t&=\sum_{i=1}^Nz_i\pa_tc_i=-\sum_{i=1}^Nz_i\divv J_i,&x\in\Omega,\\
\label{eq_P_bdy_phit}
\pa_\nu\phi_t+\tau\phi_t&=0,&x\in\pa\Omega
\end{alignat}
by interchanging $\pa_t$ and $\Delta$ and taking advantage of (\ref{eq_NP}) and the
time-independence of $\xi$.

%%%%%%%%%%%%%%%%%%%%%%%%%%%%%%%%%%%%%%%%%%%%%%%%%%%%%%%%%%%%%%%%%%%%%%%%%%%%%%%%%%%%%%%%%%%%%%%%%%%%%%%%%%%%%%%%%%%%%%%%%%%%%%%%%%%%%%%%%%
\begin{lemma}
\label{lem_phi_t_p}
There is a constant $C>0$, depending only on the initial data, such that for almost all
$t\in(0,T_{max})$
\[\|\phi_t(t)\|_2\leq C.\]
\end{lemma}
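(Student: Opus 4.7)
Since $\phi_t$ solves the elliptic Robin problem \eqref{eq_P_phit}--\eqref{eq_P_bdy_phit}, I would treat it by a standard energy estimate. Testing \eqref{eq_P_phit} against $\phi_t$, integrating by parts and using \eqref{eq_P_bdy_phit} on the boundary yields
\[
\ep \|\nabla \phi_t\|_2^2 + \ep\tau \|\phi_t\|_{2,\pa\Omega}^2 \;=\; \sum_{j=1}^N z_j \int_\Omega \pa_t c_j\,\phi_t\,dx.
\]
Next I would replace $\pa_t c_j$ by $-\divv J_j$ via \eqref{eq_NP} and integrate by parts a second time. The boundary contribution drops out, because on $\pa\Omega$ the no-slip $u|_{\pa\Omega}=0$ together with the no-flux condition \eqref{eq_NP_bdy} give $J_j\cdot\nu = c_j u\cdot\nu - D_j(\pa_\nu c_j + z_j c_j\pa_\nu\phi)=0$. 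This produces the divergence-form identity
\[
\ep \|\nabla \phi_t\|_2^2 + \ep\tau \|\phi_t\|_{2,\pa\Omega}^2 \;=\; \sum_{j=1}^N z_j \int_\Omega J_j\cdot\nabla\phi_t\,dx.
\]

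Cauchy--Schwarz and Young's inequality allow me to absorb $\|\nabla\phi_t\|_2^2$ into the left-hand side, giving $\|\nabla\phi_t\|_2^2+\|\phi_t\|_{2,\pa\Omega}^2\leq C\sum_j\|J_j\|_2^2$. Combined with the Robin--Poincar\'e coercivity $\|v\|_2^2\leq C_0(\|\nabla v\|_2^2+\|v\|_{2,\pa\Omega}^2)$ (a direct consequence of the positivity of the bilinear form associated with the Robin Laplacian) this leads to the master inequality
\[
\|\phi_t\|_2 \;\leq\; C\sum_{j=1}^N \|J_j\|_2.
\]
For the algebraic parts of $J_j = -D_j\nabla c_j - D_j z_j c_j\nabla\phi + c_j u$, the uniform bounds $\|c_j\|_\infty\leq C$ (Lemma~\ref{lem_ci_infty}), $\|\nabla\phi\|_\infty\leq C$ (Lemma~\ref{lem_nabla_phi_infty}) and $\|u\|_2\leq C$ (Lemma~\ref{lem_u_2}) take care of the terms $c_j\nabla\phi$ and $c_j u$ once and for all.

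The hard part is producing an a.e.-pointwise-in-$t$ control on $\|\nabla c_j(t)\|_2$, since Lemma~\ref{lem_ci_l2} only gives it in $L^2(0,T_{\max})$. The way I would close the loop is to pass to the diffusive flux $j_j=-D_j(\nabla c_j+z_j c_j\nabla\phi)$: since $\|\nabla c_j\|_2\leq D_j^{-1}\|j_j\|_2+|z_j|\,\|c_j\|_\infty\|\nabla\phi\|_2$, it suffices to bound $\|j_j(t)\|_2$ uniformly in $t$. This in turn is best seen through the transformed variable $\zeta_j=c_j e^{z_j\phi}$ of \eqref{eq_trans_conc}, which satisfies the homogeneous Neumann problem \eqref{eq_trans_prob}--\eqref{eq_trans_prob_bdy} and fulfils the pointwise identity $\nabla\zeta_j=-e^{z_j\phi}j_j/D_j$, whence $\|j_j\|_2\leq e^{|z_j|\|\phi\|_\infty}\|\nabla\zeta_j\|_2$. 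A uniform-in-time bound for $\|\nabla\zeta_j(t)\|_2$ can then be obtained by testing the transport-diffusion equation for $\zeta_j$ against $-\Delta\zeta_j$ (this is where the Neumann condition is indispensable, see \eqref{eq_energy_zeta}) and applying the uniform Gronwall inequality Lemma~\ref{lem_gron} with the already-established integral controls \eqref{eq_nabla_ci_growth} and \eqref{ez_int_A_S_12}. Thus the proof of Lemma~\ref{lem_phi_t_p} is, in effect, entangled with the forthcoming $W^{1,2}$-estimate for $\zeta_j$: the two must be run simultaneously, and this bootstrap is the main technical hurdle.
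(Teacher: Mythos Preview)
Your approach is correct in principle, but it takes a more circuitous route than the paper. You have correctly identified that the direct energy estimate $\|\phi_t\|_2\le C\sum_j\|J_j\|_2$ runs into the obstacle that $\|\nabla c_j(t)\|_2$ is not yet known to be bounded pointwise in $t$, and your proposed resolution---running the $\nabla\zeta_j$ estimate of Lemma~\ref{lem_delta_c_l2} simultaneously with this one in a coupled Gronwall argument---does close: the $\phi_t$-term in \eqref{eq_nabla_zeta_2} then contributes an extra $C\sum_j\|\nabla\zeta_j\|_2^2$ on the right-hand side of \eqref{eq_nabla_zeta_4}, which is harmless after summing over $i$.

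The paper, however, avoids this entanglement entirely through a duality argument. Instead of bounding $\|J_j\|_2$, it estimates $\divv J_j$ in the dual space $\mathcal D(B)^*$, where $B=-\ep\Delta$ with Robin domain $\mathcal D(B)=\{h\in W^{2,2}(\Omega):\pa_\nu h+\tau h=0\}$. Since $B^{-1}:\mathcal D(B)^*\to L^2(\Omega)$ is bounded, one has $\|\phi_t\|_2\le C\sum_j\|\divv J_j\|_{\mathcal D(B)^*}$. Pairing with $h\in\mathcal D(B)$ and integrating by parts once gives $\langle\divv J_j,h\rangle=-\int_\Omega J_j\cdot\nabla h$, as in your calculation; but now the test function carries \emph{two} derivatives, so the troublesome piece $\int_\Omega\nabla c_j\cdot\nabla h$ can be integrated by parts \emph{again}, yielding $-\int_\Omega c_j\Delta h-\tau\int_{\pa\Omega}c_j h$. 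This requires only $\|c_j\|_2$ and $\|c_j\|_\infty$, both already available from Lemmas~\ref{lem_ci_l2} and~\ref{lem_ci_infty}. The outcome is a self-contained proof of Lemma~\ref{lem_phi_t_p} that never mentions $\zeta_j$ and can be quoted as a black box in the subsequent Lemma~\ref{lem_delta_c_l2}; your method reaches the same conclusion but at the cost of merging the two proofs.
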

%%%%%%%%%%%%%%%%%%%%%%%%%%%%%%%%%%%%%%%%%%%%%%%%%%%%%%%%%%%%%%%%%%%%%%%%%%%%%%%%%%%%%%%%%%%%%%%%%%%%%%%%%%%%%%%%%%%%%%%%%%%%%%%%%%%%%%%%%%

\begin{proof}
The proof follows by a duality argument. 
Let 
\[
	Bh:=-\varepsilon\Delta h,
	\quad h\in \mathcal D(B):=\{v\in W^{2,2}(\Omega):\ \pa_\nu v+\tau v=0\}.
\]
Remark~\ref{rem_triebel} then implies that $B:\mathcal D(B)\to L^2(\Omega)$
is isomorphic. By duality also 
$
	B^*:L^2(\Omega)\to \mathcal D(B)^*
$
is isomorphic. By the self-adjointness of $B$ and since $\mathcal D(B)$
is dense in $L^2(\Omega)$ we further can regard above $B^*$ as the unique
extension of $B$, hence we write
\[
	B:L^2(\Omega)\to \mathcal D(B)^*.
\]
According to Theorem~\ref{theorem_strong_loc} we have
\[
	\divv J_i
	\in L^2(\Omega)\hookrightarrow \mathcal D(B)^*.
\]
We now intend to show that the norm of each $\divv J_i$
in $\mathcal D(B)^*$ can be estimated uniformly in $t\in (0,T_{max})$.
Thanks to (\ref{eq_NP_bdy}) we can calculate
\begin{equation}
\label{eq_phit_1}
\begin{split}
\langle \divv J_i, h\rangle_{\mathcal D(B)^*,\mathcal D(B)}
&=\int_\Omega h \divv J_i dx=\int_\Omega \nabla h\cdot J_i dx\\
	&=\int_\Omega\nabla h\cdot(-D_i\nabla c_i -D_i z_ic_i \nabla\phi +c_i u )dx
\end{split}
\end{equation}
for all $h\in \mathcal D(B)$ and for almost all $t\in (0,T_{max})$.

We need to estimate the terms on the right-hand side of (\ref{eq_phit_1}). Applying integration by
parts and H\"older's inequality we compute
\begin{align*}
\left|\int_\Omega\nabla h\cdot\nabla c_i dx\right|&\leq\left|
\int_\Omega c_i\Delta h 
dx\right|+\tau\left|\int_{\pa\Omega}c_i hdx\right|\\
	&\leq\|h\|_{\mathcal D(B)}\|c_i \|_2
	+\tau\|c_i \|_{2,\pa\Omega}\|h\|_{2,\pa\Omega}\\
	&\leq C\left(\|c_i \|_2
	     +\|c_i \|_{\infty}\right)\|h\|_{\mathcal D(B)}\\
	&\leq C\|h\|_{\mathcal D(B)}\quad (t\in (0,T_{max})),
\end{align*}
where we made use of the uniform boundedness of $\|c_i\|_2$ and $\|c_i\|_\infty$, cf.
Lemmas~\ref{lem_ci_l2} and \ref{lem_ci_infty}. With the uniform
bound on $\|\nabla\phi\|_\infty$, cf.\ Lemma~\ref{lem_nabla_phi_infty}, we deduce
\[\left|\int_\Omega c_i\nabla\phi\cdot\nabla hdx\right|\leq\|c_i \|_\infty\|\nabla\phi
\|_\infty\|h\|_{W^{1,1}}\leq C\|h\|_{\mathcal D(B)}\]
for the second term. Due to Lemma~\ref{lem_u_2} the quantity $\|u\|_2$ is uniformly bounded in
time, so the last term can be estimated by
\[\left|\int_\Omega c_iu\cdot\nabla h dx\right|\leq\|c_i \|_\infty\|u \|_2\|\nabla h\|_2\leq
C\|h\|_{\mathcal D(B)}.\]
In total we obtain
\[
	\|\phi_t\|_2\le \sum_{i=1}^N z_i\|B^{-1}\divv J_i\|_2
	\le C\sum_{i=1}^N\|\divv J_i\|_{\mathcal D(B)^*}\le C
	\quad (t\in(0,T_{max})).
\]
\end{proof}

Before treating problem (\ref{eq_trans_prob})-(\ref{eq_trans_prob_bdy}) let us note the following
lemma on the regularity of $\zeta$.

%%%%%%%%%%%%%%%%%%%%%%%%%%%%%%%%%%%%%%%%%%%%%%%%%%%%%%%%%%%%%%
\begin{lemma}
\label{lem_reg_zeta}
Let $c\in\E_{T,2}^c$ be as in Theorem~\ref{theorem_strong_loc} and $\zeta$ be defined as in
(\ref{eq_trans_conc}). Then it holds true that $\zeta\in\E_{T,2}^c$.
\end{lemma}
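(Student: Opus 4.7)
The plan is to exhibit $\zeta$ as a product of $c$ and an exponential factor $e^{z_i\phi}$ whose regularity is \emph{better} than that of $c$, and then read off the desired regularity term by term from the product and chain rules. The two-dimensional setting will do all the work: it will let us embed $W^{1,2}\hookrightarrow L^q$ for every $q<\infty$ and $W^{2,2}\hookrightarrow L^\infty$, which is exactly what is needed so that every factor appearing below lies in a space making products fit into $L^2(0,T;L^2)$.

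First, I would collect regularity of $\phi$ and $c$. Given $c\in\E_{T,2}^c$, Remark~\ref{rem_triebel} yields
\[
  \phi\in W^{1,2}(0,T;W^{2,2}(\Omega))\cap L^2(0,T;W^{4,2}(\Omega)),
\]
and by the trace embedding from \cite{amann} this sits continuously in $BUC([0,T];W^{3,2}(\Omega))$. Since $n=2$, Sobolev embedding gives $W^{3,2}(\Omega)\hookrightarrow C^1(\overline\Omega)$, so $\phi$ and $\nabla\phi$ are uniformly bounded on $[0,T]\times\overline\Omega$; in particular $e^{z_i\phi}$ and $\nabla(e^{z_i\phi})=z_ie^{z_i\phi}\nabla\phi$ are bounded. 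Moreover $\nabla^2\phi\in BUC([0,T];W^{1,2}(\Omega))\hookrightarrow BUC([0,T];L^q(\Omega))$ for every $q<\infty$, and $\phi_t\in L^2(0,T;W^{2,2}(\Omega))\hookrightarrow L^2(0,T;L^\infty(\Omega))$. On the $c$-side, Amann's embedding gives $c\in BUC([0,T];W^{1,2}(\Omega))\hookrightarrow BUC([0,T];L^q(\Omega))$ for every $q<\infty$.

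Next I would handle the time derivative. Direct differentiation yields
\[
  \pa_t\zeta_i=e^{z_i\phi}\bigl(\pa_tc_i+z_ic_i\phi_t\bigr).
\]
The factor $e^{z_i\phi}$ being bounded, it suffices to check that $\pa_tc_i\in L^2(0,T;L^2)$ (immediate from $c\in\E_{T,2}^c$) and that $c_i\phi_t\in L^2(0,T;L^2)$. For the latter, H\"older gives
\[
  \|c_i(t)\phi_t(t)\|_2\leq \|c_i(t)\|_4\|\phi_t(t)\|_4,
\]
and since $c_i\in L^\infty(0,T;L^4)$ while $\phi_t\in L^2(0,T;L^\infty)\hookrightarrow L^2(0,T;L^4)$, the product is square integrable in time.

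Finally I would handle the spatial derivatives. By the product and chain rules,
\[
  \pa_k\pa_l\zeta_i=e^{z_i\phi}\Bigl[\pa_k\pa_lc_i+z_i\pa_kc_i\,\pa_l\phi+z_i\pa_lc_i\,\pa_k\phi+z_ic_i\pa_k\pa_l\phi+z_i^2c_i\pa_k\phi\,\pa_l\phi\Bigr].
\]
Each summand lies in $L^2(0,T;L^2)$: the first because $\pa_k\pa_lc_i\in L^2(L^2)$; the two cross terms because $\nabla c_i\in L^\infty(L^2)$ and $\nabla\phi\in L^\infty(L^\infty)$; the fourth because $c_i\in L^\infty(L^q)$ for any $q<\infty$ and $\nabla^2\phi\in L^\infty(L^q)$ for any $q<\infty$ (so the product lies in $L^\infty(L^2)$); the last because $c_i\in L^\infty(L^2)$ and $|\nabla\phi|^2\in L^\infty(L^\infty)$. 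Together with the uniform bound on $e^{z_i\phi}$ this gives $\zeta_i\in L^2(0,T;W^{2,2}(\Omega))$. Combined with the time-derivative bound above, $\zeta\in\E_{T,2}^c$, as claimed.

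There is no real obstacle here; the only thing to be careful about is matching up the exponents so that every product lies in $L^2(L^2)$, and the 2D Sobolev embeddings $W^{1,2}\hookrightarrow L^q$ (any $q<\infty$) and $W^{2,2}\hookrightarrow L^\infty$ make this straightforward.
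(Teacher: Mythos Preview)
Your proof is correct. Both you and the paper argue by expanding $\pa_t\zeta_i$ and the second spatial derivatives of $\zeta_i$ via the product and chain rules and then controlling each factor, so the underlying mechanism is the same. The difference is in where the bounds on the factors come from. The paper's one-line proof simply invokes the uniform-in-time a priori bounds established immediately before the lemma: $\phi,\nabla\phi\in L^\infty_{t,x}$ (Lemma~\ref{lem_nabla_phi_infty}), $c_i\in L^\infty_{t,x}$ (Lemma~\ref{lem_ci_infty}), hence $\Delta\phi\in L^\infty_{t,x}$, and $\phi_t\in L^\infty_t L^2_x$ (Lemma~\ref{lem_phi_t_p}); these were obtained through the Lyapunov functional and the Moser iteration. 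You instead extract everything you need directly from the assumption $c\in\E_{T,2}^c$ via elliptic regularity for the Poisson problem (yielding $\phi\in W^{1,2}(0,T;W^{2,2})\cap L^2(0,T;W^{4,2})\hookrightarrow BUC([0,T];W^{3,2})$) together with the two-dimensional Sobolev embeddings. Your route is therefore more self-contained and in fact applies to \emph{any} $c\in\E_{T,2}^c$, not only to the specific solution with its extra energy bounds; the paper's argument is terse only because the heavy lifting has already been done in the preceding lemmas.
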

%%%%%%%%%%%%%%%%%%%%%%%%%%%%%%%%%%%%%%%%%%%%%%%%%%%%%%%%%%%%%%

\begin{proof}
This is an immediate consequence of $\phi$, $\nabla\phi$, $c_i$ and, hence, $\Delta\phi$ being
uniformly bounded in $L^\infty(\Omega)$ and $\phi_t$ being uniformly bounded in $L^2(\Omega)$ for
any time.
\end{proof}

%%%%%%%%%%%%%%%%%%%%%%%%%%%%%%%%%%%%%%%%%%%%%%%%%%%%%%%%%%%%%%
\begin{lemma}
\label{lem_delta_c_l2}
There is a constant $C>0$, depending only on the initial data, such that for almost all
$t\in(0,T_{max})$
\[\|\nabla c_i(t)\|^2_2\leq C\quad\text{and}\quad \int_0^t\|\Delta c_i(s)\|_2^2ds\leq C(1+t).\]
\end{lemma}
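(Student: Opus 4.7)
The plan is to work with the transformed concentrations $\zeta_i = c_i \exp(z_i\phi)$, which by Lemma~\ref{lem_reg_zeta} lie in $\E_{T,2}^c$ and satisfy the homogeneous Neumann problem (\ref{eq_trans_prob})--(\ref{eq_trans_prob_bdy}). The homogeneous boundary condition allows us to apply the key identity (\ref{eq_energy_zeta}), which is what the raw $c_i$-equation does not permit.

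First I would test (\ref{eq_trans_prob}) with $-\Delta\zeta_i$ and integrate over $\Omega$. Using (\ref{eq_energy_zeta}) this yields
\begin{equation*}
\tfrac12\Dt\|\nabla\zeta_i\|_2^2 + D_i\|\Delta\zeta_i\|_2^2 = D_iz_i\!\!\int_\Omega\!\!\nabla\phi\cdot\nabla\zeta_i\,\Delta\zeta_i\,dx - z_i\!\!\int_\Omega\!\!\zeta_i\phi_t\Delta\zeta_i\,dx - z_i\!\!\int_\Omega\!\!\zeta_i u\!\cdot\!\nabla\phi\,\Delta\zeta_i\,dx + \int_\Omega\!\! u\!\cdot\!\nabla\zeta_i\,\Delta\zeta_i\,dx.
\end{equation*}
The first three terms on the right are routine: using the uniform bounds $\|\nabla\phi\|_\infty\leq C$ (Lemma~\ref{lem_nabla_phi_infty}), $\|\phi_t\|_2\leq C$ (Lemma~\ref{lem_phi_t_p}), $\|c_i\|_\infty\leq C$ (Lemma~\ref{lem_ci_infty}) hence $\|\zeta_i\|_\infty\leq C$, and $\|u\|_2\leq C$ (Lemma~\ref{lem_u_2}), each of them can be estimated by Cauchy--Schwarz and Young's inequality as $\varepsilon\|\Delta\zeta_i\|_2^2 + C(1+\|\nabla\zeta_i\|_2^2)$.

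The main obstacle is the convective term $\int u\cdot\nabla\zeta_i\,\Delta\zeta_i\,dx$, since neither $u$ nor $\nabla\zeta_i$ enjoy an $L^\infty$-bound. Here I would use the $n=2$ Gagliardo--Nirenberg inequality $\|v\|_4\leq C\|v\|_2^{1/2}\|\nabla v\|_2^{1/2}$ applied simultaneously to $u$ and to $\nabla\zeta_i$, combined with the Neumann elliptic regularity estimate $\|\nabla^2\zeta_i\|_2\leq C(\|\Delta\zeta_i\|_2+\|\zeta_i\|_2)\leq C(\|\Delta\zeta_i\|_2+1)$. Since Lemmas~\ref{lem_u_2} and \ref{lem_delta_u_l2} yield $\|u\|_4\leq C\|u\|_2^{1/2}\|\nabla u\|_2^{1/2}\leq C$, the convective term is controlled by
\[
\|u\|_4\|\nabla\zeta_i\|_4\|\Delta\zeta_i\|_2 \leq C\|\nabla\zeta_i\|_2^{1/2}(\|\Delta\zeta_i\|_2+1)^{1/2}\|\Delta\zeta_i\|_2 \leq \varepsilon\|\Delta\zeta_i\|_2^2 + C(1+\|\nabla\zeta_i\|_2^2),
\]
again by Young's inequality. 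Choosing $\varepsilon>0$ sufficiently small we absorb the $\|\Delta\zeta_i\|_2^2$ terms and arrive at
\begin{equation*}
\Dt\|\nabla\zeta_i\|_2^2 + D_i\|\Delta\zeta_i\|_2^2 \leq C\bigl(1+\|\nabla\zeta_i\|_2^2\bigr).
\end{equation*}

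To conclude $\|\nabla\zeta_i(t)\|_2\leq C$ uniformly I would apply the uniform Gronwall inequality (Lemma~\ref{lem_gron}). The required uniform-in-$t$ integral bound $\int_t^{t+r}\|\nabla\zeta_i(s)\|_2^2\,ds\leq a_1$ follows from Remark~\ref{rem_gron_1} together with the pointwise estimate $|\nabla\zeta_i|\leq e^{z_i\phi}(|\nabla c_i|+|z_i|c_i|\nabla\phi|)$ and the uniform bounds on $\phi,\nabla\phi,c_i$. This gives the first assertion, and integrating the differential inequality over $(0,t)$ then delivers $\int_0^t\|\Delta\zeta_i(s)\|_2^2\,ds\leq C(1+t)$.

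Finally, to transfer the estimates back to $c_i=\zeta_i e^{-z_i\phi}$, I compute
\[
\nabla c_i = e^{-z_i\phi}\bigl(\nabla\zeta_i - z_i\zeta_i\nabla\phi\bigr),\qquad \Delta c_i = e^{-z_i\phi}\bigl(\Delta\zeta_i - 2z_i\nabla\zeta_i\cdot\nabla\phi + z_i^2\zeta_i|\nabla\phi|^2 - z_i\zeta_i\Delta\phi\bigr),
\]
and observe that $\phi$, $\nabla\phi$ are uniformly bounded in $L^\infty$, while $\Delta\phi=-\varepsilon^{-1}\sum_jz_jc_j$ is uniformly bounded in $L^\infty$ by Lemma~\ref{lem_ci_infty}. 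Combined with the previously obtained bounds on $\zeta_i$ this yields $\|\nabla c_i(t)\|_2\leq C$ and $\int_0^t\|\Delta c_i(s)\|_2^2\,ds\leq C(1+t)$ as claimed.
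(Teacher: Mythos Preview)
Your proposal is correct and follows essentially the same route as the paper: transform to $\zeta_i$ to obtain homogeneous Neumann conditions, test with $-\Delta\zeta_i$, control the convective term via the 2D Gagliardo--Nirenberg inequality together with the uniform bounds on $\|u\|_2$, $\|\nabla u\|_2$ from Lemmas~\ref{lem_u_2} and~\ref{lem_delta_u_l2}, apply the uniform Gronwall Lemma~\ref{lem_gron} using Remark~\ref{rem_gron_1}, and finally transfer back to $c_i$ via the explicit chain-rule identities. The only cosmetic difference is that the paper writes the Neumann elliptic estimate as $\|\nabla^2\zeta_i\|_2\leq C\|\Delta\zeta_i\|_2$ directly, whereas you keep the lower-order term $\|\zeta_i\|_2$; this is harmless since $\|\zeta_i\|_\infty\leq C$.
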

%%%%%%%%%%%%%%%%%%%%%%%%%%%%%%%%%%%%%%%%%%%%%%%%%%%%%%%%%%%%%%

\begin{proof}
We first derive the corresponding estimate for the transformed variable $\zeta_i$, defined in
(\ref{eq_trans_conc}).
We multiply (\ref{eq_trans_prob}) with $-\Delta \zeta_i$ and get with (\ref{eq_energy_zeta})
\begin{equation}
\label{eq_nabla_zeta_1}
\begin{split}
\frac12\Dt\|\nabla \zeta_i \|_2^2+D_i\|\Delta \zeta_i \|_2^2&= z_iD_i\int_\Omega\nabla\phi
\cdot\nabla \zeta_i \Delta \zeta_i dx- z_i\int_\Omega \zeta_i \phi_t \Delta \zeta_i dx\\
&\quad- z_i\int_\Omega \zeta_i u \cdot\nabla\phi \Delta \zeta_i dx+\int_\Omega u \cdot\nabla \zeta_i
\Delta \zeta_i dx.
\end{split}
\end{equation}
Note that the $L^\infty$-norm of $\zeta_i$ is bounded independently of time, since both
$\|c\|_\infty$ and $\|\phi\|_\infty$ are bounded independently of time, cf.\ Lemmas
\ref{lem_nabla_phi_infty} and \ref{lem_ci_infty}. Estimating the integrals on the right-hand side,
apart from the last one, with H\"older's inequality, Lemmas \ref{lem_u_2}, \ref{lem_delta_u_l2},
\ref{lem_phi_t_p}, and Young's inequality gives
\begin{equation}
\label{eq_nabla_zeta_2}
\begin{split}
\left|\int_\Omega\nabla\phi \cdot\nabla \zeta_i \Delta \zeta_i dx\right|&\leq\|\nabla\phi
\|_\infty\|\Delta \zeta_i \|_2\|\nabla \zeta_i \|_2\leq\ep_1\|\Delta \zeta_i
\|_2^2+C(\ep_1)\|\nabla\zeta_i \|_2^2,\\
\left|\int_\Omega \zeta_i \phi_t \Delta \zeta_i dx\right|&\leq\|\zeta_i \|_\infty\|\phi_t
\|_2\|\Delta \zeta_i \|_2\leq\ep_2\|\Delta \zeta_i \|_2^2+C(\ep_2),\\
\left|\int_\Omega \zeta_i u \cdot\nabla\phi \Delta \zeta_i dx\right|&\leq\|\zeta_i
\|_\infty\|\nabla\phi \|_\infty\|u \|_2\|\Delta \zeta_i \|_2\leq \ep_3\|\Delta \zeta_i
\|_2^2+C(\ep_3).
\end{split}
\end{equation}
For the remaining integral $\int_\Omega u \cdot\nabla \zeta_i \Delta \zeta_i dx$ recall that if
$v\in W^{2,2}(\Omega)$ satisfies homogeneous Neumann boundary conditions, one has
$\|\nabla^2v\|_2\leq C\|\Delta v\|_2$. So H\"older's inequality and the Gagliardo-Nirenberg inequality (\ref{eq_gag_ge}) give
\begin{equation}
\label{eq_nabla_zeta_3}
\begin{split}
\bigg|\int_\Omega u &\cdot\nabla \zeta_i \Delta \zeta_i dx\bigg|\leq\|u \|_4\|\nabla \zeta_i
\|_4\|\Delta \zeta_i \|_2\\
&\leq C\|u \|_2^{1/2}\|\nabla u \|_2^{1/2}\|\nabla \zeta_i
\|_2^{1/2}\|\nabla^2\zeta_i\|_2^{1/2}\|\Delta \zeta_i \|_2\leq C'\|\nabla \zeta_i
\|_2^{1/2}\|\Delta\zeta_i \|_2^{3/2}\\
	&\leq\ep_4\|\Delta \zeta_i \|_2^2+C(\ep_4)\|\nabla \zeta_i \|_2^2,
\end{split}
\end{equation}
where we also applied Lemmas~\ref{lem_u_2} and \ref{lem_delta_u_l2}.

Collecting (\ref{eq_nabla_zeta_2}) and (\ref{eq_nabla_zeta_3}) and choosing $\ep_i$ small enough we
obtain from (\ref{eq_nabla_zeta_1})
\begin{equation}
\label{eq_nabla_zeta_4}
\Dt\|\nabla \zeta_i \|_2^2+D_i\|\Delta \zeta_i \|_2^2\leq C\|\nabla \zeta_i \|_2^2+C.
\end{equation}
It holds that $\nabla\zeta_i=\nabla c_i\exp(z_i\phi)+z_ic_i\exp(z_i\phi)\nabla\phi$. Since $\phi$
and $\nabla\phi$ are uniformly bounded in time and space,
$\nabla\zeta$ has the same growth as $c_i$ in (\ref{eq_nabla_ci_growth}), namely $\int_t^{t+r}\|\nabla
\zeta_i(s)\|_2^2ds\leq C(r)$, $t\geq0$. Letting $f(t)=\|\nabla\zeta_i\|_2^2$, $g(t)=h(t)=C$ in the
formulation of Lemma~\ref{lem_gron} we find
\begin{equation}
\label{eq_nabla_zeta_i}
\|\nabla \zeta_i \|_2^2\leq C' \quad (t\in(0,T_{max})).
\end{equation}
Integrating (\ref{eq_nabla_zeta_4}) in time finally gives
\begin{equation}
\label{eq_delta_zeta_i}
\int_0^t\|\Delta \zeta_i(s)\|_2^2ds\leq C'(1+t)
\end{equation}
for some $C'>0$.

Since
\begin{align*}
\nabla c_i&=\nabla \zeta_i\exp(- z_i\phi)- z_i\zeta_i\nabla\phi\exp(- z_i\phi)\text{ and}\\
\Delta c_i&=\Delta \zeta_i\exp(- z_i\phi)-2 z_i\nabla \zeta_i\cdot\nabla\phi\exp(- z_i\phi)\\
&\quad- z_i\zeta_i\Delta\phi\exp(- z_i\phi)+z_i^2\zeta_i|\nabla\phi|^2\exp(- z_i\phi)
\end{align*}
the estimates (\ref{eq_nabla_zeta_i}) and (\ref{eq_delta_zeta_i}) are also valid for $\nabla c_i $
and $\Delta c_i $ by the uniform $L^\infty$-boundedness of $\phi$, $\nabla\phi$ and $\zeta_i$.
\end{proof}

The estimates on $(u,c)$ and on $\phi$ derived so far enable us now to conclude the statement of
Theorem~\ref{theorem_glob_ex}.

\begin{proof}[Proof. (of Theorem~\ref{theorem_glob_ex})]
From Lemmas \ref{lem_u_2}, \ref{lem_ci_l2}, \ref{lem_delta_u_l2} and \ref{lem_delta_c_l2} it
follows that
\begin{equation}
\label{eq_ex_glob_strong_1}
\|u\|^2_{L^2(0,T;\DD(A_S))}+\|c\|^2_{L^2(0,T;W^{2,2}(\Omega))}\leq C(1+T).\
\end{equation}
We use this estimate to show the boundedness of the nonlinearities of ($P$) in order to prove the
boundedness of $\pa_t(u,c)$ in $L^2(0,T;L^2)$.
We have:
\begin{equation*}
\|c_ic_j\|_{L^2(L^2)}^2+\|\nabla c_i\cdot\nabla\phi\|_{L^2(L^2)}^2\leq C T.
\end{equation*}
For the treatment of the term $u\cdot\nabla c_i$ we will, as in the proof of
Lemma~\ref{lem_delta_c_l2}, work with the transformed variable $\zeta_i$. The reason lies in the
homogeneous Neumann conditions for $\zeta_i$ which allows for the 
estimate $\|\nabla^2\zeta_i\|_2\leq
C\|\Delta \zeta_i\|_2$ when using the Gagliardo-Nirenberg inequality. Since
\[u\cdot\nabla c_i=u\cdot\nabla\zeta_i\exp(- z_i\phi)- z_i\zeta_iu\cdot\nabla\phi\exp(- z_i\phi),\]
the estimate
\begin{equation*}
\begin{split}
\|u\cdot\nabla \zeta_i\|_{L^2(L^2)}^2&\leq\int_0^T\|u \|_4^2\|\nabla \zeta_i \|_4^2dt\\
	&\leq C\int_0^T\|u \|_2\|\nabla u \|_2\|\nabla \zeta_i \|_2\|\Delta \zeta_i \|_2dt\leq C(1+T),
\end{split}
\end{equation*}
is sufficient to show $\|u\cdot\nabla c_i\|_{L^2(L^2)}^2\leq C(1+T)$, since $\zeta_i$, $\phi$ and
$\nabla\phi$ are uniformly bounded in $L^\infty$, cf.\ Lemmas \ref{lem_nabla_phi_infty} and
\ref{lem_ci_infty}, and $u$ is uniformly bounded in $L^2$, cf.\ Lemma~\ref{lem_u_2}. We infer
\begin{equation}
\label{eq_ex_glob_strong_2}
\|\pa_tc_i\|_{L^2(L^2)}\leq C(1+T).
\end{equation}

We proceed in the same way for the Navier-Stokes part, i.e.
\begin{align*}
\|P(u\cdot\nabla)u\|_{L^2(L^2)}^2&\leq C\int_0^T\|u \|_4^2\|\nabla u \|_4^2dt\\
	&\leq C'\int_0^T\|u \|_2\|\nabla u \|_2^2\|\nabla^2u \|_2dt\\
	&\leq C''\int_0^T\|u \|_2\|\nabla u \|_2^2\|A_Su \|_2dt\leq C'''(1+T),\\
\|\sum_ {i=1}^Nz_ic_i\nabla\phi\|_{L^2(L^2)}^2&\leq C T,
\end{align*}
where we used the fact that due to $0\in\rho(A_S)$ we have $\|\nabla^2
u\|_2\leq C\|A_Su\|_2$ for $u\in\DD(A_S)$, hence also
\begin{equation}
\label{eq_ex_glob_strong_3}
\|\pa_tu\|_{L^2(L^2)}\leq C(1+T).
\end{equation}
In conclusion it follows from (\ref{eq_ex_glob_strong_1})-(\ref{eq_ex_glob_strong_3}) that $\|(u,c)\|_{\E_{T,2}}\leq C(1+T)$ for finite $T$. So $(u,c)$ can be extended to a global
strong solution. It is unique by the local uniqueness from Theorem~\ref{theorem_strong_loc}.
\end{proof}

\begin{remark}
\label{rem_weak_sol_2}
It is possible to show that a global existence result in the spirit of
Theorem~\ref{theorem_glob_ex} remains still valid if we only impose
$(u^0,c^0)\in L^2_\sigma(\Omega)\times L^2(\Omega)$ with $c_i^0\geq0$
for the initial data. This will be published in the 
doctoral thesis of the second author. We only briefly indicate a sketch of the proof.

\textit{1. Existence and uniqueness of local-in-time weak solutions $(u,c)$:}

The idea is to define a map $L$ which sends $\bar c\in C([0,T];L^2(\Omega))\cap
L^2(0,T;W^{1,2}(\Omega))$ with $\bar c(0)=c^0$ to the unique (weak) solution $c\in
C(0,T;L^2(\Omega))\cap L^2(0,T;W^{1,2}(\Omega))$ of
\begin{equation}
\label{eq_fix_c}
\left.
\begin{split}
\partial_tc_i+\divv (-D_i\nabla c_i+c_i\bar u)&=D_iz_i\divv(\bar c_i\nabla\bar\phi),\quad
i=1,\ldots,N,\\
\pa_\nu c_i|_{\pa\Omega}&=-(z_i\bar c_i\pa_\nu\bar\phi)|_{\pa\Omega},\quad i=1,\ldots ,N,\\
c_i|_{t=0}&=c_i^0,\quad i=1,\ldots ,N,\\
\end{split}
\right\}
\end{equation}
$i=1,\ldots ,N$, where $\bar\phi$ is the unique solution to
\begin{equation}
\label{eq_fix_phi}
-\ep\Delta\bar\phi=\sum_{j=1}^Nz_j\bar c_j,\quad\text{in
}\Omega,\quad\pa_\nu\bar\phi+\tau\bar\phi=\xi,\quad\text{on }\pa\Omega
\end{equation}
and $\bar u$ is the unique (weak) solution to
\begin{equation}
\label{eq_fix_u}
\pa_t\bar u+A_S\bar u+P(\bar u\cdot\nabla)\bar u=-P\left(\sum_{i=1}^Nz_i\bar
c_i\nabla\bar\phi\right),\qquad\bar u(0)=u^0.
\end{equation}
The unique weak solutions $\bar u$ and $c$ exist from well-known results on weak solutions for
Navier-Stokes equations, see e.g. \cite{Sohr}, and for parabolic equations, see e.g. \cite{LSU},
whereas $\bar\phi$ is well-defined by Remark~\ref{rem_triebel}. By choosing the time $T$ small
enough it is possible to show that $L$ is a contraction in a closed subset of
\[Z_T=\{v\in C([0,T];L^2(\Omega))\cap L^2(0,T;W^{1,2}(\Omega)),v(0)=c^0\}.\]
Hence $L$ has a unique fixed point, which represents a local-in-time weak solution. This idea has also been used in \cite{Schmuck}.

\textit{2. Usage of regularizing effects of Navier Stokes and parabolic equations:}

The goal is to infer that for some positive time the trajectory $(u(\cdot),c(\cdot))$ enters the
regularity class for initial data considered in this work, namely $W^{1,2}_{0,\sigma}(\Omega)\times
(W^{1,2}(\Omega)\cap L^\infty(\Omega))$. This can be seen as follows.

Since the weak solution
$(u,c)$ is contained in $L^1(0,T;W^{1,2}_{0,\sigma}(\Omega)\times
W^{1,2}(\Omega))$, by Lebegue's differentiation theorem we obtain
that almost every $t\in(0,T)$ is a Lebesgue point for $(u,c)$.
Now let $t_0\in(0,T)$ be such a Lebesgue point. Then 
$(u(t_0),c(t_0))\in
W^{1,2}_{0,\sigma}(\Omega)\times W^{1,2}(\Omega)$ 
is well-defined.
Thanks to Theorem~\ref{theorem_strong_loc} there
is a unique local strong solution
\[(u^s,c^s)\in W^{1,2}(0,T_0;L^2_\sigma(\Omega)\times L^2(\Omega))\cap
L^2(0,T_0;\DD(A_S)\times W^{2,2}(\Omega))\]
with $(u^s(0),c^s(0))=(u(t_0),c(t_0))$ and for some $T_0\le T$. 
Note that strong solutions are also weak solutions, so by
uniqueness of weak solutions they must coincide, i.e.\ $(u(t_0+t),c(t_0+t))=(u^s(t),c^s(t))$ for
$t\in[0,T_0)$. Therefore $(u(t),c(t))\in W^{1,2}_{0,\sigma}(\Omega)\times W^{1,2}(\Omega)$ for
all $t\in(t_0,t_0+T_0)$. 

It remains to gain $L^\infty$-regularity for $c$.
To this end, however, we can argue in the same fashion. From
\[c\in L^1(t_0,t_0+T_0;W^{2,2}(\Omega)),\]
we see that $c(t_1)\in W^{2,2}(\Omega)\inj
L^\infty(\Omega)$ for a Lebesgue point $t_1\in (t_0,T_0)$. 
Consequently, we have reached the initial situation of this paper.
\end{remark}

%%%%%%%%%%%%%%%%%%%%%%%%%%%%%%%%%%%%%%%%%%%%%%%%%%%%%%%%%%%%%%%%%%%%%%%%%%%%%%%%%%%%%%%%%%%%%%%%%%%%%%%%%
\section{Steady states}
\label{stat}
%%%%%%%%%%%%%%%%%%%%%%%%%%%%%%%%%%%%%%%%%%%%%%%%%%%%%%%%%%%%%%%%%%%%%%%%%%%%%%%%%%%%%%%%%%%%%%%%%%%%%%%%%

For the definition of steady states we only require that they solve ($P_s$) in the weak sense.

%%%%%%%%%%%%%%%%%%%%%%%%%%%%%%%%%%%%%%%%%%%%%%%%%%%%%%%%%%%%%%%%%%%%%%%%%%%%%%%%%%%%%%%%%%%%%%%%%%%%%%%%%
\begin{definition}(Steady state)
\label{def_stat}
Let $\Omega\subset\R^n$, $n\geq2$. The function $(u,c)\in W^{1,2}_{0,\sigma}(\Omega)\times W^{1,2}(\Omega)$ is called a \textit{steady state}
of ($P$), if $(u,c)$ is a weak solution to ($P_s$), i.e.\ if for all $(\ph,\psi)\in
C^\infty_{0,\sigma}(\Omega)\times C^\infty(\overline\Omega)$ the following is valid:
\begin{align}
\label{eq_stat_w_1}
-\int_\Omega \nabla u\cdot\nabla\ph dx-\int_\Omega u(u\cdot\nabla\ph)dx&=\sum_{j=1}^Nz_j\int_\Omega c_j\nabla\phi\cdot\ph dx,\\
\label{eq_stat_w_2}
\int_\Omega(-D_i\nabla c_i-D_iz_ic_i\nabla\phi+c_iu)\cdot\nabla\psi_i dx&=0,\qquad i=1,\ldots ,N,
\end{align}
where $\phi\in W^{3,2}(\Omega)$ satisfies
\begin{equation}
\label{eq_stat_phi_steady_state}
-\ep\Delta\phi=\sum_{j=1}^Nz_jc_j\quad\text{in }\Omega,\quad\pa_\nu\phi+\tau\phi=\xi\quad\text{on
}\pa\Omega.
\end{equation}
\end{definition}

It turns out that steady states have in fact better regularity properties as required from
Definition \ref{def_stat}.

%%%%%%%%%%%%%%%%%%%%%%%%%%%%%%%%%%%%%%%%%%%%%%%%%%%%%%%%%%%%%%%%%%%%%%%%%%%%%%%%%%%%%%%%%%%%%%%%%%%%%%%%%
\begin{lemma}
\label{lem_stat_smooth}
Let $(u^\infty,c^\infty)\in W^{1,2}_{0,\sigma}(\Omega)\times W^{1,2}(\Omega)$ be a steady state of
($P$). Then $(u^\infty,c^\infty)\in W^{2,2}(\Omega)\times W^{2,2}(\Omega)$.
\end{lemma}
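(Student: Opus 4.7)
The approach is a standard elliptic bootstrap in two dimensions, upgrading in turn the regularity of $\phi^\infty$, then $u^\infty$, then $c_i^\infty$, each step exploiting the improved regularity of the previous ones.

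\textbf{Step 1 (Potential).} I would first observe that in 2D, $W^{1,2}(\Omega) \inj L^q(\Omega)$ for every $q<\infty$. Thus the right-hand side $\sum_j z_j c_j^\infty$ of (\ref{eq_stat_phi_steady_state}) lies in $L^q(\Omega)$ for every $q < \infty$, and Remark~\ref{rem_triebel}(a) then yields $\phi^\infty \in W^{2,q}(\Omega)$ for every $q<\infty$. In particular, $\nabla \phi^\infty \in L^\infty(\Omega)$, $\Delta\phi^\infty \in L^q(\Omega)$ for all $q<\infty$, and the trace $\pa_\nu \phi^\infty$ is H\"older continuous on $\pa\Omega$.

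\textbf{Step 2 (Velocity).} With $\phi^\infty$ under control, (\ref{eq_stat_w_1}) reads
$A_S u^\infty = -P((u^\infty\cdot\nabla) u^\infty) - P(\sum_j z_j c_j^\infty \nabla\phi^\infty)$,
where by Step 1 the electro-coupling term lies in $L^q_\sigma(\Omega)$ for every $q<\infty$. For $u^\infty$ I would perform a short bootstrap: the nonlinearity $(u^\infty \cdot\nabla) u^\infty$ belongs to $L^r(\Omega)$ for every $r<2$ by $u^\infty\in L^q$ and $\nabla u^\infty\in L^2$, so the Stokes isomorphism $A_S : \DD(A_S) \to L^r_\sigma(\Omega)$ (Remark~\ref{rem_sohr}) gives $u^\infty \in W^{2,r}(\Omega)$. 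In 2D, the embedding $W^{2,r}(\Omega) \inj W^{1,s}(\Omega)$ with $s=2r/(2-r)$ shows that $\nabla u^\infty\in L^s$ for $s$ arbitrarily large as $r\to 2^-$. Iterating the Stokes isomorphism once more then yields $u^\infty \in W^{2,q}(\Omega)$ for every $q<\infty$, in particular $u^\infty \in W^{2,2}(\Omega) \inj L^\infty(\Omega)$.

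\textbf{Step 3 (Concentrations).} Using $\divv u^\infty = 0$ and integrating by parts in (\ref{eq_stat_w_2}), one sees that $c_i^\infty$ is a weak solution of the linear elliptic boundary-value problem
\begin{equation*}
-D_i \Delta c_i^\infty + b_i \cdot \nabla c_i^\infty + d_i\, c_i^\infty = 0 \ \text{in } \Omega, \qquad \pa_\nu c_i^\infty + \sigma_i\, c_i^\infty = 0 \ \text{on } \pa\Omega,
\end{equation*}
with coefficients $b_i := u^\infty + D_i z_i \nabla\phi^\infty \in L^\infty(\Omega)$, $d_i := D_i z_i \Delta\phi^\infty \in L^q(\Omega)$ for every $q<\infty$, and boundary coefficient $\sigma_i := z_i \pa_\nu\phi^\infty$ H\"older continuous on $\pa\Omega$. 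Viewing the drift and zero-order terms as a perturbation of $-D_i \Delta$ and the Robin coefficient as a perturbation of a constant one, classical $L^2$ elliptic regularity (in the spirit of Remark~\ref{rem_triebel}(a)) gives $c_i^\infty \in W^{2,2}(\Omega)$.

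The main obstacle I anticipate is Step 2: because of the quadratic nonlinearity of the stationary Navier--Stokes equations and the weak a priori regularity $u^\infty \in W^{1,2}$, the Stokes isomorphism must be invoked twice before the natural $W^{2,2}$-regularity is reached. Once this is done, the concentration equation is a linear elliptic equation with bounded drift and smooth Robin data, and Step 3 becomes essentially routine.
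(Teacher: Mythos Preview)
Your proposal is correct and follows precisely the approach the paper indicates: the paper in fact omits the proof entirely, saying only that it ``is an easy consequence of a bootstrap argument taking into account the semilinear structure and regularity properties of the Stokes operator and the Neumann Laplacian.'' Your three-step bootstrap (potential $\to$ velocity $\to$ concentrations) is exactly such an argument, so you have simply written out what the paper leaves to the reader; note also that Definition~\ref{def_stat} already requires $\phi^\infty\in W^{3,2}(\Omega)$, so part of your Step~1 is built into the hypotheses.
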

%%%%%%%%%%%%%%%%%%%%%%%%%%%%%%%%%%%%%%%%%%%%%%%%%%%%%%%%%%%%%%%%%%%%%%%%%%%%%%%%%%%%%%%%%%%%%%%%%%%%%%%%%

We omit a proof here. It is an easy consequence of a bootstrap argument
taking into account the semilinear structure and regularity properties 
of the Stokes operator and the Neumann Laplacian, cf., e.g.,
\cite{galdi}, \cite{Sohr}, and
\cite{triebel}.

Let $\Omega\subset \R^2$ be bounded and smooth.
Before considering existence and uniqueness of steady states we give a characterization of weak
solutions to problem ($P_s$).

%%%%%%%%%%%%%%%%%%%%%%%%%%%%%%%%%%%%%%%%%%%%%%%%%%%%%%%%%%%%%%%%%%%%%%%%%%%%%%%%%%%%%%%%%%%%%%%%%%%%%%%%%
\begin{lemma}
\label{lem_stat_zeta_infty}
The function $(u^\infty,c^\infty)\in W^{1,2}_{0,\sigma}(\Omega)\times W^{1,2}(\Omega)$, where
$c_i^\infty\geq0$ not identically zero, $i=1,\ldots,N$, is a weak solution to ($P_s$) if and only if
the following conditions are fulfilled:
\begin{enumerate}
\item[(i)] $u^\infty=0$,
\item[(ii)] $\zeta_i^\infty:=c_i^\infty\exp( z_i\phi^\infty)\equiv const$, $i=1,\ldots ,N$,
\end{enumerate}
where $\phi^\infty$ is the corresponding electrical potential from (\ref{eq_stat_phi_steady_state}).
In this situation there is a constant $\eta_0>0$ such that $c_i^\infty\geq \eta_0$, $i=1,\ldots ,N$.
The associated pressure $\pi^\infty$ satisfies $\pi^\infty=\Sigma_jc_j^\infty$ up to some
constant.
\end{lemma}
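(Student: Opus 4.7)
The plan is to use the weak formulation together with a test-function choice that recovers the stationary analogue of the dissipation identity (\ref{dissiprate}). The key algebraic observation is that in terms of $\zeta_i^\infty = c_i^\infty \exp(z_i \phi^\infty)$ the flux factorizes as $-D_i(\nabla c_i^\infty + z_i c_i^\infty \nabla \phi^\infty) = -D_i \exp(-z_i \phi^\infty)\nabla \zeta_i^\infty$. By Lemma~\ref{lem_stat_smooth} we may work with $u^\infty, c^\infty \in W^{2,2}$, and in two space dimensions the Sobolev embedding $W^{2,2}(\Omega) \hookrightarrow C(\overline\Omega)$ combined with Remark~\ref{rem_triebel} yields $\phi^\infty \in L^\infty(\Omega)$, so that $\exp(\pm z_i \phi^\infty)$ is uniformly bounded above and below.

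First I would test (\ref{eq_stat_w_1}) with $\ph = u^\infty$, admissible by density since $u^\infty \in W^{1,2}_{0,\sigma}(\Omega)$; the convection term vanishes by solenoidality and the no-slip condition, yielding
\begin{equation*}
\int_\Omega |\nabla u^\infty|^2\, dx = -\sum_{j=1}^N z_j \int_\Omega c_j^\infty \nabla \phi^\infty \cdot u^\infty\, dx.
\end{equation*}
Then I would test (\ref{eq_stat_w_2}) with the regularized logarithm $\psi_i = \log(\zeta_i^\infty + \delta)$ for $\delta > 0$, which lies in $W^{1,2}(\Omega) \cap L^\infty(\Omega)$ and is admissible after a density extension. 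The resulting identity,
\begin{equation*}
\int_\Omega D_i \exp(-z_i \phi^\infty) \frac{|\nabla \zeta_i^\infty|^2}{\zeta_i^\infty + \delta}\, dx = \int_\Omega \frac{\zeta_i^\infty}{\zeta_i^\infty + \delta}\, \exp(-z_i \phi^\infty)\, u^\infty \cdot \nabla \zeta_i^\infty\, dx,
\end{equation*}
passes to $\delta \to 0$ by monotone convergence on the left and dominated convergence on the right, using $\nabla \zeta_i^\infty = 0$ a.e.\ on $\{\zeta_i^\infty = 0\}$. Rewriting $\exp(-z_i \phi^\infty)\nabla \zeta_i^\infty = \nabla c_i^\infty + z_i c_i^\infty \nabla \phi^\infty$ and noting $\int_\Omega u^\infty \cdot \nabla c_i^\infty\, dx = 0$ (by $\divv u^\infty = 0$ and $u^\infty|_{\pa\Omega} = 0$), summing over $i$ matches the two right-hand sides and produces
\begin{equation*}
\int_\Omega |\nabla u^\infty|^2\, dx + \sum_{i=1}^N \int_\Omega D_i \exp(-z_i \phi^\infty) \frac{|\nabla \zeta_i^\infty|^2}{\zeta_i^\infty}\, dx = 0,
\end{equation*}
with the convention that the second integrand vanishes where $\zeta_i^\infty = 0$. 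Nonnegativity of both terms together with the Poincar\'e inequality (Lemma~\ref{lem_poinc}) forces $u^\infty = 0$ and $\nabla \zeta_i^\infty = 0$ almost everywhere, so each $\zeta_i^\infty$ is constant; the constant is strictly positive since $c_i^\infty \not\equiv 0$, and the uniform bound on $\phi^\infty$ yields $c_i^\infty \geq \eta_0 > 0$.

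The pressure is recovered from the strong form of the momentum balance at $u^\infty = 0$, which reduces to $\nabla \pi^\infty = -\sum_j z_j c_j^\infty \nabla \phi^\infty$; differentiating $\zeta_j^\infty \equiv \text{const}$ gives $\nabla c_j^\infty = -z_j c_j^\infty \nabla \phi^\infty$, hence $\nabla \pi^\infty = \nabla\bigl(\sum_j c_j^\infty\bigr)$ and $\pi^\infty = \sum_j c_j^\infty$ up to an additive constant. For the converse, if (i)--(ii) hold then $\nabla \zeta_i^\infty = 0$ implies that the full expression $-D_i \nabla c_i^\infty - D_i z_i c_i^\infty \nabla \phi^\infty + c_i^\infty u^\infty$ is identically zero, so (\ref{eq_stat_w_2}) holds trivially; in (\ref{eq_stat_w_1}) the left side vanishes and the right side equals $-\sum_j \int_\Omega \nabla c_j^\infty \cdot \ph\, dx = \sum_j \int_\Omega c_j^\infty \divv \ph\, dx = 0$ for every solenoidal test field $\ph$.

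The main technical obstacle is the possible degeneracy of $c_i^\infty$ (equivalently $\zeta_i^\infty$) when testing against the logarithm; this is precisely why I would introduce the $\delta$-regularization and rely on the Stampacchia-type fact that $\nabla \zeta_i^\infty = 0$ almost everywhere on the level set $\{\zeta_i^\infty = 0\}$. This keeps the argument purely variational and avoids invoking the strong maximum principle on the transformed second-order equation for $\zeta_i^\infty$.
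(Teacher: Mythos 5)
Your proof is correct. It differs from the paper's in how the forward implication is obtained: the paper uses Lemma~\ref{lem_stat_smooth} to view the constant-in-time extension $(u^s(t),c^s(t)):=(u^\infty,c^\infty)$ as a global strong solution of ($P$), observes that $V(t)\equiv E(u^\infty,c^\infty)$ is constant, and reads $\nabla u^\infty=0$, $\nabla\zeta_i^\infty=0$ directly off the already-established dissipation formula (\ref{eq_lya_der_zeta}); you instead re-derive the stationary dissipation identity from scratch, testing (\ref{eq_stat_w_1}) with $u^\infty$ and (\ref{eq_stat_w_2}) with $\log(\zeta_i^\infty+\delta)$ and letting $\delta\to0$. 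The algebraic core is the same in both arguments -- the factorization $-D_i(\nabla c_i^\infty+z_ic_i^\infty\nabla\phi^\infty)=-D_ie^{-z_i\phi^\infty}\nabla\zeta_i^\infty$ and the cancellation of the Coulomb work term against the convective transport term -- and your $\delta$-regularization together with the Stampacchia fact that $\nabla\zeta_i^\infty=0$ a.e.\ on $\{\zeta_i^\infty=0\}$ mirrors exactly what the paper does in Lemma~\ref{lem_lya_der} with $\log(c_i+\delta)$. What your route buys is self-containedness: it stays entirely in the elliptic setting, requires only that the weak formulation extend by density to $W^{1,2}\cap L^\infty$ test functions, and does not need the steady state to be re-interpreted as a solution of the evolution problem nor the parabolic energy-equality machinery behind (\ref{eq_lya_der_zeta}). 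What the paper's route buys is brevity, since the dissipation identity and the treatment of the degeneracy where $c_i$ vanishes have already been paid for. The converse implication, the lower bound $c_i^\infty\geq\eta_0$, and the identification $\pi^\infty=\Sigma_jc_j^\infty$ are handled in essentially the same way in both.
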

%%%%%%%%%%%%%%%%%%%%%%%%%%%%%%%%%%%%%%%%%%%%%%%%%%%%%%%%%%%%%%%%%%%%%%%%%%%%%%%%%%%%%%%%%%%%%%%%%%%%%%%%%

The proof of this lemma employs the functional $V$ from (\ref{eq_lyapunov}). In terms of the
transformed variable $\zeta$ from (\ref{eq_trans_conc}), the dissipation rate $\Dt V(t)$ can be
rewritten as
\begin{equation}
\label{eq_lya_der_zeta}
\Dt V(t)=-\|\nabla
u(t)\|_2^2-\sum_{i=1}^N\int_\Omega\frac{D_i}{c_i}\exp(-2z_i\phi(t)|\nabla\zeta_i|^2dx,
\end{equation}
whenever $(u,c)$ is a global strong solution from Theorem~\ref{theorem_glob_ex}.

\begin{proof}[Proof. (of Lemma~\ref{lem_stat_zeta_infty})]
Suppose conditions $(i)$ and $(ii)$ are satisfied. Since $u^\infty=0$ and
\begin{equation}
\label{eq_press}
0=\nabla\zeta_i^\infty=(\nabla c_i^\infty+z_ic_i^\infty\nabla\phi^\infty)\exp(z_i\phi^\infty),
\end{equation}
we infer that (\ref{eq_stat_w_2}) is satisfied. From (\ref{eq_press}) it is now evident that the
term $-\sum_jz_jc_j^\infty\nabla\phi^\infty=\nabla \sum_jc_j^\infty$ is a gradient field, so also
(\ref{eq_stat_w_1}) is valid. From this the last assertion of this lemma concerning
the pressure follows, too. Observe that $\zeta_i^\infty=c_i^\infty\exp(z_i\phi^\infty)>0$, because $c_i\geq0$
and $c_i\not\equiv0$. The fact that $\phi^\infty\in W^{3,2}(\Omega)\inj BUC(\overline\Omega)$ due to
Sobolov's embedding theorem shows that there is a constant $\eta_0>0$ such that $c_i^\infty\geq
\eta_0$, $i=1,\ldots ,N$.

Now suppose $(u^\infty,c^\infty)$ is a steady state to ($P$). From Lemma~\ref{lem_stat_smooth} the
function $(u^s(t),c^s(t)):=(u^\infty,c^\infty)$, $t\geq0$, is a global strong solution to ($P$) in
the sense of Definition \ref{def_glob_well}. For
this stationary solution we have $V(t)\equiv E(u^\infty,c^\infty)$, hence $\Dt V(t)\equiv0$. On the
other hand, with (\ref{eq_lya_der_zeta}) we deduce $\nabla u^\infty=0$ and
$\nabla\zeta_i^\infty=0$, which results in $\zeta_i^\infty=const$ and $u^\infty=0$ because of ${u^\infty}_{|\pa\Omega}=0$.
\end{proof}

In order to make use of the global strong solutions $(u,c)$ from Theorem~\ref{theorem_glob_ex} it is
crucial to know that the dissipation terms in (\ref{eq_lya_der_zeta}) become small as time tends to
infinity.

%%%%%%%%%%%%%%%%%%%%%%%%%%%%%%%%%%%%%%%%%%%%%%%%%%%%%%%%%%%%%%%%%%%%%%%%%%%%%%%%%%%%%%%%%%%%%%%%%%%%%%%%%
\begin{lemma}
\label{lem_AS_nabla_yi}
Let $(u,c)$ be a global strong solution to ($P$) from Theorem~\ref{theorem_glob_ex} and
$\zeta_i=c_i\exp( z_i\phi)$ be defined as above. Then
\[\lim_{t\to\infty}\left(\|\nabla u(t)\|_2^2+\|\nabla \zeta_i(t)\|_2^2\right)=0.\]
\end{lemma}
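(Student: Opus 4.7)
The plan is to combine the dissipation identity (\ref{eq_lya_der_zeta}) with a uniform upper bound on $\Dt G$, where $G(t):=\|\nabla u(t)\|_2^2+\sum_{i=1}^N\|\nabla\zeta_i(t)\|_2^2$, and then to upgrade $L^1$-integrability of $G$ to pointwise decay by an interval-trapping argument.

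First I would verify that $G\in L^1(0,\infty)$. The functional $V$ is non-increasing along trajectories (Lemma~\ref{lem_lya_der}) and bounded below (the terms $c_i\log c_i$ are $\geq-1$, the remaining contributions are nonnegative), so $V(t)\to V_\infty>-\infty$ and $\int_0^\infty(-\Dt V)\,dt<\infty$. By Lemmas~\ref{lem_ci_infty} and \ref{lem_nabla_phi_infty} the weights $D_i c_i^{-1}\exp(-2z_i\phi)$ appearing in (\ref{eq_lya_der_zeta}) are bounded below by some $\de_0>0$ uniformly in $(t,x)$, and hence (\ref{eq_lya_der_zeta}) yields a $\de>0$ with $\int_0^\infty\de\,G(t)\,dt\leq V(0)-V_\infty<\infty$.

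Next, I would establish $\Dt G(t)\leq C_\ast$ uniformly in $t$. For the velocity part, (\ref{pr_lem_glob}) gives $\Dt\|\nabla u\|_2^2\leq C\|\nabla u\|_2^4+C$, which together with the uniform bound $\|\nabla u(t)\|_2\leq C$ from Lemma~\ref{lem_delta_u_l2} controls $\Dt\|\nabla u\|_2^2$ from above. For the $\zeta$-part, (\ref{eq_nabla_zeta_4}) gives $\Dt\|\nabla\zeta_i\|_2^2\leq C\|\nabla\zeta_i\|_2^2+C$; the uniform bound $\|\nabla\zeta_i(t)\|_2\leq C$ follows from $\zeta_i=c_i\exp(z_i\phi)$ combined with Lemma~\ref{lem_delta_c_l2} and the $L^\infty$-bounds on $c_i$, $\phi$, and $\nabla\phi$ provided by Lemmas~\ref{lem_ci_infty} and \ref{lem_nabla_phi_infty}.

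Finally, I argue by contradiction. If $G(t)\not\to 0$ as $t\to\infty$, choose $\ep>0$ and an increasing sequence $t_n\to\infty$ with $t_{n+1}-t_n>\ep/C_\ast$ and $G(t_n)\geq\ep$. Since $\Dt G\leq C_\ast$, for any $s\in[t_n-\ep/(2C_\ast),t_n]$ we have $G(s)\geq G(t_n)-C_\ast(t_n-s)\geq\ep/2$, so $\int_{t_n-\ep/(2C_\ast)}^{t_n}G(s)\,ds\geq\ep^2/(4C_\ast)$; by the spacing these intervals are pairwise disjoint, and summing contradicts $G\in L^1(0,\infty)$. The one subtle point worth highlighting is that only the \emph{upper} bound $\Dt G\leq C_\ast$ is needed, and not a two-sided bound (which would force us to control $\|A_Su\|_2^2$ and $\|\Delta\zeta_i\|_2^2$ uniformly in $t$, something we do not have): non-negativity of $G$ guarantees that any large spike $G(t_n)\ge\ep$ traps a nontrivial \emph{past} interval on which $G$ stays above $\ep/2$, which is precisely what is incompatible with integrability.
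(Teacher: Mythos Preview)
Your proof is correct and follows the same overall scheme as the paper: first establish $G\in L^1(0,\infty)$ from the dissipation identity~(\ref{eq_lya_der_zeta}) together with the uniform $L^\infty$-bounds on $c_i$ and $\phi$, then combine this with the differential inequalities~(\ref{pr_lem_glob}) and~(\ref{eq_nabla_zeta_4}) to force $G(t)\to 0$.

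The only difference is in the finishing step. The paper feeds the inequality $\Dt\|\nabla u\|_2^2\leq C\|\nabla u\|_2^2\cdot\|\nabla u\|_2^2+C$ directly into the uniform Gronwall lemma (Lemma~\ref{lem_gron}), exploiting that $\int_{T_\ep}^\infty\|\nabla u\|_2^2\,ds<\ep$ to make the output $(\ep/r+Cr)e^{C\ep}$ small by first choosing $r$ and then $\ep$; the argument for $\nabla\zeta_i$ is identical. You instead first invoke the already-available uniform bounds $\|\nabla u(t)\|_2\leq C$ (Lemma~\ref{lem_delta_u_l2}) and $\|\nabla\zeta_i(t)\|_2\leq C$ (relation~(\ref{eq_nabla_zeta_i})) to turn those differential inequalities into the cruder $\Dt G\leq C_\ast$, and then run an elementary Barbalat-type interval-trapping argument. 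Your route is slightly more self-contained (no auxiliary Gronwall lemma needed), while the paper's route would still work even without the pointwise-in-time bounds on $\|\nabla u\|_2$ and $\|\nabla\zeta_i\|_2$, since uniform Gronwall only needs integral bounds over windows of fixed length.
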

%%%%%%%%%%%%%%%%%%%%%%%%%%%%%%%%%%%%%%%%%%%%%%%%%%%%%%%%%%%%%%%%%%%%%%%%%%%%%%%%%%%%%%%%%%%%%%%%%%%%%%%%%

\begin{proof}
From (\ref{eq_lya_der_zeta}) and the uniform boundedness of $\|\phi\|_\infty$ and $\|c\|_\infty$,
cf.\ Lemmas \ref{lem_nabla_phi_infty} and \ref{lem_ci_infty}, we have
\begin{equation*}
\begin{split}
\Dt V(t)&\leq-\|\nabla
u\|_2^2-\sum_{i=1}^N\frac{1}{\|c_i\|_\infty\|\exp(-2z_i\phi)\|_\infty}\|\nabla \zeta_i\|_2^2\\
	&\leq -C\left(\|\nabla u\|_2^2+\sum_{i=1}^N\|\nabla \zeta_i\|_2^2\right).
\end{split}
\end{equation*}
The functional $V$ is bounded from below, so using the fact that it is 
absolutely continuous, cf.\
Remark~\ref{rem_der_lya}, we compute
\[L\leq V(t)\leq V(0)-C\int_0^t\left(\|\nabla u\|_2^2+\sum_{i=1}^N\|\nabla
\zeta_i\|_2^2ds\right),\quad t\geq0.\]
Rearrangement therefore yields
\begin{equation}
\label{ez_int_inf}
\int_0^\infty\left(\|\nabla u\|_2^2+\sum_{i=1}^N\|\nabla \zeta_i\|_2^2\right)ds<\infty.
\end{equation}
From (\ref{pr_lem_glob}) there is a constant $C>0$ such that
\begin{align*}
\Dt\|\nabla u\|_2^2\leq C\|\nabla u\|_2^4+C.
\end{align*}
For $\ep>0$ there is $T_\ep>0$ such that
\[\int_{T_\ep}^\infty\|\nabla u\|_2^2ds<\ep\]
by (\ref{ez_int_inf}). With $f(t):=\|\nabla u\|_2^2$, $g(t):=C\|\nabla u\|_2^2$, $h(t):=C$,
$a_1=\ep$, $a_2=C\ep$ and $a_3=Cr$ application of Lemma~\ref{lem_gron} yields
\[\|\nabla u(t+r)\|_2^2\leq (\ep/r+Cr)\exp(C\ep),\quad t\geq T_\ep.\]
So choosing first $r>0$ and then $\ep>0$ small enough shows $\|\nabla u(t)\|_2^2<\de$ for $t\geq
T_\ep+r=:T_\de$. Note that from Remark~\ref{rem_gron_1}, the uniform bounds on $\phi$ and
$\nabla\phi$, and (\ref{eq_nabla_zeta_4}) we have exactly the same situation for $\nabla\zeta_i$ as
for $\nabla u$. So the assertion for $\nabla \zeta_i$ is obtained in the same way.
\end{proof}

\begin{proof}[Proof. (of Theorem~\ref{theorem_stat})]

\noindent\textit{Existence.}

Let $(u^0,c^0)\in W^{1,2}(\Omega)\times (W^{1,2}(\Omega)\cap L^\infty(\Omega))$ with $c_i^0\geq0$
and $\int_\Omega c_i^0(x)dx=m_i>0$, let $(u,c)$ be the corresponding global strong solution from
Theorem~\ref{theorem_glob_ex} and let $(t_k)_k$ be a sequence with $t_k\to\infty$. The estimates
derived in Section~\ref{global} show that for the trajectory $(u,c)$ there is a constant $C$ such
that
\[\|u(t_k)\|_2+\|\nabla u(t_k)\|_2+\|c(t_k)\|_2+\|\nabla c_i(t_k)\|_2\leq C\]
independently of time, cf.\ Lemmas \ref{lem_u_2}, \ref{lem_ci_l2}, \ref{lem_delta_u_l2} and
\ref{lem_delta_c_l2}. So $(u(t_k),c(t_k))$ is bounded in $W^{1,2}_{0,\sigma}(\Omega)\times W^{1,2}(\Omega)$. By
the compact embedding $W^{1,2}(\Omega)\inj L^2(\Omega)$, the trajectory is
relatively compact in $L^2(\Omega)$, so there is a subsequence of $(t_{k})_{k\in\N}$, which will be
denoted in the same way, and an element $(u^\infty,c^\infty)\in L^2_\sigma(\Omega)\times
L^2(\Omega)$ such that
\[(u(t_k),c(t_k))\to(u^\infty,c^\infty)\ \text{in }L^2.\]
This convergence implies convergence almost everywhere in $\Omega$ of a
subsequence, thus by nonnegativity of $c_i(t_k)$ we deduce nonnegativity of $c_i^\infty$. Since
$\|c_i(t_k)\|_1=m_i$ we readily approve $\|c_i^\infty\|_1=m_i$, $i=1,\ldots,N$. The convergence of
$c_i(t_k)$ in $L^2$ implies convergence of $\phi(t_k)\to\phi^\infty$ in $W^{2,2}(\Omega)$ where
$\phi^\infty$ denotes the unique solution to
\begin{equation*}
-\ep\Delta\phi=-\sum_{i=1}^Nz_ic_i^\infty,\quad \pa_\nu\phi+\tau\phi=\xi.
\end{equation*}
With Poincar\'e's inequality we obtain
\[\|u(t_k)\|_2\leq C\|\nabla u(t_k)\|_2\to0,\]
due to Lemma~\ref{lem_AS_nabla_yi}, so $u^\infty=0$.

Recall the definition $\zeta_i=c_i\exp(z_i\phi)$. From the uniform $L^\infty$-bounds on $\phi$ and
$c$, cf.\ Lemmas \ref{lem_nabla_phi_infty} and \ref{lem_ci_infty}, we see that the sequence
$(\zeta_i(t_k))_k$ is bounded in $L^2(\Omega)$. Since $\nabla\zeta_i(t_k)\to0$ in $L^2(\Omega)$, cf.
Lemma~\ref{lem_AS_nabla_yi}, we infer that $\zeta_i$ is bounded in $W^{1,2}(\Omega)$. Since the
embedding $W^{1,2}(\Omega)\inj L^2(\Omega)$ is compact there exists $\zeta_i^\infty\in
L^2(\Omega)$ such that, up to a subsequence, $\zeta_i(t_k)\to\zeta^\infty_i$ in $L^2(\Omega)$.
Because $\nabla\zeta_i(t_k)\to0$ we deduce that
$\zeta_i^\infty$ is in fact constant and that $\zeta_i(t_k)\to\zeta_i^\infty$ in $W^{1,2}(\Omega)$.

From the uniform $L^\infty$-bounds of $\phi(t_k)$ and $c_i(t_k)$ it follows that $\sum_j\|c_j^\infty\|_\infty+\|\phi^\infty\|_\infty\leq C$. Thus from the continuity of the function $(x,y)\mapsto x\exp(z_iy)$ we have $\zeta_i^\infty=c_i^\infty\exp(z_i\phi^\infty)$ almost everywhere.

Hence if we can show that $c^\infty\in W^{1,2}(\Omega)$, then Lemmas
\ref{lem_stat_smooth} and \ref{lem_stat_zeta_infty} imply that $(u^\infty,c^\infty)$ is a solution
to ($P_s$).

Note that from the definition of $\zeta_i$ we have
\begin{equation}
\label{eq_lem_zeta}
\exp(-z_i\phi)\nabla \zeta_i=\nabla c_i+ z_ic_i\nabla\phi.
\end{equation}
Since $\nabla\zeta_i(t)\to0$ in $L^2(\Omega)$ as $t\to\infty$ and the sequence $\phi(t_k)$ is uniformly bounded in
$L^\infty(\Omega)$, for the proof of $\nabla c_i^\infty\in L^2$ it is sufficient to show the
convergence of $c_i(t_k)\nabla\phi(t_k)$ in $L^2$. We compute
\begin{equation*}
\begin{split}
\|c_i&(t_k)\nabla\phi(t_k)-c_i^\infty\nabla\phi^\infty\|_2\\
&\leq\|c_i(t_k)-c_i^\infty\|_2\|\nabla\phi(t_k)\|_\infty+\|c_i^\infty\|_\infty\|\nabla\phi(t_k)-\nabla\phi^\infty\|_2\to0,
\end{split}
\end{equation*}
as $k\to\infty$, where we use the convergence of $\phi(t_k)\to\phi^\infty$ in $W^{1,2}(\Omega)$ and
the convergence of $c_i(t_k)\to c_i^\infty$ in $L^2(\Omega)$. So by (\ref{eq_lem_zeta}) we see that
$\nabla c_i(t_k)$ converges in $L^2(\Omega)$ as well, therefore $c_i^\infty\in W^{1,2}(\Omega)$ and
$(u^\infty,c^\infty)=(0,c^\infty)$ is a steady state to problem ($P$). Therefore $c^\infty\in
W^{2,2}(\Omega)$ is a solution to~($P'_s$).

Note that the assertion concerning the pressure $\pi^\infty$ is already contained in Lemma~\ref{lem_stat_zeta_infty}.

\noindent\textit{Uniqueness.}

Due to the fact that $u^\infty=0$ it is sufficient to prove that $c^\infty$ with corresponding
$\phi^{\infty}$ is the unique nonnegative solution to ($P'_s$). So suppose there are two nonnegative solutions
$c^{\infty},\tilde c^\infty$ with $\int_\Omega c_i^\infty dx=\int_\Omega \tilde c_i^\infty dx=m_i$, $i=1,\ldots ,N$. Note that $\int_\Omega
c_i^\infty -\tilde c_i^\infty dx=m_i-m_i=0$. Let us denote the corresponding potentials by
$\phi^\infty ,\tilde\phi^\infty $. From Lemma~\ref{lem_stat_zeta_infty} we have
$\zeta_i^\infty =c_i^\infty \exp(z_i\phi^\infty )=const$ and $\tilde\zeta_i^\infty =\tilde c_i^\infty \exp(z_i\tilde\phi^\infty )=const$. Since $\zeta_i^\infty ,\tilde\zeta_i^\infty >0$, we
can take the logarithm. It holds $\log \zeta_i^\infty =\log
c_i^\infty +z_i\phi^\infty =const$ and correspondingly for $\tilde\zeta_i^\infty $. Therefore, integrating by parts we obtain
\begin{align*}
0&=\sum_{i=1}^N\int_\Omega(c_i^\infty -\tilde c_i^\infty )(\log c_i^\infty -\log
\tilde c_i^\infty +z_i(\phi^\infty -\tilde\phi^\infty ))dx\\
&=\int_\Omega\sum_i\left(z_i(c_i^\infty -\tilde c_i^\infty )(\phi^\infty -\tilde\phi^\infty )+(c_i^\infty -\tilde c_i^\infty )\log
\frac{c_i^\infty }{\tilde c_i^\infty }\right)dx\\
&=\ep\|\nabla(\phi^\infty -\tilde\phi^\infty )\|_2^2+\ep\tau\|\phi^\infty -\tilde\phi^\infty \|_{2,\pa\Omega}^2\\
&\qquad+\int_\Omega\sum_i(c_i^\infty -\tilde c_i^\infty )\log\frac{c_i^\infty }{\tilde c_i^\infty }dx\geq0,
\end{align*}
so $c^\infty =\tilde c^\infty$ and therefore $\phi^\infty =\tilde\phi^\infty $.

\noindent\textit{Convergence}

It was shown that semi-orbits $(u,c)$ are compact and any accumulation point $(u^\infty,c^\infty)$
represents a steady state. The convergence $(u(t_k),c(t_k))\to(u^\infty,c^\infty)$ in this situation
means convergence with respect to the topology in $W^{1,2}_{0,\sigma}(\Omega)\times
W^{1,2}(\Omega)$. From the existence and uniqueness of steady states and the fact that the
sequence $(t_k)_{k\in\N}$ was arbitrary, the convergence (\ref{eq_conv_steady}) follows.
\end{proof}

%%%%%%%%%%%%%%%%%%%%%%%%%%%%%%%%%%%%%%%%%%%%%%%%%%%%%%%%%%%%%%%%%%
\section{Asymptotic behaviour}
\label{asymp}
%%%%%%%%%%%%%%%%%%%%%%%%%%%%%%%%%%%%%%%%%%%%%%%%%%%%%%%%%%%%%%%%%%

Let $\Omega\subset\R^2$ be bounded and smooth, let $(u^0,c^0)\in W^{1,2}_{0,\sigma}(\Omega)\times
(W^{1,2}(\Omega)\cap
L^\infty(\Omega))$ with $c_i^0\geq0$ and corresponding masses $m_i=\int_\Omega c_i^0dx>0$,
$i=1,\ldots,N$, be given and let $c^\infty\in W^{2,2}(\Omega)$ denote the unique solution to
(\ref{eq_stat_u0}) subject to $\int_\Omega c_i^\infty dx=m_i$, $i=1,\ldots,N$ from Theorem \ref{theorem_stat}.

Let us define
\[\SS:= W^{1,2}_{0,\sigma}(\Omega)\times\left\{v\in W^{1,2}(\Omega)\cap
L^\infty(\Omega): v_i\geq0,\int_\Omega v_i(x)dx=m_i\right\}\]
and introduce the function $\Psi\colon\SS\to\R$, given by
\begin{align*}
\Psi(u,c):=&E(u,c)-E(u^\infty,c^\infty)\\
=&\frac12\|u\|_2^2+\sum_{i=1}^N\int_\Omega \left(c_i\log c_i-c_i^\infty\log c^\infty\right) dx\\
&\quad+\frac\ep2(\|\nabla\phi\|_2^2-\|\nabla\phi^\infty\|_2^2)+\frac{\ep\tau}2(\|\phi\|_{2,\pa\Omega}^2-\|\phi^\infty\|_{2,\pa\Omega}^2)\\
=&\frac12\|u\|_2^2+\sum_{i=1}^N\int_\Omega
\left(c_i(\log\frac{c_i}{c_i^\infty}-1)+c_i^\infty\right)dx+\frac\ep2\|\nabla(\phi-\phi^\infty)\|_2^2\\
&+\frac{\ep\tau}2\|\phi-\phi^\infty\|_{2,\pa\Omega}^2+\ep\int_\Omega\nabla\phi^\infty\cdot\nabla(\phi-\phi^\infty)dx\\
&+\ep\tau\int_{\pa\Omega}\phi^\infty(\phi-\phi^\infty)dx+\sum_{i=1}^N\int_\Omega(c_i-c_i^\infty)(1+\log
c_i^\infty)dx.
\end{align*}
Using integration by parts and the fact that $c_i^\infty\exp(z_i\phi^\infty)$ is constant we
obtain
\begin{align*}
\ep&\int_\Omega\nabla\phi^\infty\cdot\nabla(\phi-\phi^\infty)dx+\ep\tau\int_{\pa\Omega}\phi^\infty(\phi-\phi^\infty)dx\\
	&\hspace{6cm}+\sum_{i=1}^N\int_\Omega(c_i-c_i^\infty)(1+\log c_i^\infty)dx\\
=&\int_\Omega\sum_{i=1}^N(c_i-c_i^\infty)(1+\log c_i^\infty+z_i\phi^\infty)dx\\
=&\int_\Omega\sum_{i=1}^N(c_i-c_i^\infty)(1+\log\zeta_i^\infty)dx=0.
\end{align*}
Therefore
\begin{equation}
\label{eq_def_PSI}
\begin{split}
\Psi(u,c)&=\frac12\|u\|_2^2+\sum_{i=1}^N\int_\Omega\left(c_i(\log\frac{c_i}{c_i^\infty}-1)+c_i^\infty\right)
dx\\
&\quad+\frac\ep2\|\nabla(\phi-\phi^\infty)\|_2^2+\frac{\ep\tau}2\|\phi-\phi^\infty\|_{2,\pa\Omega}^2.
\end{split}
\end{equation}
Let $(u,c)$ be the global strong solution to ($P$) from Theorem~\ref{theorem_glob_ex} with initial
data $(u^0,c^0)$. Note that due to Theorem~\ref{theorem_strong_loc} and
Corollary~\ref{cor_mass_cons} for almost all $t\geq0$ we have $(u(t),c(t))\in\SS$, hence $t\mapsto\Psi(u(t),c(t))$ is well-defined. The value $\Psi(u(t),c(t))$ represents
the difference between the energy at some time $t$ and the energy in the equilibrium state. From the
monotone behaviour of $V$ along solutions of ($P$), cf.\ Lemma~\ref{lem_lya_der}, we see that $\Psi$
is monotonely decreasing as
well with the same dissipation rate given in~(\ref{eq_lya_der_zeta}).

%%%%%%%%%%%%%%%%%%%%%%%%%%%%%%%%%%%%%%%%%%%%%%%%%%%%%%%%%%%%%%%%%%%%%%%%%%%%%%%%%%%%%%%%%%%%%%%%%%%%%%%%%
\begin{proposition}
\label{prop_napier}
Let $a,b>0$.

(a) It holds
\[(\sqrt a-\sqrt b)^2\leq a(\log a-\log b)+b-a.\]
This inequality remains valid in the limits $a\to0$ and $b\to0$ as well.

(b) Let $b\geq b_0>0$ and $K:=1/b_0$. Then
\[a(\log a-\log b)+b-a\leq K(a-b)^2.\]
\end{proposition}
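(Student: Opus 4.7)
The plan is to observe that both parts are scalar calculus facts and, at their core, reduce to the elementary bound $\log x \leq x - 1$ (equivalently $\log t \geq 1 - 1/t$) for $x, t > 0$, with equality iff $x = 1$.

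For part (a), I would first expand $(\sqrt a - \sqrt b)^2 = a + b - 2\sqrt{ab}$ so that the difference between right-hand and left-hand sides becomes $a\log(a/b) - 2a + 2\sqrt{ab}$. The natural substitution $t := \sqrt{a/b}$ (for $a, b > 0$) lets me factor out $a$ and reduce the desired inequality to the one-variable statement $g(t) := \log t - 1 + 1/t \geq 0$ for $t > 0$. A direct derivative computation $g'(t) = (t-1)/t^2$ shows that $g$ has a unique global minimum at $t = 1$ with $g(1) = 0$, which closes the positive-$a,b$ case. The degenerate limits are then handled by continuity using the convention $x\log x \to 0$ as $x \to 0^+$: as $a \to 0$ with $b > 0$ fixed, both sides tend to $b$, giving equality; as $b \to 0$ with $a > 0$ fixed, the right-hand side diverges to $+\infty$ through the $-a\log b$ term, trivializing the inequality.

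For part (b), I would apply $\log x \leq x - 1$ at $x = a/b$ to obtain $\log(a/b) \leq (a-b)/b$. Multiplying by $a \geq 0$ and adding $b - a$, routine algebra collapses the left-hand side bound to $(a-b)^2/b$, and the assumption $b \geq b_0$ then upgrades this to $K(a-b)^2$. The degenerate case $a = 0$ is handled separately: the left-hand side becomes $b$, the right-hand side $Kb^2$, and $b \leq Kb^2$ holds because $Kb \geq Kb_0 = 1$.

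I do not anticipate any genuine obstacle; both claims are standard relative-entropy-type inequalities dressed up for the present setting. The only subtlety is the careful bookkeeping at the boundary cases $a = 0$ or $b \to 0$ when logarithms are involved, which is already addressed above by continuity and by checking the $a = 0$ edge case in part (b) directly.
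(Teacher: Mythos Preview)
Your argument is correct. Both parts reduce cleanly to the one-variable inequality $\log t \ge 1 - 1/t$ (equivalently $\log x \le x-1$), and your boundary discussions at $a\to 0$, $b\to 0$, and $a=0$ are accurate.

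The paper does not give a detailed proof either; it only remarks that the result follows from elementary calculus via the Napier inequality
\[
\frac{2}{a+b} < \frac{\log a - \log b}{a-b} < \frac{1}{\sqrt{ab}},\qquad a>b>0,
\]
which bounds the logarithmic mean between the harmonic and geometric means. From the upper bound one gets, for $a\neq b$, $a(\log a-\log b)+b-a \le a\,\dfrac{(a-b)^2}{(a-b)\sqrt{ab}} + b - a$, and after simplification both (a) and (b) drop out by case distinction on the sign of $a-b$. Your route through $\log x\le x-1$ is essentially the same identity in disguise (the Napier bounds are themselves consequences of it), but packaged so that no case distinction on $a\gtrless b$ is needed: the substitution $t=\sqrt{a/b}$ in (a) and $x=a/b$ in (b) absorbs the sign automatically. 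That makes your write-up marginally cleaner, while the paper's hint has the advantage of pointing to a named classical inequality.
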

%%%%%%%%%%%%%%%%%%%%%%%%%%%%%%%%%%%%%%%%%%%%%%%%%%%%%%%%%%%%%%%%%%%%%%%%%%%%%%%%%%%%%%%%%%%%%%%%%%%%%%%%%

Proposition~\ref{prop_napier} can be proven by elementary calculus employing the relation
\begin{equation}
\label{eq_napier}
\frac2{a+b}<\frac{\log a-\log b}{a-b}<\frac1{\sqrt{ab}},\quad a>b>0.
\end{equation}

%%%%%%%%%%%%%%%%%%%%%%%%%%%%%%%%%%%%%%%%%%%%%%%%%%%%%%%%%%%%%%%%%%%%%%%%%%%%%%%%%%%%%%%%%%%%%%%%%%%%%%%%%
\begin{theorem}
\label{thm_asymp}
Let $\Omega\subset\R^2$ be bounded and smooth, let $(u^0,c^0)\in W^{1,2}_{0,\sigma}(\Omega)\times
(W^{1,2}(\Omega)\cap L^\infty(\Omega))$ with
initial masses $m_i=\int_\Omega c_i^0dx>0$, $i=1,\ldots,N$, be given, let $(u,c)$ be the global
strong solution to ($P$) from Theorem~\ref{theorem_glob_ex} with initial data $(u^0,c^0)$, and let
$\Psi$ be defined as in~(\ref{eq_def_PSI}). Then there are constants $C,\omega>0$ depending on the
initial data such that
\[\Psi(u(t),c(t))\leq Ce^{-\omega t}\quad\text{for a.a. }t\geq0.\]
\end{theorem}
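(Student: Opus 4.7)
The target is to show that $\Psi(t) := \Psi(u(t),c(t))$ satisfies a differential inequality of the form $\Psi(t) \le C\,(-\Dt\Psi(t))$ for a.a.\ $t\ge0$; the conclusion then follows immediately from Gronwall. By Lemma~\ref{lem_lya_der} together with the rewriting (\ref{eq_lya_der_zeta}) in terms of $\zeta_i=c_i\exp(z_i\phi)$, we already have the explicit dissipation identity
\[
-\Dt\Psi(t)=D(u(t),c(t)),\qquad
D(u,c):=\|\nabla u\|_2^2+\sum_{i=1}^N\int_\Omega\frac{D_i}{c_i}\exp(-2z_i\phi)|\nabla\zeta_i|^2\,dx,
\]
so the whole task reduces to proving the pointwise entropy-dissipation estimate $\Psi(u,c)\le C\,D(u,c)$ on the orbit of the global solution.

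The plan is to argue indirectly, in the spirit of \cite{Glitzky}. Suppose the desired constant does not exist; then there is a sequence $t_k\to\infty$ with $\Psi(t_k)/D(t_k)\to\infty$. Section~\ref{global} yields the uniform-in-time bounds $\|u(t)\|_{W^{1,2}}+\|c(t)\|_{W^{1,2}}+\|c(t)\|_\infty+\|\phi(t)\|_{W^{1,\infty}}\le M$, so I may extract subsequences that converge weakly in $W^{1,2}$ and strongly in $L^p$ for $p<\infty$. I distinguish the two cases that can produce the blow-up of the quotient.

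\emph{Case A: $\Psi(t_k)$ stays bounded away from zero.} Then $D(t_k)\to0$. The uniform $L^\infty$-bounds on $c$ and $\phi$ turn the term $\sum_i\int D_i c_i^{-1}\exp(-2z_i\phi)|\nabla\zeta_i|^2\,dx\to0$ into $\|\nabla\zeta_i(t_k)\|_2\to0$ (bounding $c_i^{-1}$ is not needed here, since one only needs an \emph{upper} control of the dissipation by $\|\nabla\zeta_i\|_2^2$ and a \emph{lower} control of $c_i^{-1}$ is sidestepped by the Cauchy--Schwarz reformulation $\int c_i^{-1}|\nabla\zeta_i|^2\cdot c_i\,dx\ge(\int|\nabla\zeta_i|)^2/\|c_i\|_1$, which vanishes as well). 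A subsequence of $(u(t_k),c(t_k))$ converges in $L^2$ to some $(u^\ast,c^\ast)$ with $\nabla u^\ast=0$, $\nabla\zeta_i^\ast=0$, $u^\ast|_{\partial\Omega}=0$ and $\int c_i^\ast=m_i$. By Lemma~\ref{lem_stat_zeta_infty} and the uniqueness part of Theorem~\ref{theorem_stat}, $(u^\ast,c^\ast)=(0,c^\infty)$; continuity of $\Psi$ on the orbit (which uses the uniform bounds together with Proposition~\ref{prop_napier}(b) to dominate the logarithmic term) forces $\Psi(t_k)\to\Psi(0,c^\infty)=0$, contradicting Case~A.

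\emph{Case B: $\Psi(t_k)\to0$.} This is the technical heart of the proof. From Proposition~\ref{prop_napier}(a) and the structure (\ref{eq_def_PSI}) of $\Psi$, we obtain $u(t_k)\to0$ in $L^2$ and $c(t_k)\to c^\infty$ in $L^1$, hence in $L^p$ for all $p<\infty$ by the uniform $L^\infty$-bound. Since $c^\infty\ge\eta_0>0$ on the whole of $\Omega$ (Lemma~\ref{lem_stat_zeta_infty}), Proposition~\ref{prop_napier}(b) yields $c_i\log(c_i/c_i^\infty)-c_i+c_i^\infty\le K(c_i-c_i^\infty)^2$ with $K=1/\eta_0$, and elliptic regularity for (\ref{eq_stat_phi_steady_state}) with $\xi$ fixed gives $\|\phi(t_k)-\phi^\infty\|_{W^{1,2}}\le C\|c(t_k)-c^\infty\|_2$. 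Combining these bounds,
\[
\Psi(t_k)\le C\bigl(\|u(t_k)\|_2^2+\|c(t_k)-c^\infty\|_2^2\bigr).
\]
For the dissipation, Poincar\'e's inequality on $W^{1,2}_{0,\sigma}(\Omega)$ gives $\|u(t_k)\|_2^2\le C\|\nabla u(t_k)\|_2^2\le C\,D(t_k)$. The delicate step is $\|c(t_k)-c^\infty\|_2^2\le C\,D(t_k)$: using $\|c(t_k)\|_\infty+\|\phi(t_k)\|_\infty\le M$ I bound $D(t_k)\ge c_0\sum_i\|\nabla\zeta_i(t_k)\|_2^2$; mass conservation (Corollary~\ref{cor_mass_cons}) fixes the mean of $\zeta_i(t_k)=c_i(t_k)\exp(z_i\phi(t_k))$ up to a quantity that tends to $\zeta_i^\infty$, so the Poincar\'e--Wirtinger inequality gives $\|\zeta_i(t_k)-\zeta_i^\infty\|_2^2\le C'D(t_k)$. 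Transferring back via $c_i(t_k)=\zeta_i(t_k)\exp(-z_i\phi(t_k))$ and the uniform bounds yields the desired inequality. This contradicts $\Psi(t_k)/D(t_k)\to\infty$, finishing the proof of $\Psi\le CD$; Gronwall then delivers $\Psi(u(t),c(t))\le\Psi(u^0,c^0)e^{-t/C}$, which is the claim with $\omega=1/C$.

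\textbf{Main obstacle.} Case~B is where the real work lies: the factor $1/c_i$ inside $D$ degenerates wherever $c_i$ may vanish, so one cannot simply estimate $\int|\nabla\zeta_i|^2\le\|c_i\|_\infty D/D_i$ uniformly—one needs to exploit, simultaneously, the closeness of $c(t_k)$ to the strictly positive equilibrium $c^\infty$ and the fact that $\zeta_i^\infty$ is \emph{constant}, so that the Poincar\'e--Wirtinger inequality is available on $\zeta_i(t_k)$. Making this transfer quantitative, with constants independent of $k$, is the technical part borrowed in spirit from \cite{Glitzky}.
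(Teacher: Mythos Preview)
Your overall architecture (indirect argument, reduce to $\Psi\le CD$ along the orbit) matches the paper, and your Case~A is essentially the paper's Step~1. The genuine gap is in Case~B.

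First, a minor point: the estimate $\int|\nabla\zeta_i|^2\le C\,D$ \emph{is} available uniformly along the orbit, because $1/c_i\ge 1/\|c_i\|_\infty$ and $\|c_i\|_\infty+\|\phi\|_\infty\le M$ by Lemmas~\ref{lem_nabla_phi_infty} and~\ref{lem_ci_infty}. So the degeneracy of $1/c_i$ is not the obstacle you describe; your ``Main obstacle'' paragraph misdiagnoses the difficulty.

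The real gap is the step ``Poincar\'e--Wirtinger on $\zeta_i$ plus transfer back to $c_i$''. Mass conservation fixes $\int c_i$, not $\int\zeta_i$, so Poincar\'e--Wirtinger only gives $\|\zeta_i-\bar\zeta_i\|_2^2\le CD$ with $\bar\zeta_i=|\Omega|^{-1}\int\zeta_i$; you still need $|\bar\zeta_i-\zeta_i^\infty|^2\le CD$. Writing $c_i-c_i^\infty=(\zeta_i-\bar\zeta_i)e^{-z_i\phi}+(\bar\zeta_i-\zeta_i^\infty)e^{-z_i\phi}+\zeta_i^\infty(e^{-z_i\phi}-e^{-z_i\phi^\infty})$ and using $\int(c_i-c_i^\infty)=0$ yields
\[
|\bar\zeta_i-\zeta_i^\infty|\le C\bigl(\|\zeta_i-\bar\zeta_i\|_1+\|\phi-\phi^\infty\|_1\bigr),
\qquad
\|\phi-\phi^\infty\|_{W^{1,2}}\le C'\sum_j\|c_j-c_j^\infty\|_2,
\]
so you end up with $\sum_i\|c_i-c_i^\infty\|_2\le C_1D^{1/2}+C_2\sum_j\|c_j-c_j^\infty\|_2$, which does \emph{not} close unless $C_2<1$; there is no reason for that. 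The coupling of all species through $\phi$ is precisely what blocks this direct route.

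The paper resolves this by introducing $\psi_i=\sqrt{c_i/c_i^\infty}\,\exp\!\big(z_i(\phi-\phi^\infty)/2\big)-1$, for which $\|\nabla\psi_i\|_2^2\le CD$ holds without any mean-correction (the equilibrium is built into $\psi_i$), and then---after Step~1 forces $\lambda_k^2:=\tilde\Psi(c^k)\to0$---rescales by $\lambda_k$ and passes to the limit to linearize the problem around $(c^\infty,\phi^\infty)$. The resulting limit $(\hat w_i,\hat y_i,\hat\chi)$ satisfies an algebraic identity coming from the Poisson structure and mass conservation,
\[
\ep\|\nabla\hat\chi\|_2^2+\ep\tau\|\hat\chi\|_{2,\pa\Omega}^2+\sum_i\int_\Omega c_i^\infty(2\hat w_i-z_i\hat\chi)^2\,dx
=2\sum_i\hat w_i\int_\Omega\hat y_i\,dx=0,
\]
which forces $\hat w_i=\hat y_i=\hat\chi=0$ and yields the contradiction. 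This linearized identity is the missing ingredient; it is what decouples the species at the level where your circular estimate got stuck.
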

%%%%%%%%%%%%%%%%%%%%%%%%%%%%%%%%%%%%%%%%%%%%%%%%%%%%%%%%%%%%%%%%%%%%%%%%%%%%%%%%%%%%%%%%%%%%%%%%%%%%%%%%%

\begin{proof}
Recall the definition of $V$ in (\ref{eq_lyapunov}) and note that $\Dt
V=\Dt\Psi(u(\cdot),c(\cdot))$, so from (\ref{eq_lya_der_zeta}) we have
\begin{equation}
\label{eq_Psi_der}
\Dt\Psi(u(t),c(t))=-D(u(t),c(t)),
\end{equation}
where
\[D(u,c)=\|\nabla
u(t)\|_2^2+\sum_{i=1}^N\int_\Omega\frac{D_i}{c_i(t)}\exp(-2z_i\phi(t))|\nabla\zeta_i(t)|^2dx\]
with $\zeta_i=c_i\exp(z_i\phi)$. The idea of the proof is to estimate $\Psi$ suitably in terms of
the dissipation rate $D$ and obtain exponential decay via Gronwall.

Let us define the auxiliary variable
\begin{equation}
\label{eq_def_psi_i}
\psi_i(t):=\sqrt{\frac{c_i(t)}{c_i^\infty}}\exp\left(z_i\frac{\phi(t)-\phi^\infty}2\right)-1.
\end{equation}
Recall that $c_i^\infty\exp(z_i\phi^\infty)=\zeta_i^\infty>0$ due to
Lemma~\ref{lem_stat_zeta_infty}. For $c_i\ne0$, the gradient of $\psi_i$ satisfies
\begin{equation}
\label{eq_nabla_psi_i}
\begin{split}
|\nabla \psi_i(t)|^2&=\frac1{4c_i(t)c_i^\infty\exp(z_i(\phi^\infty-\phi(t)))}|\nabla
c_i(t)+z_ic_i(t)\nabla\phi(t)|^2\\
	&\leq\frac{CD_i}{c_i(t)}\exp(-z_i\phi(t))|\nabla\zeta_i(t)|^2,
\end{split}
\end{equation}
where we use the fact that $c_i^\infty\exp(z_i\phi^\infty)=const$ and $\|\phi\|_\infty\leq C$
uniformly in time, so
\[\|\nabla u(t)\|_2^2+\sum_{i=1}^N\|\nabla \psi_i(t)\|_2^2\leq CD(u(t),c(t)).\]
For proving the assertion it is sufficient to estimate
\begin{equation}
\label{eq_assert_1}
\Psi(u(t),c(t))\leq C\left(\|\nabla u(t)\|_2^2+\sum_{i=1}^N\|\nabla \psi_i(t)\|_2^2\right)
\end{equation}
with some constant $C>0$ independent of $t$, since then, by (\ref{eq_Psi_der}) and Gronwall, we
obtain exponential decay of $\Psi(u(\cdot),c(\cdot))$. Note that we only require (\ref{eq_assert_1})
to hold true for those states $(u(t),c(t))$ which are attained by the solution $(u,c)$.

By Poincar\'e's inequality we already know that $\|u\|_2\leq C\|\nabla u\|_2$ for all $u\in
W^{1,2}_{0,\sigma}(\Omega)$. So for (\ref{eq_assert_1}) it suffices to show that for given $R,R'>0$
we have
\begin{equation}
\label{eq_assert_2}
\begin{split}
&\tilde\Psi(c):=\\
&:=\sum_{i=1}^N\int_\Omega \left(c_i(\log\frac{c_i}{c_i^\infty}-1)+c_i^\infty\right)
dx+\frac\ep2\|\nabla(\phi-\phi^\infty)\|_2^2+\frac{\ep\tau}2\|\phi-\phi^\infty\|_{2,\pa\Omega}^2\\
&\ \leq C\sum_i\|\nabla \psi_i\|_2^2
\end{split}
\end{equation}
for all $c\in Z_{R,R'}$ with
\begin{equation*}
\begin{split}
Z_{R,R'}:=\bigg\{v\in W^{1,2}(\Omega)\cap L^\infty&(\Omega):\\
	&\quad v_i\geq0,\int_\Omega v_i=m_i,\tilde\Psi(v)\leq R,\|v\|_\infty\leq R'\bigg\},
\end{split}
\end{equation*}
where $\phi\in W^{3,2}(\Omega)$ is the unique solution to
\[-\ep\Delta\phi=\sum_{i=1}^Nz_ic_{i}\ \text{in}\ \Omega,\quad\pa_\nu\phi+\tau\phi=\xi\
\text{on}\ \pa\Omega.\]
Note that for given initial data with initial masses $m_i>0$ the phase space of $c$ is contained in
$Z_{R,R'}$ for sufficiently large $R,R'>0$, cf.\
Corollary~\ref{cor_mass_cons}, Remark~\ref{rem_der_lya},
and Lemma~\ref{lem_ci_infty}. From the Sobolev embedding $W^{2,2}(\Omega)\inj BUC(\overline\Omega)$
we compute
\begin{equation}
\label{eq_bound_phi_k}
\|\phi\|_\infty\leq C\|\phi\|_{W^{2,2}}\leq C'\|c\|_2\leq C''\|c\|_\infty\leq C''R'
\end{equation}
for $c\in Z_{R,R'}$, so we observe that the corresponding potentials $\phi$ are uniformly bounded
in $L^\infty(\Omega)$.

For the proof of relation (\ref{eq_assert_2}) observe that by the boundedness of $c_i^\infty$ from
below due to Lemma~\ref{lem_stat_zeta_infty} and Proposition~\ref{prop_napier} there are constants
$C_1,C_2>0$ such that
\begin{equation}
\label{eq_Psi_lu}
\begin{split}
C_1\bigg(\sum_i\|\sqrt{c_i}-\sqrt{c_i^\infty}\|_2^2+\|\phi-\phi^\infty&\|_{W^{1,2}}^2\bigg)\leq\tilde\Psi(c)\\
&\leq C_2\bigg(\sum_i\|c_i-c_i^\infty\|_2^2+\|\phi-\phi^\infty\|_{W^{1,2}}^2 \bigg)
\end{split}
\end{equation}
for all $c\geq0$ and corresponding potentials $\phi$.

We will derive (\ref{eq_assert_2}) indirectly. So suppose relation (\ref{eq_assert_2}) does not
hold.

\noindent\textit{Step 1.}

Then there is a sequence $l_k\to\infty$ and $(c^k)_k\subset Z_{R,R'}$ with corresponding
$\phi^k$, solution to
\[-\ep\Delta\phi^k=\sum_iz_ic_i^k,\ \text{in}\ \Omega,\quad\pa_\nu\phi^k+\tau\phi^k=\xi,\
\text{on}\ \pa\Omega,\]
and auxiliary functions $\psi_i^k$, given by
\begin{equation}
\label{eq_def_psi_ki}
\psi_i^k=\sqrt{\frac{c_i^k}{c_i^\infty}}\exp\left(z_i\frac{\phi^k-\phi^\infty}2\right)-1,
\end{equation}
such that
\begin{equation}
\label{eq_ass}
l_k\sum_i\|\nabla \psi_i^k\|_2^2\leq\tilde\Psi(c^k)\leq R.
\end{equation}
Note that for $c_i^k,\psi_i^k$ superscripts refer to the index of the sequence and subscripts to the corresponding components. Without loss of generality we may exclude the case that $c^k= c^\infty$ for some $k\in\N$, since in
this case inequality (\ref{eq_assert_2}) is trivially satisfied.

By relation (\ref{eq_ass}) we see $\nabla \psi_i^k\to0$ in $L^2(\Omega)$ for $k\to\infty$. Due to the
boundeds for $c^k$ and $\phi^k$, implied by the definiton of $Z_{R,R'}$ and
(\ref{eq_bound_phi_k}), we see that $\|\psi_i^k\|_2$ can be bounded by a constant independent of
$k$, cf.\ (\ref{eq_def_psi_ki}). Hence the sequence $(\psi_i^k)_k$ is bounded in $W^{1,2}(\Omega)$,
and therefore $\psi_i^k\rightharpoonup\hat\psi_i$ in $W^{1,2}(\Omega)$ and $\psi_i^k\to\hat\psi_i$
in $L^2(\Omega)$ via compactness. Because both $\psi_i^k\to\hat\psi_i$ and $\nabla\psi_i^k\to0$
in $L^2(\Omega)$, it follows that
\begin{equation}
\label{eq_conv_psi_ki}
\psi_i^k\to\hat\psi_i\ \text{in }W^{1,2}(\Omega)\ \text{as }k\to\infty,
\end{equation}
where $\nabla\hat\psi_i=0$, hence $\hat\psi_i=const.$

By (\ref{eq_Psi_lu}) we have $\|\phi^k-\phi^\infty\|_{W^{1,2}}\leq C$, so there is $\hat\phi\in
W^{1,2}(\Omega)$ such that
\begin{equation}
\label{eq_conv_phi_k}
\phi^k\rightharpoonup\hat\phi\ \text{in}\ W^{1,2}(\Omega),\ \text{hence}\ \phi^k\to\hat\phi\
\text{in}\ L^2(\Omega),\ \text{as }k\to\infty,
\end{equation}
again using compactness. From the uniform boundedness of $\|\phi^k\|_\infty$ by
(\ref{eq_bound_phi_k}) and its convergence almost everywhere we deduce $\|\hat\phi\|_\infty<\infty$. Let
\begin{equation}
\label{eq_c_hat}
\hat{c_i}:=c_i^\infty(1+\hat\psi_i)^2\exp(z_i(\phi^\infty-\hat\phi)).
\end{equation}
By the definition of $\psi_i^k$ in (\ref{eq_def_psi_ki}) we have $c_i^k=
c_i^\infty(1+\psi_i^k)^2\exp(z_i(\phi^\infty-\phi^k))$. Straight forward manipulations reveal
\begin{align*}
c_i^k-\hat
c_i&=c_i^\infty\exp(z_i(\phi^\infty-\phi^k))[(1+\psi_i^k)^2-(1+\hat\psi_i)^2\exp(z_i(\phi^k-\hat\phi))]\\
&=c_i^\infty\exp(z_i(\phi^\infty-\phi^k))[(\psi_i^k-\hat\psi_i)^2+2(\psi_i^k-\hat\psi_i)+2\hat\psi_i(\psi_i^k-\hat\psi_i)\\
&\quad+(1+\hat\psi_i)^2(1-\exp(z_i(\phi^k-\hat\phi)))].
\end{align*}
Using the a priori estimates
$\|\phi^k\|_\infty+\|\hat\phi\|_\infty+\|\phi^\infty\|_\infty+\|c^\infty\|_\infty\leq C$ and the
fact that $W^{1,2}(\Omega)\inj L^4(\Omega)$ gives
\begin{equation}
\label{eq_conv_c_ki_hat}
\|c_i^k-\hat c_i\|_2\leq
C(\|\psi_i^k-\hat\psi_i\|_4^2+\|\psi_i^k-\hat\psi_i\|_2+\|\phi^k-\hat\phi\|_2)\to0\ \text{as
}k\to\infty.
\end{equation}
By continuity it is true that $-\ep\Delta\hat\phi=\sum_iz_i\hat c_i$ in $\Omega$, and
$\pa_\nu\hat\phi+\tau\hat\phi=\xi$ on $\pa\Omega$, and
\[\hat c_i\exp(z_i\hat\phi)=c_i^\infty\exp(z_i\phi^\infty)(1+\hat\psi_i)^2\equiv const.\]
From relation (\ref{eq_c_hat}) we readily approve that $\hat c_i\in W^{1,2}(\Omega)$, since both
$\hat\psi_i$ and $c_i^\infty\exp(z_i\phi^\infty)$ are constant and $\hat\phi\in W^{1,2}(\Omega)$.
Thus $(\hat c,\hat\phi)$ is the unique steady state to ($P$) from Lemma~\ref{lem_stat_zeta_infty}
and Theorem~\ref{theorem_stat},
i.e.\ $(\hat c,\hat\phi)=(c^\infty,\phi^\infty)$. So relation (\ref{eq_conv_c_ki_hat}) shows
\begin{equation}
\label{eq_conv_c_ki}
c_i^k\to c_i^\infty\ \text{in }L^2(\Omega)\ \text{as }k\to\infty.
\end{equation}
For the sequence $(\tilde\Psi(c^k))_k$ this implies
\begin{equation}
\label{eq_conv_Psi_k}
\la_k^2:=\tilde\Psi(c_k)\to0
\end{equation}
by (\ref{eq_Psi_lu}).

\noindent\textit{Step 2.}

Recall that we assume $c_k\ne c^\infty$ for $k\in\N$, so $\la_k^2=\tilde\Psi(c_k)>0$ for all
$k\in\N$. Having obtained the convergence of the sequences $(\psi_i^k)_k$, $(\phi^k)_k$,
$(c_i^k)_k$ and $\la_k$ in (\ref{eq_conv_psi_ki}), (\ref{eq_conv_phi_k}),
(\ref{eq_conv_c_ki}) and (\ref{eq_conv_Psi_k}) we define new variables
\begin{equation}
\label{eq_def_w_ki}
w_i^k:=\frac{\psi_i^k}{\la_k},\quad y_i^k:=\frac{c_i^k-c^\infty}{\la_k},\quad
\chi^k:=\frac{\phi^k-\phi^\infty}{\la_k}.
\end{equation}
Again for $w_i^k$ and $y_i^k$ superscripts represent the index of the sequence and subscripts denote the respective components.
Note that for $k\in\N$ the function $\chi^k$ is the unique solution to
\begin{equation}
\label{eq_poisson_chi_k}
-\ep\Delta \chi^k=\sum_iz_i y_i^k\quad \text{in }\Omega,\ \text{ and }\ \pa_\nu \chi^k+\tau \chi^k=0\quad\text{on
}\pa\Omega.
\end{equation}
Dividing (\ref{eq_ass}) by $l_k\la_k^2$ it follows that $\sum_i\|\nabla w_i^k\|_2^2\leq1/l_k$, so
$\nabla w_i^k\to0$ in $L^2(\Omega)$. We shall argue in an analogue way as for
(\ref{eq_conv_psi_ki}). For the $L^2$-boundedness of $w_i^k$, observe that $\psi_i^k$ can be
rewritten as
\[\psi_i^k=\frac1{\sqrt{c_i^\infty}}\left(\sqrt{c_i^k}-\sqrt{c_i^\infty}\right)+\sqrt{\frac{c_i^k}{c_i^\infty}}\left(\exp\left(z_i\frac{\phi^k-\phi^\infty}2\right)-1\right).\]
With the help of (\ref{eq_Psi_lu}) we conclude
\begin{equation}
\label{eq_abs_wni}
\begin{split}
\sum_{i=1}^N\|\psi_i^k\|_2^2+\|\phi^k-\phi^\infty\|_{W^{1,2}}^2&\leq
C\bigg(\sum_{i=1}^N\|\sqrt{c_i^k}-\sqrt{c_i^\infty}\|_2^2+\|\phi^k-\phi^\infty\|_{W^{1,2}}^2\bigg)\\
&\leq C'\tilde\Psi(c^k).
\end{split}
\end{equation}
Dividing (\ref{eq_abs_wni}) by $\la_k^2=\tilde\Psi(c_k)$ we have $\|w_i^k\|_2\leq C'$ independent
of $k$, hence
\begin{equation}
\label{eq_conv_w_ki}
w_i^k\to \hat w_i\ \text{in } W^{1,2}(\Omega)\ \text{as }k\to\infty,
\end{equation}
where $\hat w$ is constant. Also from the division of (\ref{eq_abs_wni}) by $\la_k^2$ we infer in
the same way as for (\ref{eq_conv_phi_k}) the existence of a $\hat\chi \in W^{1,2}(\Omega)$ such
that
\begin{equation}
\label{eq_conv_chi_k}
\chi^k\rightharpoonup\hat\chi \ \text{in }W^{1,2}(\Omega), \ \text{hence }\chi^k\to\hat\chi \
\text{in
}L^2(\Omega),\ \text{as }k\to\infty.
\end{equation}
With relations (\ref{eq_conv_w_ki}) and (\ref{eq_conv_chi_k}) we define
\begin{equation}
\label{eq_def_y_hat}
\hat y_i:=c_i^\infty(2\hat w_i-z_i\hat\chi ).
\end{equation}
For the difference $y_i^k-\hat y_i$ we obtain
\begin{equation*}
\begin{split}
&y_i^k-\hat
y_i=c_i^\infty\left(\frac1{\la_k}\left((1+\psi_i^k)^2\exp(z_i(\phi^\infty-\phi^k))-1\right)-2\hat
w_i+z_i\hat\chi \right)\\
&=c_i^\infty\bigg(2\frac{\psi_i^k}{\la_k}\big(\exp(z_i(\phi^\infty-\phi^k))-1\big)+2\left(\frac{\psi_i^k}{\la_k}-\hat
w_i\right)+z_i\left(\hat\chi -\frac{\phi^k-\phi^\infty}{\la_k}\right)\\
&\quad+\frac{(\psi_i^k)^2}{\la_k^2}\la_k\exp(z_i(\phi^\infty-\phi^k))+\frac{\exp(z_i(\phi^\infty-\phi^k))-1-z_i(\phi^\infty-\phi^k)}{\la_k}\bigg)\\
&=c_i^\infty\Big(2w_i^k\big(\exp(z_i(\phi^\infty-\phi^k))-1\big)+2(w_i^k-\hat w_i)+z_i(\hat\chi -\chi^k)\\
	&\quad+\la_k (w_i^k)^2\exp(z_i(\phi^\infty-\phi^k))+\frac{\la_k}2z_i^2(\chi^k)^2\exp(hz_i(\phi^\infty-\phi^k))\Big)
\end{split}
\end{equation*}
for some function $h\in[0,1]$ using Taylor's formula.
Hence
\[\|y_i^k-\hat y_i\|_2\leq C(\|\phi^k-\phi^\infty\|_4^2+\|w_i^k-\hat w_i\|_2+\la_k+\|\chi^k-\hat
z\|_2)\to0.\]
By continuity it follows from (\ref{eq_poisson_chi_k}) that
\begin{equation}
\label{eq_poisson_z_hat}
-\ep\Delta \hat\chi =\sum_iz_i \hat y_i\ \text{in }\Omega,\ \text{ and }\ \pa_\nu\hat\chi
+\tau\hat\chi =0\
\text{on }\pa\Omega,
\end{equation}
and that $\chi^k\to\hat\chi $ in $W^{2,2}(\Omega)$. Therefore, using integration by parts
and representation (\ref{eq_def_y_hat}), we obtain
\begin{align*}
0&\leq\ep\|\nabla\hat\chi \|_2^2+\ep\tau\|\hat\chi
\|_{2,\pa\Omega}^2+\int_\Omega\sum_ic_i^\infty(2\hat
w_i-z_i\hat\chi )^2\\
&=-\ep\int_\Omega \hat\chi \Delta\hat\chi +\int_\Omega\sum_ic_i^\infty(2\hat w_i-z_i\hat\chi )^2\\
&=\sum_i\int_\Omega z_i\hat\chi \hat y_i+\hat y_i(2\hat w_i-z_i\hat\chi )=2\sum_i\int_\Omega \hat
y_i\hat
w_i=0,
\end{align*}
since $\int_\Omega y_i^k=0$ for all $k$. Therefore $\hat\chi =0$, $\hat w=0$ and finally $\hat
y_i=0$.
However, dividing (\ref{eq_Psi_lu}) by $\la_k^2$ now implies that
\[1\leq\sum_i\|y_i^k\|_2^2+\|\chi^k\|_{W^{1,2}}^2\to0\ \text{as}\ k\to\infty,\]
a contradiction.
\end{proof}

Now the proof of Theorem~\ref{theorem_conv} remains an easy task.

\begin{proof}[Proof. (of Theorem~\ref{theorem_conv})]
From the definition of $\Psi$ in (\ref{eq_def_PSI}) it is clear that $\Psi\geq0$, so
Theorem~\ref{thm_asymp} implies $\|u(t)\|_2\leq Ce^{-\omega t}$. The claim for $c$ follows with the
help of the estimates in Lemma~\ref{lem_ci_infty}, Theorem~\ref{thm_asymp} and relation
(\ref{eq_Psi_lu}) by
\[\|c_i-c_i^\infty\|_1\leq C\|(\sqrt{c_i}+\sqrt{c_i^\infty})(\sqrt{c_i}-\sqrt{c_i^\infty})\|_2\leq
C'\Psi(u,c)^{1/2}\leq C''e^{-\omega' t}.\]
\end{proof}

\section*{Acknowledgement} This work is supported by the Deutsche
Forschungsgemeinschaft within the cluster of excellence ``Center of 
Smart Interfaces'' at the TU Darmstadt. The authors would like to thank 
A.~Glitzky for useful hints concerning the boundary 
conditions (\ref{eq_P_bdy_}) imposed on
the electro-static potential.

\bibliography{Literatur}{}
\bibliographystyle{plain}

\end{document}